\newtheorem{theorem}{Theorem}
\theoremstyle{definition}
\newtheorem{corollary}{Corollary}
\newtheorem{definition}{Definition}
\newtheorem{example}{Example}
\newtheorem{proposition}{Proposition}
\newtheorem{remark}{Remark}
\numberwithin{equation}{section}
\newcommand{\brak}[1]{\langle #1\rangle}
\def\co{\colon\thinspace} 
\def\bbZ{{\mathbb Z}}
\def\bbR{{\mathbb R}}
\begin{document}

\title{The Kauffman polynomial and trivalent graphs }

\author{Carmen Caprau}
\address{Department of Mathematics, California State University, Fresno\\ 
5245 North Backer Avenue M/S PB 108, Fresno, CA 93740-8001, USA}
\email{ccaprau@csufresno.edu}
\urladdr{}
\author{James Tipton}
\address{Department of Mathematics, The University of Iowa \\
14 MacLean Hall, Iowa City, IA 52242-1419, USA}

\date{}
\subjclass[2000]{57M27, 57M15}
\keywords{braids, invariants for graphs and links, Kauffman polynomial, knotted graphs}
\thanks{The first author was supported in part by NSF grant DMS 0906401}

\begin{abstract}
We construct a state model for the two-variable Kauffman polynomial using planar trivalent graphs. We also use this model to obtain a polynomial invariant for a certain type of trivalent graphs embedded in $\bbR^3$. 
\end{abstract}
\maketitle

\section{Introduction}

In~\cite{K2}, Kauffman constructed a two-variable Laurent polynomial invariant of regular isotopy for classical unoriented knots and links. The invariant of a link $L$ is denoted by $D_L : = D_L(z, a)$ and is uniquely determined by the axioms: 
\begin{enumerate}
\item[1.] $D_{L_1} = D_{L_2}$ whenever $L_1$ and $L_2$ are regular isotopic links.
\item[2.] $D_{\,\raisebox{-3pt}{\includegraphics[height=0.15in]{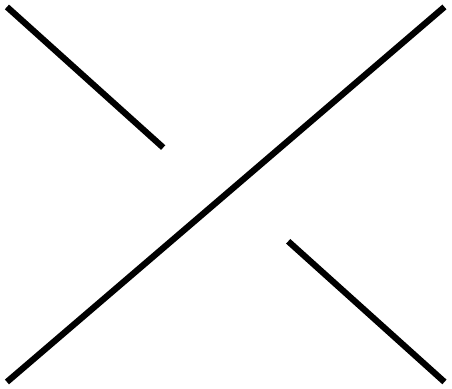}}}\, - D_{\,\raisebox{-3pt}{\includegraphics[height=0.15in]{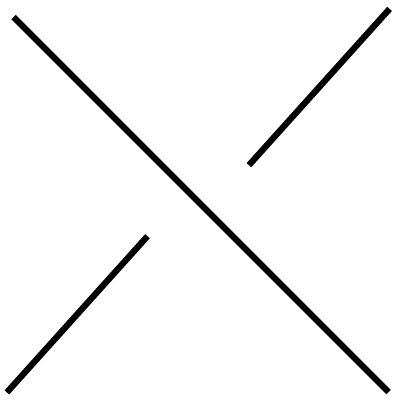}}}\,= z \left[D_{\,\raisebox{-3pt}{\includegraphics[height=0.15in]{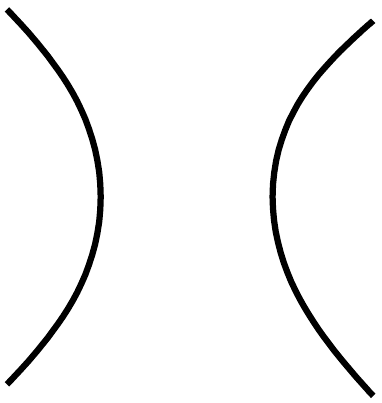}}}\, - D_{\,\raisebox{-3pt}{\includegraphics[height=0.15in]{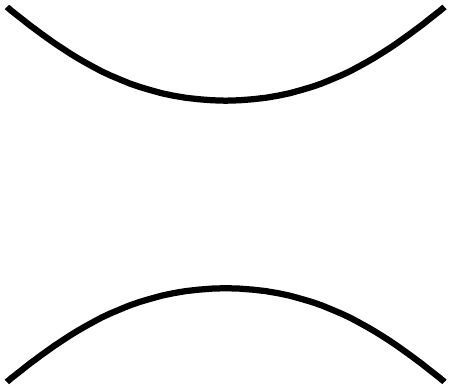}}}\,\right]$.
\item[3.] $D_{\,\bigcirc} = 1$.
\item[4.] $D_{\,\raisebox{-3pt}{\includegraphics[height=0.15in]{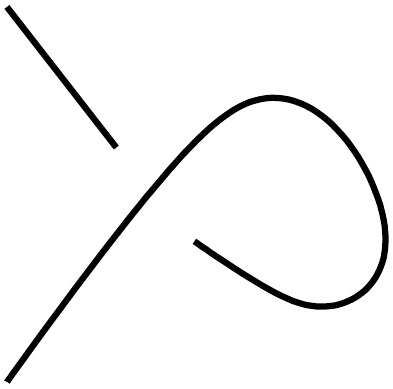}}}\, = a D_{\,\raisebox{-3pt}{\includegraphics[height=0.15in]{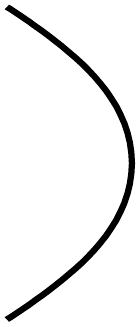}}}\,, \quad D_{\,\raisebox{-3pt}{\includegraphics[height=0.15in]{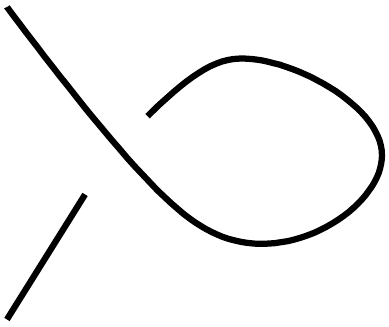}}}\, = a^{-1} D_{\,\raisebox{-3pt}{\includegraphics[height=0.15in]{arc}}}$.
\end{enumerate}
The diagrams in both sides of the second or fourth equation represent parts of larger link diagrams that are identical except near a point where they look as indicated.

We remark that this polynomial is sometimes called the Dubrovnik version of the two-variable \textit{Kauffman polynomial} of unoriented links. For more details about these polynomials we refer the reader to~\cite{K2, K3}. Throughout the paper we call $D_L$ the Kauffman polynomial of the link $L$.

The corresponding normalized invariant, $Y_L$, of ambient isotopy of oriented links is then given by 
\[ Y_L = a^{-w(L)}D_L \]
where $w(L)$ is the writhe of the oriented link $L$ and $D_L$ is the Kauffman polynomial of the associated unoriented link (corresponding to the oriented link $L$).

Kauffman and Vogel~\cite{KV} extended the Kauffman polynomial to a three-variable rational function for knotted 4-valent graphs (4-valent graphs embedded in $\bbR^3$) with rigid vertices by defining 
\[ D_{\,\raisebox{-3pt}{\includegraphics[height=0.15in]{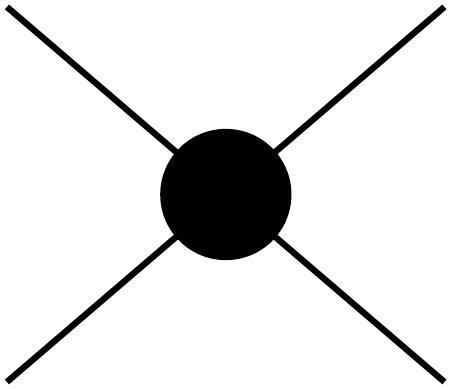}}}\, = D_{\,\raisebox{-3pt}{\includegraphics[height=0.15in]{poscrossing}}}\, - AD_{\,\raisebox{-3pt}{\includegraphics[height=0.15in]{A-smoothing}}}\, -B D_{\,\raisebox{-3pt}{\includegraphics[height=0.15in]{B-smoothing}}}\, =  D_{\,\raisebox{-3pt}{\includegraphics[height=0.15in]{negcrossing}}}\, - AD_{\,\raisebox{-3pt}{\includegraphics[height=0.15in]{B-smoothing}}}\, -B D_{\,\raisebox{-3pt}{\includegraphics[height=0.15in]{A-smoothing}}}\, \]
where $A$ and $B$ are commuting variables and $A - B = z$. In particular, the invariant for knotted 4-valent graphs is defined in terms of the Kauffman polynomial. In~\cite{KV}, it was also shown that the resulting polynomial $D_G$ of a knotted 4-valent graph $G$ satisfies certain graphical relations, which determine values for each planar unoriented 4-valent graph by recursive formulas defined entirely in the category of planar graphs.

Kauffman's and Vogel's results imply that there is a model for the Kauffman polynomial of an unoriented link via planar 4-valent graphs (these graphs have ordinary vertices; there is no reason to consider rigid vertices when working with planar graphs). This model can be also deduced from Carpentier's work~\cite{C} on the Kauffman and Vogel's polynomial by changing one's perspective, as the focus of that paper is not on the Kauffman polynomial invariant of links but on invariants of graphs. Thus we believe it is worthwhile presenting this model explicitly. We give in Section~\ref{sec:comments} a brief description of how this state model for the Kauffman polynomial can be obtained, but throughout the rest of the paper we work with trivalent graphs. The main reason for this is that trivalent graphs are generic graphs, in the sense that any graph, in particular a 4-valent graph, can be perturbed into a trivalent graph. In this paper, a trivalent graph may contain a circle component. We regard a circle as a graph without vertices.

The purpose of this paper is twofold. One goal is to use trivalent graphs to construct a rational function $\textbf{P}_L = \textbf{P}_L(A, B, a) \in \bbZ[A^{\pm1}, B^{\pm 1}, a^{\pm 1}, (A-B)^{\pm 1}]$ which is an invariant of regular isotopy for unoriented links, and we do this using two equivalent constructions. The first construction is via a state summation and a recurrence method to evaluate planar trivalent graphs - via skein theory. The inspiration for this construction came from the well-known model for the Homflypt polynomial via trivalent graphs constructed by Murakami, Ohtsuki and Yamada~\cite{MOY} (and, as already mentioned, the results of~\cite{KV}). The second method makes use of a representation of the braid group $B_n$ into some algebra $\mathcal{A}_n$ whose trace function recovers the invariant $\textbf{P}_L$. This algebra is given by generators and relations; the relations come from the skein theory used in our first definition. We remark that the algebra $\mathcal{A}_n$ is isomorphic to the Birman-Murakami-Wenzl algebra introduced in~\cite{BW, M}. Our construction and results easily imply that $D_L(A - B, a) = \textbf{P}_L(A, B, a)$. Although the first method may be implicitly obtained from Carpentier's work~\cite{C} by replacing 4-valent vertices with trivalent vertices, and by shifting the approach to the problem,  it might be beneficial for less experienced researchers to give all of the details here.  On the other hand, the second method giving rise to the Kauffman polynomial of a link is completely new.

 Another purpose of this paper is to construct a three-variable rational function $[G]$ (or equivalently, a four-variable Laurent polynomial) which is an invariant for a certain type of knotted unoriented trivalent graphs $G$, and we do this without relying on the existence of the Kauffman polynomial for links. Instead of extending the Kauffman polynomial to trivalent graphs embedded in $\bbR^3$, we give sufficient conditions that the graph polynomial $[G]$ must satisfy in order to be an invariant of regular isotopy of knotted graphs. These conditions imply that the restriction of $[G]$ to links gives the Laurent polynomial $\textbf{P}_L$ (thus a version of the Kauffman polynomial). We also take a look at the one-variable specialization of the Kauffman polynomial, the $SO(N)$ Kauffman polynomial, where we focus on the case $N = 2$. We show that in this particular case, the evaluation $[G]$ of a planar trivalent graph diagram depends only on the number of its connected components and the number of vertices. This implies that the polynomial $P_L$, as well as the corresponding polynomial $[G]$ of knotted trivalent graphs, are easily computed in this case.
 
The trivalent graphs under consideration in this paper have two types of edges, namely ``standard" edges and ``wide" edges, such that there is exactly one wide edge incident to a vertex: \raisebox{-5pt}{\includegraphics[height=0.2in]{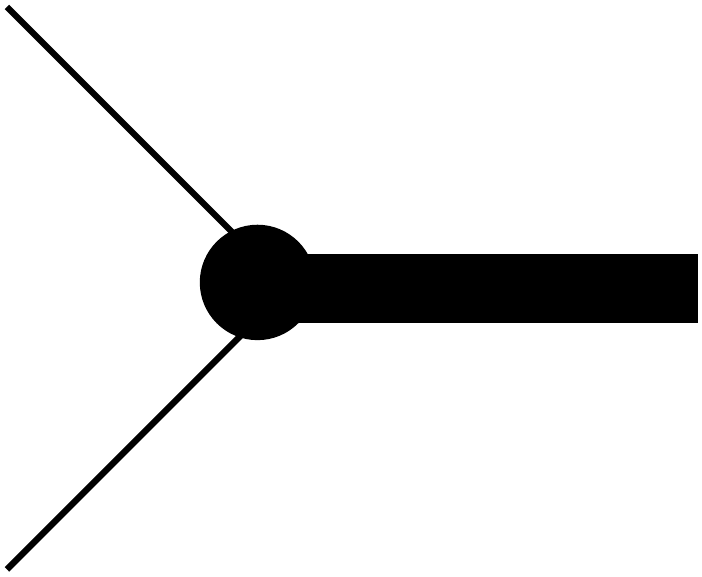}}\,. Unless otherwise stated, all trivalent graphs in this paper are of this type. When a trivalent graph is knotted, its wide edges  are regarded as being rigid, in the sense that there is a cyclic ordering of the four standard edges ``attached'' to a wide edge, given by the rigidity of the wide edge. Moreover, a wide edge is not allowed to cross itself. Such a knotted graph will be called \textit{knotted rigid-edge trivalent graph} (or shortly knotted RE 3-graph).
 
The paper is organized as follows: In Section~\ref{sec:model} we give the definition of the polynomial $\textbf{P}_L$ using a state sum formula and skein relations for trivalent graphs, and prove its invariance under the Reidemeister II and III moves, as well as its behavior under the Reidemeister I move. In Subsection~\ref{ssec:algebra} we define the algebra $\mathcal{A}_n$, while in Subsection~\ref{ssec:representation} we construct a representation of the braid group on $n$ strands, $B_n$, into $\mathcal{A}_n$. There is a trace function defined on the algebra $\mathcal{A}_n$, and we employ it in Subsection~\ref{ssec:bracket-braids} to define a \textit{bracket} polynomial of a braid and to recover the link invariant $\textbf{P}_L$. Section~\ref{sec: knotted graphs} is dedicated to knotted RE 3-graphs, and especially to the graph polynomial $[G]$. We give the model for the $SO(N)$ Kauffman polynomial in Subsection~\ref{ssec:one-variable poly}, and we treat the case $N =2$ for the corresponding polynomial for planar trivalent graph diagrams in Subsection~\ref{ssec:N=2}. A few concluding remarks are given in Section~\ref{sec:comments}. (Finally, an appendix is used to keep some technicalities out of the way.)

\section{A model for the Kauffman polynomial} \label{sec:model}

Let $D$ be a plane diagram of an unoriented link $L$ in $\bbR^3$. Associate to each crossing of $D$ a formal linear combination of planar trivalent graphs as shown in Figure~\ref{fig:replacing crossings} (where $A$ and $B$ are commuting variables).

\begin{figure}[ht]
\raisebox{-3pt}{\includegraphics[height=0.15in]{poscrossing}} = $A$ \raisebox{-3pt}{\includegraphics[height=0.15in]{A-smoothing}} + $B$ \raisebox{-3pt}{\includegraphics[height=0.15in]{B-smoothing}} +  \raisebox{-6pt}{\includegraphics[height=0.25in]{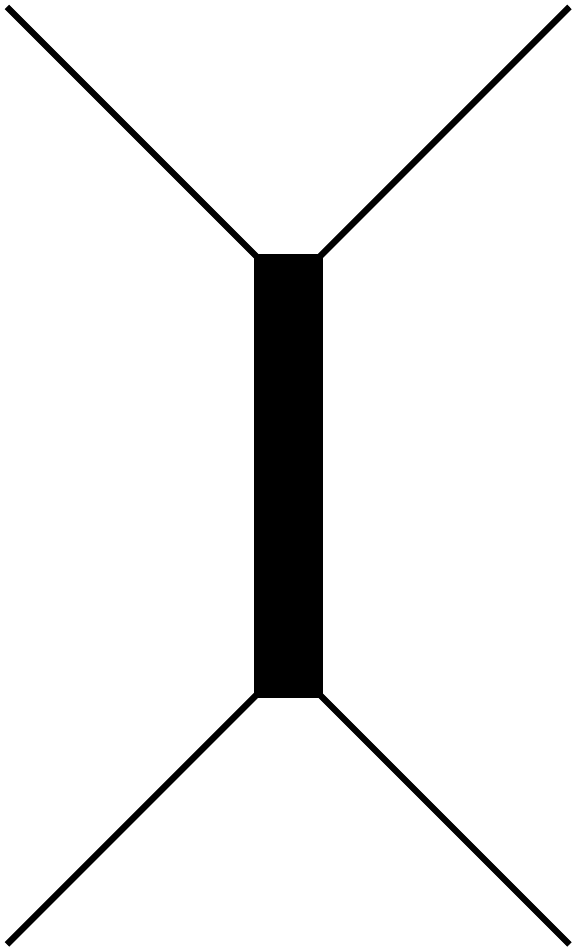}} \\
\vspace{0.5cm}
\raisebox{-3pt}{\includegraphics[height=0.15in]{negcrossing}} = $A$  \raisebox{-3pt}{\includegraphics[height=0.15in]{B-smoothing}} +$B$  \raisebox{-3pt}{\includegraphics[height=0.15in]{A-smoothing}} +  \raisebox{-4pt}{\includegraphics[height=0.17in]{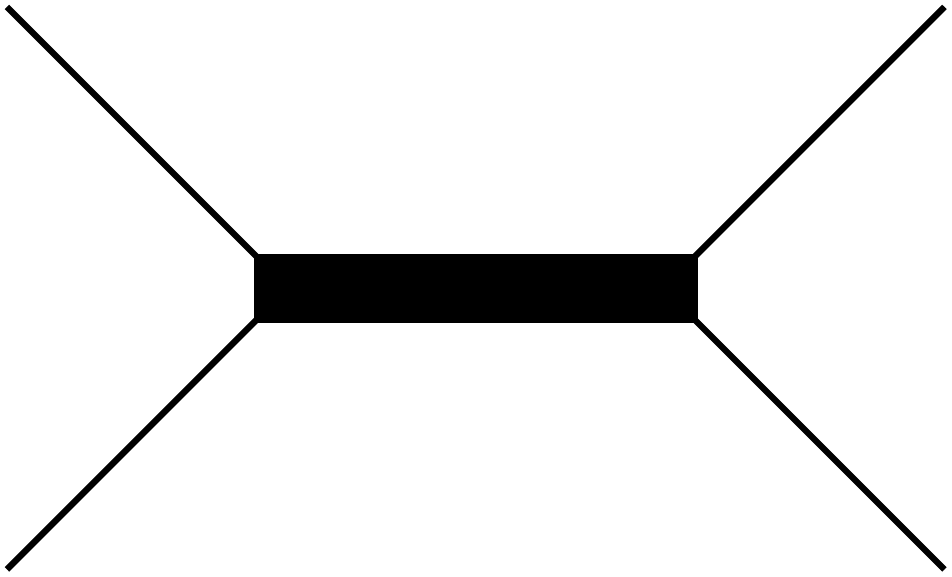}}
\caption{From crossings to planar trivalent graphs}\label{fig:replacing crossings}
\end{figure}

Here and throughout this paper, small diagrams represent parts of larger diagrams, and the collection of small diagrams appearing in a single equation all share the same larger diagram. 

Replacing each of the crossings in $D$ with one of the local diagrams \raisebox{-3pt}{\includegraphics[height=0.15in]{A-smoothing}}\,,\, \raisebox{-3pt} {\includegraphics[height=0.15in]{B-smoothing}}\,, \raisebox{-6pt}{\includegraphics[height=0.25in]{resol}}, or \raisebox{-4pt}{\includegraphics[height=0.17in]{resol-ho}}, we arrive, after finitely many steps, at planar trivalent graphs, called the \textit{states} of the link diagram $D$. As mentioned in the introduction, these graphs have both standard and wide edges, such that there is exactly one wide edge incident to a vertex. 

There is a unique way to assign a polynomial $P(\Gamma) \in \bbZ[A^{\pm1}, B^{\pm 1}, a^{\pm 1}, (A-B)^{\pm 1}]$ to a state $\Gamma$, so that it takes value $1$ for the unknot and satisfies the following \textit{graph skein relations} (see Theorem~\ref{thm:unique poly}, whose proof is given in appendix~\ref{appendix:trivalent}):

\begin{eqnarray}
P\left( \Gamma \cup \bigcirc \right) &=& \alpha \, P( \Gamma) \label{gi2}\\
P\left( \,\raisebox{-7pt}{\includegraphics[height=0.25in]{resol}}\,\right) &= &P \left(\, \raisebox{-3pt}{\includegraphics[height=0.17in]{resol-ho}}\,\right) \label{gi3} \\
P \left( \,\raisebox{-7pt}{\includegraphics[height=0.25in]{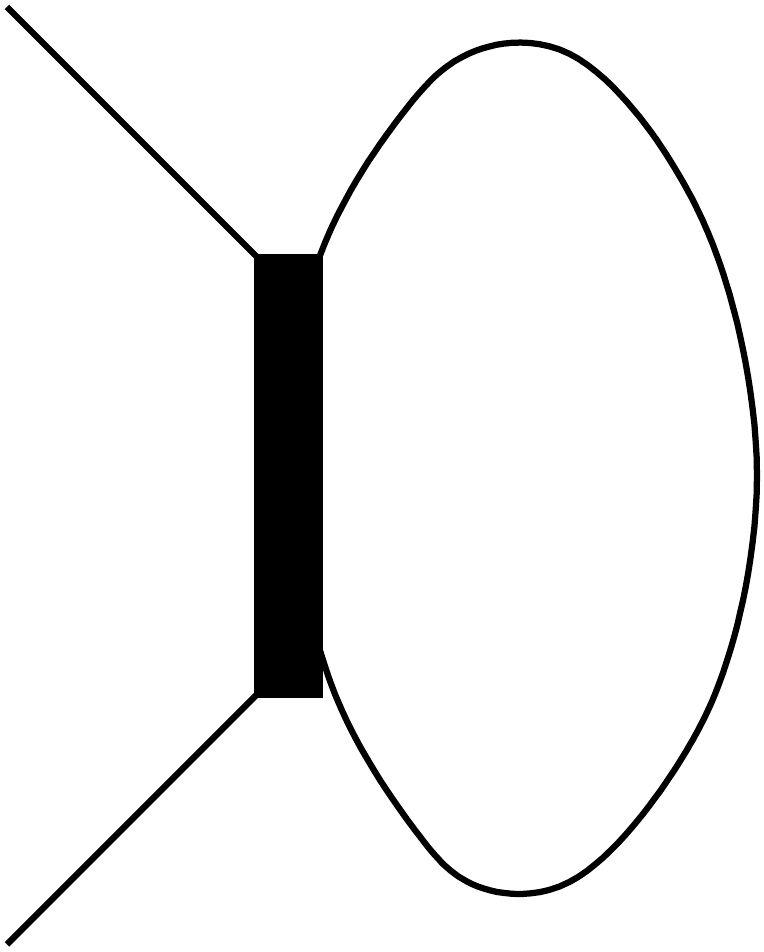}}\, \right) &=& \beta \,P \left( \,\raisebox{-7pt}{\includegraphics[height=0.25in]{arc}}\, \right)  \label{gi4}\\
P \left( \,\raisebox{-13pt}{\includegraphics[height=0.45in]{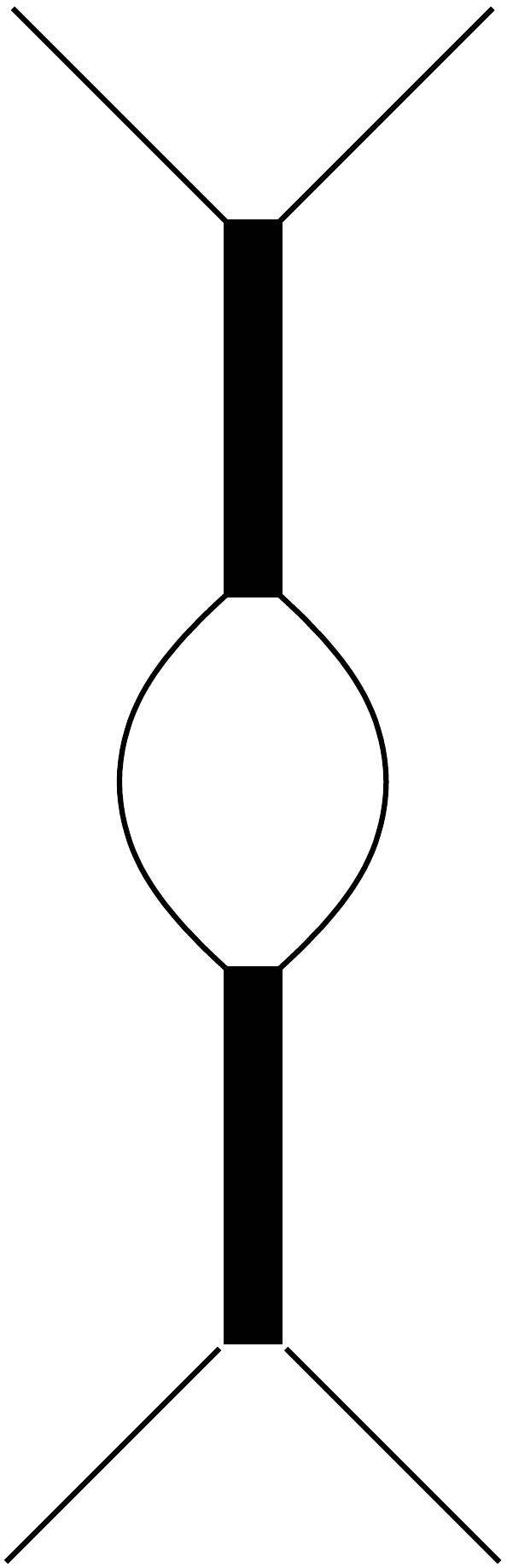}}\, \right) &=& (1-AB)\, P \left( \, \raisebox{-4pt}{\includegraphics[height=0.18in]{A-smoothing}} \, \right) + \gamma \,P \left( \, \raisebox{-4pt}{\includegraphics[height=0.18in]{B-smoothing}} \, \right) - (A+B)\, P \left( \, \raisebox{-7pt}{\includegraphics[height=0.25in]{resol}}\, \right) \label{gi5} 
\end{eqnarray}
\begin{eqnarray}
P \left( \,\raisebox{-13pt}{\includegraphics[height=0.45in, width=0.25in]{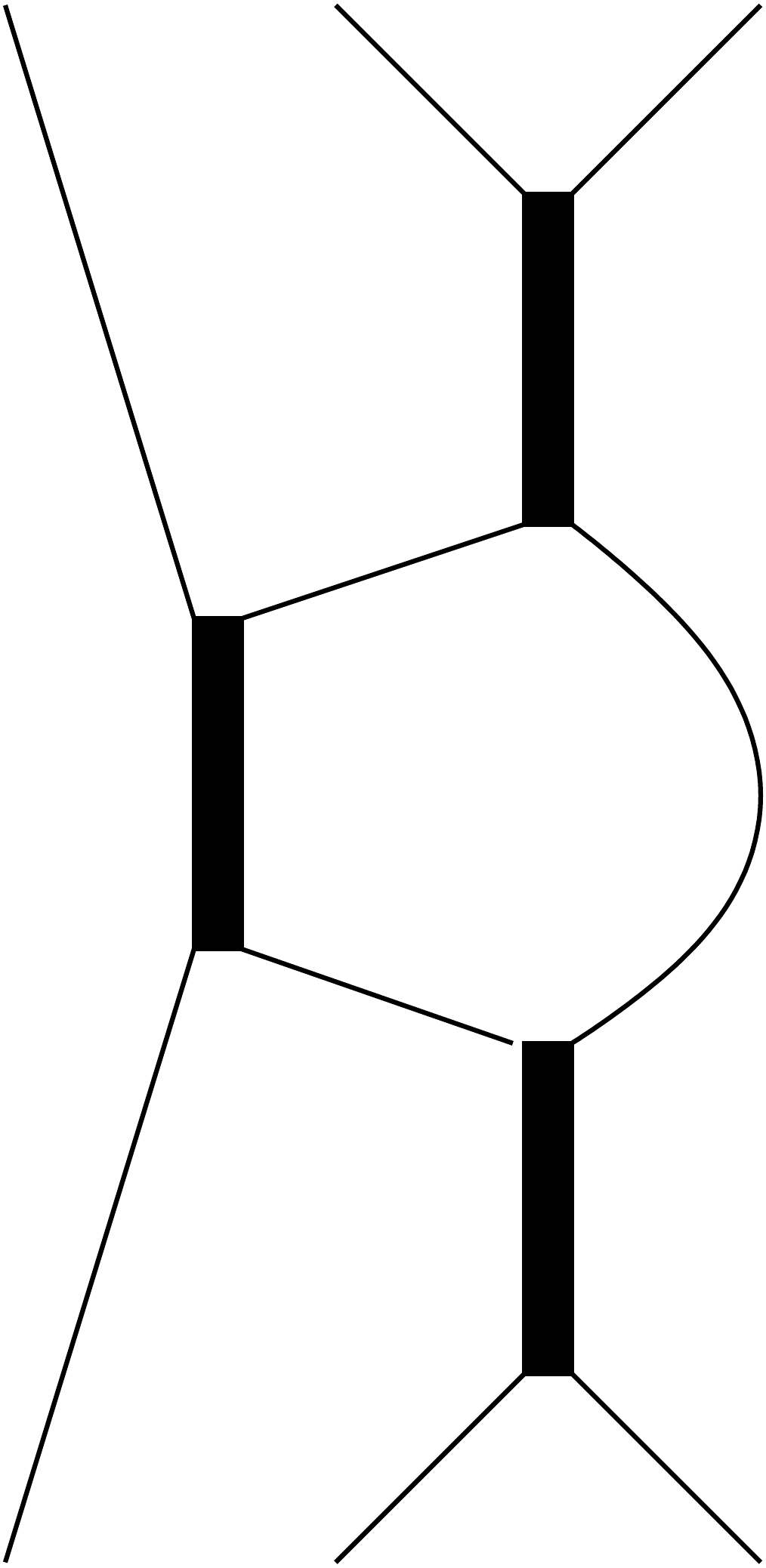}}\,\right) - P \left( \,\raisebox{-13pt}{\includegraphics[height=0.45in, width=0.25in]{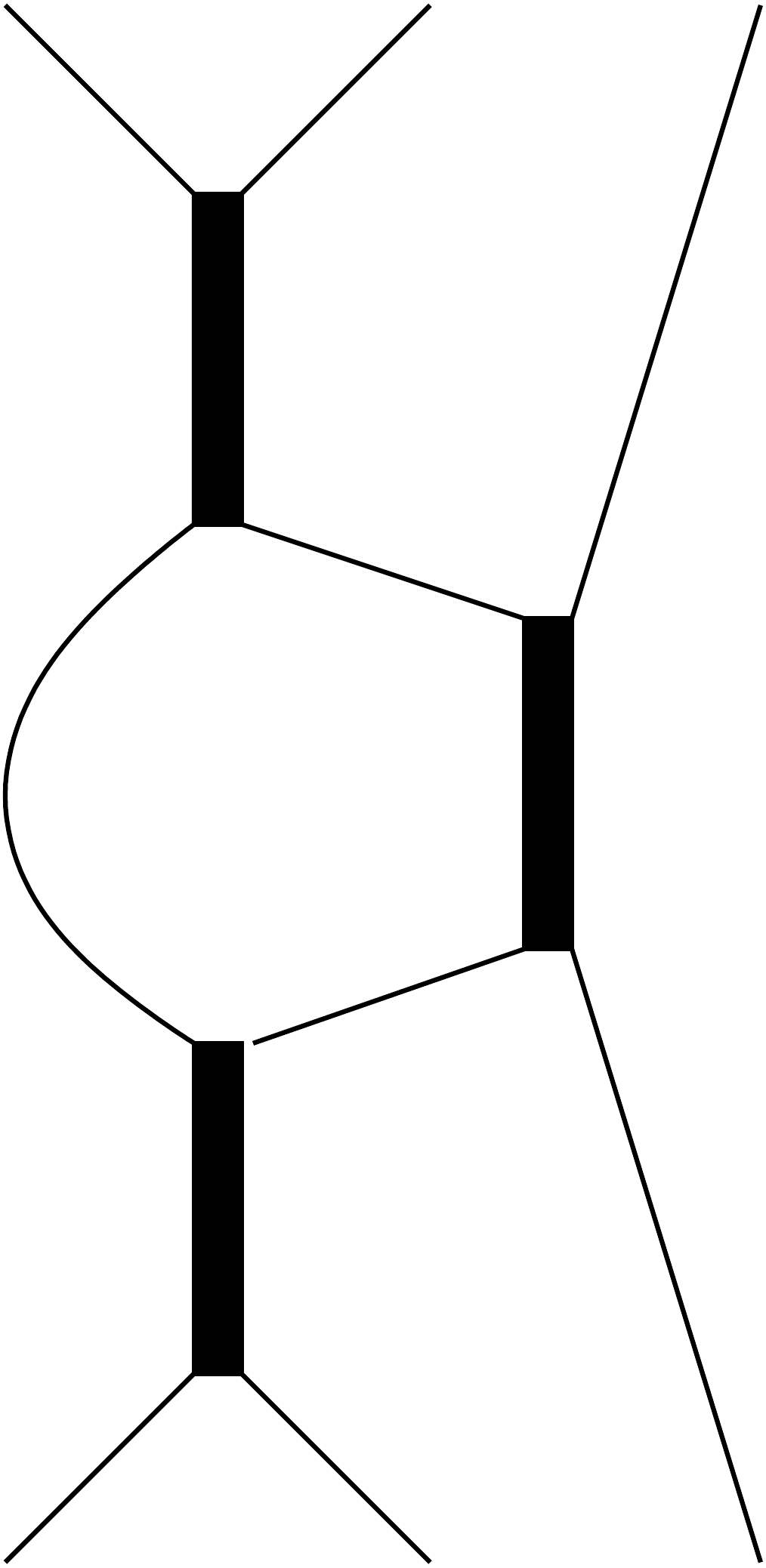}}\, \right) =  \delta \, \left [P \left(\, \raisebox{-7pt}{\includegraphics[height=0.25in]{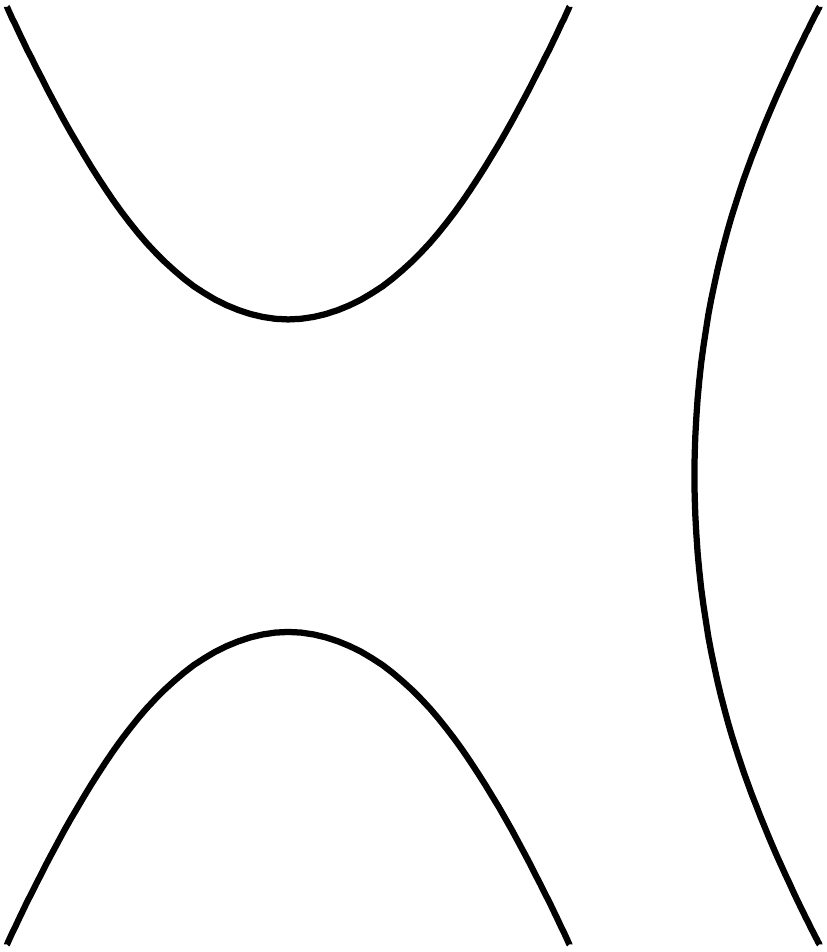}}\, \right) - P \left( \,\, \raisebox{-7pt}{\includegraphics[height=0.25in]{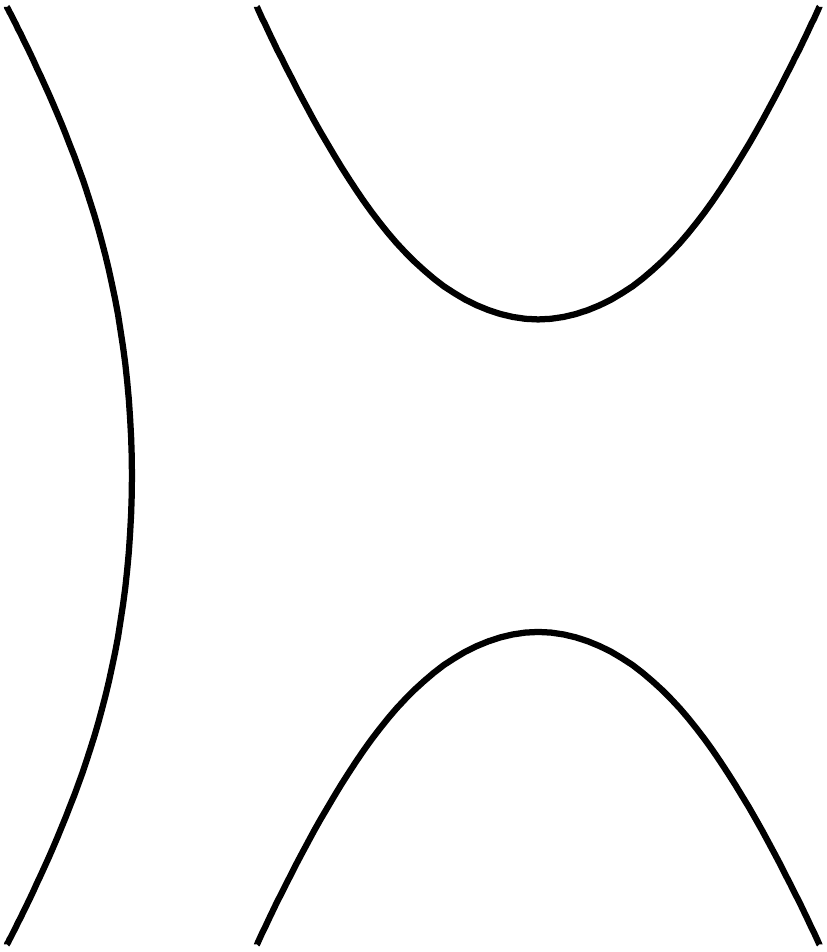}}\, \right)\,\right ] + \label{gi6}
\end{eqnarray}
\begin{eqnarray*}
 + AB \left [P\left( \,\raisebox{-7pt}{\includegraphics[height=0.25in]{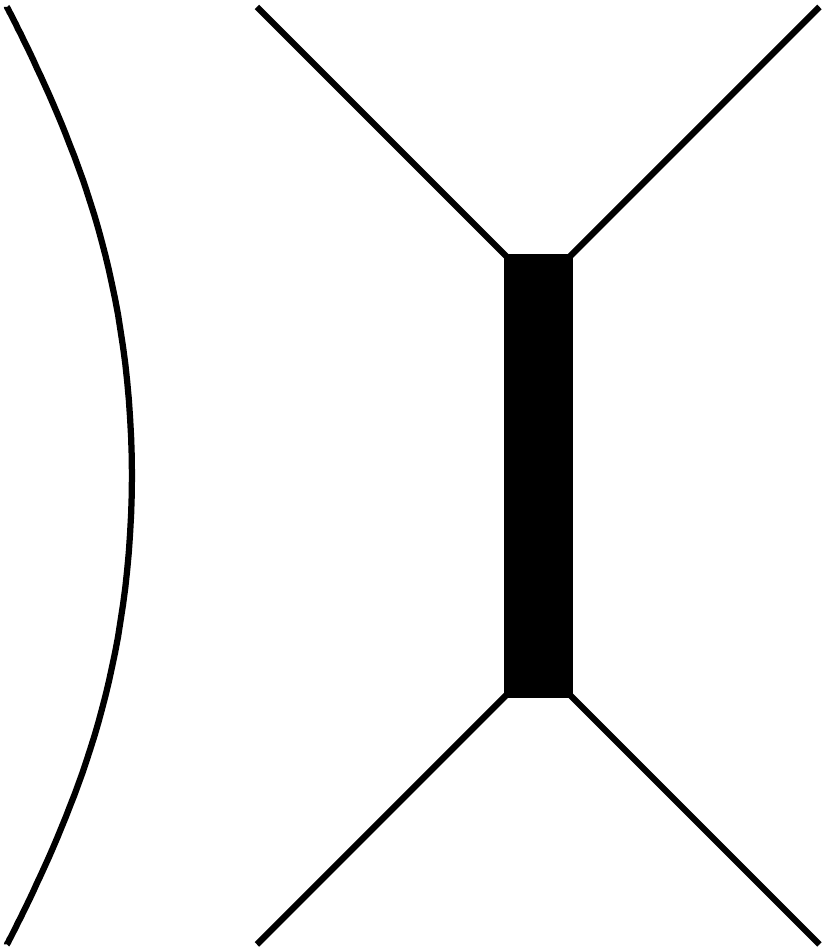}}\, \right) - P \left( \,\raisebox{-7pt}{\includegraphics[height=0.25in]{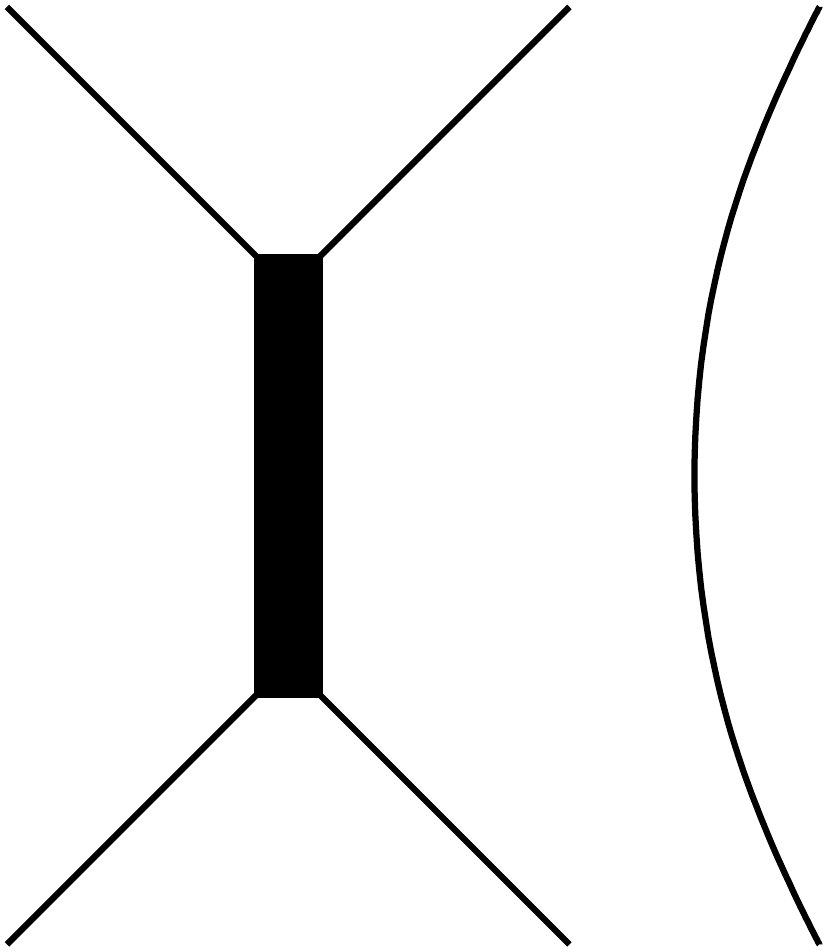}} \,\, \right)+  P \left( \, \raisebox{-7pt}{\includegraphics[height=0.25in, width=0.25in]{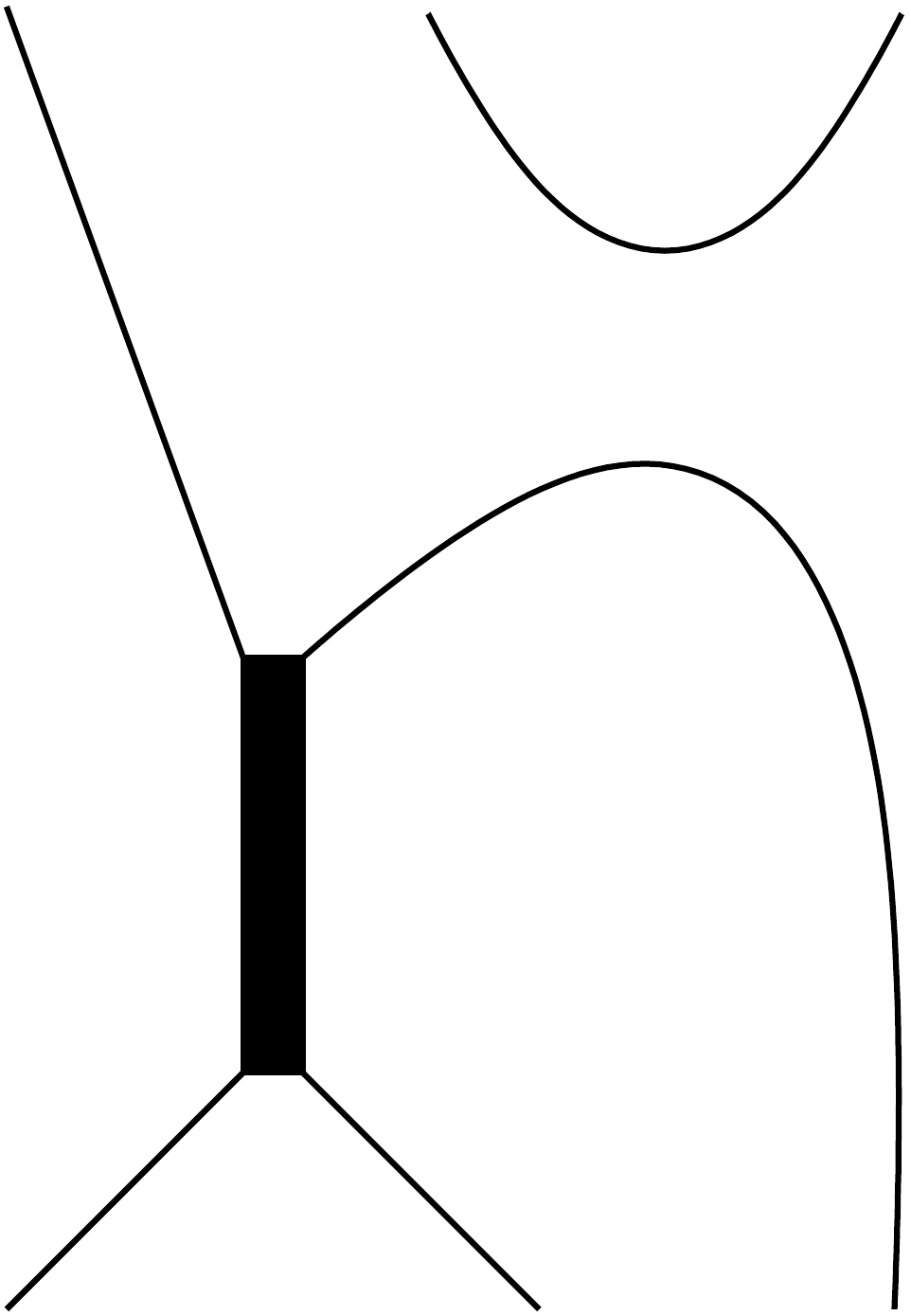}}\, \right) - P \left( \,\raisebox{-7pt}{\includegraphics[height=0.25in , width=0.25in]{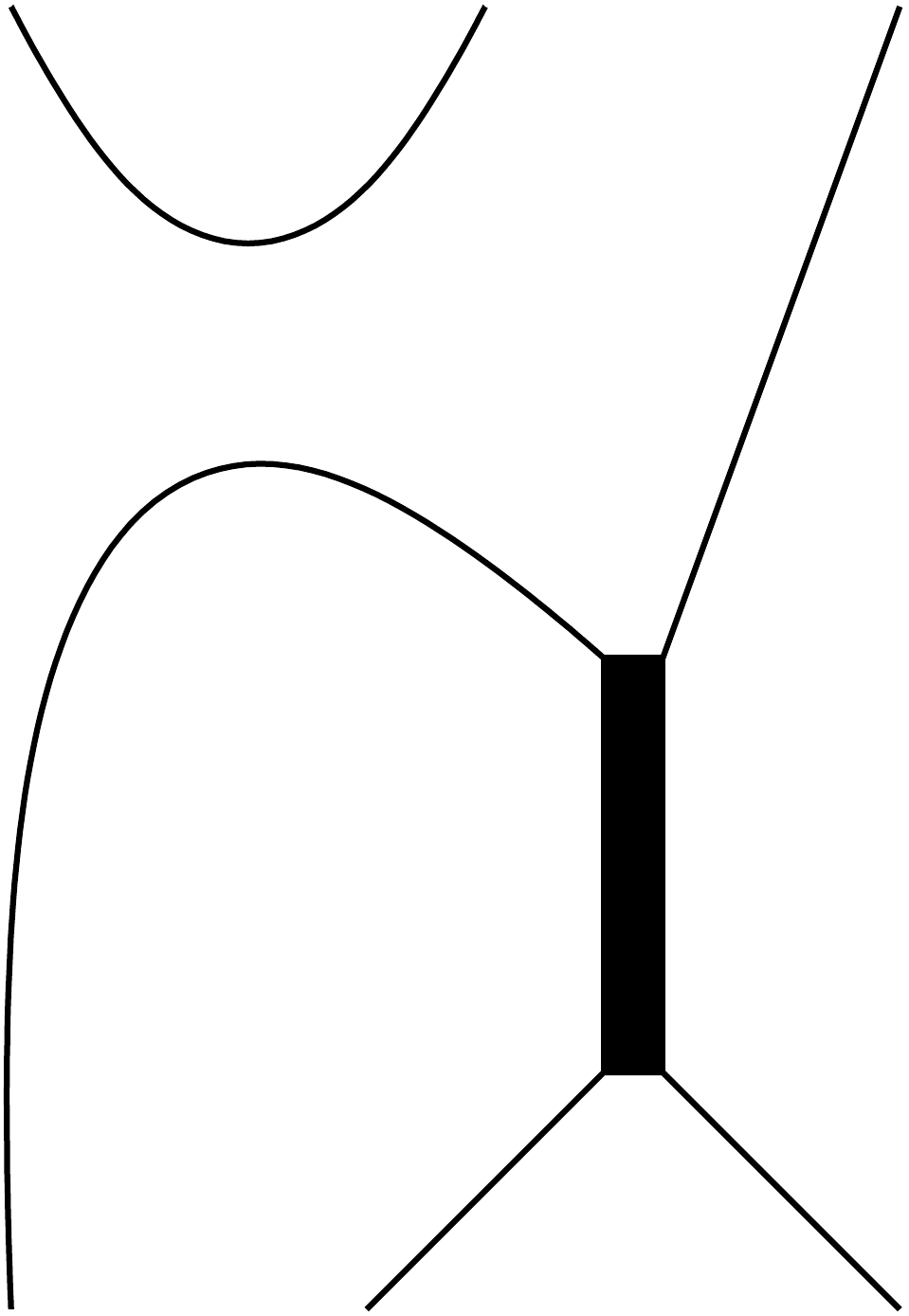}}\,\right)  + P\left(\, \raisebox{-7pt}{\includegraphics[height=0.25in, width=0.25in]{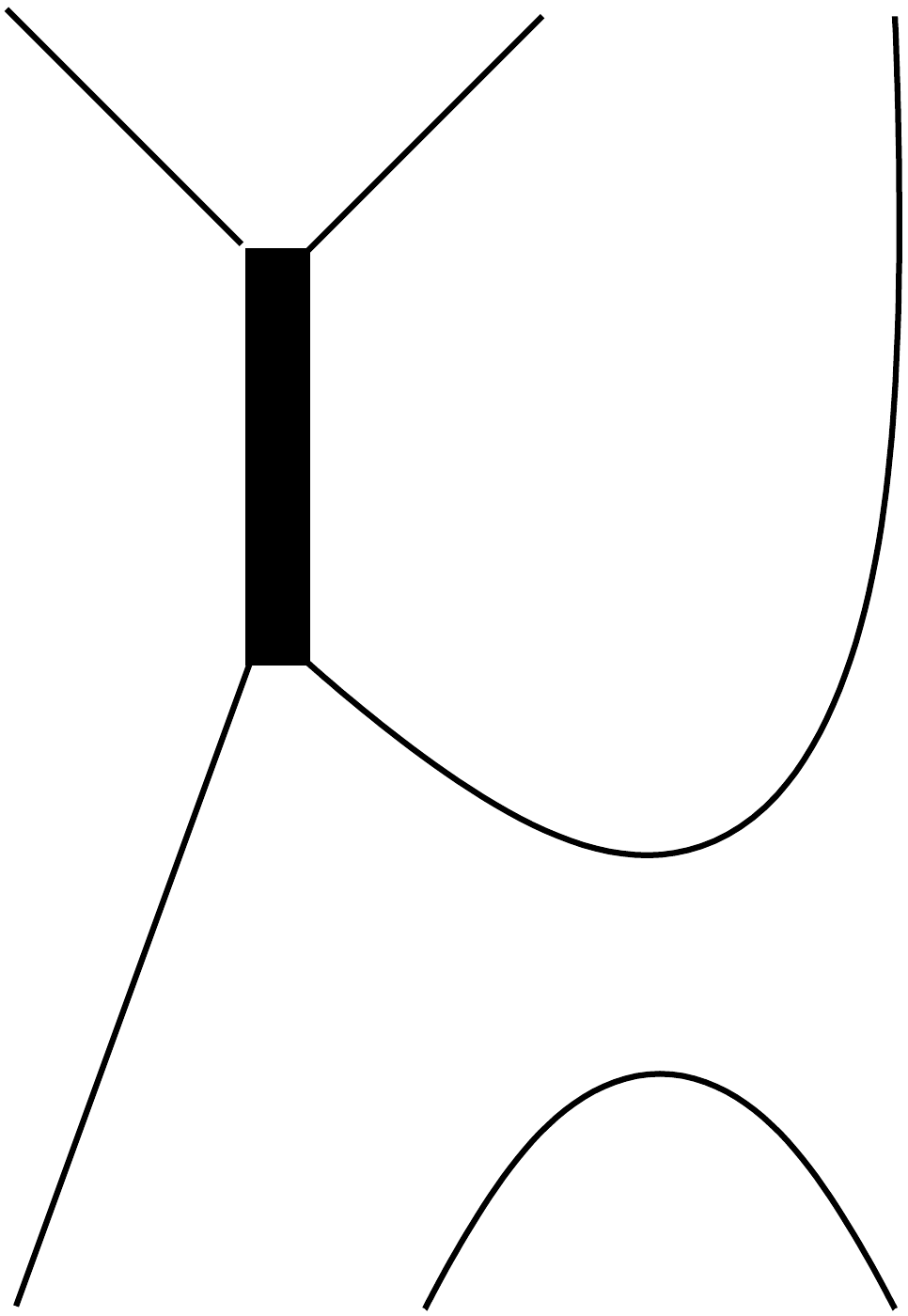}} \,\right) - P \left(\, \raisebox{-7pt}{\includegraphics[height=0.25in, width=0.25in]{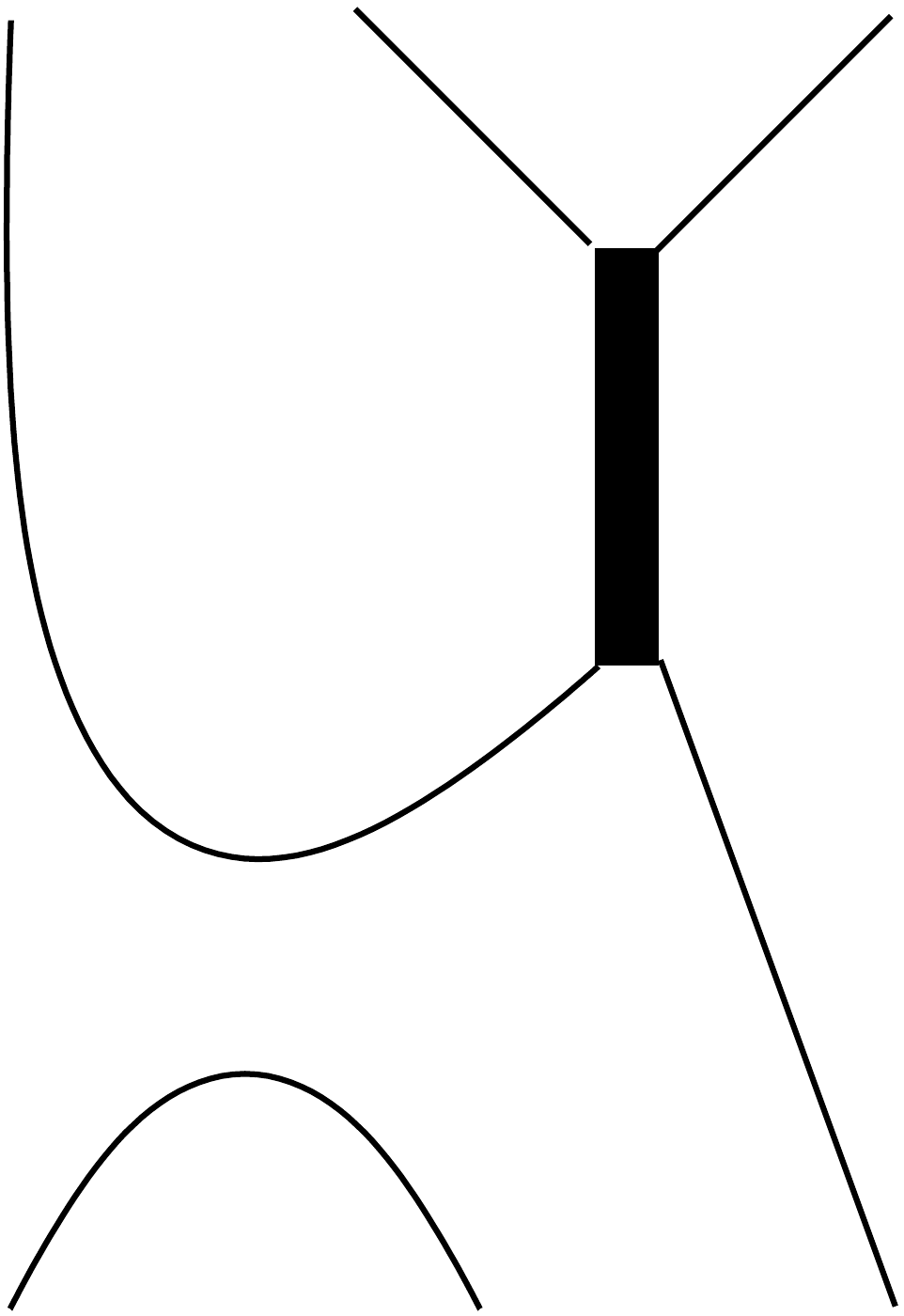}}\,\right)\right], \\
 \end{eqnarray*}
 where
 \[\alpha =  \frac{a-a^{-1}}{A-B} + 1,\qquad \beta = \frac{Aa^{-1}-Ba}{A-B}-A-B,\]
 \[\gamma = \frac{B^2a-A^2a^{-1}}{A-B} + AB, \qquad\delta =  \frac{B^3a - A^3a^{-1}}{A - B}.\]

\begin{remark}
The identity~\eqref{gi3} implies that when replacing a crossing by a formal linear combination of its resolutions (as shown in Figure~\ref{fig:replacing crossings}) we can either use \raisebox{-7pt}{\includegraphics[height=0.25in]{resol}} or \raisebox{-5pt}{\includegraphics[height=0.17in]{resol-ho}}, regardless of the type of the crossing.
\end{remark}

\begin{proposition}
The following identities for the graph polynomial $P$ hold:
\begin{eqnarray}
P \left( \,\raisebox{-4pt}{\includegraphics[height=0.18in]{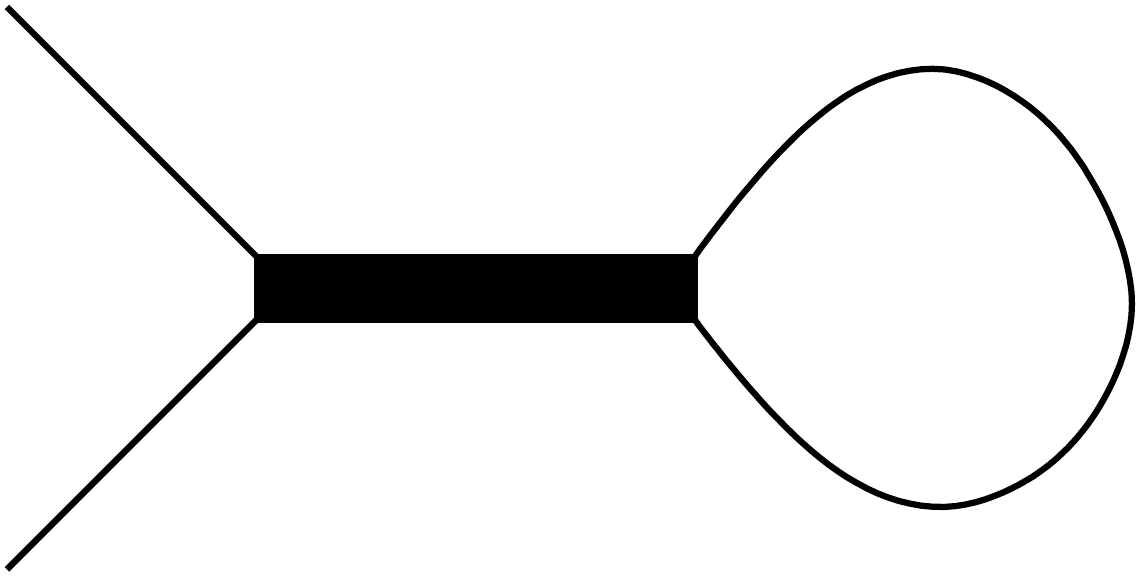}}\, \right) &=& \beta \, P \left( \,\raisebox{-6pt}{\includegraphics[height=0.25in]{arc}}\, \right) \label{gi7}\\
P \left( \,\raisebox{-6pt}{\includegraphics[height=0.25in]{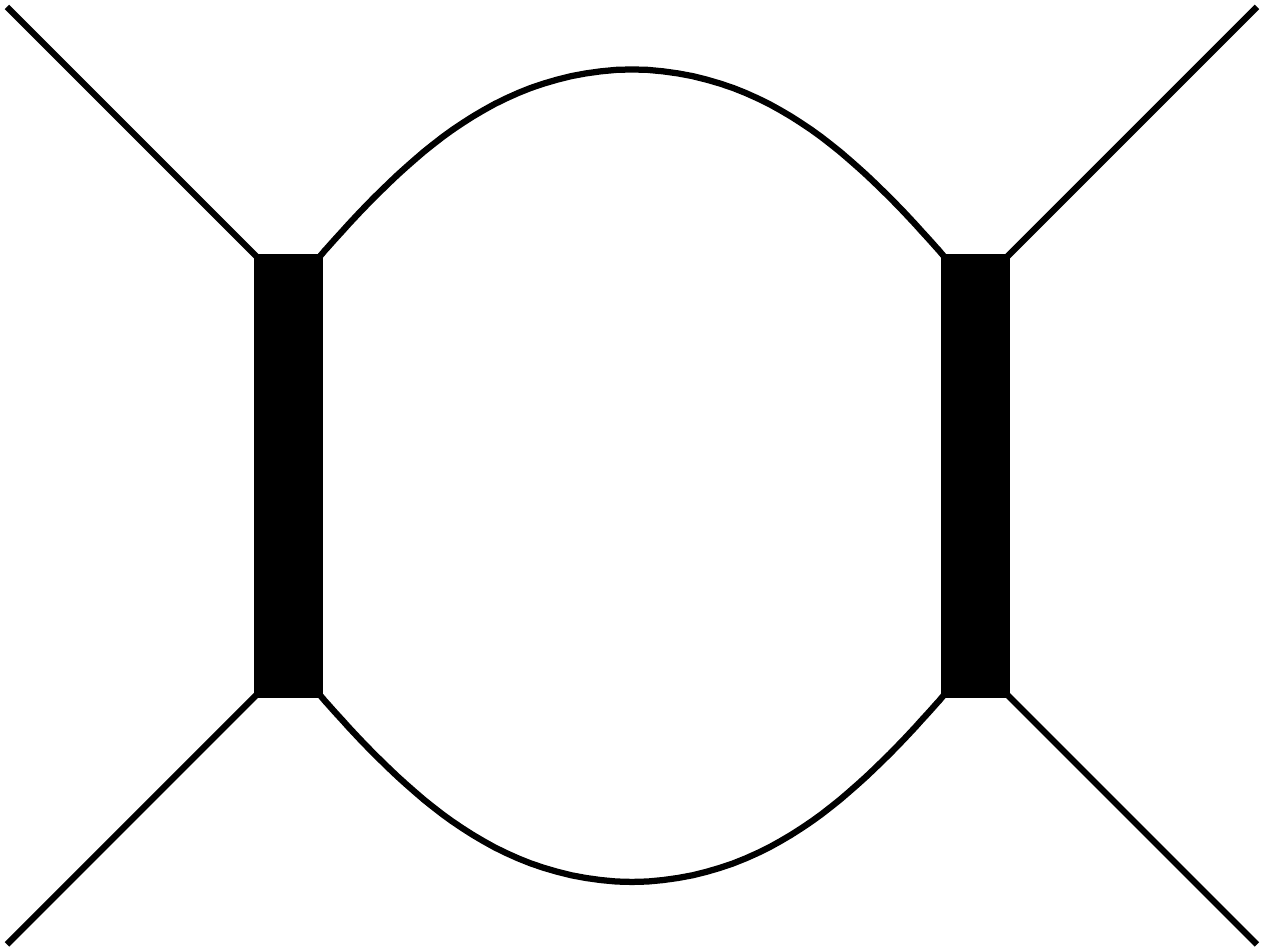}}\, \right) &=& (1-AB)\, P \left( \, \raisebox{-4pt}{\includegraphics[height=0.18in]{B-smoothing}}\, \right) + \gamma \, P \left( \,\raisebox{-4pt}{\includegraphics[height=0.18in]{A-smoothing}}\, \right) - (A+B) \,P \left( \, \raisebox{-6pt}{\includegraphics[height=0.25in]{resol}}\, \right) \label{gi8}\\
P \left( \,\raisebox{-6pt}{\includegraphics[height=0.25in]{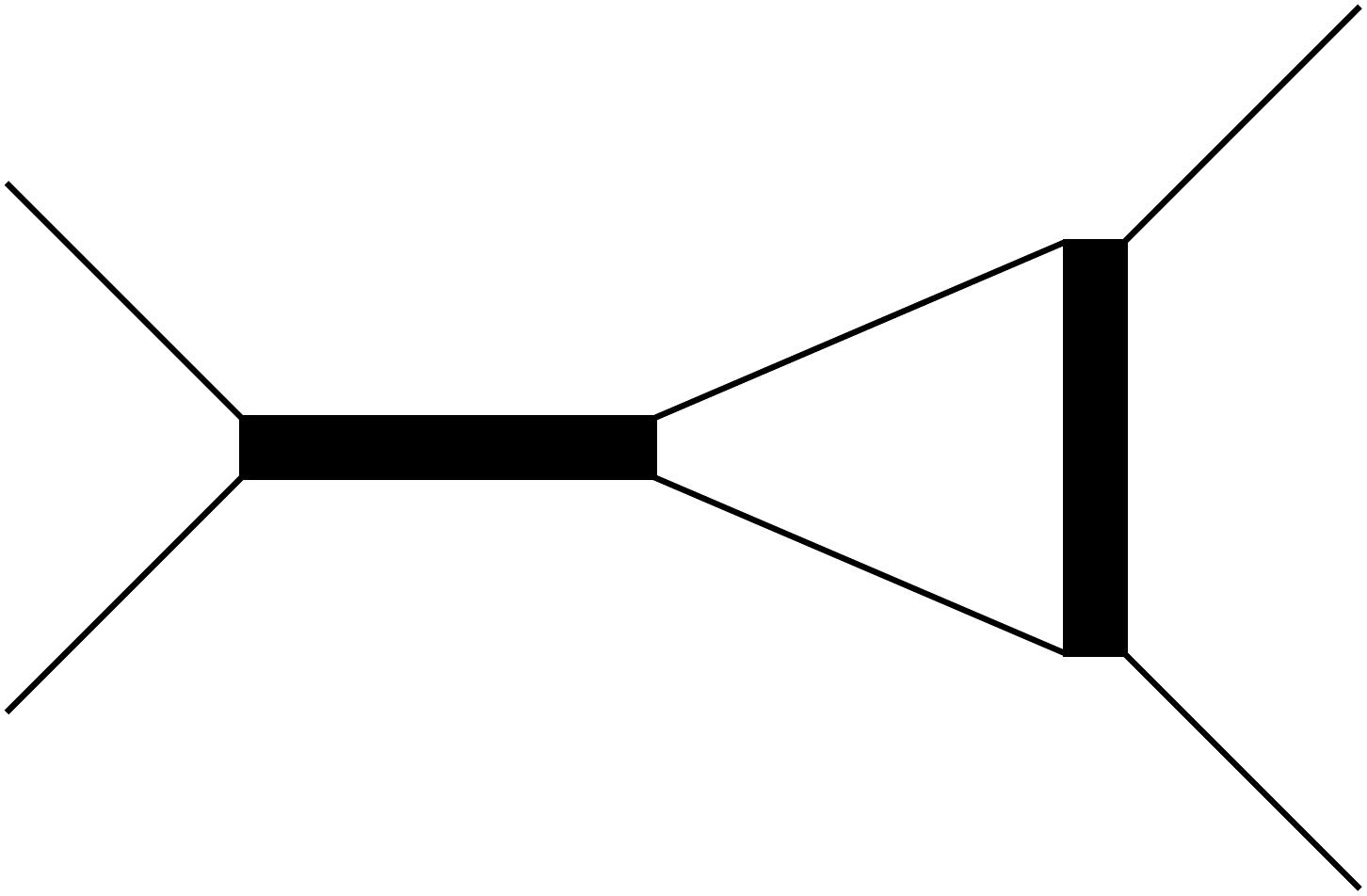}}\, \right)&=& (1-AB)\,P \left( \, \raisebox{-4pt}{\includegraphics[height=0.18in]{B-smoothing}}\, \right) + \gamma \,P \left(\, \raisebox{-4pt}{\includegraphics[height=0.18in]{A-smoothing}}\,\right) - (A+B) \,P \left(\, \raisebox{-6pt}{\includegraphics[height=0.25in]{resol}}\, \right). \label{gi9}
\end{eqnarray}
\end{proposition}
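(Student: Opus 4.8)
\emph{Proof sketch.} The plan is to derive \eqref{gi7}, \eqref{gi8} and \eqref{gi9} from the defining relations \eqref{gi2}--\eqref{gi5} by combining the planar isotopy invariance of $P$ with \eqref{gi3}, which lets the two planar resolutions of a wide edge be exchanged freely. The guiding principle is that each defining relation still holds when its local picture is rotated or reflected, as long as one records how the chosen symmetry permutes the four local pieces (the two smoothings and the two wide-edge resolutions) and then uses \eqref{gi3} to restore the preferred orientation of any wide edge it produces. Since the coefficients $1-AB$, $\gamma$, $-(A+B)$ and $\beta$ are scalars, they are unaffected by such symmetries and are simply transported to whatever position the corresponding diagram is moved to.

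First I would prove \eqref{gi8}. The idea is to recognize the left-hand diagram of \eqref{gi8} as the image of the left-hand diagram of \eqref{gi5} under a quarter-turn rotation. Applying that rotation to the whole identity \eqref{gi5} and invoking isotopy invariance of $P$, the right-hand side is transformed term by term: the rotation interchanges the two smoothings (which are quarter-turn rotations of one another), so the coefficients $1-AB$ and $\gamma$ change places, while it carries the vertical resolution appearing in \eqref{gi5} to its horizontal counterpart, which by \eqref{gi3} has the same polynomial. The outcome is exactly \eqref{gi8}.

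Identity \eqref{gi9} would be obtained in the same spirit. Its right-hand side coincides with that of \eqref{gi8}, so it suffices to produce a planar symmetry (followed, if necessary, by \eqref{gi3}) that carries the configuration reduced in \eqref{gi9} to the one reduced in \eqref{gi8} while fixing each of the two smoothings; then the coefficients are copied verbatim and \eqref{gi9} follows from \eqref{gi8}. Finally, \eqref{gi7} follows from \eqref{gi4}: the loop on its left-hand side differs from the edge configuration of \eqref{gi4} by a planar isotopy together with a single use of \eqref{gi3}, so both evaluate to $\beta$ times an arc.

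I expect the real work to be diagrammatic bookkeeping rather than computation. In each case one must check that the symmetry employed is a genuine isotopy of the \emph{decorated} graph -- preserving the trivalent vertex type and the condition that exactly one wide edge meets each vertex -- and that it permutes the four local pieces precisely as claimed, so that $1-AB$, $\gamma$, $-(A+B)$ and $\beta$ land in the asserted positions. The most delicate step is \eqref{gi9}: one has to confirm that the passage to the \eqref{gi8} configuration is realized by a coefficient-preserving symmetry (together with \eqref{gi3}), and does not secretly require a bigon or edge removal, which would introduce an extra scalar and spoil the agreement of the two right-hand sides.
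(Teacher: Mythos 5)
Your proposal is correct and takes essentially the same route as the paper, whose proof is the one-line observation that the identities ``follow at once'' from relations~(\ref{gi3})--(\ref{gi5}): a quarter-turn of~(\ref{gi5}) plus one use of~(\ref{gi3}) gives~(\ref{gi8}), a single application of~(\ref{gi3}) to the pendant wide edge carries the configuration of~(\ref{gi9}) to that of~(\ref{gi8}) with the boundary legs fixed, and isotopy plus~(\ref{gi3}) reduces~(\ref{gi7}) to~(\ref{gi4}). The ``delicate step'' you flag for~(\ref{gi9}) is resolved exactly by that one~(\ref{gi3}) move, which preserves the four endpoints and introduces no extra scalar, so your bookkeeping goes through as planned.
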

\begin{proof}
These follow at once from the graph skein relations~(\ref{gi3}) - (\ref{gi5}).
\end{proof}

\begin{theorem}\label{thm:unique poly}
There is a unique polynomial for planar trivalent graph diagrams that takes value $1$ for the unknot and satisfies the skein relations~(\ref{gi2}) - (\ref{gi6}). 
\end{theorem}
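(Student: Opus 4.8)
The plan is to prove existence and uniqueness separately, treating uniqueness as a reduction/termination statement and existence as a confluence (well-definedness) statement. For uniqueness I would argue by induction on the number $v(\Gamma)$ of trivalent vertices of $\Gamma$ (equivalently, on the number of wide edges, since the wide edges form a perfect matching on the vertex set). In the base case $v(\Gamma)=0$ the diagram has no vertices and so consists only of disjoint embedded circles built from standard edges; the normalization $P(\bigcirc)=1$ together with repeated use of \eqref{gi2} forces $P(\Gamma)=\alpha^{\,c-1}$, where $c$ is the number of circles. For the inductive step the essential point is a combinatorial lemma: every diagram with $v(\Gamma)\geq 1$ contains a reducible innermost configuration. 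To see this I would collapse each wide edge to a point, turning $\Gamma$ into a planar $4$-valent graph $\Gamma'$ with $v(\Gamma)/2$ vertices. A short Euler-characteristic count gives $\sum_{f}\bigl(4-|f|\bigr)=8>0$ for connected $\Gamma'$ (and a positive total in general), where $|f|$ is the number of sides of the face $f$; hence $\Gamma'$ must possess an innermost face bounded by at most three edges, i.e.\ a monogon, a bigon, or a triangle. Re-expanding the incident vertices into wide edges, and using \eqref{gi3} to orient each wide edge as needed, turns such a face into exactly one of the configurations removed by \eqref{gi4}, \eqref{gi5}, \eqref{gi7}, \eqref{gi8}, or \eqref{gi9}. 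Each of these relations rewrites $P(\Gamma)$ as a $\bbZ[A^{\pm1},B^{\pm1},a^{\pm1},(A-B)^{\pm1}]$-linear combination of polynomials of diagrams with strictly fewer vertices, so by induction $P(\Gamma)$ is determined. Since the hypothetical $P$ is assumed to satisfy all of \eqref{gi2}--\eqref{gi6}, every admissible reduction computes the same value, and $P$ is therefore unique.

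For existence I would show that the rewriting system just described is confluent, so that the value obtained by reduction does not depend on the choices made. Termination is immediate because the vertex count strictly decreases at each step, so by Newman's lemma it suffices to verify local confluence. Here I would enumerate the ways in which two reducible configurations can overlap inside a single diagram and check, in each case, that reducing by either one first yields the same linear combination. Overlaps supported on disjoint disks commute trivially; the substantive cases are the finitely many critical pairs in which two small faces share an edge or a wide edge, together with the ambiguity introduced by the choice of orientation of a wide edge via \eqref{gi3}. Verifying these critical pairs is precisely where the more elaborate relations \eqref{gi5} and \eqref{gi6} (and their consequences \eqref{gi8}, \eqref{gi9}) enter: they are the consistency conditions that reconcile the two reduction orders, and the explicit coefficients $\alpha,\beta,\gamma,\delta$ are exactly the data forced by demanding that the monogon, digon, triangle, and square reductions agree.

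I expect the verification of local confluence to be the main obstacle, since it is a finite but genuinely laborious case analysis; this is presumably why the authors defer the proof to the appendix. An alternative, less computational route to existence would be to define $P(\Gamma)$ directly from the Kauffman--Vogel evaluation of the associated $4$-valent graph $\Gamma'$ and to check that the trivalent relations \eqref{gi2}--\eqref{gi6} are the images, under vertex expansion, of the already-established $4$-valent graphical relations; this trades the critical-pair analysis for a dictionary between the two calculi, along the lines of Carpentier's work~\cite{C}. In either approach I would keep the consistency constraints on $\alpha,\beta,\gamma,\delta$ in view throughout, both as the source of their stated values and as a running check on the computation.
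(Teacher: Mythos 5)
Your uniqueness induction rests on a combinatorial lemma that is false. The Euler-characteristic count for the collapsed $4$-valent graph $\Gamma'$ is correct ($\sum_f (4-|f|) = 8$ per connected component), but it only guarantees a face with at most three sides, and that face may be a triangle. A triangular face of $\Gamma'$ re-expands to a face of $\Gamma$ bounded by \emph{three} standard edges and between zero and three wide edges, and no such face is removable: each of the relations \eqref{gi4}, \eqref{gi5}, \eqref{gi7}, \eqref{gi8}, \eqref{gi9} removes a face having at most \emph{two} standard edges on its boundary, and the move \eqref{gi3} never changes the number of standard edges bounding a face --- it only toggles whether the wide edge lies on that boundary. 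For a concrete counterexample, let $\Gamma'$ be the planar octahedron, which is $4$-valent with all eight faces triangles, and let $\Gamma$ be obtained by expanding each of its six vertices into a wide edge. Then $\Gamma$ has twelve vertices, every face of $\Gamma$ has exactly three standard edges, so $\Gamma$ contains none of the configurations in~(\ref{good config}), and no sequence of \eqref{gi3}-moves can create one. Your induction cannot get started on this diagram; equivalently, your rewriting system has irreducible diagrams other than disjoint unions of circles, so termination plus confluence would still not determine $P$ on them.

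What is missing is precisely the hard content of the paper's appendix: relation \eqref{gi6} must be used not merely as a consistency condition for critical pairs, but as a graph-rewriting \emph{move}. The two diagrams on its left-hand side have the same number of vertices, and \eqref{gi6} says that exchanging them changes $P$ only by terms supported on diagrams with strictly fewer vertices. Proposition~\ref{prop:polynomial} --- proved via the girth bound, alternating walks, and the procedure for emptying a minimal configuration --- shows that any diagram containing none of the configurations in~(\ref{good config}) can be carried, by a finite sequence of the vertex-count-preserving moves~(\ref{good moves}), to a diagram that does contain a digon, triangle, or square. The uniqueness induction then does not evaluate $P(\Gamma)$ by a direct reduction, but telescopes the differences $P(\Gamma_i)-P(\Gamma_{i+1})$ along this sequence, each of which is determined by lower-vertex diagrams. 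Your proposal for existence (Newman's lemma with a critical-pair analysis, or pulling the evaluation back from the Kauffman--Vogel/Carpentier $4$-valent calculus) is a reasonable, arguably more explicit, substitute for the paper's one-line appeal to consistency ``by inspection''; but it does not repair the gap above, which concerns uniqueness, and that gap is exactly what the elaborate argument of the appendix exists to close.
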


Applying the rules in Figure~\ref{fig:replacing crossings} to all of the crossings in $D$, we express the link diagram $D$ as a formal linear combination of its associated states, whose coefficients are monomials in $A$ and $B$. Using the polynomial $P$ from Theorem~\ref{thm:unique poly}, we define a three-variable rational function $\textbf{P}_D = \textbf{P}_D(A, B, a) \in \bbZ[A^{\pm1}, B^{\pm 1}, a^{\pm 1}, (A-B)^{\pm 1}]$ associated to $D$, obtained by summing up the graph polynomials $P(\Gamma)$ weighted by powers of $A$ and $B,$ over all states $\Gamma$ of $D$. That is,

\[ \textbf{P}_D = \sum_{\text{states} \, \Gamma}A^{\alpha(\Gamma)}B^{\beta(\Gamma)}P(\Gamma),\]
where the integers $\alpha(\Gamma)$ and $\beta(\Gamma)$ are determined by the rules in Figure~\ref{fig:replacing crossings}. In particular, $\alpha(\Gamma)$ is the number of ``A-smoothings" used to form the state $\Gamma$, while $\beta(\Gamma)$ is the number of ``B-smoothings'' in $\Gamma$.

\begin{theorem}\label{thm:invariance}
The Laurent polynomial $\textbf{P}_D$ satisfies the following:
\begin{enumerate}

\item $\textbf{P}_{\,\raisebox{-3pt}{\includegraphics[height=0.15in]{poscrossing}}}\, - \textbf{P}_{\,\raisebox{-3pt}{\includegraphics[height=0.15in]{negcrossing}}}\,= (A - B) \left[\textbf{P}_{\,\raisebox{-3pt}{\includegraphics[height=0.15in]{A-smoothing}}}\, - \textbf{P}_{\,\raisebox{-3pt}{\includegraphics[height=0.15in]{B-smoothing}}}\,\right]$.
\item $\textbf{P}_{\,\raisebox{-3pt}{\includegraphics[height=0.15in]{poskink}}}\, = a \textbf{P}_{\,\raisebox{-3pt}{\includegraphics[height=0.15in]{arc}}}\,, \quad \textbf{P}_{\,\raisebox{-3pt}{\includegraphics[height=0.15in]{negkink}}}\, = a^{-1} \textbf{P}_{\,\raisebox{-3pt}{\includegraphics[height=0.15in]{arc}}} \,$.
\item If $D_1$ and $D_2$ are link diagrams related by a Reidemeister II or III move, then $\textbf{P}_{D_1} = \textbf{P}_{D_2}$.
\end{enumerate}
\end{theorem}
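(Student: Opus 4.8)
The plan is to reduce each of the three assertions to a local computation, exploiting that $\textbf{P}_D$ is assembled crossing by crossing via the rules of Figure~\ref{fig:replacing crossings} and that the state sum is multilinear in these local replacements. Granting this, parts~(1) and~(2) follow directly from the graph relations, whereas the Reidemeister~II and~III invariance of part~(3) carries the real weight. For part~(1) I would expand only the two distinguished crossings, leaving every other crossing of the ambient diagram expanded identically in both cases. Writing $\textbf{P}_{+}$ and $\textbf{P}_{-}$ for the state sums of the two diagrams (positive, resp.\ negative crossing), $\textbf{P}_A,\textbf{P}_B$ for the sums in which that crossing is replaced by the $A$- and $B$-smoothing, and $\textbf{P}_{\mathrm{r}},\textbf{P}_{\mathrm{r}'}$ for its two wide-edge resolutions, multilinearity gives $\textbf{P}_{+}=A\,\textbf{P}_A+B\,\textbf{P}_B+\textbf{P}_{\mathrm{r}}$ and $\textbf{P}_{-}=A\,\textbf{P}_B+B\,\textbf{P}_A+\textbf{P}_{\mathrm{r}'}$. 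Since relation~\eqref{gi3} holds state by state, $\textbf{P}_{\mathrm{r}}=\textbf{P}_{\mathrm{r}'}$, and subtracting leaves $\textbf{P}_{+}-\textbf{P}_{-}=(A-B)(\textbf{P}_A-\textbf{P}_B)$, the stated identity.

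For part~(2) I would expand the single crossing of a curl: one smoothing yields the arc with a disjoint circle, the other the bare arc, and the wide-edge resolution yields the loop of~\eqref{gi7}. Evaluating these three states by~\eqref{gi2} and~\eqref{gi7} gives $\textbf{P}_{\mathrm{poskink}}=(A\alpha+B+\beta)\,\textbf{P}_{\mathrm{arc}}$, and it remains only to check the algebraic identity $A\alpha+B+\beta=a$ after substituting the stated values of $\alpha$ and $\beta$; the negative curl is the mirror computation, giving $A+B\alpha+\beta=a^{-1}$. Both reduce to one-line verifications once the constants are inserted.

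The heart of the proof is part~(3). For Reidemeister~II I would expand both crossings of the bigon---which are of opposite (mirror) type, so that their smoothing coefficients are interchanged---producing a weighted sum of $3\times3=9$ planar trivalent states. I would then identify each state: the ones whose two smoothings are compatible with the through-strands rebuild the arcs of the pulled-apart diagram $D_2$, while the remaining states are a disjoint circle, a removable edge, or a digon/square graph, reduced respectively by~\eqref{gi2}, by~\eqref{gi4}, and by~\eqref{gi5} and~\eqref{gi8}. Collecting the weighted contributions and inserting the explicit values of $\alpha,\beta,\gamma$ together with the coefficient $1-AB$, the sum must collapse to $\textbf{P}_{D_2}$; these constants are precisely what is engineered to force that cancellation. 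For Reidemeister~III I would first apply the RII-invariance just proved to bring the three-strand pictures of $D_1$ and $D_2$ into comparable form, then expand the crossings and reduce the states using~\eqref{gi2},~\eqref{gi4},~\eqref{gi5},~\eqref{gi8} and the triangle relation~\eqref{gi9}. The difference $\textbf{P}_{D_1}-\textbf{P}_{D_2}$ should then reduce to exactly relation~\eqref{gi6}---whose ``long'' diagrams are the graph fragments occurring in the move---and hence vanish.

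The main obstacle is the bookkeeping in part~(3): matching each of the nine (for RII) and up to twenty-seven (for RIII) planar-graph states to the correct left-hand side among~\eqref{gi2}--\eqref{gi9}, and then verifying that the weighted sum of the reduced values closes up to the required invariance. Relation~\eqref{gi6} is the linchpin for Reidemeister~III, and one must additionally check that the various over/under configurations of the RIII move all reduce, via RII, to the single case that~\eqref{gi6} governs, so that no further relation is needed.
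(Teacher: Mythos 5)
Your proposal is correct and follows essentially the same route as the paper: part (1) by subtracting the two expansion rules (using relation~\eqref{gi3}), part (2) by expanding the curl and checking $A\alpha+B+\beta=a$, Reidemeister~II by fully expanding and letting the engineered constants (e.g.\ $Aa^{-1}+B^2+B\beta+\gamma=0$) collapse the sum, and Reidemeister~III by combining RII-invariance with an expansion of the wide-edge term reduced via~\eqref{gi9} and~\eqref{gi6}. The only cosmetic differences are that the paper shortcuts the RII expansion by invoking part (2) on the curl state, and isolates your final step as an explicit lemma ($\textbf{P}$ of a strand sliding past a wide edge is invariant), which additionally requires the scalar identity $AB\beta+A\gamma+B\gamma-\delta=0$ on top of~\eqref{gi6}.
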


\begin{proof}
(1) To show this statement, one only needs to subtract the second skein relation from the first skein relation given in Figure~\ref{fig:replacing crossings}.

(2) We will show that the first equality holds (the second equality follows similarly and we leave it to the reader).

\begin{eqnarray*}
\textbf{P}_{\,\raisebox{-3pt}{\includegraphics[height=0.25in]{poskink}}}\,  &=& A \textbf{P}_{\,\raisebox{-3pt}{\includegraphics[height=0.25in]{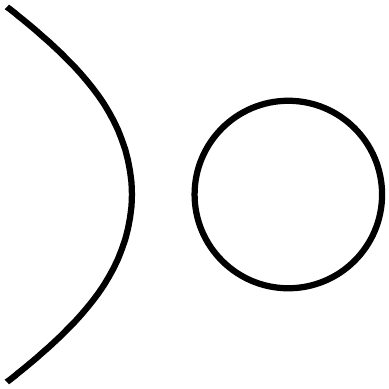}}}\, + B \textbf{P}_{\,\raisebox{-3pt}{\includegraphics[height=0.25in]{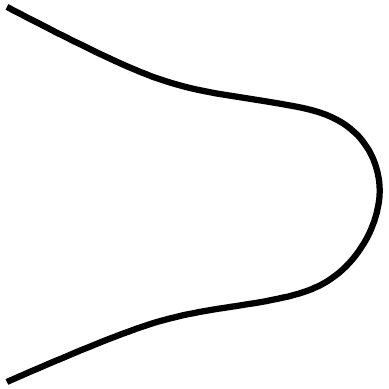}}}\, +  \textbf{P}_{\,\raisebox{-3pt}{\includegraphics[height=0.25in, width = 0.2in]{rem-edge}}}\, \\
&=& A \alpha \textbf{P}_{\,\raisebox{-3pt}{\includegraphics[height=0.25in]{arc}}}\, + B \textbf{P}_{\,\raisebox{-3pt}{\includegraphics[height=0.25in]{arc}}}\, + \beta \textbf{P}_{\,\raisebox{-3pt}{\includegraphics[height=0.25in]{arc}}}\,\\
&=& a \textbf{P}_{\,\raisebox{-3pt}{\includegraphics[height=0.25in]{arc}}}\,,
\end{eqnarray*}
since $A\alpha + B + \beta = a$.

(3) Using the skein relations in Figure~\ref{fig:replacing crossings} and the graph skein relations, we have

\begin{eqnarray*}
\textbf{P}_{\,\raisebox{-3pt}{\includegraphics[height=0.25in]{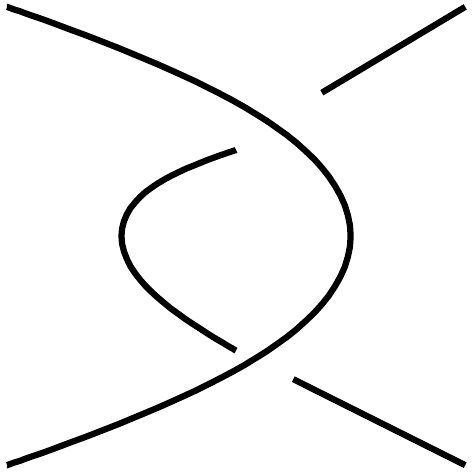}}}\,  &=& A \textbf{P}_{\,\raisebox{-3pt}{\includegraphics[height=0.25in]{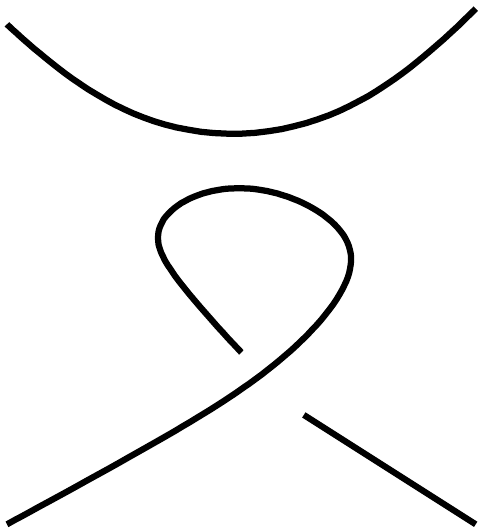}}}\, + B \textbf{P}_{\,\raisebox{-3pt}{\includegraphics[height=0.25in]{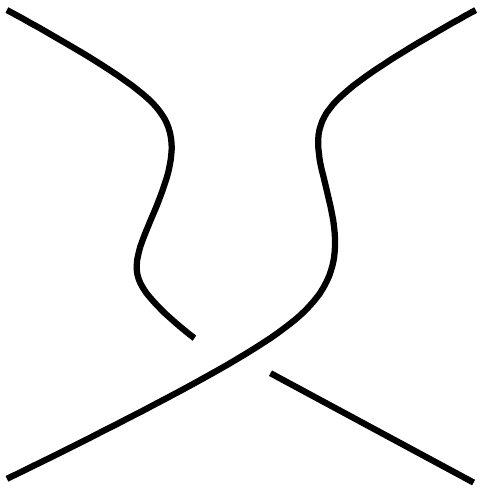}}}\, +  \textbf{P}_{\,\raisebox{-3pt}{\includegraphics[height=0.25in, width = 0.2in]{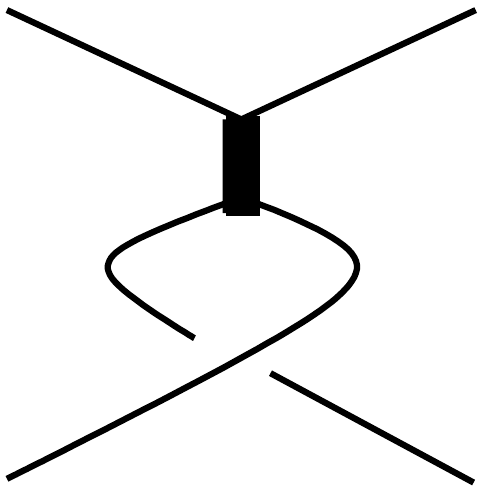}}}\, \\
&=& Aa^{-1}\textbf{P}_{\,\raisebox{-3pt}{\includegraphics[height=0.25in]{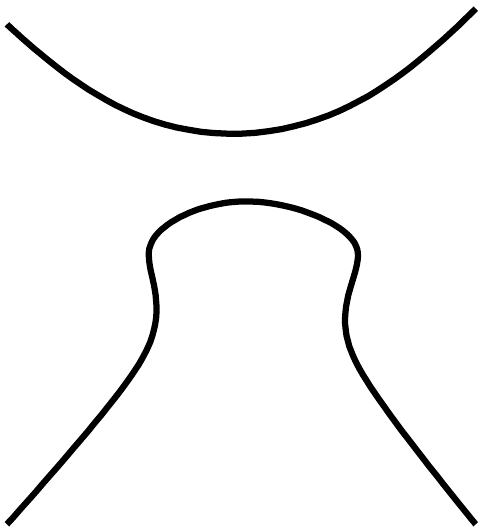}}}\, + B\left ( A \textbf{P}_{\,\raisebox{-3pt}{\includegraphics[height=0.25in]{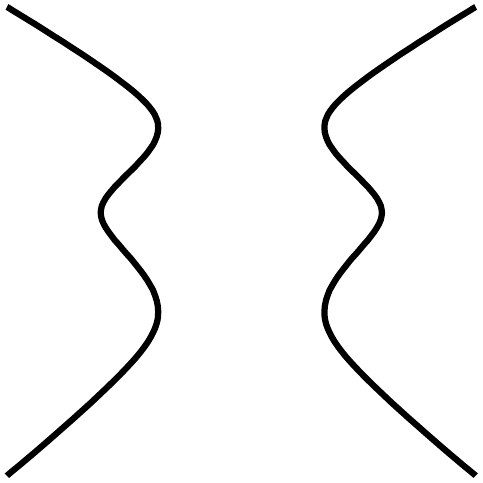}}}\, + B \textbf{P}_{\,\raisebox{-3pt}{\includegraphics[height=0.25in]{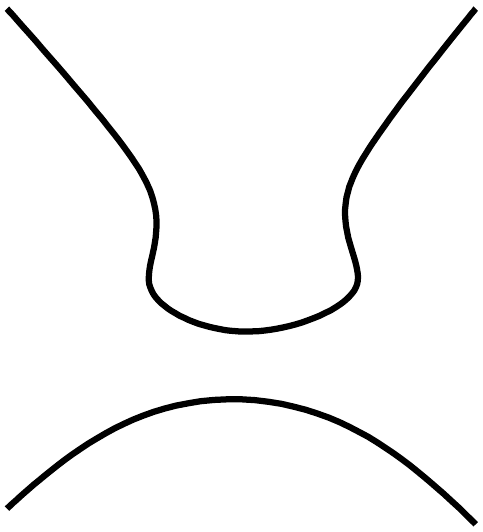}}}\, +  \textbf{P}_{\,\raisebox{-3pt}{\includegraphics[height=0.25in, width = 0.2in]{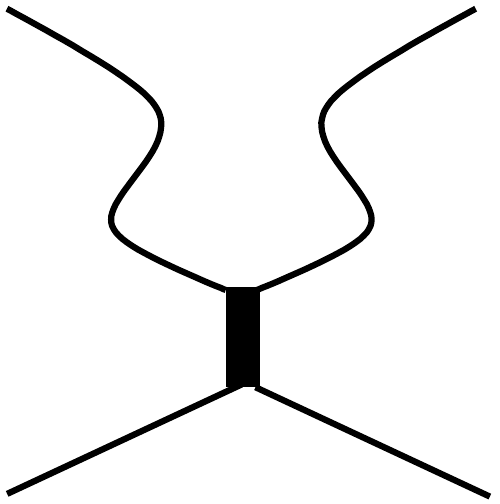}}}\, \right) \\
&& + A\textbf{P}_{\,\raisebox{-3pt}{\includegraphics[height=0.25in, width = 0.2in]{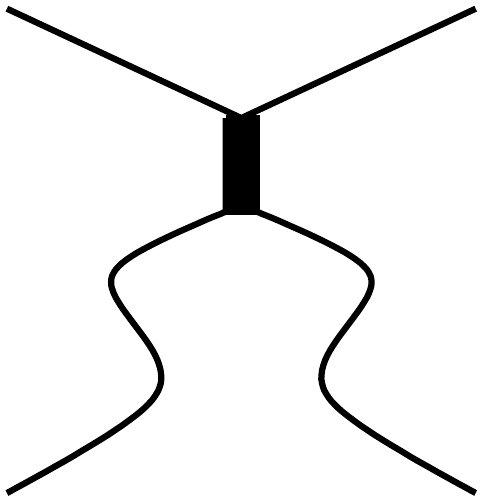}}}\, + B \textbf{P}_{\,\raisebox{-3pt}{\includegraphics[height=0.25in, width = 0.25in]{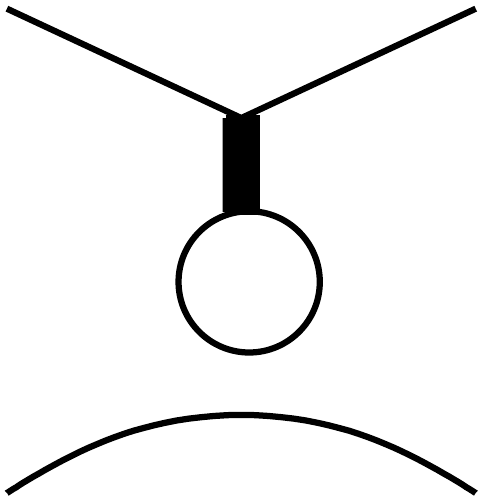}}}\, + \textbf{P}_{\,\raisebox{-3pt}{\includegraphics[height=0.25in, width = 0.15in]{rem-digon}}}\, \\
&=& Aa^{-1}  \textbf{P}_{\,\raisebox{-3pt}{\includegraphics[height=0.25in]{B-smoothing}}}\, + AB\textbf{P}_{\,\raisebox{-3pt}{\includegraphics[height=0.25in]{A-smoothing}}}\, + B^2\textbf{P}_{\,\raisebox{-3pt}{\includegraphics[height=0.25in]{B-smoothing}}}\, + B\textbf{P}_{\,\raisebox{-3pt}{\includegraphics[height=0.25in, width = 0.2in]{resol}}}\, \\
&& + A\textbf{P}_{\,\raisebox{-3pt}{\includegraphics[height=0.25in, width = 0.2in]{resol}}}\, + B \beta \textbf{P}_{\,\raisebox{-3pt}{\includegraphics[height=0.2in]{B-smoothing}}}\, \\
&&+ \left( 1-AB \right) \textbf{P}_{\,\raisebox{-3pt}{\includegraphics[height=0.25in]{A-smoothing}}}\, + \gamma  \textbf{P}_{\,\raisebox{-3pt}{\includegraphics[height=0.25in]{B-smoothing}}}\, -(A + B) \textbf{P}_{\,\raisebox{-3pt}{\includegraphics[height=0.25in, width = 0.2in]{resol}}} \\
&=& (Aa^{-1} + B^2 + B \beta + \gamma) \textbf{P}_{\,\raisebox{-3pt}{\includegraphics[height=0.25in]{B-smoothing}}}\, + \textbf{P}_{\,\raisebox{-3pt}{\includegraphics[height=0.25in]{A-smoothing}}}\\ 
&=& \textbf{P}_{\,\raisebox{-3pt}{\includegraphics[height=0.25in]{A-smoothing}}}\,,
\end{eqnarray*}
since $Aa^{-1} + B^2 + B \beta + \gamma = 0$. The invariance of the polynomial $\textbf{P}$ under Reidemeister II move follows.

It remains to prove the invariance under the Reidemeister III move. We first show that $\textbf{P}_{\,\raisebox{-3pt}{\includegraphics[height=0.35in, width = 0.25in, width = 0.25in]{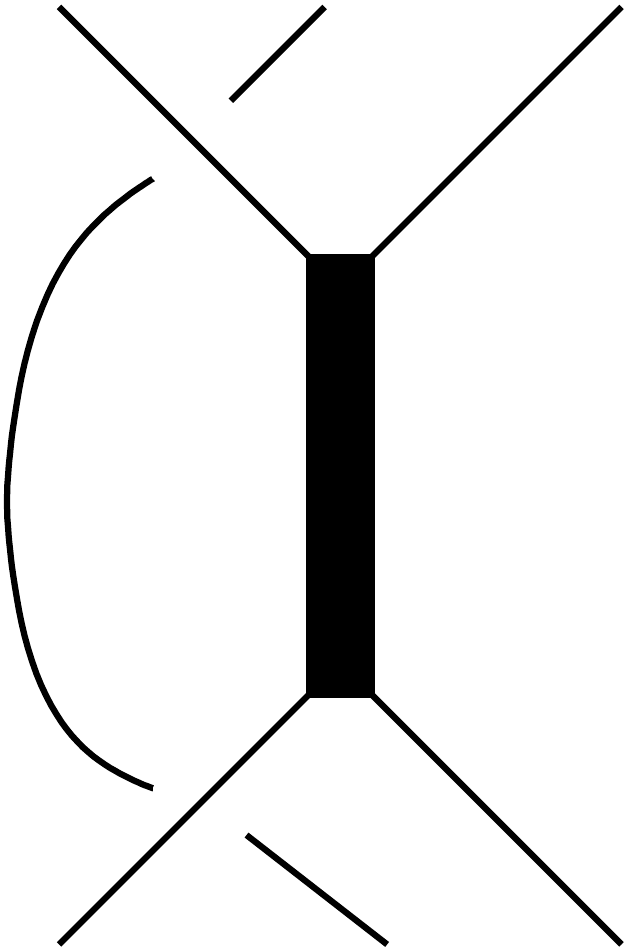}}} = \textbf{P}_{\,\raisebox{-3pt}{\includegraphics[height=0.35in, width = 0.25in]{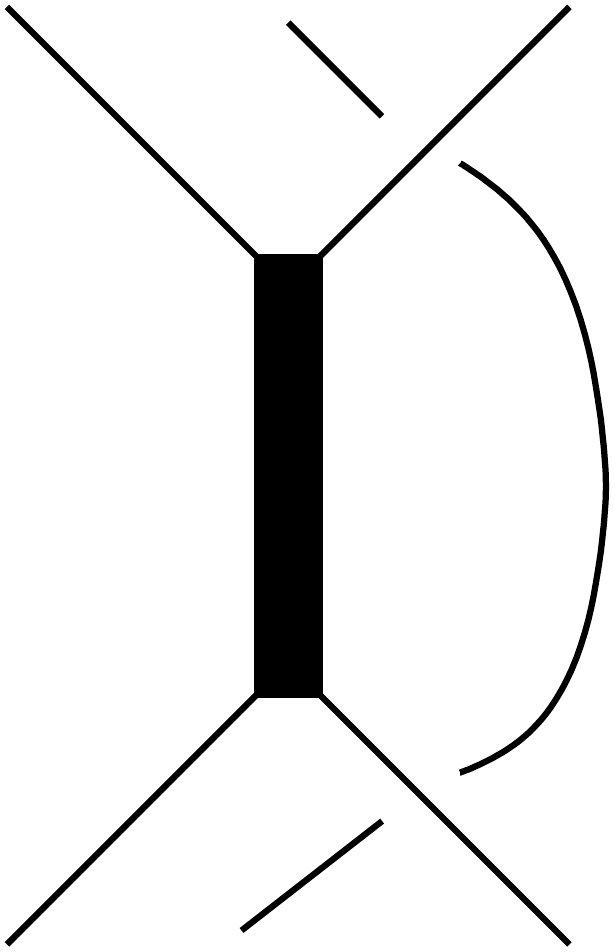}}}$\,.

\begin{eqnarray*}
\textbf{P}_{\,\raisebox{-3pt}{\includegraphics[height=0.35in]{thm1L}}}&=& A\textbf{P}_{\,\raisebox{-3pt}{\includegraphics[height=0.35in]{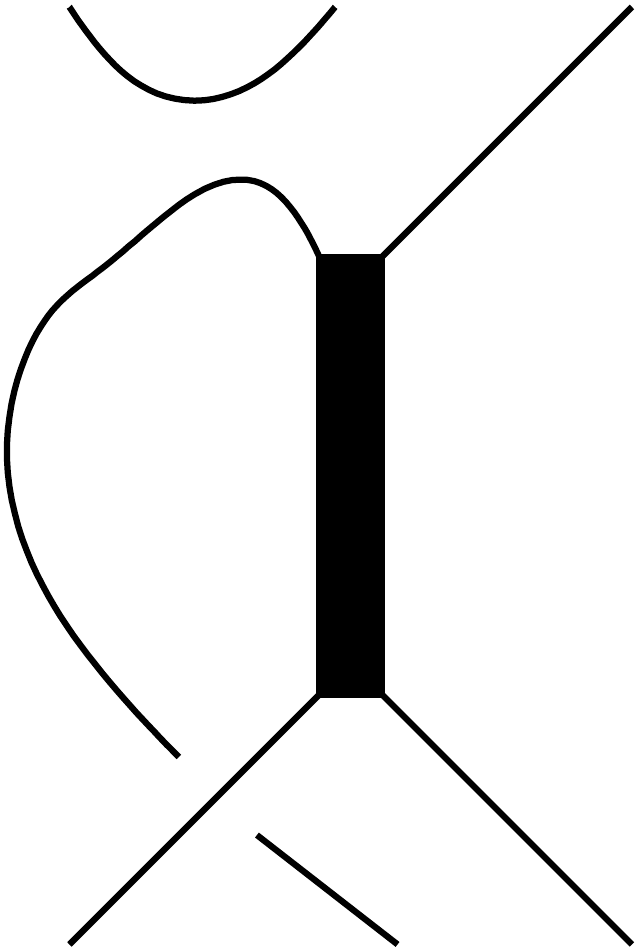}}}\, + B\textbf{P}_{\,\raisebox{-3pt}{\includegraphics[height=0.35in]{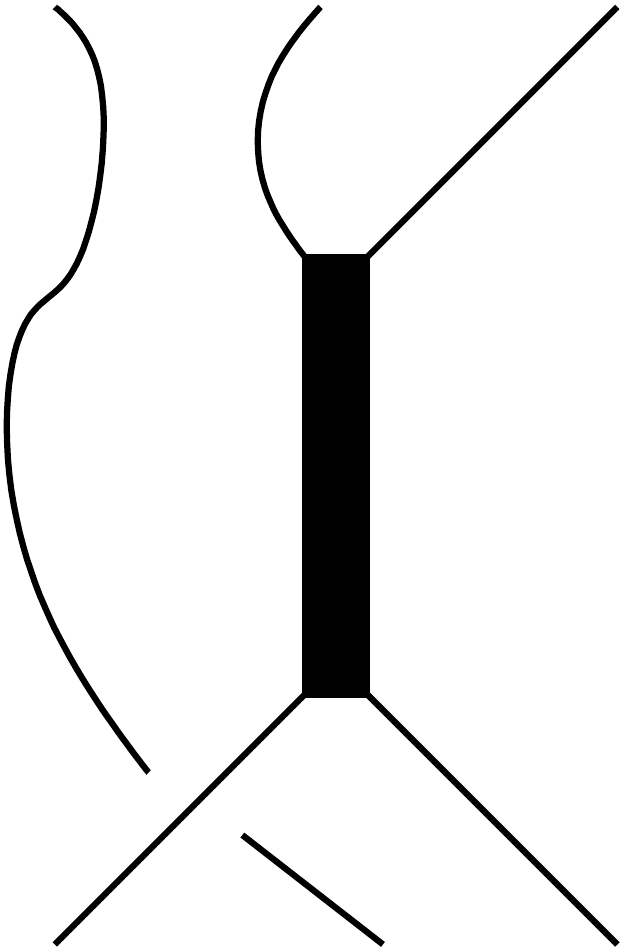}}}\, + \textbf{P}_{\,\raisebox{-3pt}{\includegraphics[height=0.35in]{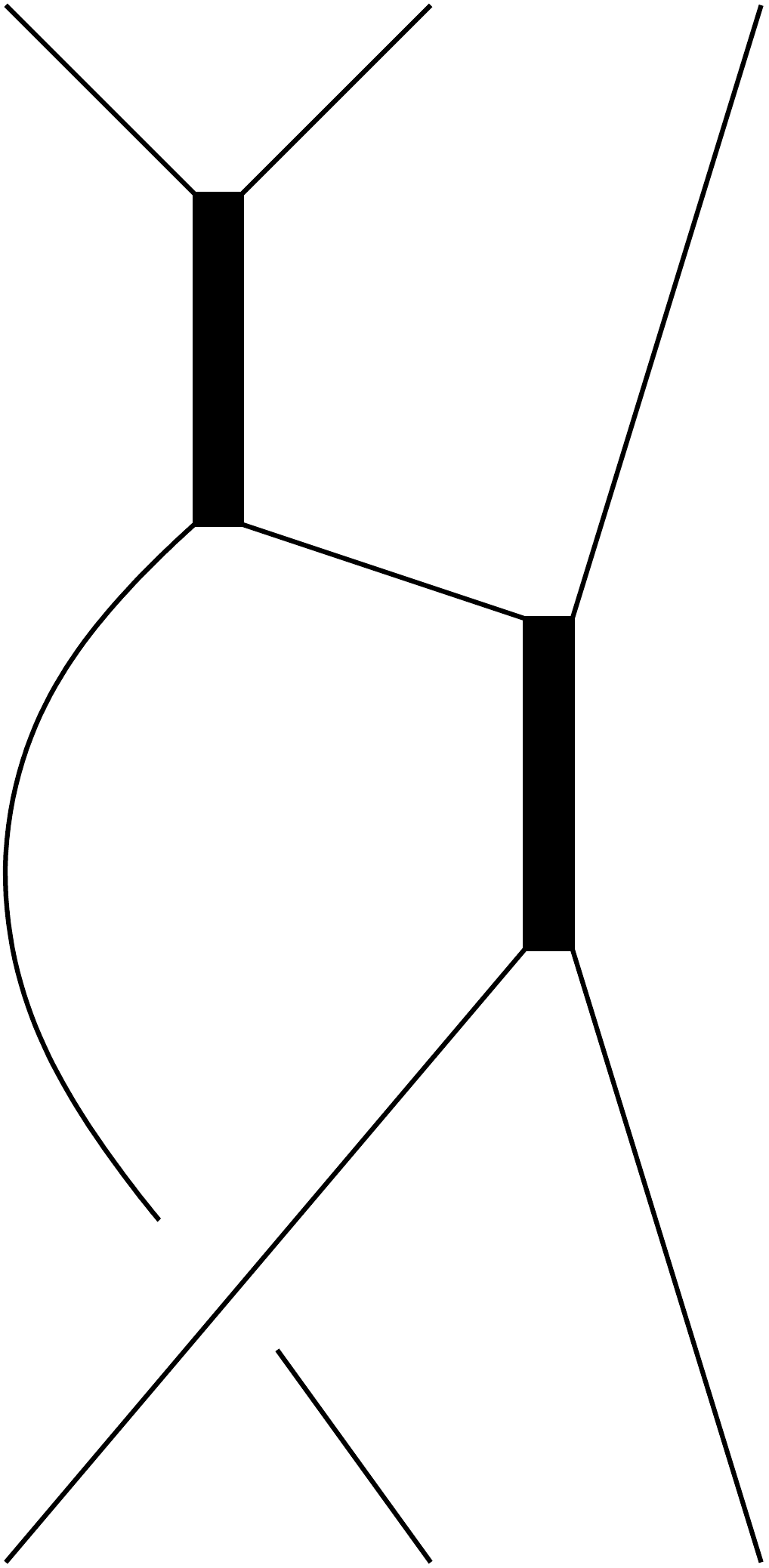}}}\\
&=& A\left( A\textbf{P}_{\,\raisebox{-3pt}{\includegraphics[height=0.35in]{long-6}}} + B\textbf{P}_{\,\raisebox{-3pt}{\includegraphics[height=0.35in, width = 0.2in]{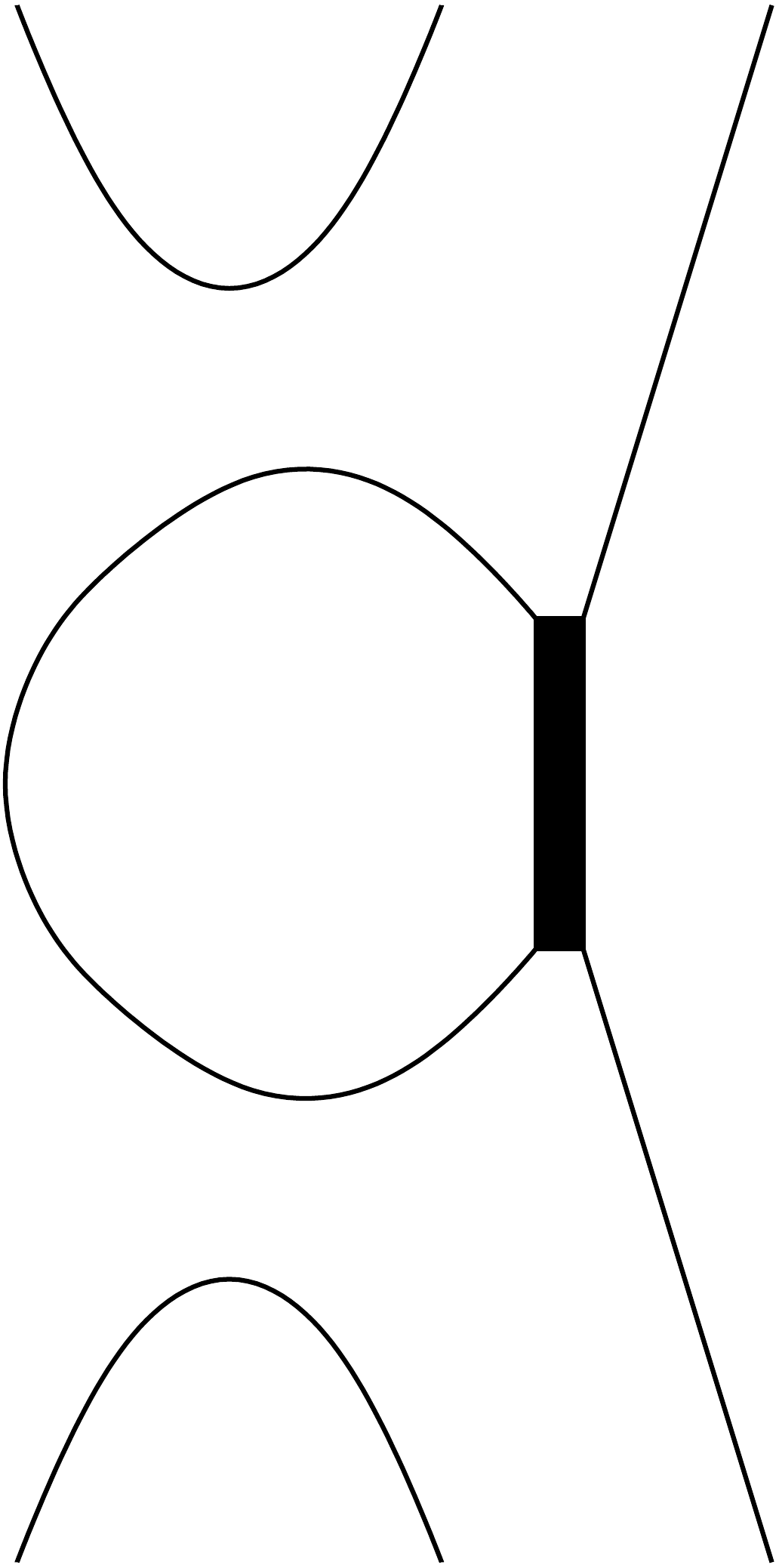}}} + \textbf{P}_{\,\reflectbox{\raisebox{3pt}{\includegraphics[height=0.35in, width = 0.2in, angle = 180]{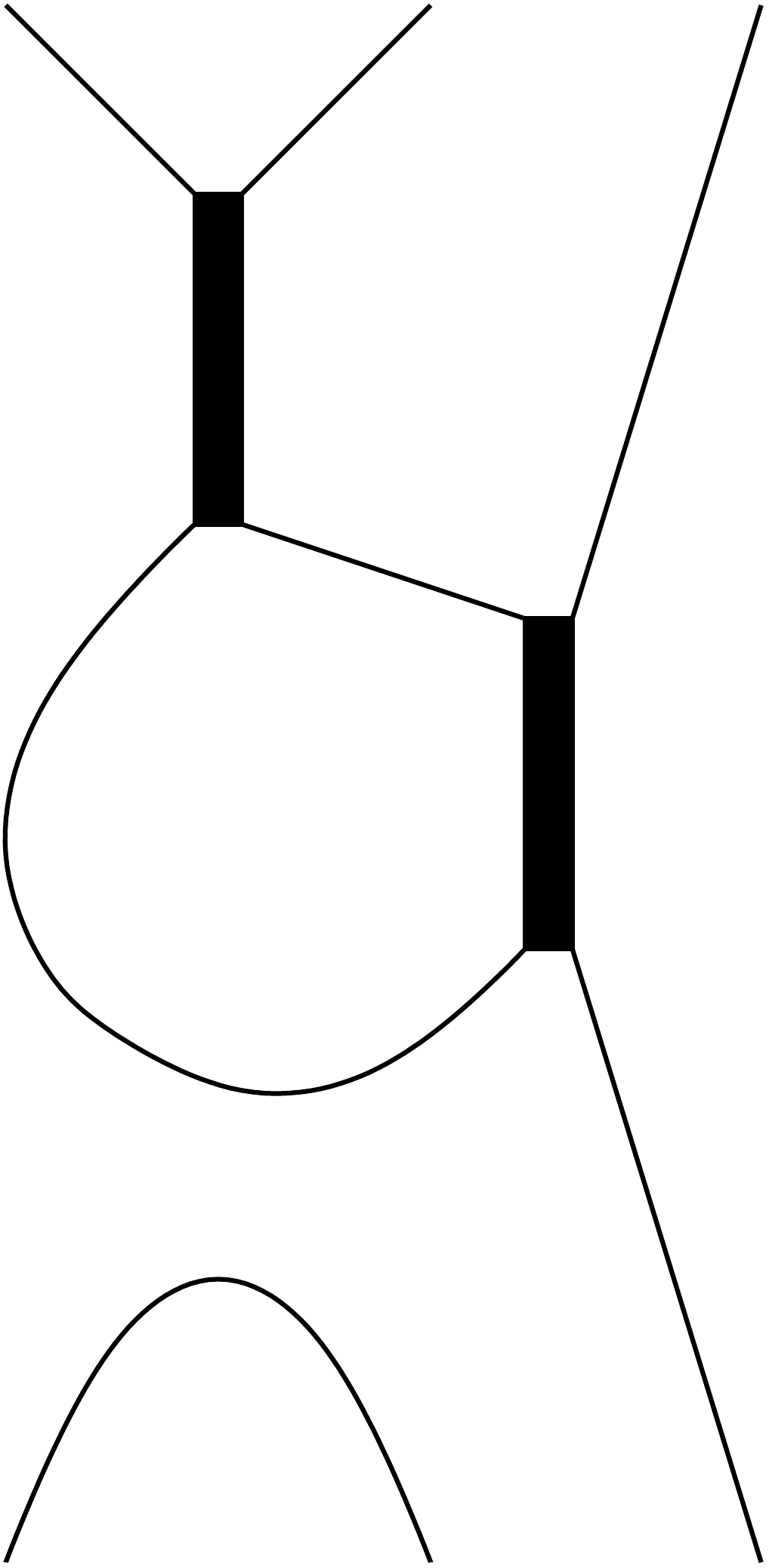}}}}\right) + B\left( A\textbf{P}_{\,\raisebox{-3pt}{\includegraphics[height=0.35in, width = 0.25in]{long-3}}} + B\textbf{P}_{\,\raisebox{-3pt}{\includegraphics[height=0.35in]{long-8}}} + \textbf{P}_{\,\raisebox{-3pt}{\includegraphics[ height=0.35in, width = 0.25in]{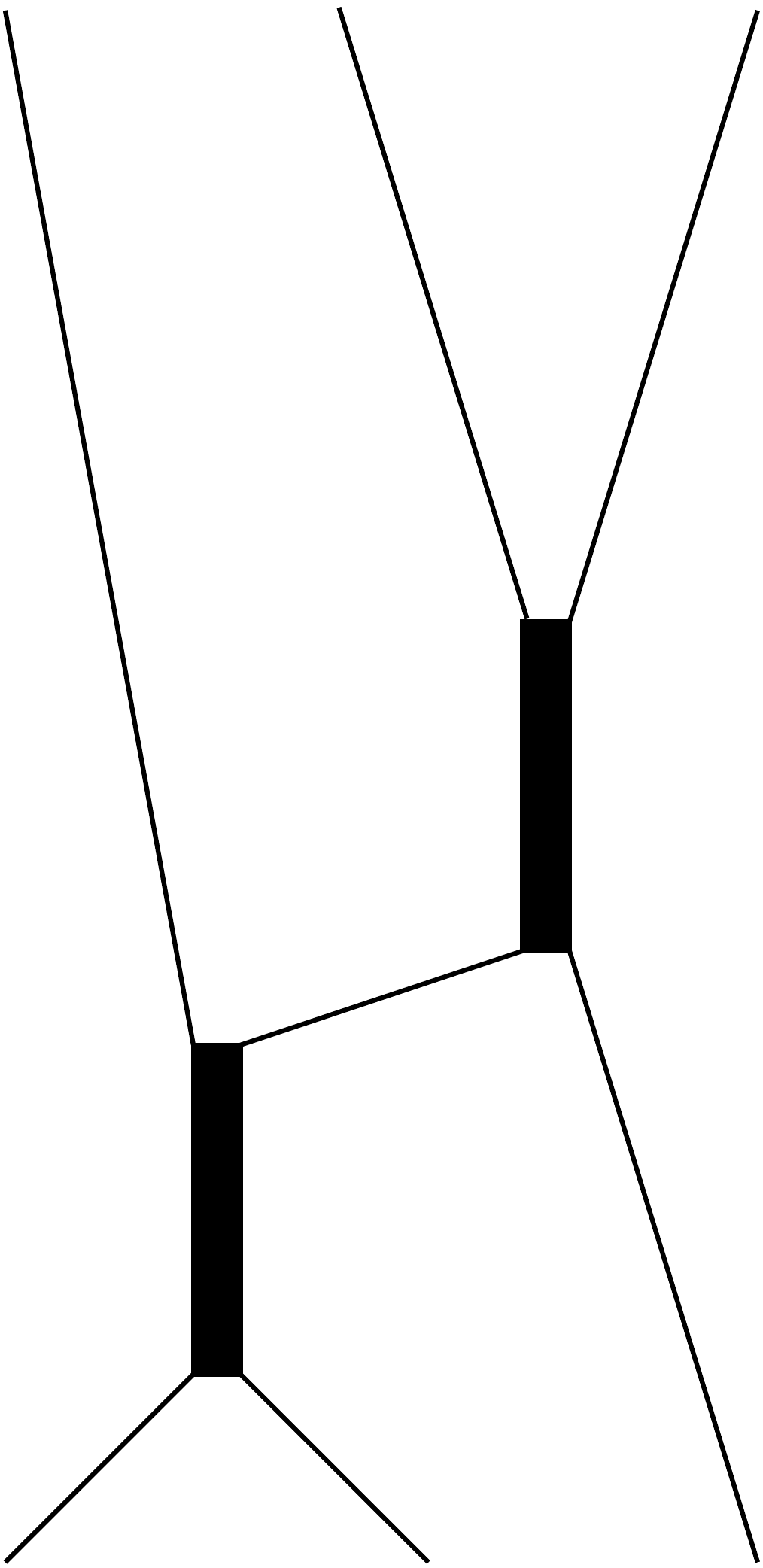}}}\right) \\
&&+ A\textbf{P}_{\,\reflectbox{\raisebox{3pt}{\includegraphics[height=0.35in,  width = 0.25in, angle = 180]{thm1_9}}}} + B\textbf{P}_{\,\raisebox{-3pt}{\includegraphics[height=0.35in, width = 0.2in]{thm1_11}}} + \textbf{P}_{\,\raisebox{-3pt}{\includegraphics[height=0.35in, width = 0.2in]{long-2}}}\\
&=& A^2\textbf{P}_{\,\raisebox{-3pt}{\includegraphics[height=0.35in]{long-6}}} + AB\beta\textbf{P}_{\,\raisebox{-3pt}{\includegraphics[height=0.35in, width = 0.25in]{long-10}}} + A\textbf{P}_{\,\reflectbox{\raisebox{3pt}{\includegraphics[height=0.35in, width = 0.2in, angle = 180]{thm1_11}}}} + AB\textbf{P}_{\,\raisebox{-3pt}{\includegraphics[height=0.35in, width = 0.25in]{long-3}}} + B^2\textbf{P}_{\,\raisebox{-3pt}{\includegraphics[height=0.35in]{long-8}}} + B\textbf{P}_{\,\raisebox{-3pt}{\includegraphics[height=0.35in,  width = 0.25in]{thm1_9}}}\\
&& + A\textbf{P}_{\,\reflectbox{\raisebox{3pt}{\includegraphics[height=0.35in,  width = 0.25in, angle = 180]{thm1_9}}}} + B\textbf{P}_{\,\raisebox{-3pt}{\includegraphics[height=0.35in, width = 0.2in]{thm1_11}}} + \textbf{P}_{\,\raisebox{-3pt}{\includegraphics[height=0.35in, width = 0.2in]{long-2}}}\,.
\end{eqnarray*}
We can use relation~(\ref{gi9}) to rewrite 
\begin{eqnarray*}
\textbf{P}_{\,\reflectbox{\raisebox{3pt}{\includegraphics[height=0.35in, width = 0.2in, angle = 180]{thm1_11}}}} &=& (1-AB) \textbf{P}_{\,\reflectbox{\raisebox{-3pt}{\includegraphics[height=0.35in, width = 0.25in]{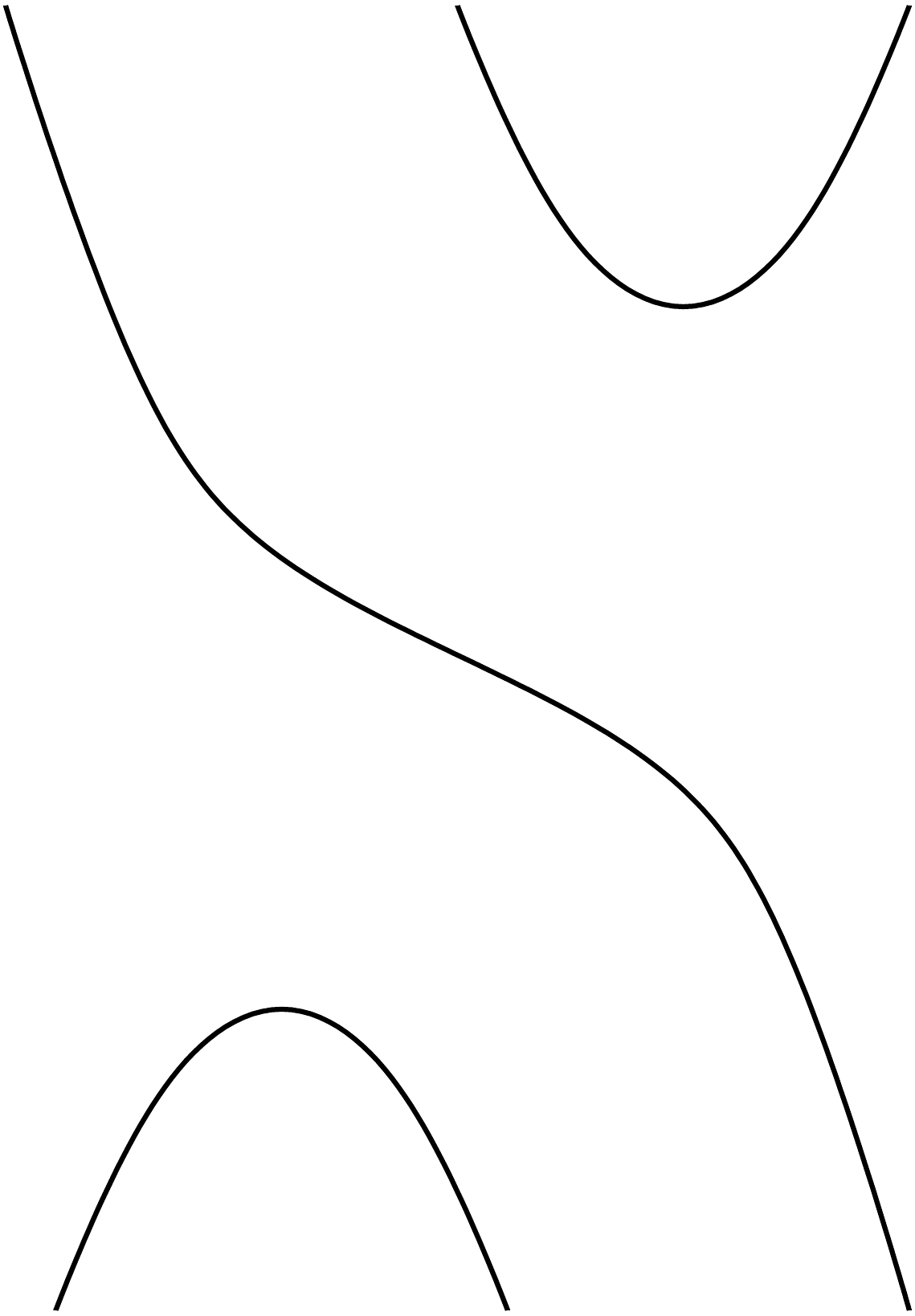}}}} + \gamma \textbf{P}_{\,\raisebox{-3pt}{\includegraphics[height=0.35in, width = 0.25in]{long-10}}} - (A + B) \textbf{P}_{\,\raisebox{-3pt}{\includegraphics[height=0.35in]{long-6}}} \\ 
\textbf{P}_{\,\raisebox{-3pt}{\includegraphics[height=0.35in, width = 0.2in]{thm1_11}}} &=& (1-AB) \textbf{P}_{\,\raisebox{-3pt}{\includegraphics[height=0.35in, width = 0.25in]{thm1_12}}} + \gamma \textbf{P}_{\,\raisebox{-3pt}{\includegraphics[height=0.35in, width = 0.25in]{long-10}}} - (A + B) \textbf{P}_{\,\raisebox{-3pt}{\includegraphics[height=0.35in]{long-8}}}
\end{eqnarray*}
and thus we obtain
\begin{eqnarray*}
\textbf{P}_{\,\raisebox{-3pt}{\includegraphics[height=0.35in]{thm1L}}}&=& (AB \beta + A\gamma + B\gamma)\textbf{P}_{\,\raisebox{-3pt}{\includegraphics[height=0.35in, width = 0.25in]{long-10}}} + A(1-AB) \textbf{P}_{\,\reflectbox{\raisebox{-3pt}{\includegraphics[height=0.35in, width = 0.25in]{thm1_12}}}} + B(1-AB)\textbf{P}_{\,\raisebox{-3pt}{\includegraphics[height=0.35in, width = 0.25in]{thm1_12}}}\\
&& -AB \left(\textbf{P}_{\,\raisebox{-3pt}{\includegraphics[height=0.35in]{long-6}}} + \textbf{P}_{\,\raisebox{-3pt}{\includegraphics[height=0.35in]{long-8}}} \ \right) + AB\textbf{P}_{\,\raisebox{-3pt}{\includegraphics[height=0.35in, width = 0.25in]{long-3}}} + B\textbf{P}_{\,\raisebox{-3pt}{\includegraphics[height=0.35in,  width = 0.25in]{thm1_9}}} + A\textbf{P}_{\,\reflectbox{\raisebox{3pt}{\includegraphics[height=0.35in,  width = 0.25in, angle = 180]{thm1_9}}}} + \textbf{P}_{\,\raisebox{-3pt}{\includegraphics[height=0.35in, width = 0.2in]{long-2}}}\,.
\end{eqnarray*}
Similarly we get
\begin{eqnarray*}
\textbf{P}_{\,\raisebox{-3pt}{\includegraphics[height=0.35in, width = 0.25in]{thm1R}}} &=& (AB \beta + A\gamma + B\gamma)\textbf{P}_{\,\raisebox{-3pt}{\includegraphics[height=0.35in, width = 0.25in]{long-9}}} + A(1-AB) \textbf{P}_{\,\reflectbox{\raisebox{-3pt}{\includegraphics[height=0.35in, width = 0.25in]{thm1_12}}}} + B(1-AB) \textbf{P}_{\,\raisebox{-3pt}{\includegraphics[height=0.35in, width = 0.25in]{thm1_12}}}\\
&& -AB \left(\textbf{P}_{\,\raisebox{-3pt}{\includegraphics[height=0.35in]{long-7}}} + \textbf{P}_{\,\raisebox{-3pt}{\includegraphics[height=0.35in]{long-5}}}  \right)+ AB\textbf{P}_{\,\raisebox{-3pt}{\includegraphics[height=0.35in, width = 0.25in]{long-4}}} + B\textbf{P}_{\,\raisebox{3pt}{\includegraphics[height=0.35in,  width = 0.25in, angle = 180]{thm1_9}}} + A\textbf{P}_{\,\reflectbox{\raisebox{-3pt}{\includegraphics[height=0.35in,  width = 0.25in]{thm1_9}}}}  + \textbf{P}_{\,\raisebox{-3pt}{\includegraphics[height=0.35in, width = 0.2in]{long-1}}}\,.
\end{eqnarray*}
Computing the following difference we have:

\begin{eqnarray*}
\textbf{P}_{\,\raisebox{-3pt}{\includegraphics[height=0.35in, width = 0.25in, width = 0.25in]{thm1L}}} - \textbf{P}_{\,\raisebox{-3pt}{\includegraphics[height=0.35in, width = 0.25in]{thm1R}}} &=& (AB \beta + A\gamma + B\gamma)\left(\textbf{P}_{\,\raisebox{-3pt}{\includegraphics[height=0.35in, width = 0.25in]{long-10}}} - \textbf{P}_{\,\raisebox{-3pt}{\includegraphics[height=0.35in, width = 0.25in]{long-9}}} \right)\\
&& + AB \left(\textbf{P}_{\,\raisebox{-3pt}{\includegraphics[height=0.35in]{long-7}}} + \textbf{P}_{\,\raisebox{-3pt}{\includegraphics[height=0.35in]{long-5}}} - \textbf{P}_{\,\raisebox{-3pt}{\includegraphics[height=0.35in]{long-6}}} - \textbf{P}_{\,\raisebox{-3pt}{\includegraphics[height=0.35in]{long-8}}} + \textbf{P}_{\,\raisebox{-3pt}{\includegraphics[height=0.35in, width = 0.25in]{long-3}}} - \textbf{P}_{\,\raisebox{-3pt}{\includegraphics[height=0.35in, width = 0.25in]{long-4}}} \right)\\
&& + \textbf{P}_{\,\raisebox{-3pt}{\includegraphics[height=0.35in, width = 0.2in]{long-2}}} - \textbf{P}_{\,\raisebox{-3pt}{\includegraphics[height=0.35in, width = 0.2in]{long-1}}}\,,
\end{eqnarray*}
and making use of relation~(\ref{gi6}) we arrive at

\begin{eqnarray*}
\textbf{P}_{\,\raisebox{-3pt}{\includegraphics[height=0.35in, width = 0.25in, width = 0.25in]{thm1L}}} - \textbf{P}_{\,\raisebox{-3pt}{\includegraphics[height=0.35in, width = 0.25in]{thm1R}}}&=& (AB \beta + A\gamma + B\gamma)\left(\textbf{P}_{\,\raisebox{-3pt}{\includegraphics[height=0.35in, width = 0.25in]{long-10}}} - \textbf{P}_{\,\raisebox{-3pt}{\includegraphics[height=0.35in, width = 0.25in]{long-9}}} \right)\\
&& + AB \left(\textbf{P}_{\,\raisebox{-3pt}{\includegraphics[height=0.35in]{long-7}}} + \textbf{P}_{\,\raisebox{-3pt}{\includegraphics[height=0.35in]{long-5}}} - \textbf{P}_{\,\raisebox{-3pt}{\includegraphics[height=0.35in]{long-6}}} - \textbf{P}_{\,\raisebox{-3pt}{\includegraphics[height=0.35in]{long-8}}} + \textbf{P}_{\,\raisebox{-3pt}{\includegraphics[height=0.35in, width = 0.25in]{long-3}}} - \textbf{P}_{\,\raisebox{-3pt}{\includegraphics[height=0.35in, width = 0.25in]{long-4}}} \right)\\
&& - AB\left(\textbf{P}_{\,\raisebox{-3pt}{\includegraphics[height=0.35in, width = 0.25in]{long-3}}} - \textbf{P}_{\,\raisebox{-3pt}{\includegraphics[height=0.35in, width = 0.25in]{long-4}}} + \textbf{P}_{\,\raisebox{-3pt}{\includegraphics[height=0.35in]{long-5}}} - \textbf{P}_{\,\raisebox{-3pt}{\includegraphics[height=0.35in]{long-6}}} + \textbf{P}_{\,\raisebox{-3pt}{\includegraphics[height=0.35in]{long-7}}} - \textbf{P}_{\,\raisebox{-3pt}{\includegraphics[height=0.35in]{long-8}}}\right)\\
&& - \delta\left(\textbf{P}_{\,\raisebox{-3pt}{\includegraphics[height=0.35in, width = 0.25in]{long-10}}} - \textbf{P}_{\,\raisebox{-3pt}{\includegraphics[height=0.35in, width = 0.25in]{long-9}}}\right)\\
&=&(AB\beta + A\gamma + B\gamma - \delta)\left(\textbf{P}_{\,\raisebox{-3pt}{\includegraphics[height=0.35in, width = 0.25in]{long-10}}} - \textbf{P}_{\,\raisebox{-3pt}{\includegraphics[height=0.35in, width = 0.25in]{long-9}}}\right)  =  0,
\end{eqnarray*}
since $AB\beta + A\gamma + B\gamma - \delta = 0$.

Using the previous result along with the invariance under the Reidemeister II move, we get

\begin{eqnarray*}
\textbf{P}_{\,\raisebox{-3pt}{\includegraphics[height=0.27in, angle = 90]{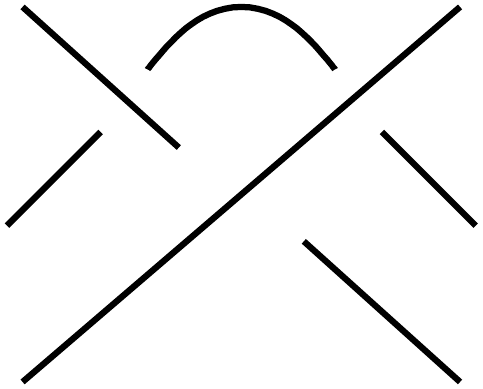}}} &=& A\textbf{P}_{\,\raisebox{-3pt}{\includegraphics[height=0.3in, angle = 90]{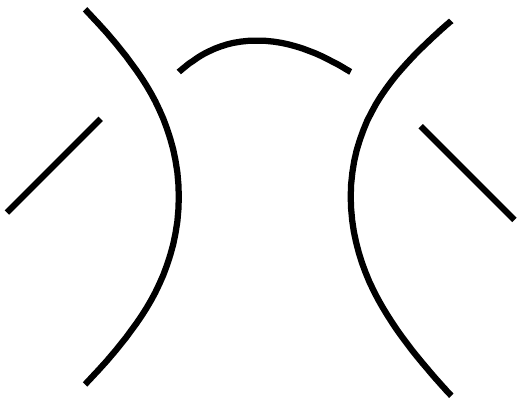}}} + B\textbf{P}_{\,\raisebox{-3pt}{\includegraphics[height=0.32in]{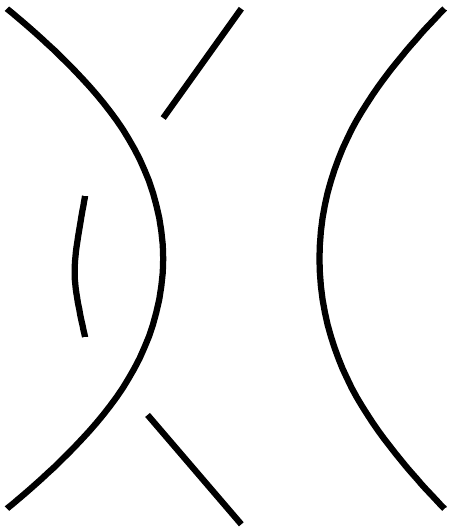}}} + \textbf{P}_{\,\raisebox{-3pt}{\includegraphics[height=0.35in,width = 0.25in]{thm1L}}}\\
&=& A\textbf{P}_{\,\reflectbox{\raisebox{-3pt}{\includegraphics[height=0.3in, angle = 90]{thm1_a}}}} + B\textbf{P}_{\,\raisebox{3pt}{\includegraphics[height=0.32in, angle=180]{slide-1}}} + \textbf{P}_{\,\raisebox{3pt}{\includegraphics[height=0.35in, width = 0.25in, angle = 180]{thm1L}}} = 
\textbf{P}_{\,\raisebox{3pt}{\includegraphics[height=0.27in, angle = 270]{reid3-1}}}
\end{eqnarray*}
and therefore the invariance under the Reidemeister move III holds.\end{proof}

Let $L$ be a link. Defining $\textbf{P}_L\co = \textbf{P}_D$, where $D$ is any plane diagram of $L$, we have proved that $\textbf{P}_L$ is a regular isotopy invariant of $L$.

\begin{corollary}
$\textbf{P}_L$ is equivalent to the two-variable Kauffman polynomial of $L$, for $z = A - B$. Namely,
\[ \textbf{P}_L(A, B, a) = D_L(A - B, a). \]
\end{corollary}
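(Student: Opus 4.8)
The plan is to invoke the uniqueness of the Kauffman polynomial and simply check that $\textbf{P}_L$ satisfies Kauffman's four defining axioms with the variable $z$ specialized to $A - B$. Recall from~\cite{K2} that $D_L(z,a)$ is the \emph{unique} regular isotopy invariant of unoriented links satisfying axioms (1)--(4) of the introduction. Hence it suffices to exhibit $\textbf{P}_L$ as a solution of the same system after the substitution $z \mapsto A - B$; since $D_L(A-B,a)$ lands in the subring $\bbZ[(A-B)^{\pm1}, a^{\pm1}]$ of the coefficient ring of $\textbf{P}_L$, the equality $\textbf{P}_L(A,B,a) = D_L(A-B,a)$ then makes sense and follows immediately.

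Three of the four axioms have in effect already been verified. Theorem~\ref{thm:invariance}(3) shows that $\textbf{P}$ is invariant under the Reidemeister II and III moves, and since regular isotopy of link diagrams is generated precisely by these two moves, $\textbf{P}_L$ is a well-defined regular isotopy invariant, which is axiom (1). Theorem~\ref{thm:invariance}(1) is exactly the skein relation of axiom (2) once we set $z = A - B$, provided the labelling of the two smoothings is matched with Kauffman's convention -- a bookkeeping check against Figure~\ref{fig:replacing crossings}. Theorem~\ref{thm:invariance}(2) reproduces the kink behaviour of axiom (4), the positive and negative curls contributing the factors $a$ and $a^{-1}$ respectively.

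The only remaining axiom is the normalization (3), namely $\textbf{P}_{\bigcirc} = 1$. For this I would observe that the standard diagram of the unknot has no crossings, so the rules of Figure~\ref{fig:replacing crossings} produce a single state, namely the circle itself, formed with zero A-smoothings and zero B-smoothings. Consequently $\textbf{P}_{\bigcirc} = A^{0}B^{0}\,P(\bigcirc) = P(\bigcirc)$, and $P(\bigcirc) = 1$ by the normalization built into the graph polynomial $P$ from Theorem~\ref{thm:unique poly}. Thus all four axioms hold, and the corollary follows by uniqueness.

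I do not expect a genuine obstacle here, as the substantive work is carried out in Theorem~\ref{thm:invariance}. The one point requiring care is conventional: one must confirm that the two resolutions appearing in Theorem~\ref{thm:invariance}(1) are assigned to the same roles as the smoothings in Kauffman's axiom (2) -- otherwise the sign of $z$, or the roles of $A$ and $B$, could silently be swapped -- and that the orientation conventions for the positive and negative kinks in Theorem~\ref{thm:invariance}(2) agree with those in axiom (4). Verifying these alignments against the figures, together with the one-line normalization computation above, completes the argument.
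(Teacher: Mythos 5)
Your proposal is correct and matches the paper's intended argument: the corollary is stated without proof precisely because it is an immediate consequence of Theorem~\ref{thm:invariance} together with the normalization $\textbf{P}_{\bigcirc} = P(\bigcirc) = 1$ and the uniqueness of the Kauffman polynomial characterized by axioms (1)--(4). Your additional care about matching smoothing and kink conventions, and the observation that $D_L(A-B,a)$ lives in the subring $\bbZ[(A-B)^{\pm1}, a^{\pm1}]$, are sound but do not change the substance of the argument.
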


\begin{example}
We consider the Hopf link $L =  \raisebox{-7pt}{\includegraphics[height=0.3in]{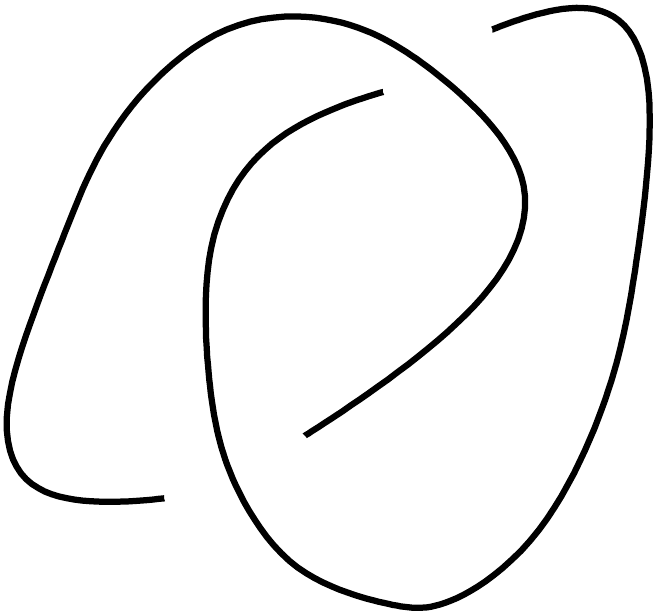}}$ and compute $P_L$. Then we have

\begin{eqnarray*}
\textbf{P}_{\,\raisebox{-3pt}{\includegraphics[height=0.27in]{hopf}}} &=& A\, \textbf{P}_{\,\raisebox{-3pt}{\includegraphics[height=0.27in]{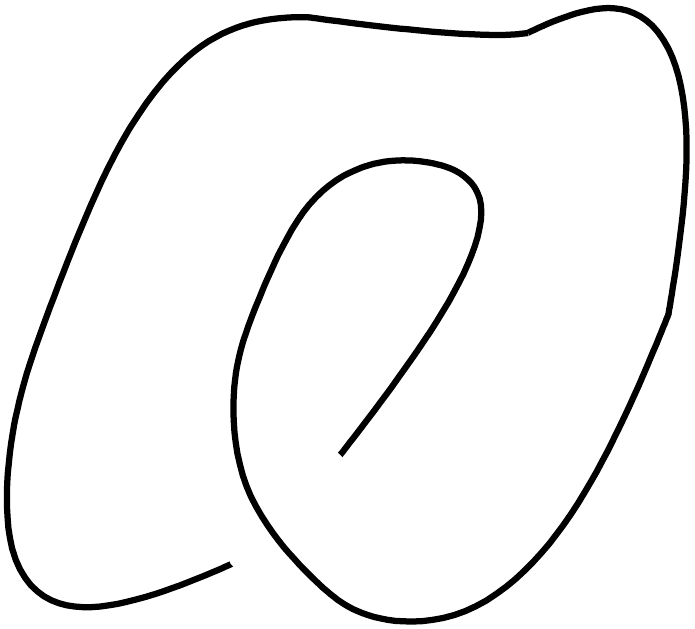}}} + B\, \textbf{P}_{\,\raisebox{-3pt}{\includegraphics[height=0.27in]{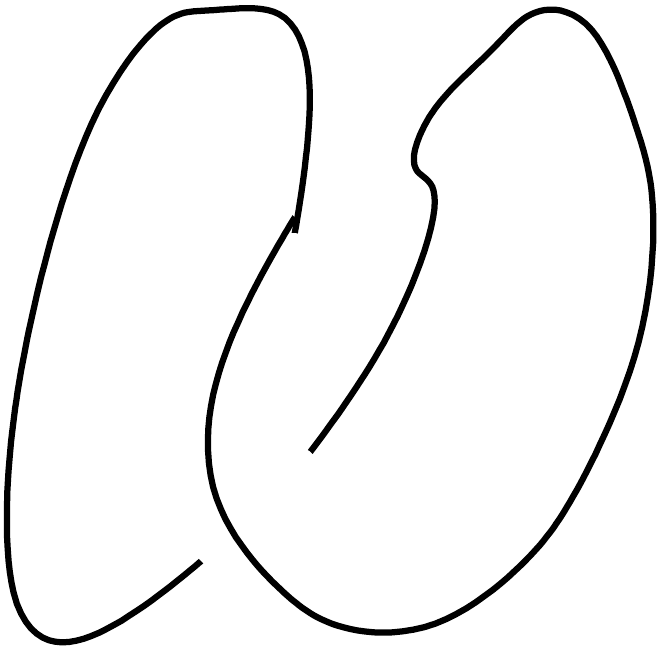}}} +  \textbf{P}_{\,\raisebox{-3pt}{\includegraphics[height=0.27in]{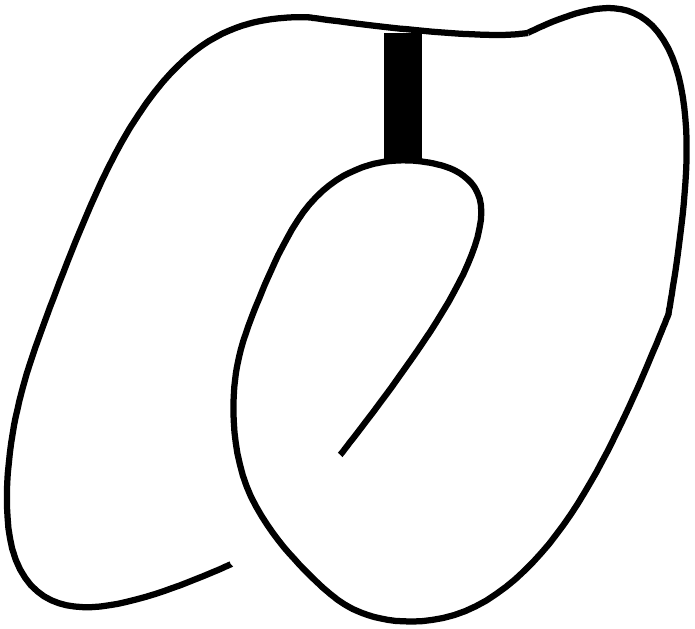}}}\\
&=& Aa + Ba^{-1} + A\, \textbf{P}_{\,\raisebox{-3pt}{\includegraphics[height=0.27in]{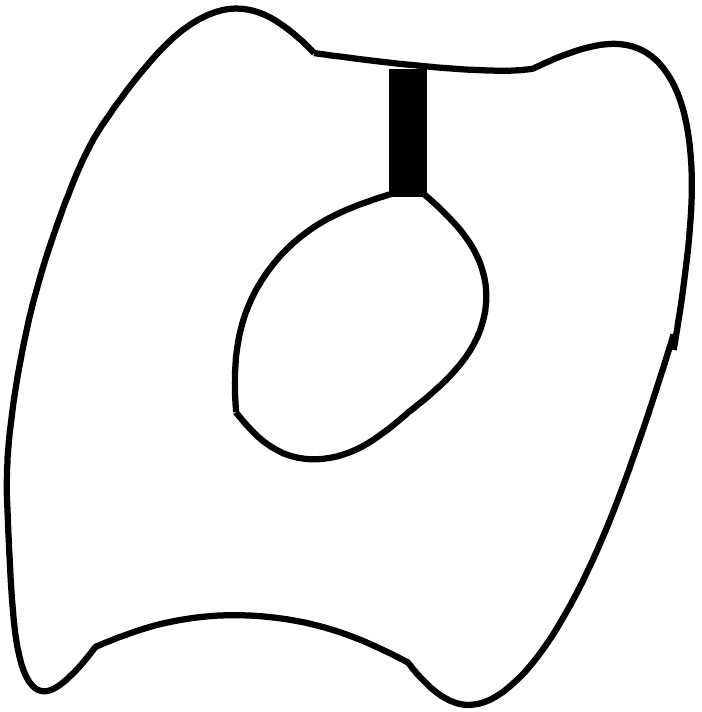}}} + B\, \textbf{P}_{\,\raisebox{-3pt}{\includegraphics[height=0.27in]{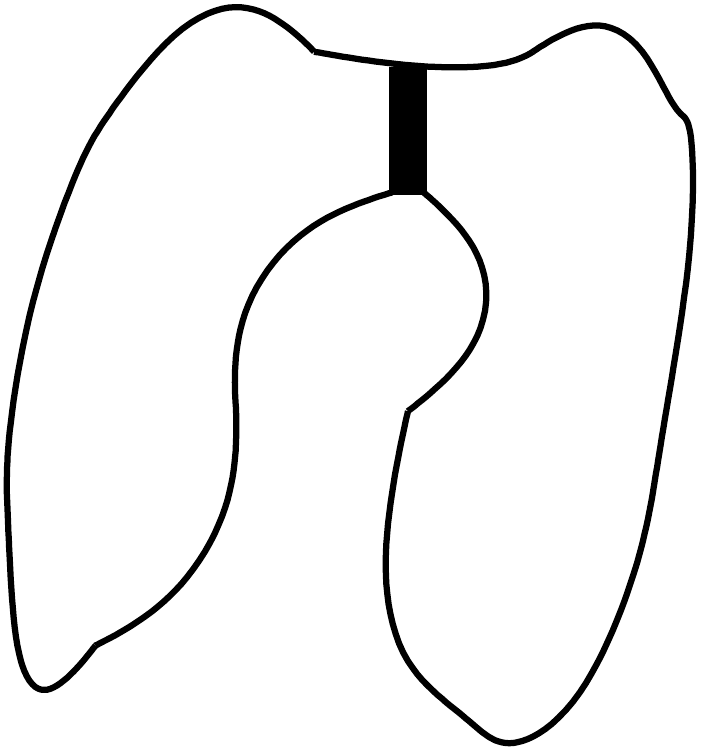}}} + \, \textbf{P}_{\,\raisebox{-3pt}{\includegraphics[height=0.27in]{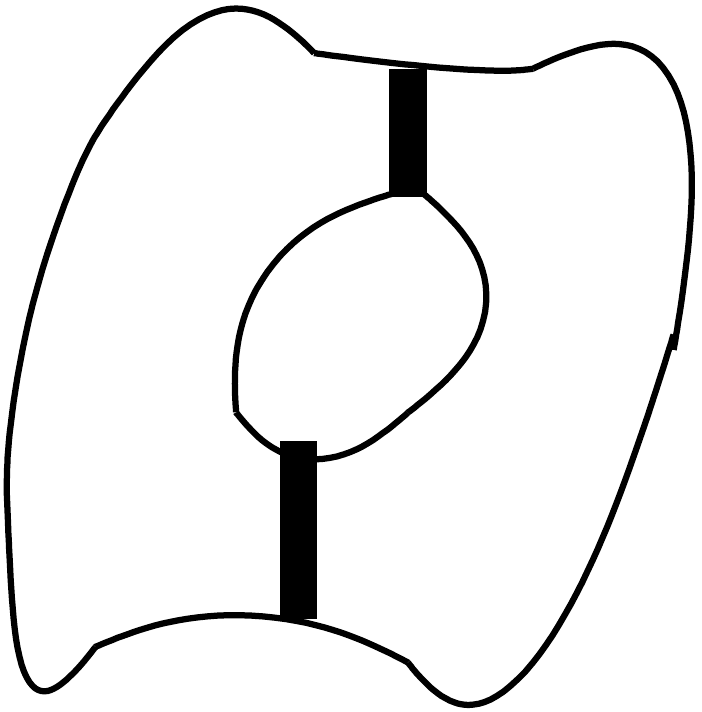}}}\,,
\end{eqnarray*}
where we used the behavior of the polynomial under the Reidemeister move I and the fact that $\textbf{P}_{
\bigcirc} = 1$. We further use relations~(\ref{gi4}) and~(\ref{gi5}), and obtain
\begin{eqnarray*}
\textbf{P}_{\,\raisebox{-3pt}{\includegraphics[height=0.27in]{hopf}}} &=& Aa + Ba^{-1} + A \beta + B \beta + (1-AB)\alpha + \gamma - (A+ B) \beta\\
& = & Aa + B a^{-1} + (1 -AB)\alpha + \gamma = (a-a^{-1})(A-B) + \frac{a-a^{-1}}{A-B}+1.
\end{eqnarray*}
\end{example}
\section{Yet another definition for the Kauffman polynomial} \label{sec:second definition}

The purpose of this section is to give another definition of the link invariant $\textbf{P}_L$, or equivalently, to the Kauffman polynomial of a link $L$, with the aid of a trace function on a certain algebra and well-known connections between links and braids. 

The braid group on $n$ strands, $B_n$, is the group generated by elements $\sigma_i$ (with inverses $\sigma_i^{-1}$): 
\[\sigma_i = \raisebox{-19pt}{\includegraphics[height=0.7in]{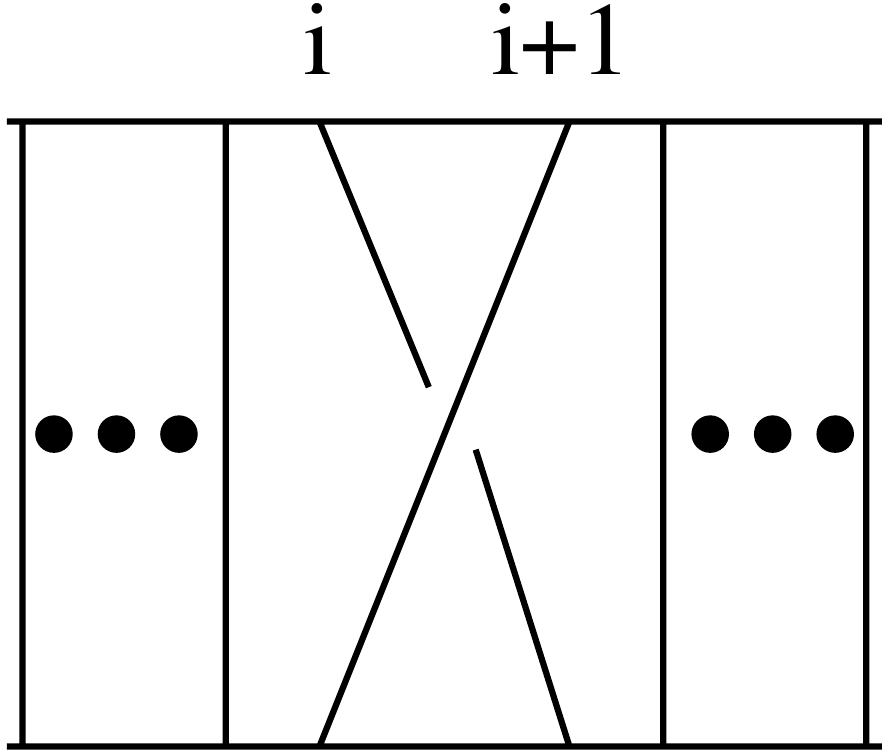}}\,, \qquad \sigma_i^{-1} = \raisebox{-19pt}{\includegraphics[height=0.7in]{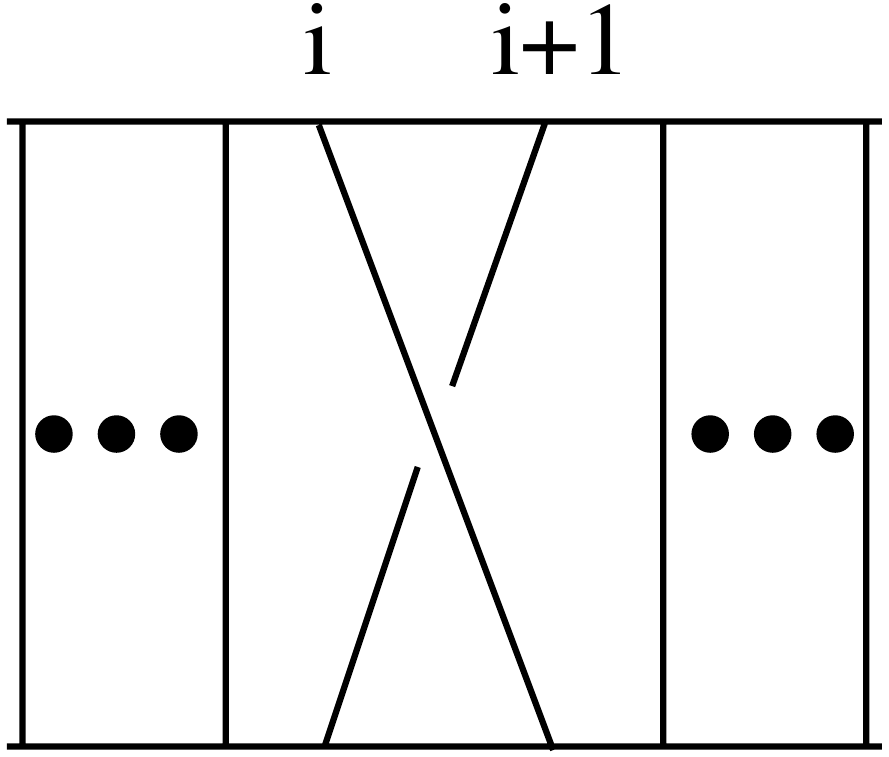}} \qquad  \text{for} \quad i = 1, 2, \dots, n-1,\] and satisfying the following relations:
\begin{eqnarray*}
 \sigma_i \sigma_j &=& \sigma _j \sigma _i, \,\,\text{for} \,\, |i -j| >1\\
 \sigma_i \sigma_{i+1} \sigma _i &=& \sigma_{i+1} \sigma_i \sigma _{i+1},\,\, \text{for} \,\,1\leq i \leq n-2.
 \end{eqnarray*}
 
 The \textit{closure} $\overline{\beta}$ of a braid $\beta \in B_n$ is obtained by connecting the initial points to the endpoints by a collection of parallel, non-weaving strands. Obviously, $\overline{\beta}$ is a knot or a link.

\textit{Alexander's Theorem}~\cite{A} states that any knot or link can be represented as the closure of a  braid, via ambient isotopy. But there are many different ways to represent a link as a closed braid, and the fundamental theorem that relates the theory of knots and the theory of braids is \textit{Markov's Theorem}, which states that if $\beta_n \in B_n$ and $\beta'_m \in B_m$ are two braids, then the links $L = \overline{\beta_n}$ and $L' = \overline{\beta'_m}$, the closures of the braids $\beta$ and $\beta'$, respectively, are ambient isotopic if and only if $\beta'_m$ can be obtained from $\beta_n$ by a series of equivalences and conjugations in a given braid group, and \textit{Markov moves} (such a move replaces $\beta \in B_n$ with $\beta \sigma_n^{\pm 1} \in B_{n+1}$ or replaces $\beta \sigma_n^{\pm 1} \in B_{n+1}$ with $\beta \in B_n$). The interested reader can find a proof of Markov's theorem in Birman's work~\cite{B}.

\subsection{The algebra $\mathcal{A}_n$}\label{ssec:algebra}

For each natural number $n$ we form a free (unital) additive algebra $\mathcal{A}_n$ over the ring $\bbZ[A^{\pm1}, B^{\pm1}, a^{\pm1}, (A-B)^{\pm 1}]$ generated by elements $c_i$ and $t_i$, for $i = 1, 2, \dots, n-1$, 
and subject to the following  relations:
\begin{enumerate}
\item[(1)] For $|i-j| >1$: $c_ic_j = c_jc_i,\, t_it_j = t_jt_i,\, c_it_j = t_jc_i$
\item[(2)] For $1\leq i\leq n-2$: $t_it_{i\pm1}t_i = t_i$
\item[(3)] For $1\leq i\leq n-2$:
\begin{enumerate}
\item  $c_it_{i+1} = c_{i+1}t_it_{i+1}$
\item $t_ic_{i+1} = t_it_{i+1}c_i$
\item $c_{i+1}c_ic_{i+1} - c_ic_{i+1}c_i = AB(c_{i+1} - c_i +  t_{i+1}c_i - t_ic_{i+1} + c_it_{i+1} - c_{i+1}t_i) + \delta \, (t_{i}- t_{i+1})$
 \end{enumerate}
 \item[(4)] For $1\leq i\leq n-1$: 
\begin{enumerate}
\item $t_i^2 = \alpha t_i$
\item $c_it_i = t_ic_i = \beta t_i$
\item $c_i^2 = (1 - AB) 1_n + \gamma \, t_i -(A+B)c_i$.
 \end{enumerate}
 \end{enumerate}
 where, as before, $\alpha = \frac{a-a^{-1}}{A-B} + 1, \, \, \beta = \frac{Aa^{-1}-Ba}{A-B}-A-B, \,\, \gamma = \frac{B^2a-A^2a^{-1}}{A-B} + AB$ and $\delta =  \frac{B^3a - A^3a^{-1}}{A - B}.$

The generators of $\mathcal{A}_n$ can be diagrammatically presented as shown below:  
\[c_i = \raisebox{-19pt}{\includegraphics[height=.7in]{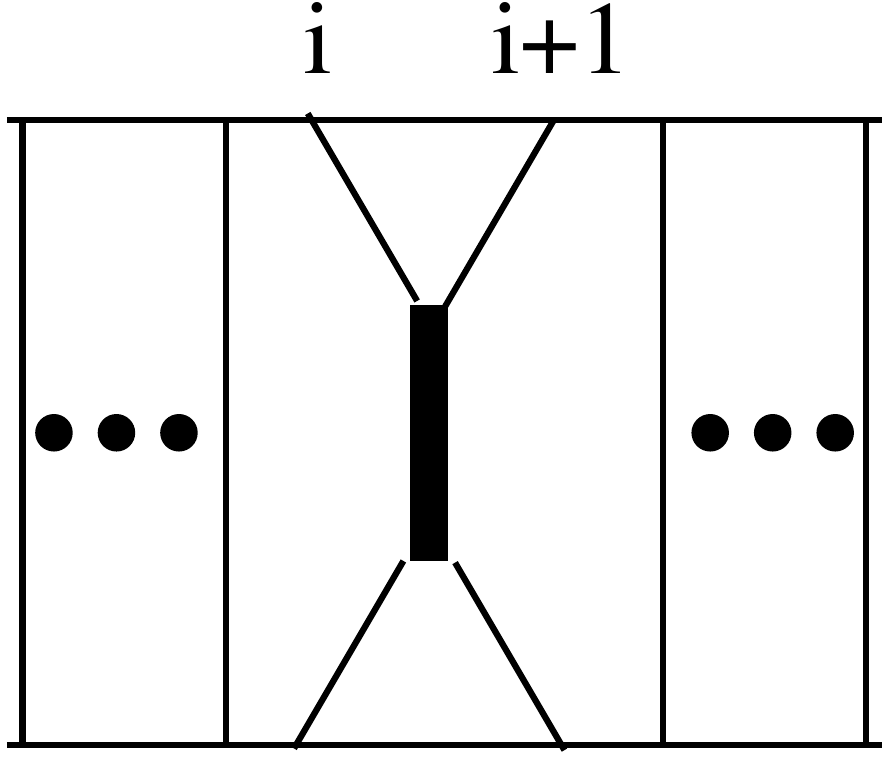}} \quad \text{and} \quad t_i = \raisebox{-19pt}{\includegraphics[height=.7in]{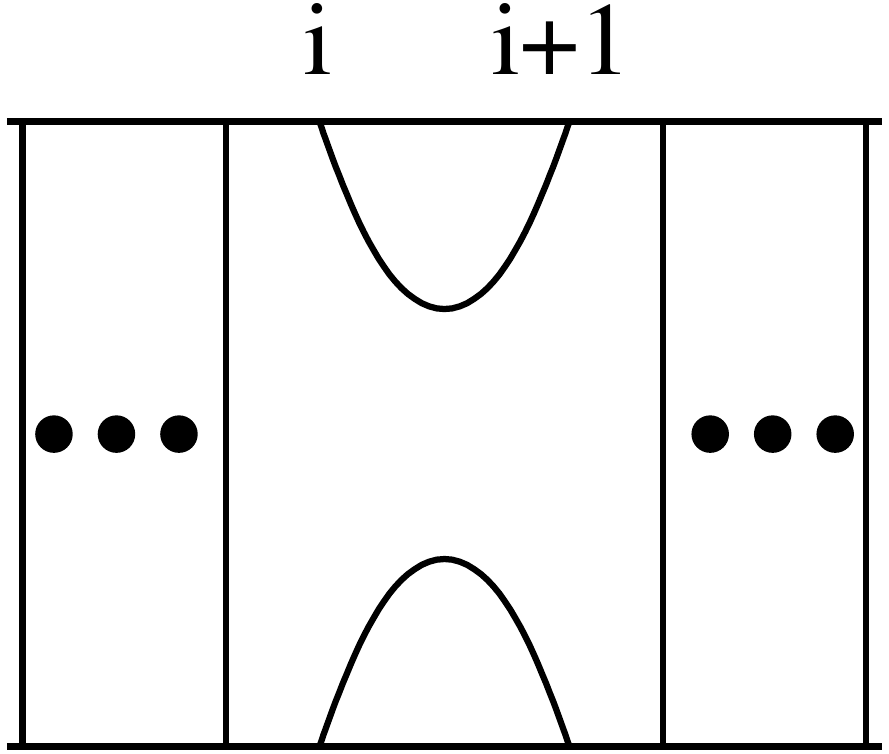}}.\]  
An element of $\mathcal{A}_n$ is a finite formal linear combination of planar trivalent graphs embedded in a rectangle with $n$ endpoints at the top and $n$ endpoints at the bottom of the rectangle. The multiplication is by concatenation and rescaling, and the identity element, $1_n$, is the diagram consisting of $n$ parallel strands in a rectangle.
 
 Relations (1) and (2) for the algebra $\mathcal{A}_n$ correspond to plane isotopies of the strands, while relations (3) and (4) are suggested by the graph skein relations. Specifically, relations (3a) and (3b) correspond to the graphical identity (\ref{gi3}) combined with plane isotopies of the strands, relation (3c) corresponds to the graphical identity (\ref{gi6}), and relations (4a) - (4c) correspond to the graphical identities~(\ref{gi2}), (\ref{gi7}) and (\ref{gi5}), respectively. We leave to the reader the enjoyable exercise of drawing all the pictures corresponding to the relations in $\mathcal{A}_n$. 
 
 \begin{remark}
 As noted in the introduction, the algebra $\mathcal{A}_n$ is isomorphic to the Birman-Murakami-Wenzl algebra $BMW_n(m, l)$ introduced in~\cite{BW, M} (corresponding to the Dubrovnik version of the two-variable Kauffman polynomial) via the map defined by the equation
 
 \[\raisebox{-6pt}{\includegraphics[height=0.25in]{resol}} = \raisebox{-3pt}{\includegraphics[height=0.15in]{poscrossing}} - A\,\, \raisebox{-3pt}{\includegraphics[height=0.15in]{A-smoothing}} - B\,\, \raisebox{-3pt}{\includegraphics[height=0.15in]{B-smoothing}}=
\raisebox{-3pt}{\includegraphics[height=0.15in]{negcrossing}} - A\,\,  \raisebox{-3pt}{\includegraphics[height=0.15in]{B-smoothing}} - B \, \, \raisebox{-3pt}{\includegraphics[height=0.15in]{A-smoothing}}\,, \]
where $m = A - B $ and $l = a$.
 \end{remark}
  
 \subsection{A representation of the braid groups}\label{ssec:representation}
 
 Inspired by the skein relations used in our state model for the Kauffman polynomial, we define a map $\rho : B_n \rightarrow \mathcal{A}_n$ given on generators by

\[\rho(\sigma_i) = A1_n  + Bt_i + c_i, \qquad \rho(\sigma_i^{-1}) = At_i + B1_n  + c_i, \quad 1 \leq i \leq n-1,\]
and extended to arbitrary braids by linearity.

We can think of $\rho$ as a function that resolves the crossings of the braid, since each $\sigma_i \in B_n$ or $\sigma_i^{-1} \in B_n$  represents a crossing of the strands in the braid:

\[ \raisebox{-19pt}{\includegraphics[height=0.7in]{sigma_i}} \stackrel{\rho}{\longrightarrow} A\, \raisebox{-19pt}{\includegraphics[height=0.7in]{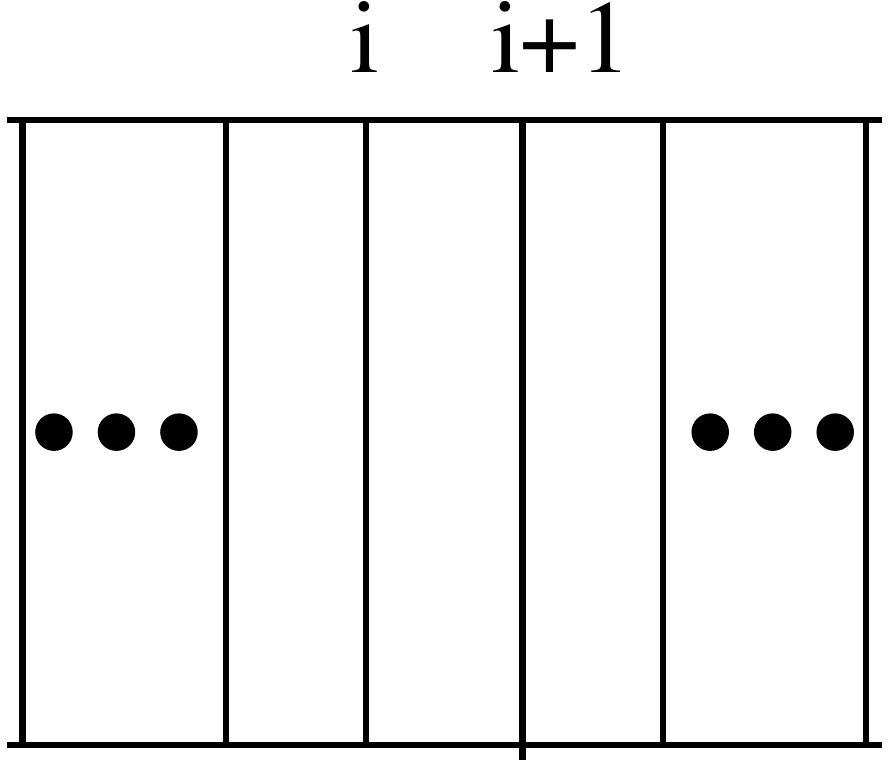}} + B\, \raisebox{-19pt}{\includegraphics[height=0.7in]{t_i}} + \raisebox{-19pt}{\includegraphics[height=0.7in]{c_i}}\,  \]

\[ \raisebox{-19pt}{\includegraphics[height=0.7in]{sigma_i_inv}}  \stackrel{\rho}{\longrightarrow}  A\, \raisebox{-19pt}{\includegraphics[height=0.7in]{t_i}} + B\, \raisebox{-19pt}{\includegraphics[height=0.7in]{identity}} + \raisebox{-19pt}{\includegraphics[height=0.7in]{c_i}}\,.\]

\begin{proposition}
The map $\rho$ is a representation of the braid group $B_n$ into the algebra $\mathcal{A}_n$.
\end{proposition}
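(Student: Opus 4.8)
The plan is to use the standard presentation of the braid group: $B_n$ is generated by $\sigma_i^{\pm1}$ subject to (i) $\sigma_i\sigma_i^{-1}=\sigma_i^{-1}\sigma_i=1$, (ii) $\sigma_i\sigma_j=\sigma_j\sigma_i$ for $|i-j|>1$, and (iii) $\sigma_i\sigma_{i+1}\sigma_i=\sigma_{i+1}\sigma_i\sigma_{i+1}$. Since $\rho$ is prescribed on generators and extended multiplicatively, showing that it descends to a homomorphism into the units of $\mathcal{A}_n$ amounts to checking that the images $\rho(\sigma_i)=A1_n+Bt_i+c_i$ and $\rho(\sigma_i^{-1})=At_i+B1_n+c_i$ satisfy the algebraic analogues of (i)--(iii) inside $\mathcal{A}_n$. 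Every verification reduces to manipulations with the defining relations (1)--(4) of $\mathcal{A}_n$ together with the scalar identities among $\alpha,\beta,\gamma,\delta$ that already drove the proof of Theorem~\ref{thm:invariance}.

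First I would verify (i), that $\rho(\sigma_i)$ is invertible with inverse $\rho(\sigma_i^{-1})$. Expanding $\rho(\sigma_i)\rho(\sigma_i^{-1})=(A1_n+Bt_i+c_i)(At_i+B1_n+c_i)$ and reducing every quadratic term by relations (4a) $t_i^2=\alpha t_i$, (4b) $c_it_i=t_ic_i=\beta t_i$, and (4c) $c_i^2=(1-AB)1_n+\gamma t_i-(A+B)c_i$, the product collects into $1_n$-, $c_i$-, and $t_i$-components. The $1_n$-coefficient is $AB+(1-AB)=1$, the $c_i$-coefficient is $A+B-(A+B)=0$, and the $t_i$-coefficient is $A^2+B^2+AB\alpha+(A+B)\beta+\gamma$, which vanishes by the same scalar relations (namely $A\alpha+B+\beta=a$ together with $Aa^{-1}+B^2+B\beta+\gamma=0$ and its $A\leftrightarrow B,\ a\leftrightarrow a^{-1}$ mirror) that handled the Reidemeister~I and~II steps of Theorem~\ref{thm:invariance}. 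Thus $\rho(\sigma_i)\rho(\sigma_i^{-1})=1_n$, and the symmetric computation gives $\rho(\sigma_i^{-1})\rho(\sigma_i)=1_n$. Relation (ii) is immediate: for $|i-j|>1$ relation (1) makes $t_i,c_i$ commute with $t_j,c_j$, so $\rho(\sigma_i)$ and $\rho(\sigma_j)$ lie in commuting subalgebras and automatically commute, reflecting that the bands carrying $c_i,t_i$ and $c_j,t_j$ are disjoint.

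The substantial part is relation (iii), the braid relation $\rho(\sigma_i)\rho(\sigma_{i+1})\rho(\sigma_i)=\rho(\sigma_{i+1})\rho(\sigma_i)\rho(\sigma_{i+1})$. I would expand each side into its $27$ monomials in $t_i,t_{i+1},c_i,c_{i+1}$ and reduce to a common normal form. The mixed terms — those containing at least one $t$ — are tamed by relation (2) $t_it_{i\pm1}t_i=t_i$, by the fork-sliding relations (3a) $c_it_{i+1}=c_{i+1}t_it_{i+1}$ and (3b) $t_ic_{i+1}=t_it_{i+1}c_i$, and by (4a),(4b); these are precisely the algebraic shadows of the graphical identity~\eqref{gi3}. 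The genuinely delicate contribution is the purely cubic-in-$c$ part, where the two sides differ by $c_{i+1}c_ic_{i+1}-c_ic_{i+1}c_i$; this is exactly what relation (3c) prescribes, and (3c) is itself the algebraic encoding of the skein relation~\eqref{gi6}. After substituting (3c) and collecting, the residual scalar coefficient of $t_i-t_{i+1}$ is $AB\beta+A\gamma+B\gamma-\delta$, which is zero — the very identity that closed the Reidemeister~III computation in Theorem~\ref{thm:invariance}.

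I expect the main obstacle to be organizational rather than conceptual: tracking the $27$ expanded terms on each side and applying the oriented relations (3a),(3b) in the correct direction, as they are not left-right symmetric. The computation is, however, essentially a transcription of the Reidemeister~III argument already carried out in Theorem~\ref{thm:invariance}, under the dictionary $c_i\leftrightarrow$ the rigid double-point resolution and $t_i\leftrightarrow$ the smoothing used there, with each algebraic reduction corresponding to one application of a graph skein relation and (3c) corresponding to~\eqref{gi6}. I would therefore reuse that reduction rather than redo it, so that verifying the braid relation comes down to re-confirming the four scalar identities among $\alpha,\beta,\gamma,\delta$, chiefly $AB\beta+A\gamma+B\gamma-\delta=0$. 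The $i\leftrightarrow i+1$ symmetry interchanging the two sides of the braid relation can additionally halve the bookkeeping, once one checks it is compatible with the relation set.
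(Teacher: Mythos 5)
Your proposal is correct and takes essentially the same route as the paper, whose proof consists of exactly your expansion for $\rho(\sigma_i)\rho(\sigma_i^{-1})=1_n$ (with the same vanishing $t_i$-coefficient $A^2+B^2+AB\alpha+(A+B)\beta+\gamma=0$), the same far-commutation argument via relation (1), and a deferral of the braid relation to the reader with the hint that all relations except (1) are needed --- which is precisely what your sketch supplies. Two small inaccuracies in that sketch, neither fatal: mixed monomials with one $t$ and two $c$'s, such as $t_ic_{i+1}c_i$, also require relation (4c) (not only (2), (3a), (3b), (4a), (4b)), and in the brute-force $27$-term reduction the coefficient of $t_i-t_{i+1}$ comes out as $A^2B+B^3+AB^2\alpha+(2AB+B^2)\beta+(A+2B)\gamma-\delta=\bigl[AB\beta+(A+B)\gamma-\delta\bigr]+B\bigl[A^2+B^2+AB\alpha+(A+B)\beta+\gamma\bigr]$, so its vanishing uses both of your scalar identities, not only $AB\beta+(A+B)\gamma-\delta=0$.
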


\begin{proof} We need to show that $\rho$ preserves the braid group relations. Considering the connection between the definitions of $\rho$ and of $\textbf{P}_L$, along with the fact that $\textbf{P}_L$ is invariant under the second and third Reidemeister moves, the reader might be already convinced that $\rho$ does preserve these relations. However, we will check two of the braid group relations and leave the third one as an exercise.

First we show that  $\rho(\sigma_i \sigma_i^{-1}) = 1_n,$ for all $1 \leq i\leq n-1$. We have
\begin{eqnarray*}
\rho(\sigma_i \sigma_i^{-1}) &=& \rho(\sigma_i)\rho(\sigma_i^{-1}) = (A\, 1_n + B\,t_i + c_i) ( A\,t_i + B\, 1_n + c_i)\\
&=& A^2 \,t_i + AB\, 1_n + A\,c_i + AB \,t_i^2 + B^2\, t_i + B\, t_i\,c_i + A\, c_i\,t_i + B\,c_i+ c_i^2\\
&=& (A^2 + B^2)t_i + (A + B)c_i + (A + B)t_i\,c_i + AB\,t_i^2 + c_i^2  + AB\, 1_n\\
&=& (A^2 + B^2)t_i + (A + B)c_i + (A + B)\beta\, t_i + AB\,\alpha\, t_i + (1 - AB)1_n\\
&& + \gamma\, t_i - (A + B)c_i + AB\, 1_n\\
&=& [A^2 + B^2 + (A + B)\beta + AB\alpha + \gamma]t_i + 1_n = 1_n,
\end{eqnarray*}
where we used relations (4a) - (4c) and that $A^2 + B^2 + (A + B)\beta + AB\alpha + \gamma = 0$.

Now we verify that  $\rho(\sigma_i \sigma_j) = \rho(\sigma_j \sigma_i)$, for all $1 \leq i, j\leq n-1$ with $|i-j| > 1$. Using relations (1) of the algebra $\mathcal{A}_n$ we have
\begin{eqnarray*}
\rho(\sigma_i \sigma_j) &=& \rho(\sigma_i) \rho(\sigma_j) = (A\, 1_n + B\,t_i + c_i)(A\, 1_n + B\,t_j + c_j) \\
&=& A^2\, 1_n+ AB\,t_j + A\,c_j + AB\, t_i + B^2\,t_i\,t_j + B\,t_i\,c_j + A\, c_i + B\,c_i\,t_j + c_i\,c_j \\
&=& A^2\, 1_n + AB \,t_j + A\,c_j + A\,B t_i + B^2\,t_j\,t_i + B\,c_j\,t_i + A \,c_i + B\,t_j\,c_i + c_j\,c_i \\
&=& (A\, 1_n + B\,t_j + c_j)(A\, 1_n + B\,t_i + c_i) = \rho(\sigma_j) \rho(\sigma_i) = \rho(\sigma_j \sigma_i).
\end{eqnarray*}

We are left with checking that $\rho(\sigma_i \sigma_{i+1} \sigma_i) =  \rho(\sigma_{i+1} \sigma_i \sigma_{i+1})$, for all $1 \leq i\leq n-2$. We leave the proof of this one to the enthusiast.  One will need to use all of the relations in $\mathcal{A}_n$ except for the first one.
\end{proof}

\subsection{The bracket polynomial of a braid}\label{ssec:bracket-braids}

We shall show now that we can derive the Kauffman polynomial from the representation $\rho$.

Let $\beta = \sigma_{i_1}^{l_1}\sigma_{i_2}^{l_2}\dots \sigma_{i_s}^{l_s}$ (where $\, l_1, l_2, \dots, l_s \in \bbZ$) be an arbitrary braid element in $B_n$. Then
\[ \rho(\beta) = (A+ Bt_{i_1} + c_{i_1})^{l_1} (A+ Bt_{i_2} + c_{i_2})^{l_2} \dots  (A+ Bt_{i_s} + c_{i_s})^{l_s}.   \]
Denote by $\Gamma_k$ a generic resolved state of the braid $\beta$ and by $\beta_k$ the coefficient of $\Gamma_k$ in the expanded expression of $\rho(\beta)$. Note that $\Gamma_k$ is an element of $\mathcal{A}_n$. We have:
\[ \rho(\beta) = \sum_k \beta_k \Gamma_k,\] 
where $k$ is indexed over all states of the braid $\beta$. Denote by $\overline{\Gamma}_k$ the closure of $\Gamma_k$, obtained in the same manner as the closure of a braid.

\begin{definition}
Define a \textit{trace} function $\text{tr}\co \mathcal{A}_n \to \bbZ[A^{\pm 1}, B^{\pm 1}, a^{\pm 1}, (A-B)^{\pm 1}]$ by $ \text{tr}(z)= P(\overline{z})$, for all $z \in \mathcal{A}_n$, where $P$ is the graph polynomial defined in Section~\ref{sec:model}. 
\end{definition}
It is not hard to see that the function tr satisfies $\text{tr}(xy) = \text{tr}(yx)$, for all $x, y \in \mathcal{A}_n$.

We define the \textit{writhe} of the braid $\beta = \sigma_{i_1}^{l_1}\sigma_{i_2}^{l_2}\dots \sigma_{i_s}^{l_s}$, denoted by $w(\beta)$, to be the sum of the exponents of the generators in the braid expression. That is,
\[ w(\beta) = \sum_{i=1}^s l_i. \]
We notice that $w(\beta) = w(\overline{\beta})$, where $\overline{\beta}$ is the oriented link obtained by closing the braid $\beta$, with downward oriented strands.

To recover the Kauffman polynomial of a link, it suffices to employ Alexander's theorem and the functions $\rho$ and tr. To be specific, we want to construct a map from $B_n$ to $\bbZ[A^{\pm 1}, B^{\pm 1}, a^{\pm 1}, (A-B)^{\pm 1}]$ such that it satisfies the braid relations (i.e. it is well-defined on braids) and is invariant under conjugation in $B_n$ and under Markov moves.

\begin{definition}
Define the function $\brak{\,\,} \co B_n \to \bbZ[A^{\pm 1}, B^{\pm 1}, a^{\pm 1}, (A-B)^{\pm 1}]$ given by 
\[\brak{\beta} = a^{-w(\beta)} \text{tr}(\rho(\beta)), \quad \text{for \ all} \quad \beta \in B_n.\]
We call $\brak{\beta}$ the \textit{bracket} polynomial of the braid $\beta$.
\end{definition}

\begin{proposition}
The bracket polynomial of a braid is well-defined on braids and is invariant under conjugation in $B_n$ and under Markov moves. Moreover, if $L$ is a link that has a plane diagram which is the closure of a braid $\beta \in B_n$, then
\[ \brak{\beta} = a^{-w(\beta)}\textbf{P}_{\overline{\beta}} =  a^{-w(\beta)}\textbf{P}_L. \]
\end{proposition}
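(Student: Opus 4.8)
The plan is to establish the proposition in two stages: first, that $\brak{\beta}$ is a genuine invariant of the appropriate equivalence on braids, and second, that it coincides with $a^{-w(\beta)}\textbf{P}_{\overline{\beta}}$. Since we have already shown that $\rho$ is a representation of $B_n$ into $\mathcal{A}_n$, the function $\beta \mapsto \text{tr}(\rho(\beta))$ is automatically well-defined on $B_n$, and so is $\brak{\beta} = a^{-w(\beta)}\text{tr}(\rho(\beta))$, because $w(\beta)$ depends only on the word modulo the braid relations (each relation preserves total exponent sum). This disposes of the ``well-defined on braids'' claim with essentially no work.

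For invariance under conjugation, I would argue that if $\beta' = \eta\beta\eta^{-1}$ in $B_n$, then $w(\beta') = w(\beta)$ (conjugation does not change the exponent sum), and $\text{tr}(\rho(\beta')) = \text{tr}(\rho(\eta)\rho(\beta)\rho(\eta)^{-1}) = \text{tr}(\rho(\beta))$ by the trace property $\text{tr}(xy) = \text{tr}(yx)$ noted immediately after the definition of tr (applied with $x = \rho(\eta)\rho(\beta)$ and $y = \rho(\eta)^{-1}$, using that $\rho(\eta)$ is invertible in $\mathcal{A}_n$ since $\rho$ is a group homomorphism). Hence $\brak{\beta'} = \brak{\beta}$.

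The heart of the proof, and the step I expect to be the main obstacle, is invariance under the Markov (stabilization) move $\beta \mapsto \beta\sigma_n^{\pm 1}$, passing from $B_n$ to $B_{n+1}$. Here $w(\beta\sigma_n^{\pm1}) = w(\beta) \pm 1$, so I must show $\text{tr}(\rho(\beta\sigma_n^{\pm1})) = a^{\pm1}\,\text{tr}(\rho(\beta))$. Diagrammatically, the closure $\overline{\beta\sigma_n}$ differs from $\overline{\beta}$ by an extra strand carrying a single positive kink: when one closes up, $\rho(\sigma_n) = A\,1_{n+1} + Bt_n + c_n$ gets traced against a diagram that is trivial on the new strand, so the three terms close to configurations computable by the graph skein relations. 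Concretely, I would expand $\text{tr}(\rho(\beta)\rho(\sigma_n)) = A\,\text{tr}(\rho(\beta)) + B\,\text{tr}(\rho(\beta)t_n) + \text{tr}(\rho(\beta)c_n)$, and argue geometrically that closing off the extra strand converts these into $\alpha$, $\beta$-weighted, and loop-removal contributions exactly as in the Reidemeister I computation of Theorem~\ref{thm:invariance}(2). The identity $A\alpha + B + \beta = a$ (already used to prove the kink behavior of $\textbf{P}_D$) is what forces $\text{tr}(\rho(\beta)\rho(\sigma_n)) = a\,\text{tr}(\rho(\beta))$, and the multiplication by $a^{-w}$ then exactly cancels the gained $a$. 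The negative stabilization is symmetric, using $A t_n + B\,1_{n+1} + c_n$ and the relation $A + B\alpha + \beta = a^{-1}$.

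Finally, for the last equality I would invoke Alexander's theorem to present $L$ as $\overline{\beta}$, observe that the trace $\text{tr}(\rho(\beta)) = P(\overline{\rho(\beta)})$ evaluates, by linearity of $\rho$ and of $P$, to exactly the state sum $\sum_\Gamma A^{\alpha(\Gamma)}B^{\beta(\Gamma)}P(\Gamma) = \textbf{P}_{\overline{\beta}}$ defining the link invariant in Section~\ref{sec:model}; the factor $a^{-w(\beta)}$ is then simply the normalization appearing in $\brak{\beta}$. The main delicacy throughout is purely diagrammatic bookkeeping in the stabilization step --- tracking how each term of $\rho(\sigma_n)$ closes up and verifying the loop/kink reductions land on the coefficient identities already verified in Theorem~\ref{thm:invariance}; the algebraic identities themselves are reused verbatim rather than recomputed.
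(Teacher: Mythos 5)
Your proof is correct, but it is organized quite differently from the paper's. The paper's entire argument rests on the single observation that $\mathrm{tr}(\rho(\beta)) = \textbf{P}_{\overline{\beta}}$, after which everything is outsourced to Theorem~\ref{thm:invariance}: conjugation and braid equivalences change the closed diagram only by regular isotopy (so invariance of $\textbf{P}_L$ under Reidemeister II and III applies), a Markov move changes the closure by a Reidemeister I move whose factor $a^{\pm 1}$ is cancelled by the writhe normalization, and Markov's theorem then gives the final statement. You, by contrast, stay inside the algebra $\mathcal{A}_n$ for the invariance claims: well-definedness from $\rho$ being a representation, conjugation invariance from the trace symmetry $\mathrm{tr}(xy)=\mathrm{tr}(yx)$ (a fact the paper states but never actually uses in its own proof), and the stabilization move by expanding $\rho(\sigma_n^{\pm 1}) $ into its three terms and evaluating their closures with the skein relations~(\ref{gi2}) and~(\ref{gi4}), which re-derives rather than quotes the Reidemeister I computation; your identity $A + B\alpha + \beta = a^{-1}$ for the negative stabilization is correct and is exactly the computation the paper left to the reader in Theorem~\ref{thm:invariance}(2). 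Only for the final equality do you fall back on the key identity $\mathrm{tr}(\rho(\beta)) = \textbf{P}_{\overline{\beta}}$, as the paper does throughout. What each approach buys: the paper's reduction is shorter and reuses proven topology; yours is the standard Markov-trace template (as for the Jones or BMW traces), is more self-contained at the algebraic level, and would survive even if one had not first established the link-level invariance of $\textbf{P}_L$ --- at the cost of the diagrammatic bookkeeping in the stabilization step, which you correctly identify as the only delicate point.
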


\begin{proof} It is easy to notice that for $\beta \in B_n$, we have $\text{tr}(\rho(\beta)) = \textbf{P}_{\overline{\beta}}$. Then the fact that $\textbf{P}_L$ is a regular invariant for links implies that the function $\brak{\,\,}$ is invariant under equivalences and conjugations in $B_n$. Moreover, the coefficient $a^{-w(\beta)}$ in the expression of $\brak{\,\,}$ cancels the effect of a Markov move, therefore $\brak{\,\,}$ is invariant under Markov moves as well. Finally, Markov's theorem implies the second part of the proposition.
\end{proof}

\section{Knotted trivalent graphs} \label{sec: knotted graphs}

Inspired by the work of Kauffman and Vogel~\cite{KV}, we consider now knotted trivalent graphs and create an invariant of these objects, closely related to invariants in knot theory. We are interested in using our state model for the Kauffman polynomial, that is, the polynomial $\textbf{P}_L$ introduced in Section~\ref{sec:model}, which forces us to work with knotted rigid-edge trivalent graphs (knotted RE 3-graphs). The reason for working with these type of graphs is similar to that for considering 4-valent graphs with rigid vertices in~\cite{KV}.

We remark that any trivalent graph has an even number of vertices. Thus if $G$ is a diagram of a knotted RE 3-graph, then it contains an even number of vertices, and for each pair of adjacent vertices there is exactly one wide (rigid) edge incident with both of them.

\subsection{Isotopies for RE 3-graphs} 

Two knotted graphs are called equivalent if there is an isotopy of $\bbR^3$ taking one onto the other. Kauffman~\cite{K1} showed that any two diagrams of equivalent knotted trivalent graphs (topological trivalent graphs not RE 3-graphs) are related by a finite sequence of the classical knot theoretic Reidemeider moves and the local moves depicted in~(\ref{graph moves}) (together with their mirror image). 

\begin{eqnarray}\label{graph moves}
\raisebox{-8pt}{\includegraphics[height=0.27in]{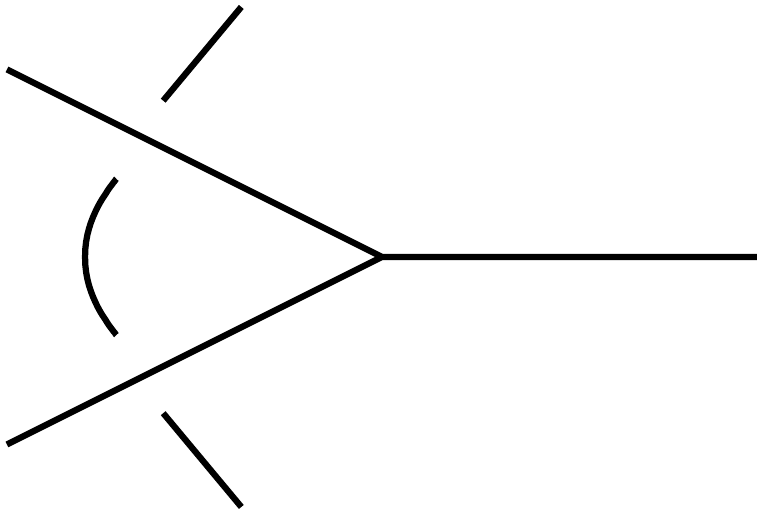}} \longleftrightarrow \raisebox{-8pt}{\includegraphics[height=0.27in]{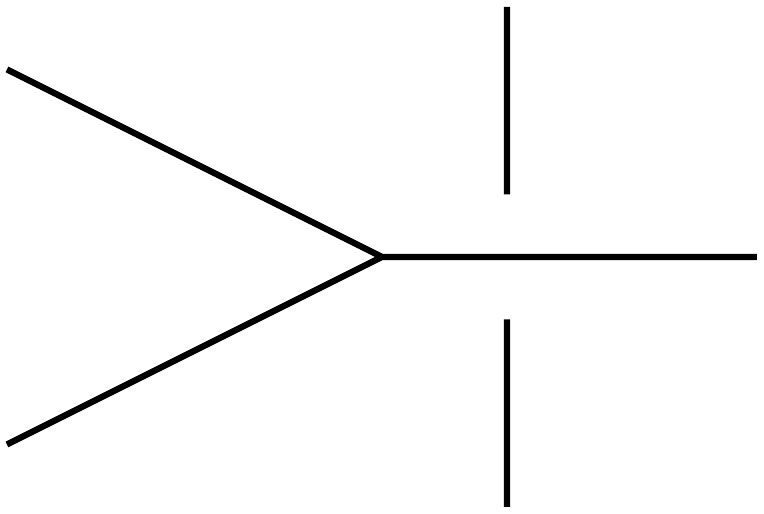}} \qquad \raisebox{-8pt}{\includegraphics[height=0.27in]{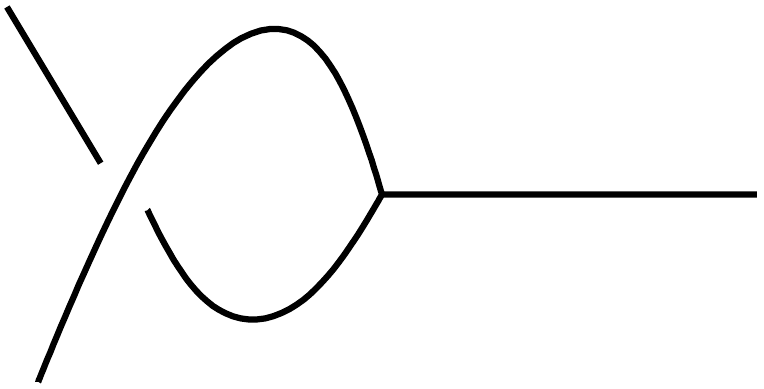}} \longleftrightarrow \raisebox{-8pt}{\includegraphics[height=0.27in]{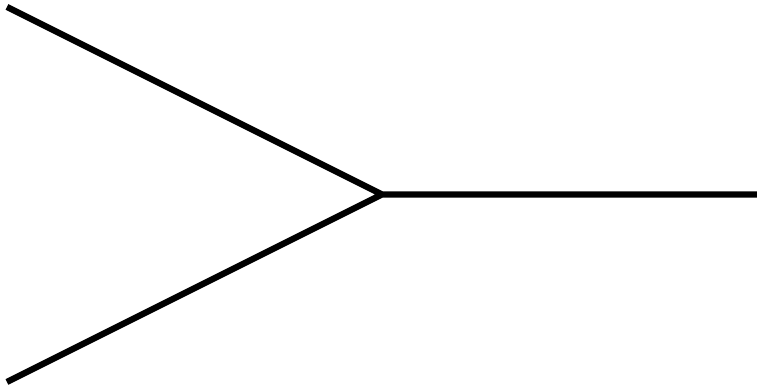}}
\end{eqnarray} 

Then it is not hard to see that two diagrams represent equivalent knotted RE 3-graphs if one can be transformed into the other by a finite sequence of, what we call, \textit{rigid-edge ambient isotopies}. The collection of moves (up to mirror image) that generate rigid-edge ambient isotopies is given below:

\begin{eqnarray*}
(I) && \raisebox{-8pt}{\includegraphics[height=0.3in]{poskink}}  \longleftrightarrow  \raisebox{-8pt}{\includegraphics[height=0.3in]{arc}}\\ 
(II) && \raisebox{-8pt}{\includegraphics[height=0.3in]{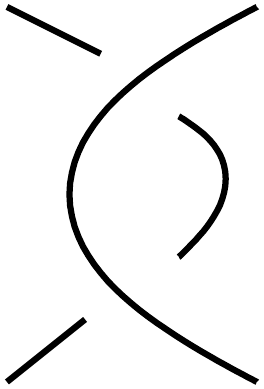}}  \longleftrightarrow \raisebox{-8pt}{\includegraphics[height=0.3in]{A-smoothing}}\\ \ 
(III) && \raisebox{-8pt}{\includegraphics[height=0.3in]{reid3-1}} \longleftrightarrow  \raisebox{-8pt}{\includegraphics[height=0.3in]{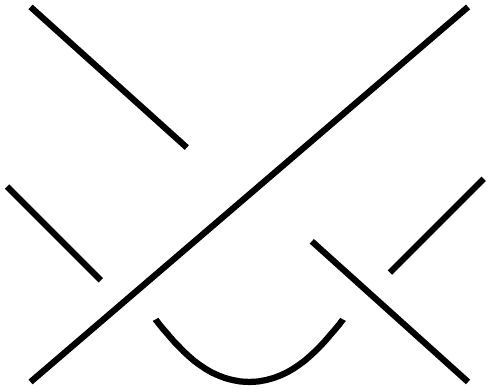}} \\(IV) && \raisebox{-8pt}{\includegraphics[height=0.3in]{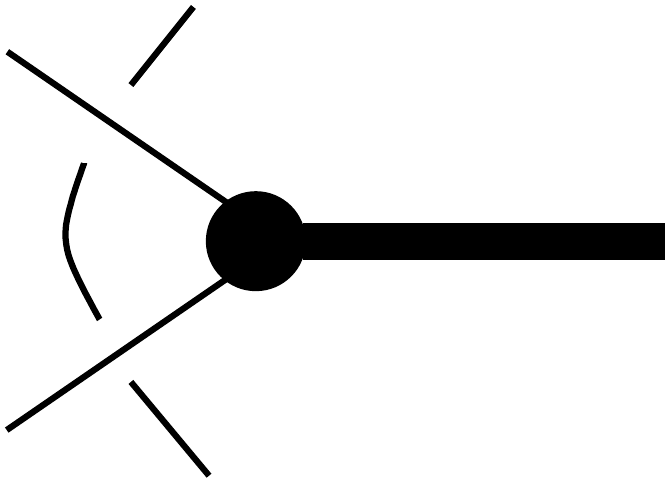}} \longleftrightarrow  \raisebox{-8pt}{\includegraphics[height=0.3in]{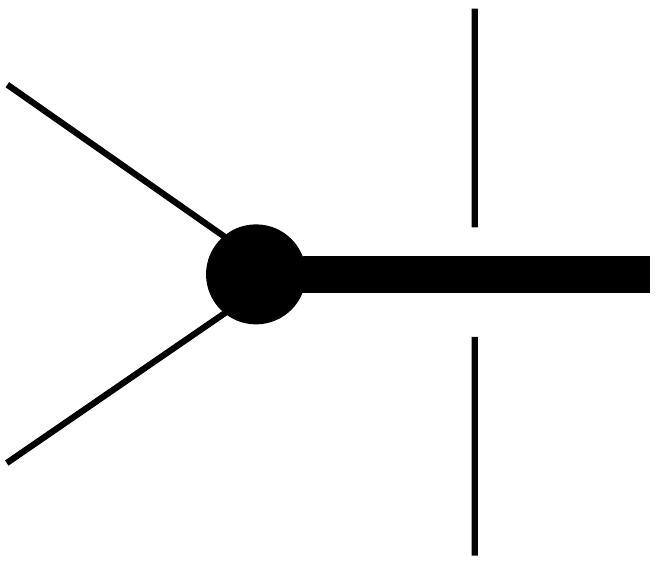}} \hspace{1cm}\raisebox{-8pt}{\includegraphics[height=0.3in]{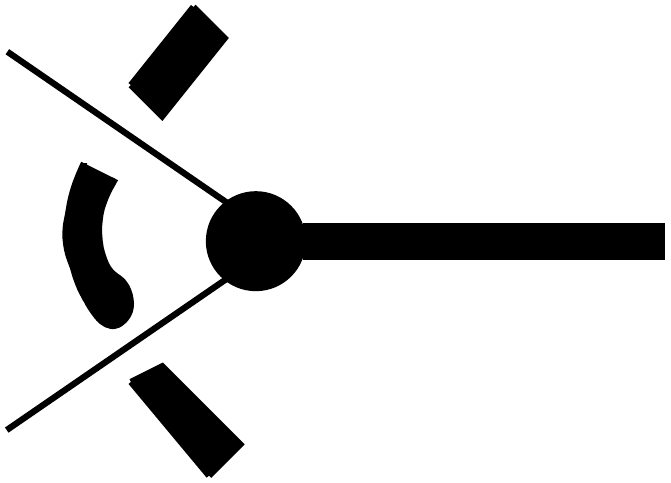}} \longleftrightarrow \raisebox{-8pt}{\includegraphics[height=0.3in]{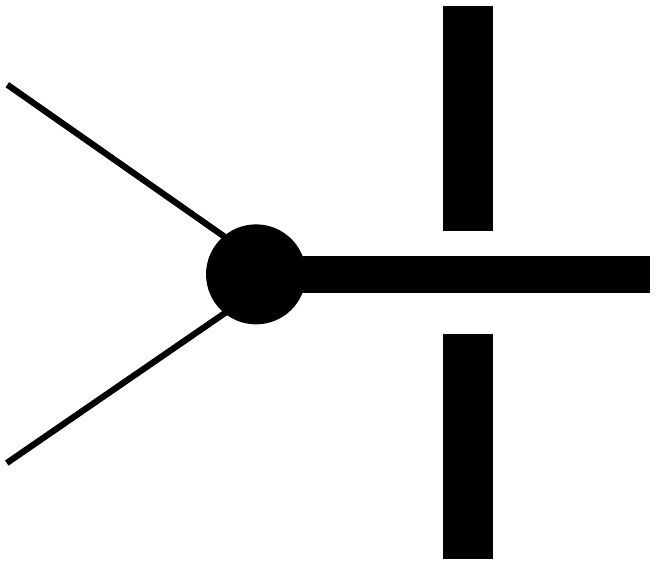}}\\ 
(V) && \raisebox{-7pt}{\includegraphics[height=0.25in]{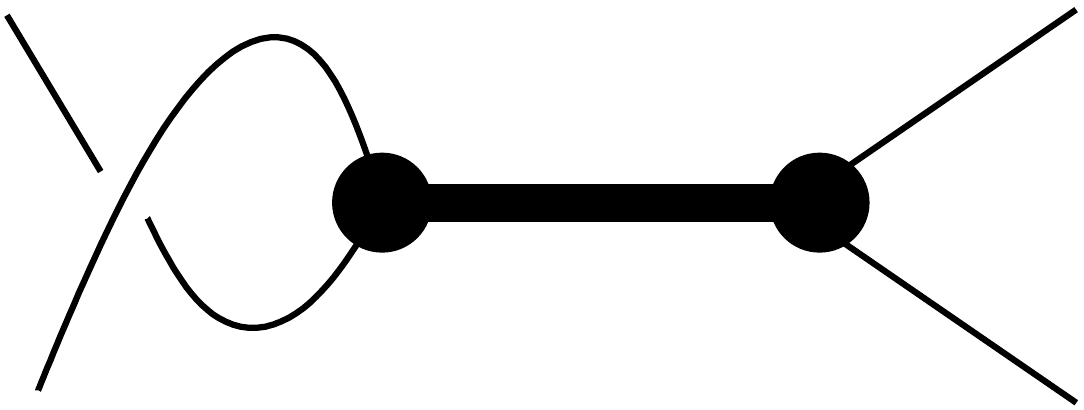}} \longleftrightarrow  \raisebox{-7pt}{\includegraphics[height=0.26in]{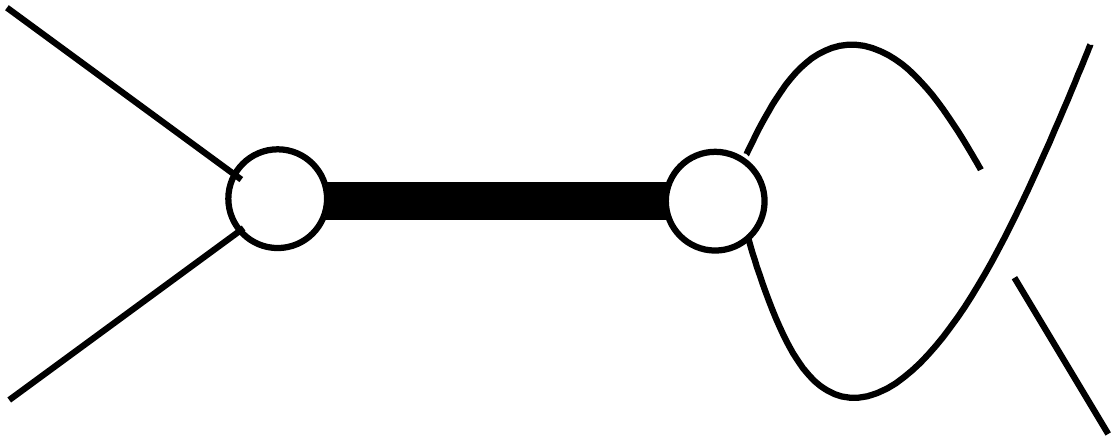}}
\end{eqnarray*}

A few words are needed here. A wide (rigid) edge is not allowed to cross itself; that is, there is no Reidemester I move involving a wide edge. However, there are other versions of the type II and type III Reidemeister moves for RE 3-graphs than those listed here, namely those containing at least one wide edge, but it is easy to show that these moves follow from type IV moves combined with types II and III Reidemester moves involving standard edges. We exemplify below the Reidemeister move of type II involving one wide edge:

\[ \raisebox{-9pt}{\includegraphics[height=0.3in]{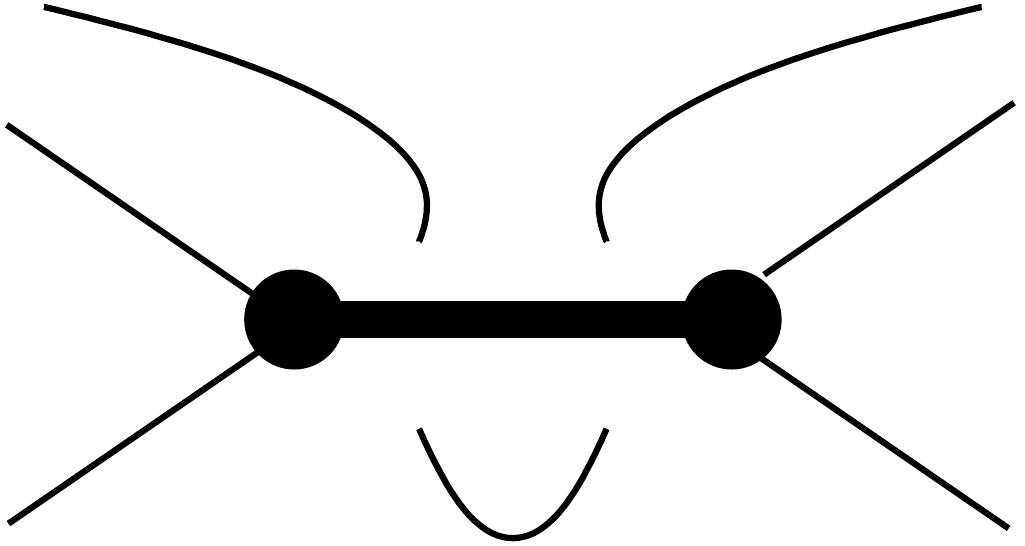}} \longleftrightarrow \raisebox{-9pt}{\includegraphics[height=0.3in]{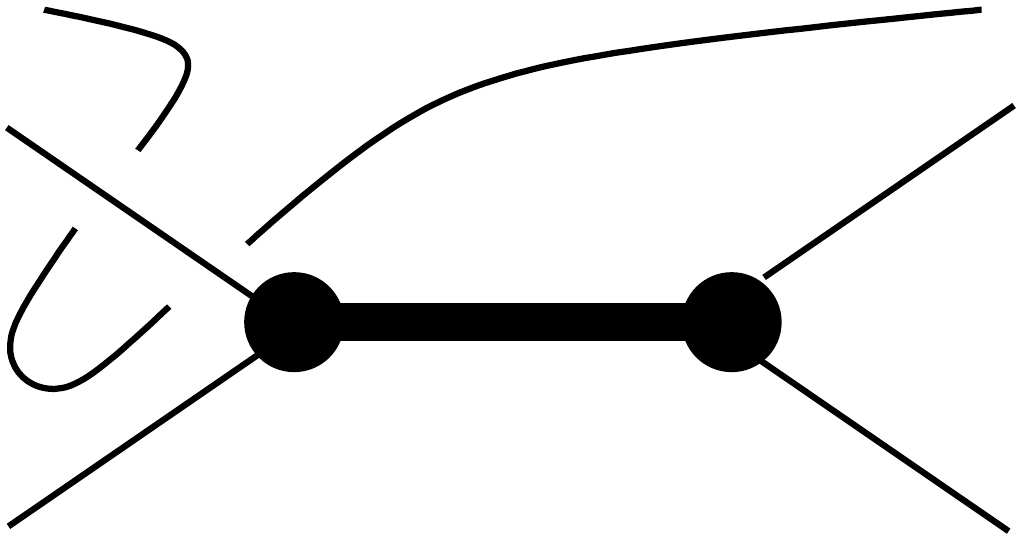}} \longleftrightarrow \raisebox{-9pt}{\includegraphics[height=0.3in]{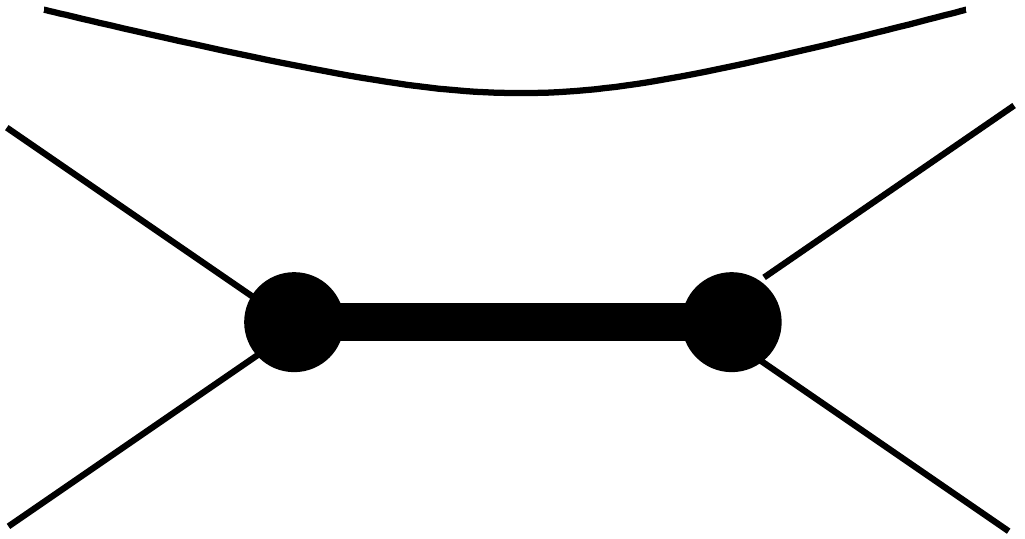}}   \] 

Moreover, there are other two variations (up to mirror image) of the type IV moves, corresponding to the other choices of the wide edge adjacent to the vertex; these moves are consequences of the type IV moves that are listed, followed by Reidemeister II moves, as we explain below:

\[ \raisebox{-9pt}{\includegraphics[height=0.3in]{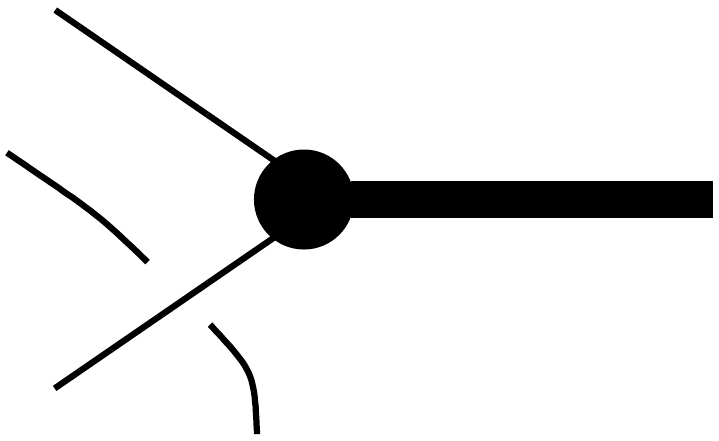}} \longleftrightarrow \raisebox{-9pt}{\includegraphics[height=0.3in]{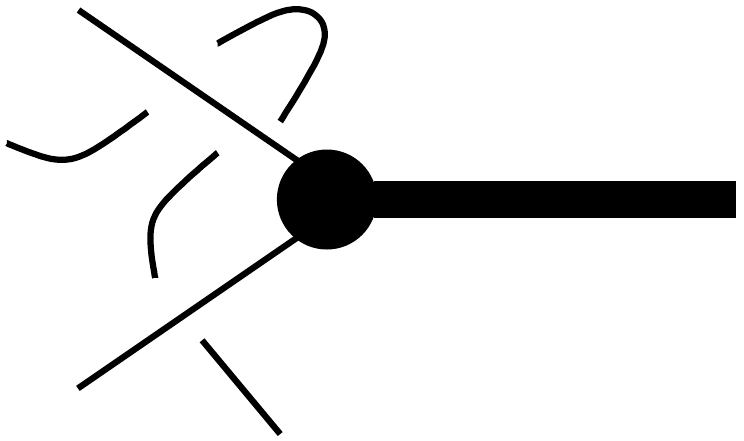}} \longleftrightarrow \raisebox{-9pt}{\includegraphics[height=0.3in]{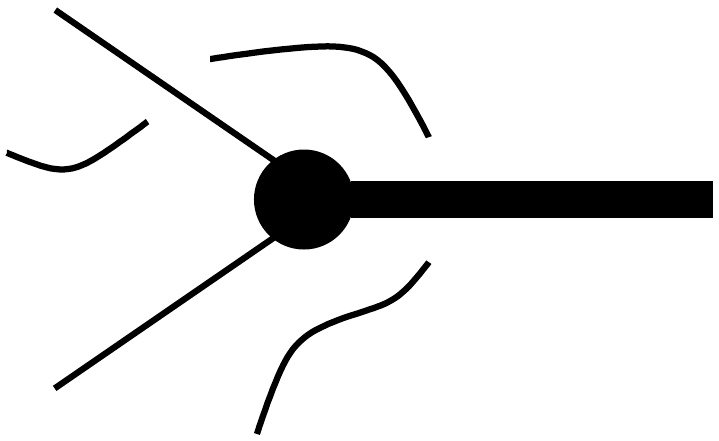}}   \] 

We say that some quantity is an \textit{RE ambient isotopy invariant} of knotted RE 3-graphs if it invariant under rigid-edge ambient isotopy (that is, it is unchanged under the moves of type I through type V), and we say that it is an \textit{ RE regular isotopy invariant} if it is unchanged under the moves of type II  through type V (in analogy with ambient and regular isotopy for link diagrams).

The move of type V tells us that, indeed, we regard the wide edges as being rigid. Specifically, the move consists of flipping the disk containing the local diagram (there is exactly one wide edge in this diagram) while keeping the endpoints fixed; we drew the vertices on the right hand side of the move as little white disks, to represent the flipping behavior under the move. The reader might wonder now what is the reason for considering rigid wide edges and, correspondingly, the move of type V in this form, since a more appropriate move would have been the following (as suggested by~(\ref{graph moves})):
 \[(VI) \qquad \raisebox{-7pt}{\includegraphics[height=0.25in]{move5-1}} \longleftrightarrow  \raisebox{-7pt}{\includegraphics[height=0.25in]{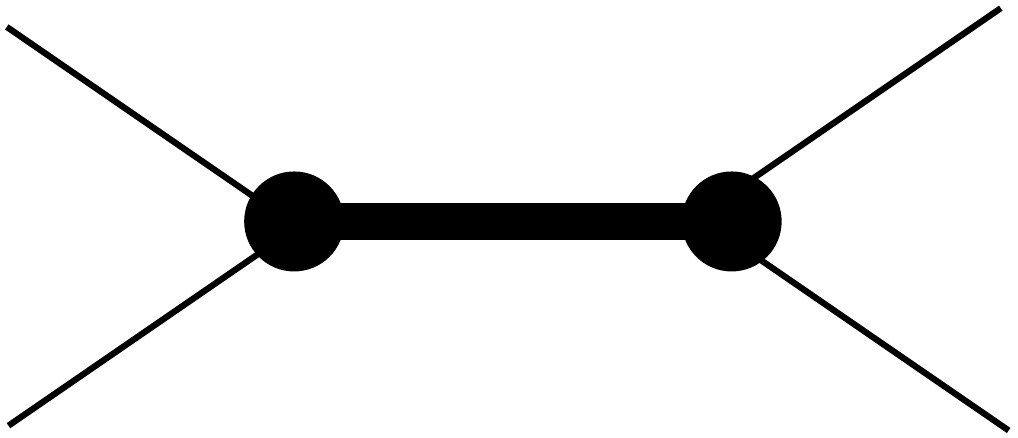}}\]
 
The reason for this choice is that the invariant for trivalent graphs that we construct in this section is not invariant under the type VI move, but it is invariant under the type V move.

\subsection{An invariant for knotted RE 3-graphs} The goal of this subsection is to construct a polynomial invariant for knotted RE 3-graphs. For this, we rely on the graphical identities for planar trivalent graphs given in Section~\ref{sec:model}.

\begin{theorem}\label{thm:graph poly}
If $[G]$ is a polynomial for RE 3-graph diagrams $G$ such that it is invariant under the moves of type IV, it takes value $1$ for the unknot, and it satisfies the graph skein relations~(\ref{gi2}) - (\ref{gi6}), as well as

 \[\left[ \, \raisebox{-5pt}{\includegraphics[height=0.2in]{move5-3}}\, \right ]= \left [ \,\raisebox{-5pt}{\includegraphics[height=0.2in]{poscrossing}}\, \right] - A \left[ \, \raisebox{-5pt}{\includegraphics[height=0.2in]{A-smoothing}}\, \right ] - B \left[\, \raisebox{-5pt}{\includegraphics[height=0.2in]{B-smoothing}}\,\right ] = \left [ \,\raisebox{-5pt}{\includegraphics[height=0.2in]{negcrossing}}\, \right] - A \left[ \, \raisebox{-5pt}{\includegraphics[height=0.2in]{B-smoothing}}\, \right ] - B \left[\, \raisebox{-5pt}{\includegraphics[height=0.2in]{A-smoothing}}\,\right ] ,\] 
 then it is an RE regular isotopy invariant of knotted RE 3-graphs, and satisfies
\[ \left[ \,\raisebox{-5pt}{\includegraphics[height=0.2in]{poskink}}\,\right] = a \left[ \,\raisebox{-5pt}{\includegraphics[height=0.2in]{arc}}\,\right], \quad \left[\,\raisebox{-5pt}{\includegraphics[height=0.2in]{negkink}}\,\right] = a^{-1} \left[ \,\raisebox{-5pt}{\includegraphics[height=0.2in]{arc}} \,\right].\]
\end{theorem}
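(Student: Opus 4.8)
The plan is to verify the behavior under each type of move in the generating set for rigid-edge regular isotopy (types II through V), treating the type IV invariance as a hypothesis that we are handed for free. First I would dispose of the framing behavior (the displayed type I formulas), which is the simplest piece: the graph polynomial $[G]$ satisfies the same skein relations (\ref{gi2})--(\ref{gi5}) as $P$, so the exact computation used in part~(2) of Theorem~\ref{thm:invariance} carries over verbatim. Indeed, resolving the kink \raisebox{-3pt}{\includegraphics[height=0.15in]{poskink}} using the displayed crossing-resolution formula gives $A\alpha + B + \beta = a$ times the arc, and the negative kink is handled by the mirror computation. This reduces the theorem to the claim that $[G]$ is unchanged under RE moves of types II, III, and V.

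Next I would treat the Reidemeister-type moves II and III. Since $[G]$ satisfies precisely the hypotheses (value $1$ on the unknot, the graph skein relations (\ref{gi2})--(\ref{gi6})) that were used in Theorem~\ref{thm:invariance}, and since the proof of that theorem resolves every crossing via Figure~\ref{fig:replacing crossings} and then manipulates the resulting planar states using only (\ref{gi2})--(\ref{gi9}), the same chains of equalities establish $[D_1]=[D_2]$ for any $D_1,D_2$ related by a type II or type III move among standard edges. The one genuinely new point is that for RE 3-graphs there are variants of the type II and III moves that involve a wide edge; as the text observes, these factor through the listed type IV moves together with the standard-edge Reidemeister moves, as illustrated by the displayed sequence \raisebox{-9pt}{\includegraphics[height=0.3in]{thm3_2}} $\leftrightarrow$ \raisebox{-9pt}{\includegraphics[height=0.3in]{reid2-widestandard}} $\leftrightarrow$ \raisebox{-9pt}{\includegraphics[height=0.3in]{thm3_5}}. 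So I would invoke the type IV hypothesis precisely here to reduce the wide-edge Reidemeister moves to the already-handled standard ones.

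Finally, the type V move is where the real content lies, and I expect it to be the main obstacle. The move flips a disk containing a single wide edge while fixing the boundary, so I would verify it by resolving every crossing inside that disk into planar states and checking that the resulting formal combination of planar graphs is symmetric under the flip. The essential input is the displayed defining relation for \raisebox{-5pt}{\includegraphics[height=0.2in]{move5-3}}: this equation asserts that the planar resolution is computed identically from the positive and the negative crossing, i.e.\ it encodes a symmetry of the state sum, and it is exactly what distinguishes the type V move (allowed) from the type VI move (not an invariance). I would expand both sides of the flip using this relation together with (\ref{gi3}) (which says the two trivalent resolutions agree) and the digon/triangle relations (\ref{gi5})--(\ref{gi9}), and check term-by-term that the flip induces a bijection of states preserving the coefficients. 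The delicate point, and the likely source of difficulty, is bookkeeping the cyclic ordering of the four standard edges attached to the wide edge: one must confirm that flipping the disk permutes these four edges in a way that the defining relation is insensitive to, which is precisely the statement that $[G]$ respects rigidity of wide edges without respecting the full topological freedom of the type VI move.
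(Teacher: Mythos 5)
Your handling of the easy parts coincides with the paper's. The kink formulas and the standard-edge Reidemeister II and III invariance are obtained exactly as you say, by rerunning the proof of Theorem~\ref{thm:invariance} (which only uses the unknot normalization, the relations (\ref{gi2})--(\ref{gi6}), and the crossing-resolution identity, all of which $[\,\cdot\,]$ satisfies by hypothesis), and the wide-edge variants of II and III are reduced to the type IV hypothesis plus the standard-edge moves, as the paper itself notes before the theorem.

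The divergence, and the problem, is the type V move. Your plan is to resolve every crossing in the disk into planar states and to ``check term-by-term that the flip induces a bijection of states preserving the coefficients.'' Coefficient bookkeeping is not the real issue (an unoriented crossing is unchanged by a $180^{\circ}$ flip about an in-plane axis, so the $A$'s and $B$'s do match up); the real issue is that matched states are \emph{reflections} of one another as planar diagrams, so you still need the lemma $P(\Gamma)=P(\Gamma^{\mathrm{flipped}})$ for planar trivalent diagrams. That is not one of the listed relations: it must be extracted either from the derived identities (\ref{gi7})--(\ref{gi9}) (which supply the symmetry for the loop, digon and square/triangle configurations that actually arise) or from the uniqueness in Theorem~\ref{thm:unique poly} (the set of skein relations is reflection-invariant, so $\Gamma\mapsto P(\Gamma^{\mathrm{flipped}})$ satisfies them and must equal $P$). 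Your proposal neither states nor proves this, substituting a vague appeal to ``bookkeeping the cyclic ordering,'' and you explicitly leave the verification as an anticipated obstacle --- so the decisive case of the theorem is open in your write-up. The paper closes it with a four-line computation that goes in the opposite direction: instead of resolving crossings into planar graphs, it uses the hypothesis identity from right to left to replace the wide edge inside the disk by $(\text{crossing}) - A\,(\text{A-smoothing}) - B\,(\text{B-smoothing})$. Both sides of the type V move then become evaluations of honest link diagrams --- a kinked arc (equal to $a$ times the arc by the kink formula already established), a crossing, and a two-crossing tangle manifestly symmetric under the flip --- and these evaluations visibly coincide, with no planar-state analysis and no reflection lemma needed. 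Finally, your assertion that the equality of the positive- and negative-crossing expressions for the wide edge ``is exactly what distinguishes type V from type VI'' is off the mark: the paper's computation uses only one of the two expressions; what fails for type VI is that absorbing the twist changes the value by the factor $a$ coming from the kink formula, whereas the type V flip merely transports the twist across the rigid edge.
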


\begin{proof} The hypothesis of the theorem together with the proof of Theorem~\ref{thm:invariance} imply that $\left[ \,\raisebox{-5pt}{\includegraphics[height=0.2in]{poskink}}\,\right] = a \left[ \,\raisebox{-5pt}{\includegraphics[height=0.2in]{arc}}\,\right], \, \left[\,\raisebox{-5pt}{\includegraphics[height=0.2in]{negkink}}\,\right] = a^{-1} \left[ \,\raisebox{-5pt}{\includegraphics[height=0.2in]{arc}} \,\right]$, and that $[G]$ is invariant under the type II and III moves for knotted RE 3-graphs. 
We demonstrate the invariance of $[G]$ under the move of type V: 
\begin{eqnarray*}
\left[\,\raisebox{-7pt}{\includegraphics[height=0.25in]{move5-1}}\,\right] &=& \left[\, \raisebox{-7pt}{\includegraphics[height=0.23in]{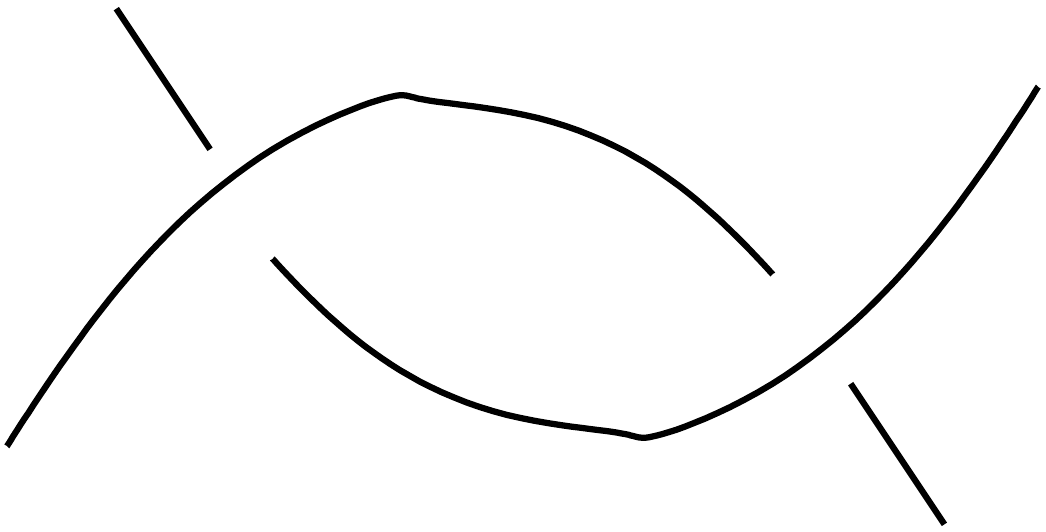}}\,\right] - A \left[\, \raisebox{-7pt}{\includegraphics[height=0.25in]{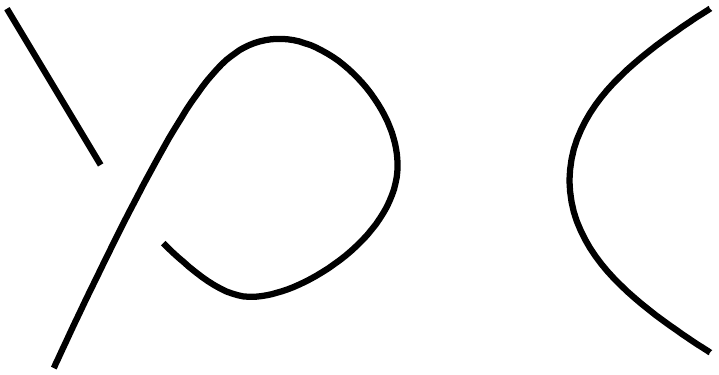}}\,\,\right] - B \left[\, \raisebox{-7pt}{\includegraphics[height=0.25in]{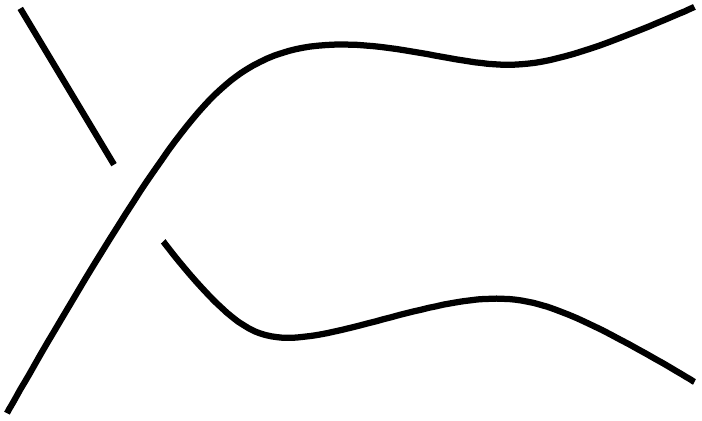}}\,\, \right] \\
&=&  \left[\, \raisebox{-7pt}{\includegraphics[height=0.23in]{thm4_1}}\,\right] - A a \left[ \,\raisebox{-7pt}{\includegraphics[height=0.25in]{A-smoothing}}\,\right] - B \left[ \,\raisebox{-7pt}{\includegraphics[height=0.25in]{poscrossing}}\, \right] \\
&=& \left[\, \raisebox{-7pt}{\includegraphics[height=0.23in]{thm4_1}}\,\right] -A \left[\, \raisebox{12pt}{\includegraphics[height=0.25in, angle = 180]{thm4_4}}\,\right] - B \left[\, \raisebox{12pt}{\includegraphics[height=0.25in, angle = 180]{thm4_6}}\, \right] \\
&=&\left[\,\raisebox{-7pt}{\includegraphics[height=0.25in]{move5-2}}\,\right].
\end{eqnarray*}
\end{proof}

 \begin{remark}
The reader may have noticed that according to Theorem~\ref{thm:graph poly}, the polynomial $[G] \in  \bbZ[A^{\pm1}, B^{\pm 1}, a^{\pm 1}, (A-B)^{\pm 1}]$ can be computed in the same way as the polynomial $\textbf{P}_L$ introduced in Section~\ref{sec:model}. Specifically, if $G$ is a knotted RE 3-graph, we slide edges under/over crossings so that the new isotopic version of the graph, call it $G$ as well, has only crossings involving standard edges. Then we resolve each crossing according to the rules in Figure~\ref{fig:replacing crossings}. After this operation, $G$ is written as a finite formal linear combination of its associated states (where each state $\Gamma$ is a planar trivalent graph) whose coefficients are monomials in $A$ and $B$. Then the graph polynomial $[G]$ is computed by replacing each state $\Gamma$ in this summation with its polynomial $P(\Gamma)$, evaluated using the graph skein relations~(\ref{gi2}) - (\ref{gi6}). That is,
\begin{eqnarray}\label{formula for [G]} 
 [G] = \sum_{\text{states} \, \Gamma}A^{\alpha(\Gamma)}B^{\beta(\Gamma)}P(\Gamma),
 \end{eqnarray}
where the integers $\alpha(\Gamma)$ and $\beta(\Gamma)$ are determined by the rules in Figure~\ref{fig:replacing crossings}. In particular, when restricted to a link $L$, the polynomial $[L]$ is the Kauffman polynomial $D_L$ with variables $A - B$ and $a$.
 \end{remark}
 
 If $G$ is knotted RE 3-graph, its \textit{mirror image} $G^*$ is a knotted RE 3-graph obtained by replacing each over-crossing with an under-crossing and vice versa. $G$ is called \textit{achiral} if it is RE ambient isotopic to its mirror image. Otherwise, $G$ is called \textit{chiral}. 
 
 The integers $\alpha(\Gamma)$ and $\beta(\Gamma)$ in equation~(\ref{formula for [G]}) represent the numbers of ``A-smoothings" and  ``B-smoothings'', respectively, in the state $\Gamma$ of the graph $G$. Then the skein relations
 
\begin{eqnarray}
 \left[ \,\raisebox{-5pt}{\includegraphics[height=0.2in]{poscrossing}}\,\right] = A \left[ \, \raisebox{-5pt}{\includegraphics[height=0.2in]{A-smoothing}}\, \right] + B \left[ \, \raisebox{-5pt}{\includegraphics[height=0.2in]{B-smoothing}}\, \right] +  \left[ \, \raisebox{-5pt}{\includegraphics[height=0.2in]{move5-3}} \, \right] \label{pos crossing}\\
\left[ \,\raisebox{-5pt}{\includegraphics[height=0.2in]{negcrossing}}\, \right] = A \left[ \, \raisebox{-5pt}{\includegraphics[height=0.2in]{B-smoothing}}\, \right] + B \left[\, \raisebox{-5pt}{\includegraphics[height=0.2in]{A-smoothing}}\, \right] + \left[\, \raisebox{-5pt}{\includegraphics[height=0.2in]{move5-3}}\, \right] \label{neg crossing},
\end{eqnarray}
together with the fact that the coefficients $\alpha, \beta, \gamma$ and $\delta$, appearing in the polynomial evaluation $P(\Gamma)$ of the state $\Gamma$, are invariant under the substitutions $A \longleftrightarrow B$ and $a \longleftrightarrow a^{-1}$, imply that the following result holds.

\begin{proposition}
The invariant $[G^*]$ of the mirror image $G^*$ is obtained from $[G]$ by interchanging $A$ and $B$ and replacing $a$ with $a^{-1}$.
\end{proposition}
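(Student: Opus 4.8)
The plan is to track how the defining data of the polynomial $[G]$ transform under the mirror-image operation $G \mapsto G^*$, and to verify that these transformations are precisely reproduced by the substitution $A \leftrightarrow B$, $a \leftrightarrow a^{-1}$. The key observation, as the excerpt already isolates, is that mirroring swaps every over-crossing with an under-crossing, which at the level of the skein expansion interchanges the roles of equations~(\ref{pos crossing}) and~(\ref{neg crossing}). Concretely, I would first record that a positive crossing in $G$ becomes a negative crossing in $G^*$, and that the expansion of the negative crossing is obtained from that of the positive crossing by interchanging the coefficients of the two smoothings, i.e. by the swap $A \leftrightarrow B$ applied to the monomial weights in Figure~\ref{fig:replacing crossings}. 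Since the third term (the wide-edge resolution \raisebox{-5pt}{\includegraphics[height=0.2in]{move5-3}}) is identical in both expansions and is itself mirror-symmetric, the set of states $\Gamma$ produced from $G^*$ coincides with the set produced from $G$, with the weight $A^{\alpha(\Gamma)}B^{\beta(\Gamma)}$ replaced by $A^{\beta(\Gamma)}B^{\alpha(\Gamma)}$.

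Next I would address the state-polynomial factor $P(\Gamma)$. Each state $\Gamma$ is a planar trivalent graph, so it has no crossings and is evaluated purely by the graph skein relations~(\ref{gi2})--(\ref{gi6}), whose closed-form coefficients are $\alpha,\beta,\gamma,\delta$. The heart of the argument is the explicit check that these four coefficients are invariant under the simultaneous substitution $A \leftrightarrow B$, $a \leftrightarrow a^{-1}$; this is exactly the symmetry already asserted in the paragraph preceding the proposition. I would verify it directly from the formulas
\[
\alpha = \frac{a-a^{-1}}{A-B}+1,\quad \beta = \frac{Aa^{-1}-Ba}{A-B}-A-B,\quad \gamma = \frac{B^2a-A^2a^{-1}}{A-B}+AB,\quad \delta = \frac{B^3a-A^3a^{-1}}{A-B},
\]
noting that the denominator $A-B$ picks up a sign under $A \leftrightarrow B$, and that each numerator also changes sign under the combined swap, so the quotients are fixed; the remaining polynomial terms $1$, $-(A+B)$, $AB$ are manifestly symmetric in $A$ and $B$. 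Since relations~(\ref{gi2})--(\ref{gi6}) are the only rules used to compute $P(\Gamma)$, and since mirroring a planar trivalent graph state yields an isotopic (indeed identical up to planar isotopy) diagram evaluated by the same relations, it follows that $P(\Gamma)$ is literally unchanged as a function once we apply the substitution; more precisely, the value $P(\Gamma)$ computed for a state of $G$ equals the value computed for the corresponding state of $G^*$ after performing $A \leftrightarrow B$, $a \leftrightarrow a^{-1}$.

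Combining the two ingredients, I would write out the state sum~(\ref{formula for [G]}) for $G^*$, substitute the transformed weights $A^{\beta(\Gamma)}B^{\alpha(\Gamma)}$ together with the (coefficient-invariant) factors $P(\Gamma)$, and recognize the result as precisely $[G]$ with $A$ and $B$ interchanged and $a$ replaced by $a^{-1}$. That is, applying the substitution to every appearance of $A$, $B$, $a$ in the expression for $[G]$ sends each term $A^{\alpha(\Gamma)}B^{\beta(\Gamma)}P(\Gamma)$ to $A^{\beta(\Gamma)}B^{\alpha(\Gamma)}P(\Gamma)$, which is exactly the corresponding term in the state sum for $G^*$, yielding $[G^*] = [G]\big|_{A \leftrightarrow B,\, a \to a^{-1}}$.

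The step I expect to be the main obstacle is not any of the algebra but the bookkeeping of the bijection between states of $G$ and states of $G^*$: one must argue carefully that mirroring, followed by the crossing replacements, produces the same collection of planar trivalent graphs with the exponents $\alpha(\Gamma)$ and $\beta(\Gamma)$ exactly interchanged, and that the common wide-edge term does not disturb this correspondence. Once this bijection is pinned down and the coefficient symmetry is verified, the conclusion is immediate, so I would present the coefficient check explicitly and treat the state correspondence with enough care to make the term-by-term matching unambiguous.
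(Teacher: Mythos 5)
Your proposal is correct and takes essentially the same approach as the paper: the paper's (very terse) justification is exactly the combination of (i) the two crossing expansions, under which switching every crossing interchanges the state weights $A^{\alpha(\Gamma)}B^{\beta(\Gamma)}$ and $A^{\beta(\Gamma)}B^{\alpha(\Gamma)}$, and (ii) the invariance of the coefficients $\alpha,\beta,\gamma,\delta$ (together with the manifestly symmetric terms $1-AB$, $A+B$, $AB$) under $A\leftrightarrow B$, $a\leftrightarrow a^{-1}$, which makes each $P(\Gamma)$ invariant under the substitution. Your write-up simply makes explicit the state-by-state bookkeeping that the paper leaves implicit.
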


\begin{remark} If $[G]$ changes by interchanging $A$ and $B$ and replacing $a$ with $a^{-1}$, then $G$ is a chiral RE 3-graph. Therefore the invariant can detect chiral RE 3-graphs (however, it does not detect all chiral RE 3-graphs).
\end{remark}
 
 A graph diagram is a \textit{connected sum} if it is displayed as two disjoint graph diagrams connected by two parallel embedded arcs (up to planar isotopy). We write $G'\, \sharp\, G''$ for the connected sum of graph diagrams $G'$ and $G''$. Moreover, we write $G' \cup G''$ for the disjoint union of $G'$ and $G''$.
 
 \begin{proposition}
 The following formulas hold for the graph polynomial with respect to disjoint union and connected sum:
 \[ [G'\, \sharp\, G''] = [G'] [G''], \qquad [G' \cup G''] =\alpha [G'] [G''], \]
 where $\alpha = \displaystyle\frac{a-a^{-1}}{A-B} + 1$.
 \end{proposition}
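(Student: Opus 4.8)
The plan is to prove both formulas by reducing the knotted diagrams to planar states and then invoking the multiplicativity of the underlying graph polynomial $P$ on planar trivalent graphs. I would begin by using the formula~(\ref{formula for [G]}) from the preceding remark, which expresses $[G]$ as a state sum $\sum_\Gamma A^{\alpha(\Gamma)}B^{\beta(\Gamma)}P(\Gamma)$ over the planar states of $G$. The key observation is that in a connected sum $G'\,\sharp\,G''$ (respectively a disjoint union $G'\cup G''$), every crossing lies entirely in one of the two summands, so the crossings can be resolved independently in each piece. Consequently the states of $G'\,\sharp\,G''$ are precisely the connected sums $\Gamma'\,\sharp\,\Gamma''$ of a state $\Gamma'$ of $G'$ with a state $\Gamma''$ of $G''$, with $\alpha(\Gamma'\,\sharp\,\Gamma'')=\alpha(\Gamma')+\alpha(\Gamma'')$ and similarly for $\beta$; the analogous factorization holds for the disjoint union.

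With this indexing in hand, the state sum factors as a product of state sums, and each claim reduces to a purely planar multiplicativity statement for $P$. The heart of the matter is therefore to establish that
\[ P(\Gamma'\,\sharp\,\Gamma'') = P(\Gamma')\,P(\Gamma''), \qquad P(\Gamma'\cup\Gamma'') = \alpha\,P(\Gamma')\,P(\Gamma'') \]
for planar trivalent graphs $\Gamma',\Gamma''$. Granting these, I would then write $[G'\,\sharp\,G''] = \sum_{\Gamma',\Gamma''} A^{\alpha(\Gamma')+\alpha(\Gamma'')}B^{\beta(\Gamma')+\beta(\Gamma'')}P(\Gamma')P(\Gamma'')$, which factors cleanly into $\bigl(\sum_{\Gamma'}A^{\alpha(\Gamma')}B^{\beta(\Gamma')}P(\Gamma')\bigr)\bigl(\sum_{\Gamma''}A^{\alpha(\Gamma'')}B^{\beta(\Gamma'')}P(\Gamma'')\bigr) = [G'][G'']$, with the extra factor $\alpha$ carried along in the disjoint-union case to yield $\alpha[G'][G'']$.

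For the planar multiplicativity of $P$, I would argue by induction on the structure of the planar states using the defining graph skein relations~(\ref{gi2})--(\ref{gi6}) and the normalization $P(\text{unknot})=1$. By uniqueness of $P$ (Theorem~\ref{thm:unique poly}), it suffices to check that the candidate products $P(\Gamma')P(\Gamma'')$ and $\alpha\,P(\Gamma')P(\Gamma'')$ satisfy the same skein relations when the local move is applied inside, say, the $\Gamma'$ factor; since the skein relations are local and the $\Gamma''$ factor is untouched, its polynomial simply passes through as a constant multiplier, and both sides transform identically. For the disjoint union, the base case is the relation~(\ref{gi2}), $P(\Gamma\cup\bigcirc)=\alpha\,P(\Gamma)$, which directly gives the factor $\alpha$ when $\Gamma''$ is a single circle; the general case then follows by expressing an arbitrary $\Gamma''$ via the skein relations and reducing to circles and the empty diagram.

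The main obstacle I anticipate is making the claim ``the states of $G'\,\sharp\,G''$ are exactly products of states'' genuinely rigorous in the connected-sum case, where the two parallel connecting arcs are standard edges that are \emph{not} severed during crossing resolution. One must verify that these connecting arcs never acquire crossings (this is where the hypothesis that the summands are joined by two embedded arcs up to planar isotopy is essential), so that every state of the connected sum really does decompose along those arcs into a connected sum of states of the two pieces. Once this combinatorial bookkeeping is pinned down, the algebraic factorization and the appeal to uniqueness of $P$ are routine; the delicate point is purely the geometric one of ensuring the two summands interact only through the fixed connecting arcs.
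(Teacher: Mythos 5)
Your proof has the same skeleton as the paper's: the paper resolves the crossings of $G''$ recursively (induction on the number of crossings, via the skein relations~(\ref{pos crossing}) and~(\ref{neg crossing})), which is exactly your state-sum factorization repackaged as an induction, and both arguments thereby reduce the proposition to multiplicativity of $P$ on \emph{planar} trivalent diagrams. Your disjoint-union half of that planar statement is sound: for fixed $\Gamma''$, the assignment $\Gamma \mapsto P(\Gamma\cup\Gamma'')$ is a genuine function on planar diagrams, every instance of~(\ref{gi2})--(\ref{gi6}) applied inside $\Gamma$ is a valid instance of the relations for the larger diagram, and its value on the unknot is $\alpha P(\Gamma'')$ by~(\ref{gi2}); since the relations are linear, the argument proving Theorem~\ref{thm:unique poly} shows any solution equals its unknot value times $P$, giving $P(\Gamma\cup\Gamma'')=\alpha P(\Gamma'')P(\Gamma)$.

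The gap is in the connected-sum half, and it is not the obstacle you flagged. (The worry about the connecting arcs is vacuous: by definition a connected sum diagram displays the two summands disjointly, joined by embedded arcs, so no crossing ever sits on those arcs and the state factorization is immediate.) The real problem is that connected sum is not an operation on diagrams alone: $\Gamma'\,\sharp\,\Gamma''$ depends on a choice of edge of $\Gamma'$ (and of $\Gamma''$) where the connecting arcs attach. Hence $\Gamma'\mapsto P(\Gamma'\,\sharp\,\Gamma'')$ is not a well-defined function on planar diagrams to which Theorem~\ref{thm:unique poly} applies, and your locality argument fails exactly at those instances of the skein relations whose disk meets the attachment edge. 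Concretely, if the attachment edge of $\Gamma'$ lies on a circle removed by~(\ref{gi2}), or on a digon collapsed by~(\ref{gi5}), the corresponding configuration inside $\Gamma'\,\sharp\,\Gamma''$ is not a circle or a digon at all --- it contains a copy of $\Gamma''$ --- so there is no instance of the relations for $P$ matching that move, and ``both sides transform identically'' has no content there. What is actually needed is a relative statement: a planar trivalent graph cut open along an edge, viewed in a disk with two boundary points, reduces under the skein relations to $P(\Gamma'')$ times a single arc --- equivalently, $P(\Gamma'\,\sharp\,\Gamma'')$ is independent of the attachment choices. That requires a relative analogue of Proposition~\ref{prop:polynomial} (reductions performed away from the marked edge), not just Theorem~\ref{thm:unique poly}. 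To be fair, the paper compresses the same difficulty into the single sentence that the planar base case ``follows from the graph skein relations''; but your proposal claims to close that step with an argument which, as stated, does not.
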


\begin{proof}
The proof is done by induction on the number of crossings in $G''$. Since we can slide edges under/over a crossing, we may assume that the crossings in $G''$ involve only standard edges. 

If $G''$ has no crossings then it is either the unknot or a planar diagram of an RE 3-graph. If $G'' = \bigcirc$ the statement clearly holds, since $G' \, \sharp \, \bigcirc = G', \, [\bigcirc] = 1$ and $[G' \cup \bigcirc] = \alpha [G']$. If $G''$ is a planar diagram of an RE 3-graph, then the statement follows from the graph skein relations.

Assume the statement is true for all graph diagrams $G''$ with less than $n$ crossings. If $G''$ contains $n$ crossings, then choose any of the crossings in $G''$ and use the skein relation~(\ref{pos crossing}) or ~(\ref{neg crossing}).  It is easy to see that the conclusion follows from the induction hypothesis. 
\end{proof}

  \subsection{The $SO(N)$ Kauffman polynomial}\label{ssec:one-variable poly}

The $SO(N)$ Kauffman polynomial is a one-variable specialization of the two-variable Kauffman polynomial, and is related to Chern-Simons gauge theory for  $SO(N)$, in the sense that the expectation value of Wilson loop operators in 3-dimensional $SO(N)$ Chern-Simons gauge theory gives the $SO(N)$ Kauffman polynomial.

The $SO(N)$ Kauffman polynomial, $D_L = D_L(q) \in \bbZ[q^{\pm 1}]$, of an unoriented link $L$ is a regular isotopy invariant of $L$ that takes value 1 for the unknot and satisfies 
\[D_{\,\raisebox{-3pt}{\includegraphics[height=0.15in]{poscrossing}}}\, - D_{\,\raisebox{-3pt}{\includegraphics[height=0.15in]{negcrossing}}}\,= (q-q^{-1}) \left[D_{\,\raisebox{-3pt}{\includegraphics[height=0.15in]{A-smoothing}}}\, - D_{\,\raisebox{-3pt}{\includegraphics[height=0.15in]{B-smoothing}}}\,\right], \quad D_{\,\raisebox{-3pt}{\includegraphics[height=0.15in]{poskink}}}\, = q^{N-1} D_{\,\raisebox{-3pt}{\includegraphics[height=0.15in]{arc}}}\,, \quad D_{\,\raisebox{-3pt}{\includegraphics[height=0.15in]{negkink}}}\, = q^{1-N} D_{\,\raisebox{-3pt}{\includegraphics[height=0.15in]{arc}}}.\]

Thus the one-variable specialization of the Kauffman polynomial corresponds to  $z = q-q^{-1}$ and $a = q^{N-1}$. Then the state model for $D_L(q) \in \bbZ[q^{\pm 1}]$ via planar trivalent graphs, that is, the Laurent polynomial $\textbf{P}_L(q)$, corresponds to the graph skein relations ~(\ref{gi2}) - (\ref{gi6}) with $A = q,\, B = q^{-1},\, a = q^{N-1}$. In particular 
\[\alpha = \frac{q^{N-1} - q^{1-N}}{q-q^{-1}} +1,\qquad \beta = \frac{q^{2-N} - q^{N-2}}{q-q^{-1}} - q-q^{-1},\]
\[ \gamma = \frac{q^{N-3} - q^{3-N}}{q- q^{-1}} + 1, \qquad \delta = \frac{q^{N-4} - q^{4-N}}{q - q^{-1}}.\]

\subsection{The case $N = 2$}\label{ssec:N=2}
It is worth considering the case when $N = 2$, as in this situation, the parameters $\alpha, \beta, \gamma$ and $\delta$ have a very simple form, and consequently, the polynomial of a planar trivalent graph can be easily evaluated.

Setting $N = 2$, we have $A = q, B = q^{-1}$ and $a = q$. In particular,
\[ \alpha = 2,\,\, \beta = - q - q^{-1}, \,\,\gamma = 0,\,\, \delta = - q - q^{-1}, \]
and the graph skein relations take the form shown in Figure~\ref{fig:N=2}.

\begin{figure}[ht]
\[P \left( \Gamma \cup \bigcirc \right) = 2\,P \left( \Gamma \right), \quad
 P \left( \,\raisebox{-6pt}{\includegraphics[height=0.25in]{resol}} \, \right) = P \left( \,  \raisebox{-3pt}{\includegraphics[height=0.17in]{resol-ho}}\,\right), \]
\[ P\left( \,\raisebox{-6pt}{\includegraphics[height=0.25in]{rem-edge}} \, \right) = -(q+q^{-1}) \,P \left(\,\raisebox{-6pt}{\includegraphics[height=0.25in]{arc}}\,\right), \quad
P \left( \, \raisebox{-13pt}{\includegraphics[height=0.45in]{rem-digon}}\, \right) = - (q+q^{-1}) \,P \left(\, \raisebox{-6pt}{\includegraphics[height=0.25in]{resol}}\,\right), \]
\begin{eqnarray*}
P \left( \,\raisebox{-13pt}{\includegraphics[height=0.45in, width=0.25in]{long-1}} \,\right) - P \left(\, \raisebox{-13pt}{\includegraphics[height=0.45in, width=0.25in]{long-2}}\, \right) =  P \left( \,\raisebox{-6pt}{\includegraphics[height=0.25in]{long-3}}\,\right) - P \left(\, \raisebox{-6pt}{\includegraphics[height=0.25in]{long-4}}\,\right)
 + P \left( \, \raisebox{-6pt}{\includegraphics[height=0.25in, width=0.25in]{long-5}} \,\right) - P \left( \, \raisebox{-6pt}{\includegraphics[height=0.25in , width=0.25in]{long-6}}\,\right)\\ + P \left(\, \raisebox{-6pt}{\includegraphics[height=0.25in, width=0.25in]{long-7}} \,\right) - P \left(\, \raisebox{-6pt}{\includegraphics[height=0.25in, width=0.25in]{long-8}}\,\right) - ( q+q^{-1} ) \, \left [P \left( \,\raisebox{-6pt}{\includegraphics[height=0.25in]{long-10}} \, \right) - P \left( \,  \raisebox{-6pt}{\includegraphics[height=0.25in]{long-9}}\,\right)\right ] . 
 \end{eqnarray*}
\caption{Graph skein relations with $A = q, B = q^{-1}$ and $a = q$}\label{fig:N=2}
\end{figure}

\begin{theorem}\label{thm:N=2}
In the case $N = 2$, for any planar trivalent diagram $G$ we have \[P(G) = [G] = 2^{c-1}(-q -q^{-1})^{n/2},\] where $c$ is the number of connected components and $n$ the number of vertices of $G$.
\end{theorem}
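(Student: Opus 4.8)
The plan is to prove the formula $P(G)=2^{c-1}(-q-q^{-1})^{n/2}$ by induction on the number of vertices $n$, handling the base case $n=0$ first and then showing that each application of the $N=2$ skein relations reduces $n$ by $2$ in a controlled way. First I would establish the base case: a planar trivalent diagram with $n=0$ vertices consists purely of disjoint circles (recall that a circle is regarded as a graph with no vertices). If there are $c$ such circles, then repeated application of the first relation in Figure~\ref{fig:N=2}, namely $P(\Gamma\cup\bigcirc)=2\,P(\Gamma)$, together with the normalization $P(\bigcirc)=1$, gives $P(G)=2^{c-1}$, which matches $2^{c-1}(-q-q^{-1})^{0}$.

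For the inductive step I would assume the formula holds for all planar trivalent diagrams with fewer than $n$ vertices (where $n\ge 2$ is even, since every trivalent graph has an even number of vertices) and take a diagram $G$ with exactly $n$ vertices. The key observation is that in the $N=2$ specialization the two ``removal'' relations both produce diagrams with strictly fewer vertices: the relation $P\!\left(\raisebox{-6pt}{\includegraphics[height=0.25in]{rem-edge}}\right)=-(q+q^{-1})\,P\!\left(\raisebox{-6pt}{\includegraphics[height=0.25in]{arc}}\right)$ removes a wide edge together with its two incident vertices (lowering $n$ by $2$ and multiplying by the factor $-q-q^{-1}$), while the digon relation $P\!\left(\raisebox{-13pt}{\includegraphics[height=0.45in]{rem-digon}}\right)=-(q+q^{-1})\,P\!\left(\raisebox{-6pt}{\includegraphics[height=0.25in]{resol}}\right)$ likewise removes two vertices and contributes the same factor. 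The strategy is therefore to locate in $G$ a configuration to which one of these two relations applies, reduce $n$ by $2$, and invoke the induction hypothesis; the factor $(-q-q^{-1})$ accumulates exactly once per two vertices removed, yielding the exponent $n/2$, while the connected-component bookkeeping must be tracked to recover $2^{c-1}$.

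The main obstacle, and the step requiring genuine care, is arguing that \emph{every} planar trivalent diagram with $n\ge2$ vertices actually contains such a reducible configuration, and that performing the reduction changes the component count $c$ in a way consistent with the target formula. For instance, removing a wide edge via relation~\eqref{gi4} may merge or split connected components, and collapsing a digon via relation~\eqref{gi5}/Figure~\ref{fig:N=2} similarly affects $c$; I would need to verify in each case that the net change in the prefactor $2^{c-1}$ exactly compensates. To guarantee a reducible configuration always exists, I would appeal to an Euler-characteristic or face-counting argument on the planar graph (as in the standard theory of planar trivalent graphs, where relations~\eqref{gi4}, \eqref{gi8} and \eqref{gi9} let one remove edges, bigons, and triangles respectively), showing that a connected trivalent planar graph with at least one vertex must contain a removable edge, a digon, or a configuration reducible by~\eqref{gi3} to one of these. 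Relation~\eqref{gi3} (equivalently the second identity in Figure~\ref{fig:N=2}) does not change the vertex count but lets one reroute wide edges, and I would use it to normalize local pictures so that a digon or a removable edge becomes visible. Once reducibility is secured, checking that the prefactor and the exponent track correctly under each local move is a routine bookkeeping verification, and the induction closes.
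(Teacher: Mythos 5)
Your strategy has a genuine gap at exactly the step you flag as ``the main obstacle,'' and the tool you propose for it is not strong enough. An Euler-characteristic count only shows that a planar trivalent diagram has a face of size at most $5$; it does not produce a configuration to which the vertex-removing relations apply. Indeed, there exist planar trivalent diagrams with the required wide-edge structure containing no removable edge, no loop, no digon, no square and no triangle: take any planar cubic graph of girth $5$ (the dodecahedron graph, say) and choose a perfect matching as the set of wide edges. For such diagrams one must first apply the vertex-preserving rerouting moves~(\ref{good moves}) --- the move coming from~(\ref{gi3}) and the move interchanging the two sides of relation~(\ref{gi6}) --- to create a reducible configuration; this is the content of Proposition~\ref{prop:polynomial} in the appendix, whose proof requires the alternating-walk machinery, not face counting. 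Worse for your induction: the second of those moves does \emph{not} preserve the polynomial, since by~(\ref{gi6}) the two diagrams differ by a combination of diagrams with fewer vertices. So your clean ``one factor of $-q-q^{-1}$ per two vertices removed'' bookkeeping breaks down, and to close the induction you must verify that the candidate formula $f(G)=2^{c-1}(-q-q^{-1})^{n/2}$ is compatible with relation~(\ref{gi6}). That verification is not routine: it requires a case analysis over all ways the six boundary points of the local picture can be distributed among connected components of the exterior of the picture, and it is precisely the heart of the proof.

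The paper sidesteps all of this by exploiting Theorem~\ref{thm:unique poly}: since the polynomial satisfying the skein relations with value $1$ on the unknot is unique, it suffices to check that $f$ itself satisfies the relations of Figure~\ref{fig:N=2}; every relation except the last is immediate, and the last is handled by the connectivity case analysis just described (the component count $c$ and vertex count $n$ are compared term by term in each case). If you want to salvage your induction, you would have to import Proposition~\ref{prop:polynomial} wholesale \emph{and} still carry out the compatibility check for~(\ref{gi6}); alternatively, abandon the induction and argue by uniqueness as the paper does --- which is both shorter and is where the work you dismissed as ``routine bookkeeping'' actually lives.
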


\begin{proof} By Theorem~\ref{thm:unique poly}, it suffices to prove that the polynomial $f = 2^{c-1}(-q -q^{-1})^{n/2}$ takes value $1$ for the unknot and satisfies the graph skein relations in Figure~\ref{fig:N=2}. All of these are obviously satisfied, except for the last identity which requires some explanations. 

Since the diagrams in a single graph skein relation are identical outside the region shown, to check that the polynomial satisfies the last skein relation, we need to consider all possible ways to connect the six boundary points outside the region. Label these points 1 to 6 starting with the rightmost point on the top and going counterclockwise. The six points may live in one, two or three distinct connect components, and since the skein relation is invariant under reflections, we only need to consider the following cases: 

\textit{Case 1.} All points $1, 2, 3, 4, 5, 6$ are in the same connected component: $\raisebox{-15pt}{\includegraphics[height=0.5in]{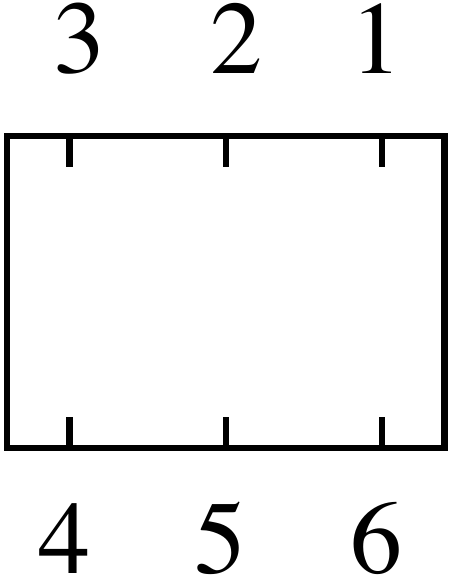}}$\,. The polynomial $f$ depends only on the number of connected components and the number of trivalent vertices of the graph, and therefore we have $f\left(\,\raisebox{-10pt}{\includegraphics[height=0.35in]{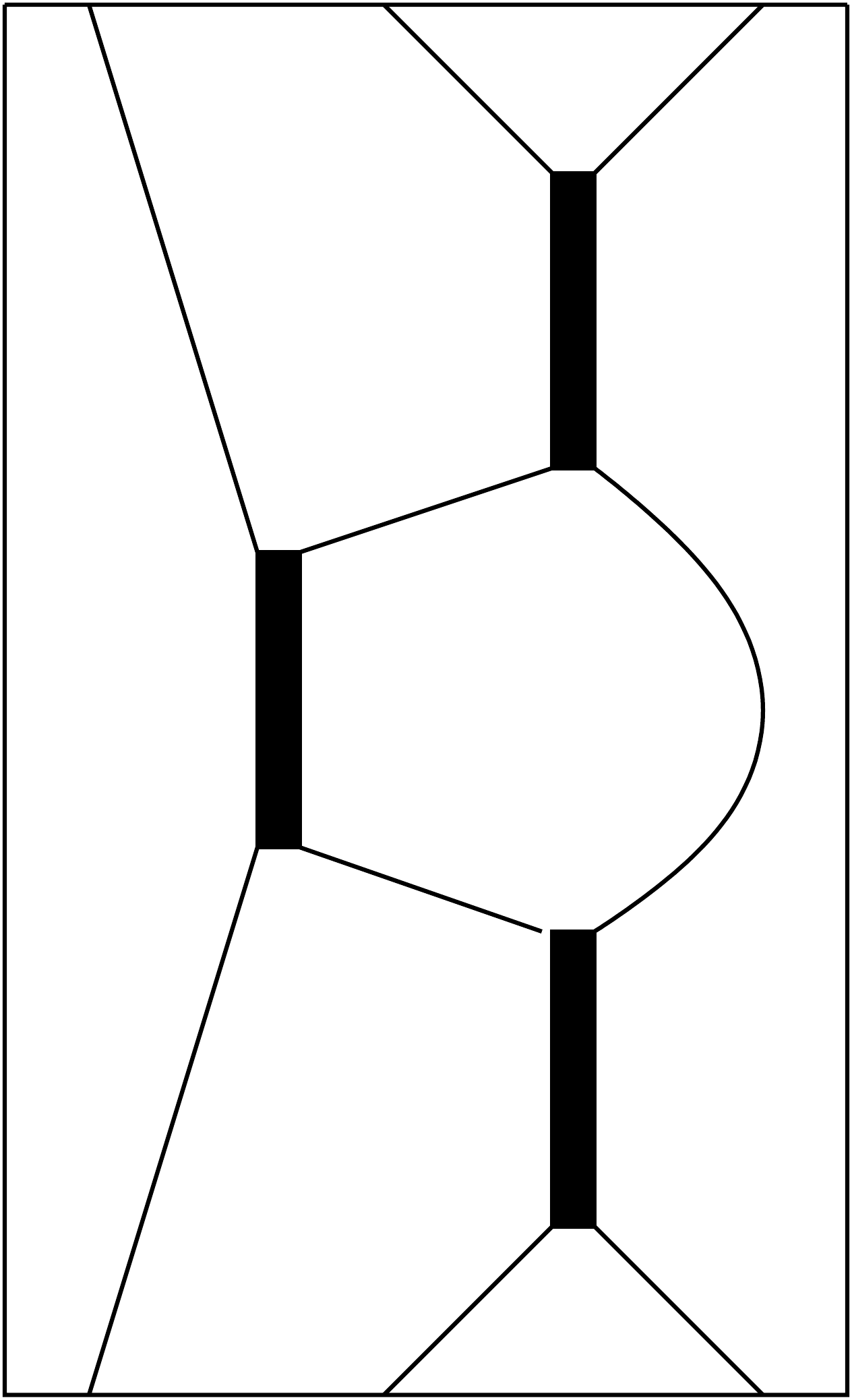}}\,\right) = f\left(\,\raisebox{-10pt}{\includegraphics[height=0.35in]{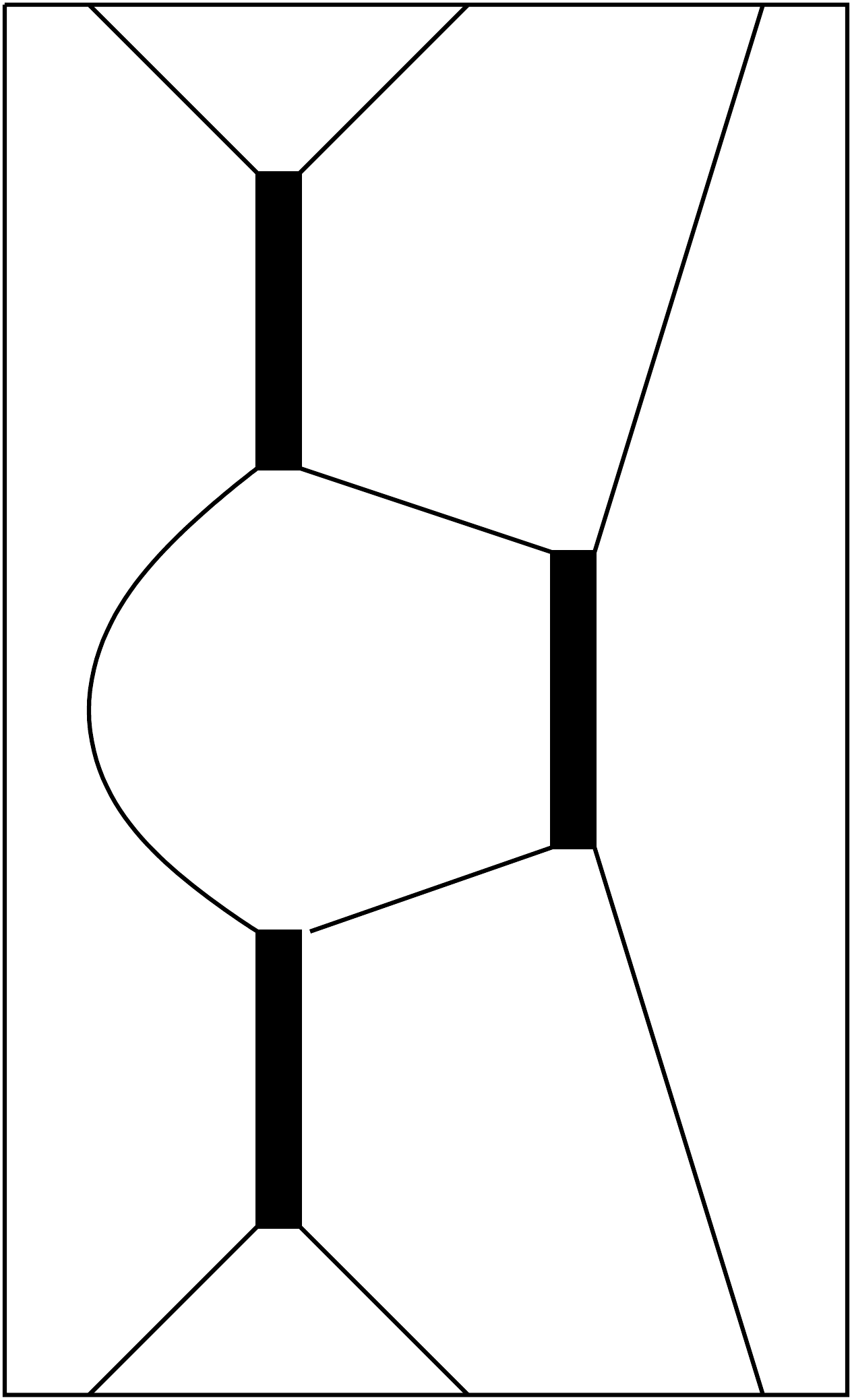}}\, \right), f\left(\,\raisebox{-7pt}{\includegraphics[height=0.25in]{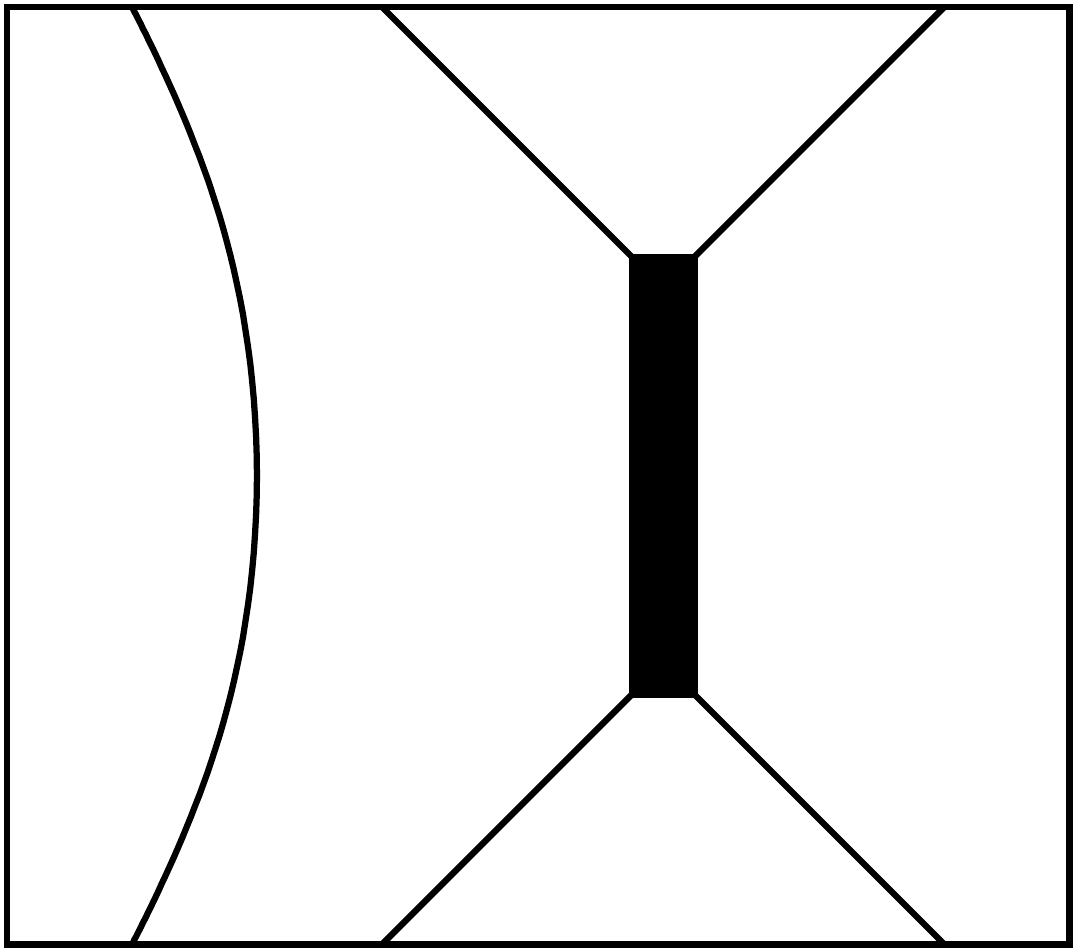}}\,\right) = f\left(\,\raisebox{-7pt}{\includegraphics[height=0.25in]{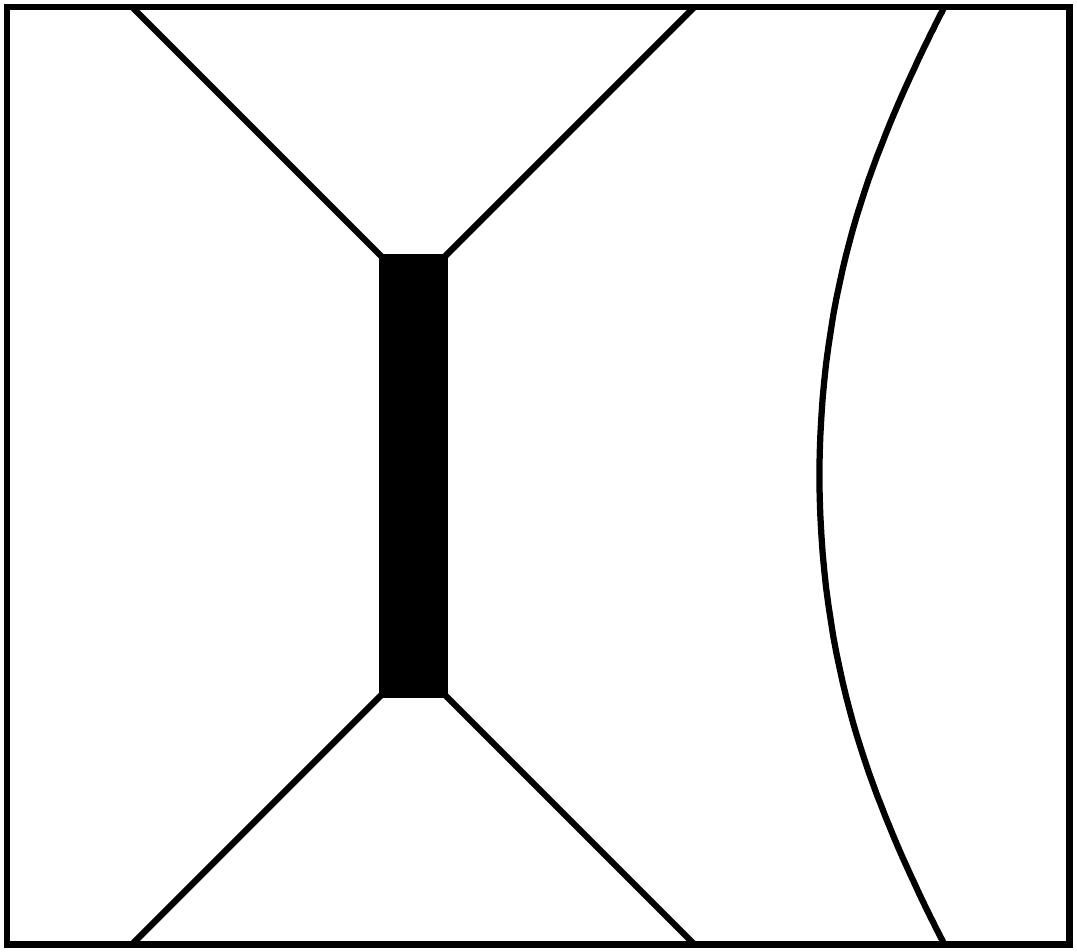}}\, \right), f\left(\,\raisebox{-7pt}{\includegraphics[height=0.25in]{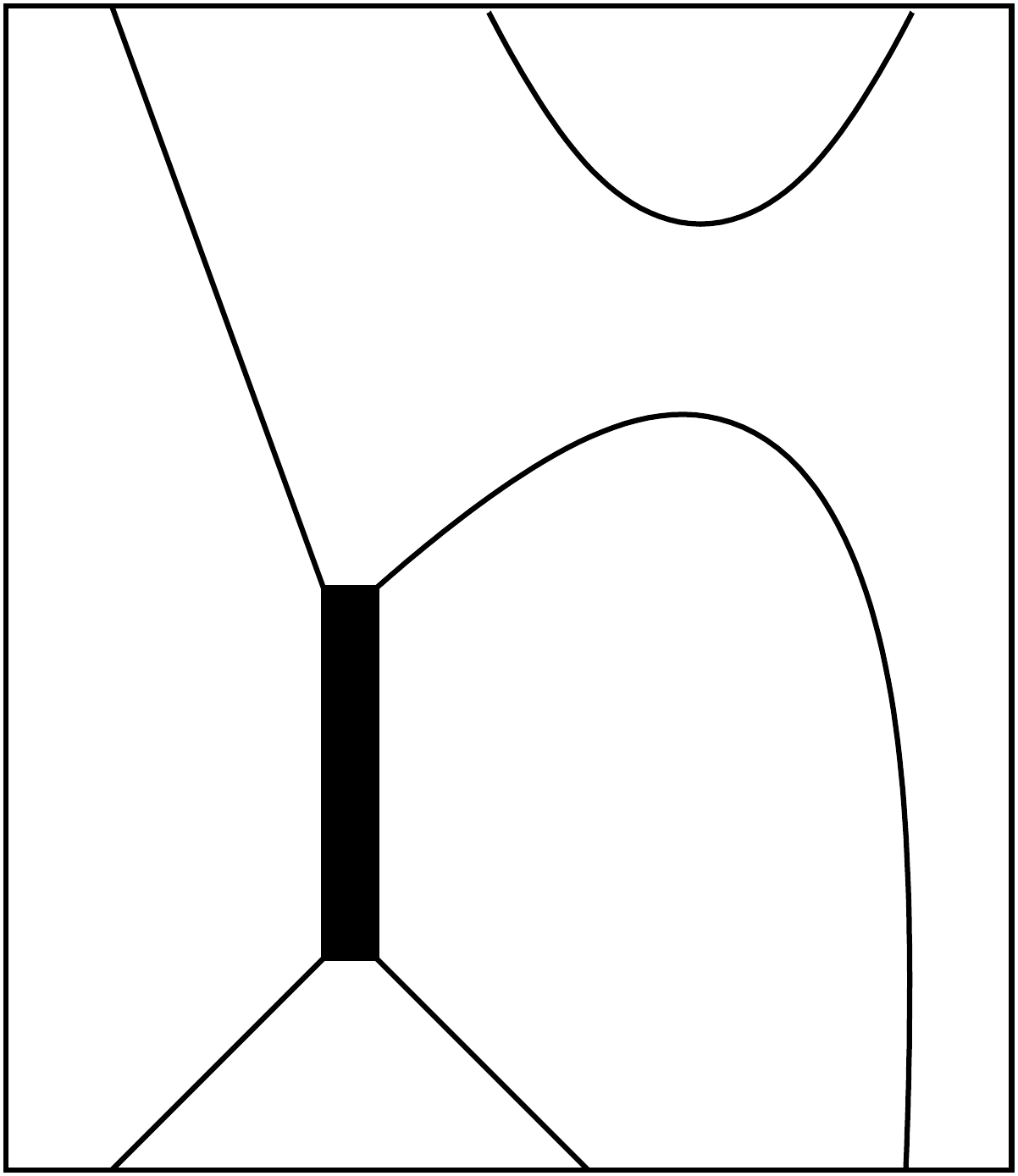}}\,\right) = f\left(\,\raisebox{-7pt}{\includegraphics[height=0.25in]{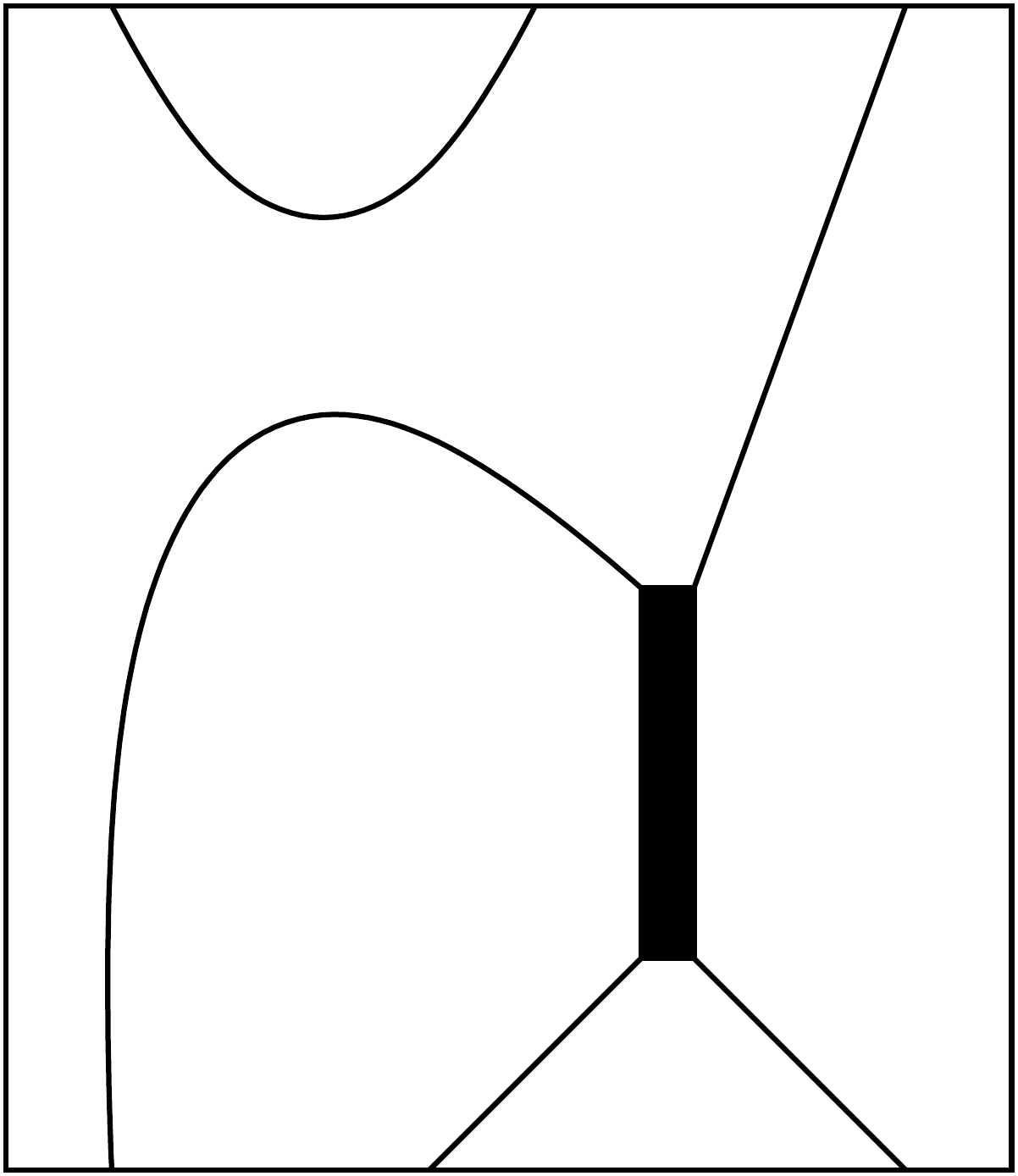}}\, \right), f\left(\,\raisebox{-7pt}{\includegraphics[height=0.25in]{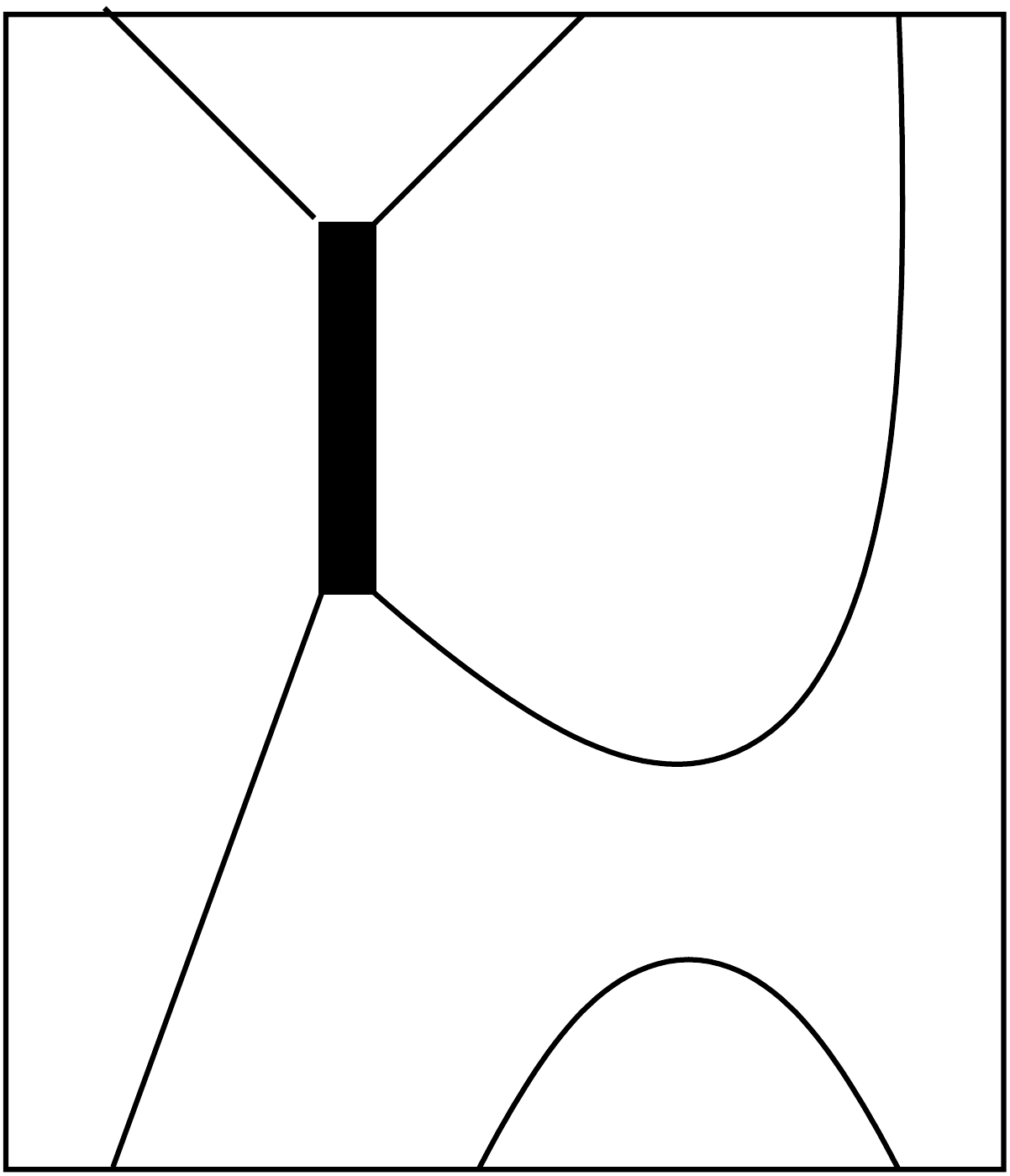}}\,\right) = f\left(\,\raisebox{-7pt}{\includegraphics[height=0.25in]{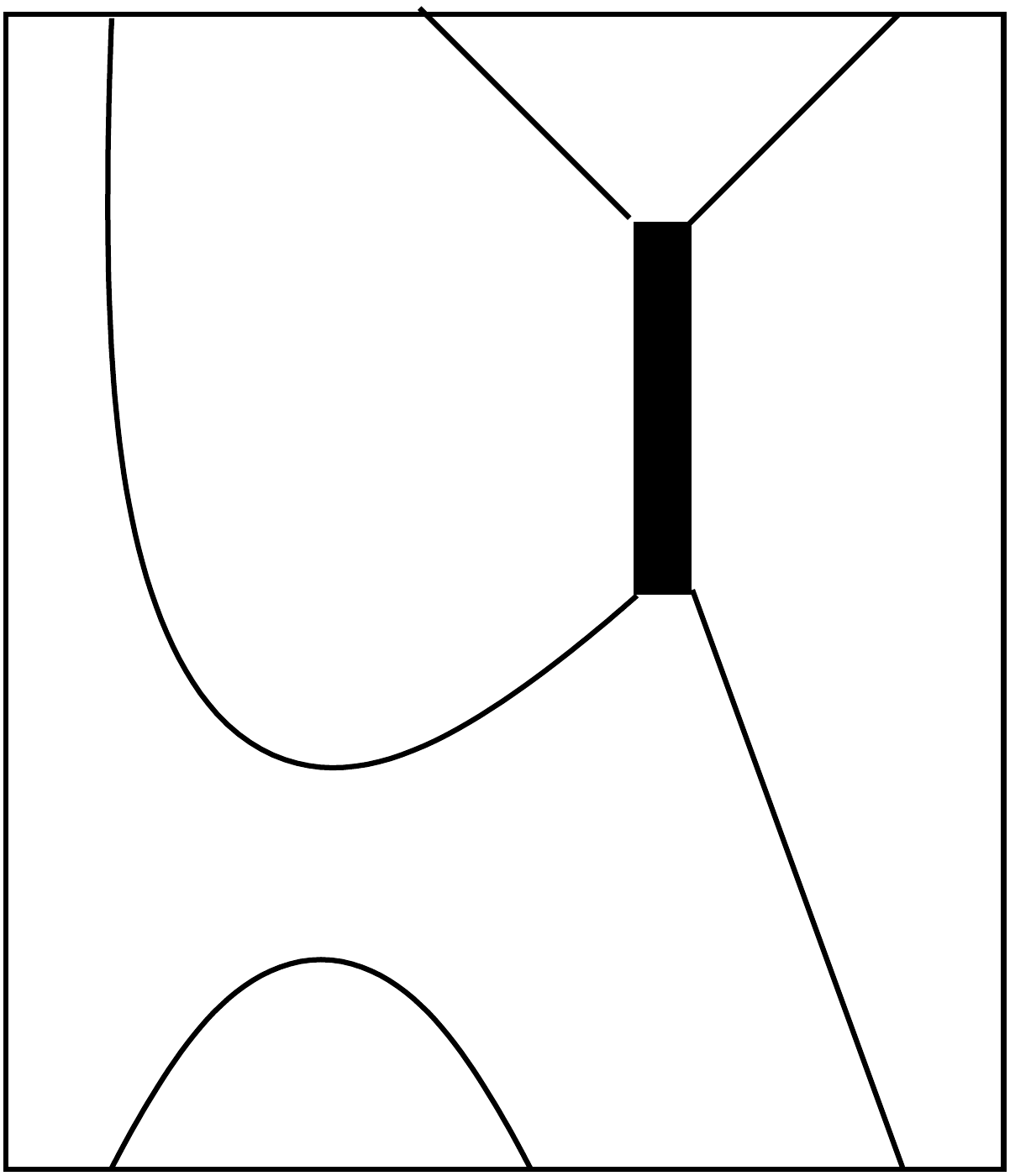}}\, \right)$ and $f\left(\,\raisebox{-7pt}{\includegraphics[height=0.25in]{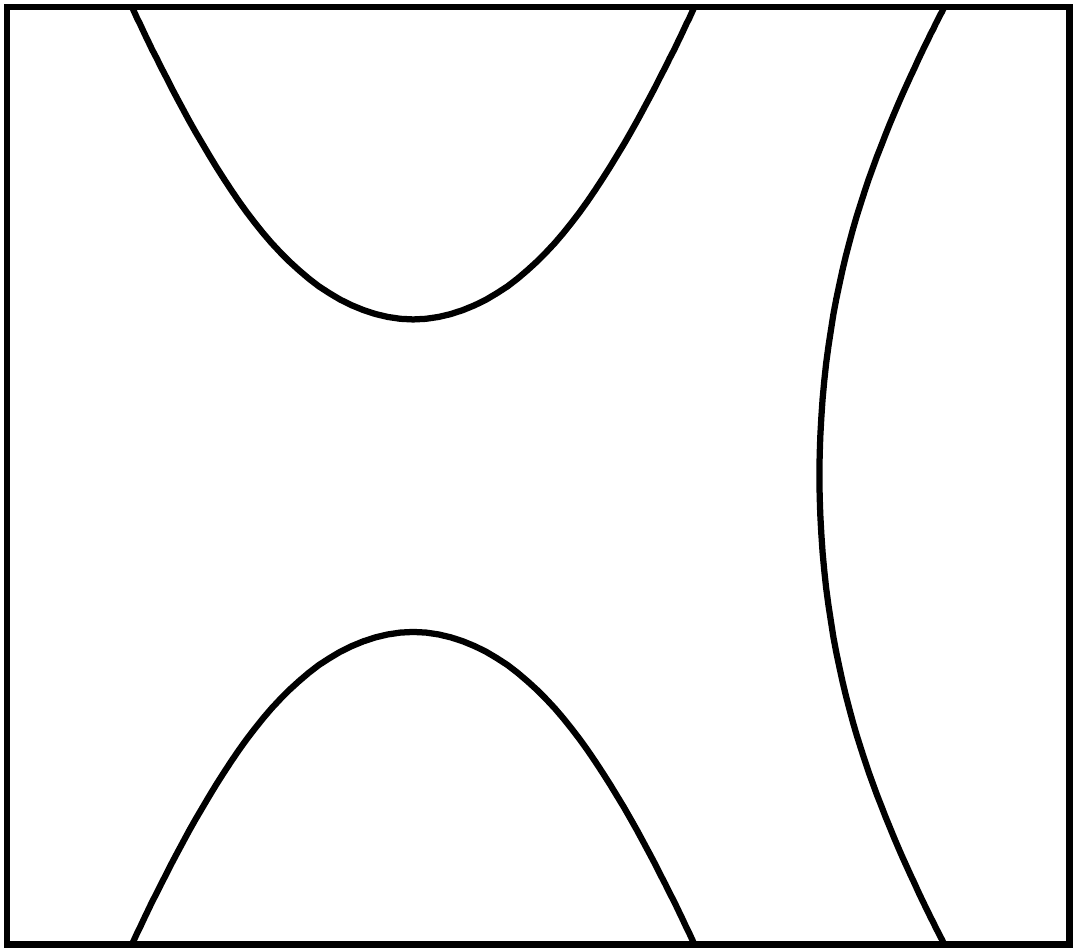}}\,\right) = f\left(\,\raisebox{-7pt}{\includegraphics[height=0.25in]{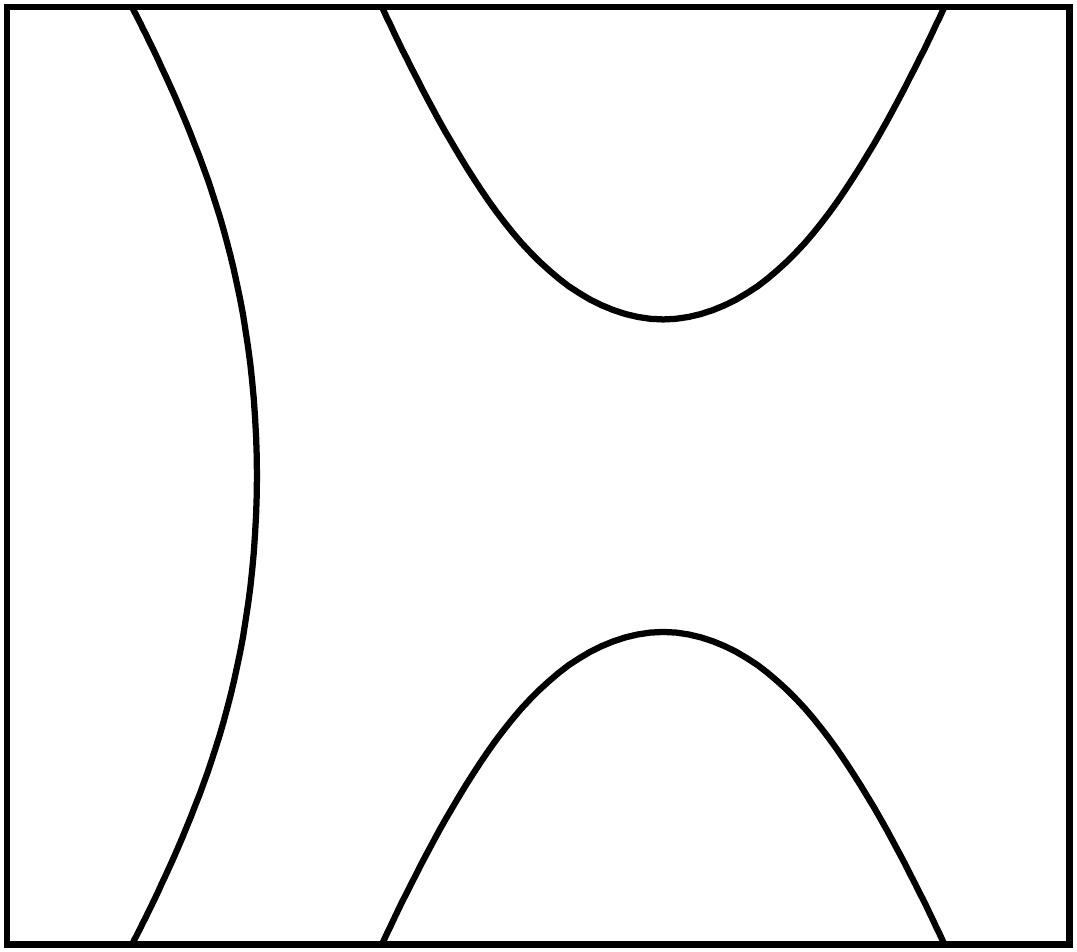}}\, \right)$. Thus the identity is trivially satisfied.

\textit{Case 2.} The points are in two connected components, say: $\raisebox{-15pt}{\includegraphics[height=0.5in]{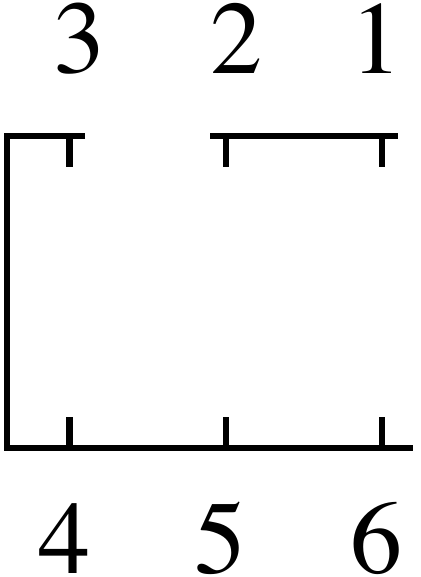}}$\,.  In this case we have $f\left(\,\raisebox{-10pt}{\includegraphics[height=0.35in]{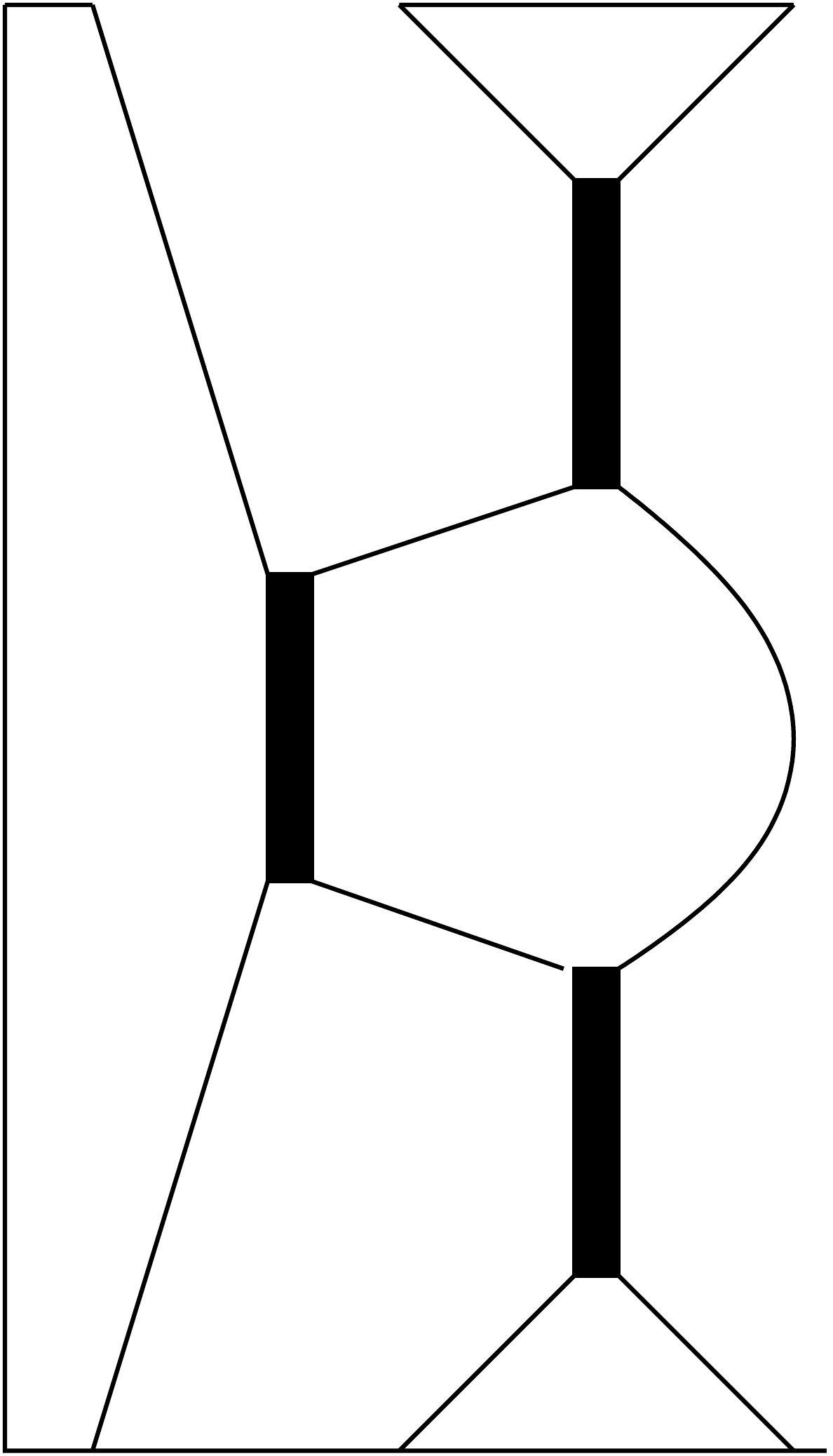}}\,\right) = f\left(\,\raisebox{-10pt}{\includegraphics[height=0.35in]{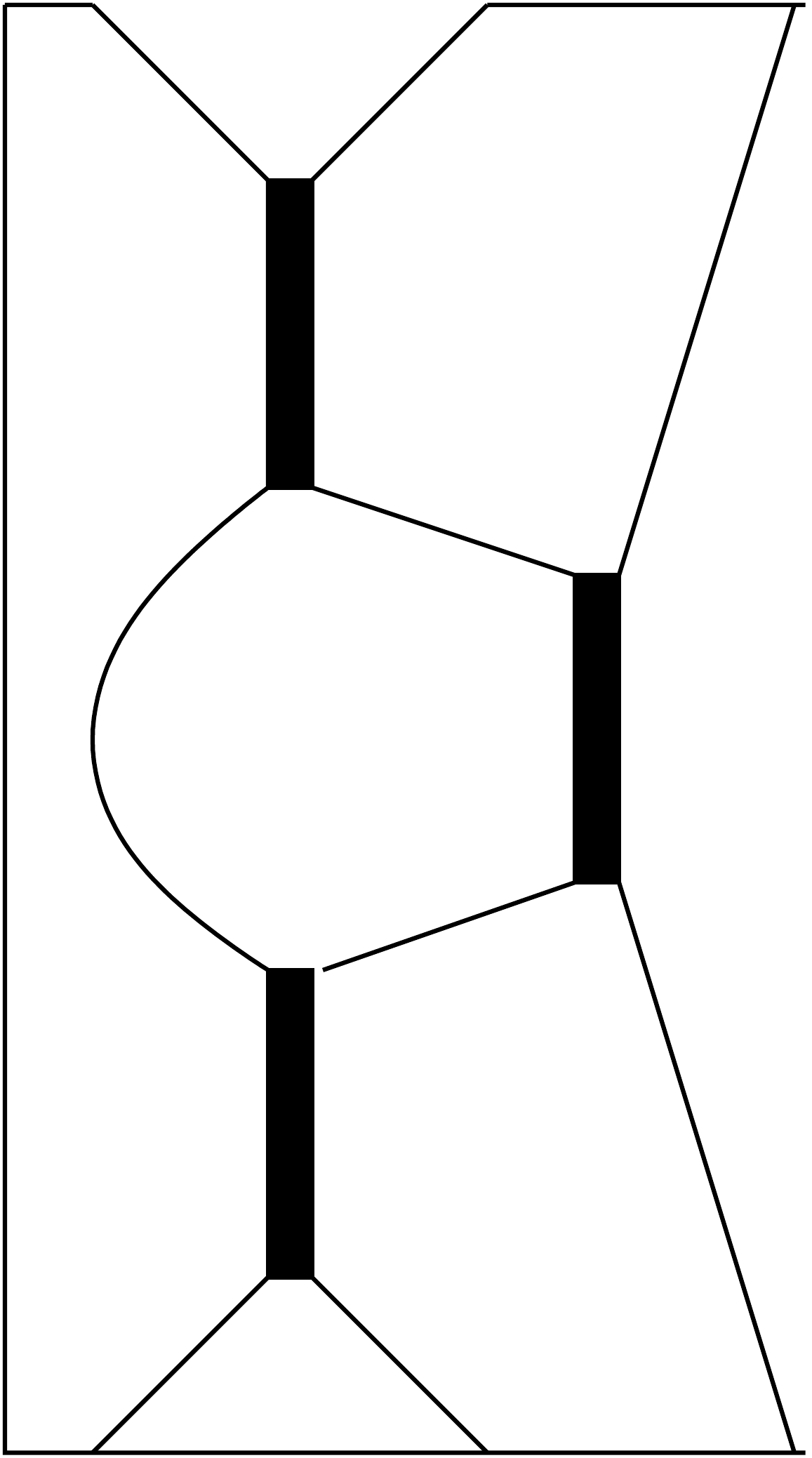}}\, \right), f\left(\,\raisebox{-7pt}{\includegraphics[height=0.25in]{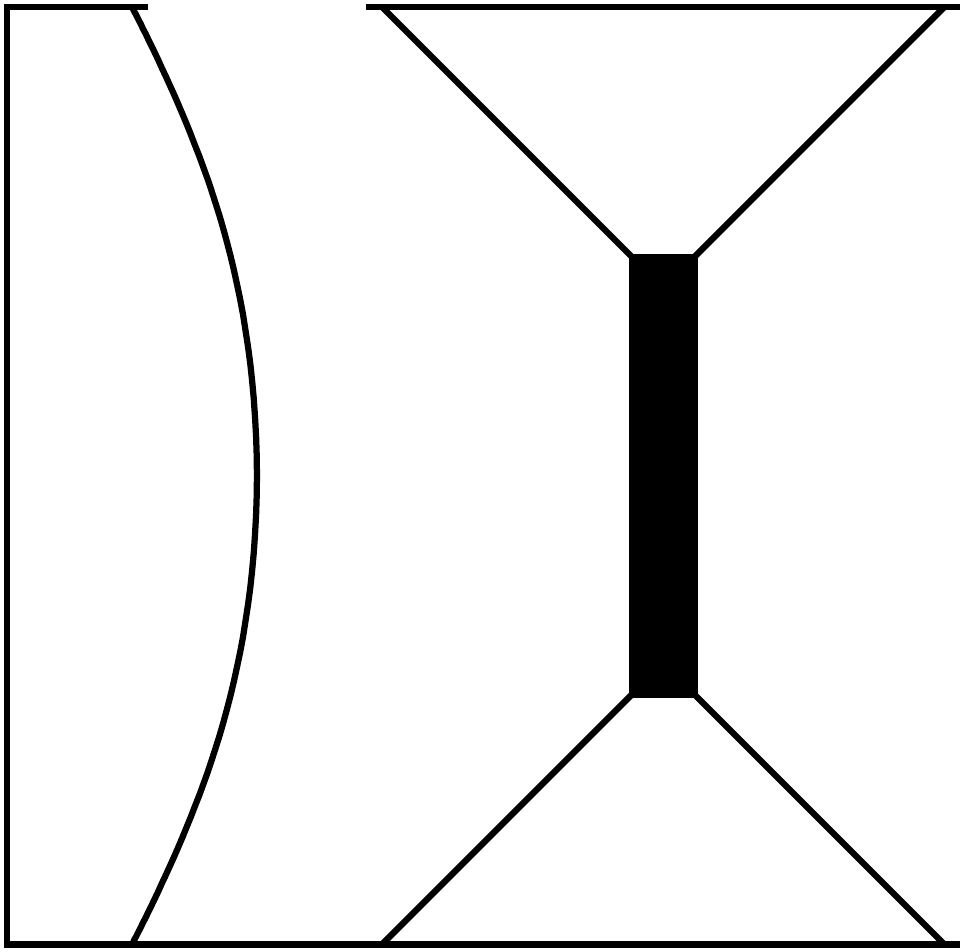}}\,\right) = f\left(\,\raisebox{-7pt}{\includegraphics[height=0.25in]{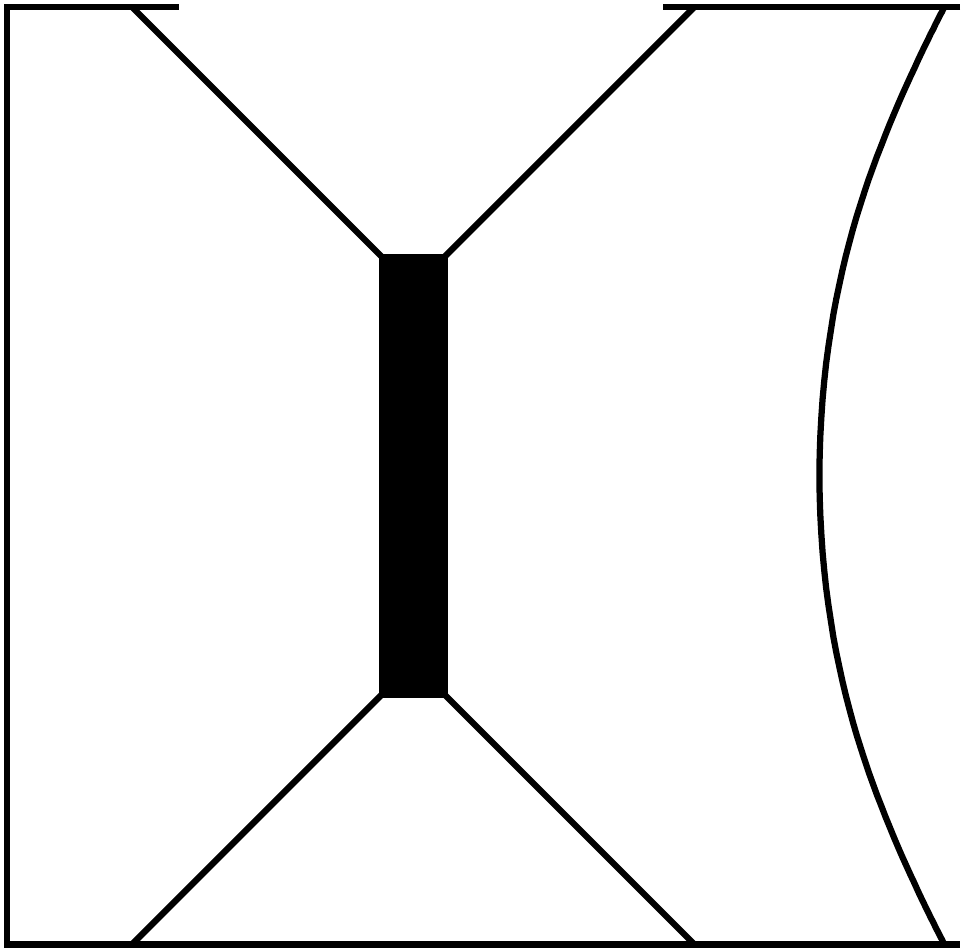}}\, \right), f\left(\,\raisebox{-7pt}{\includegraphics[height=0.25in]{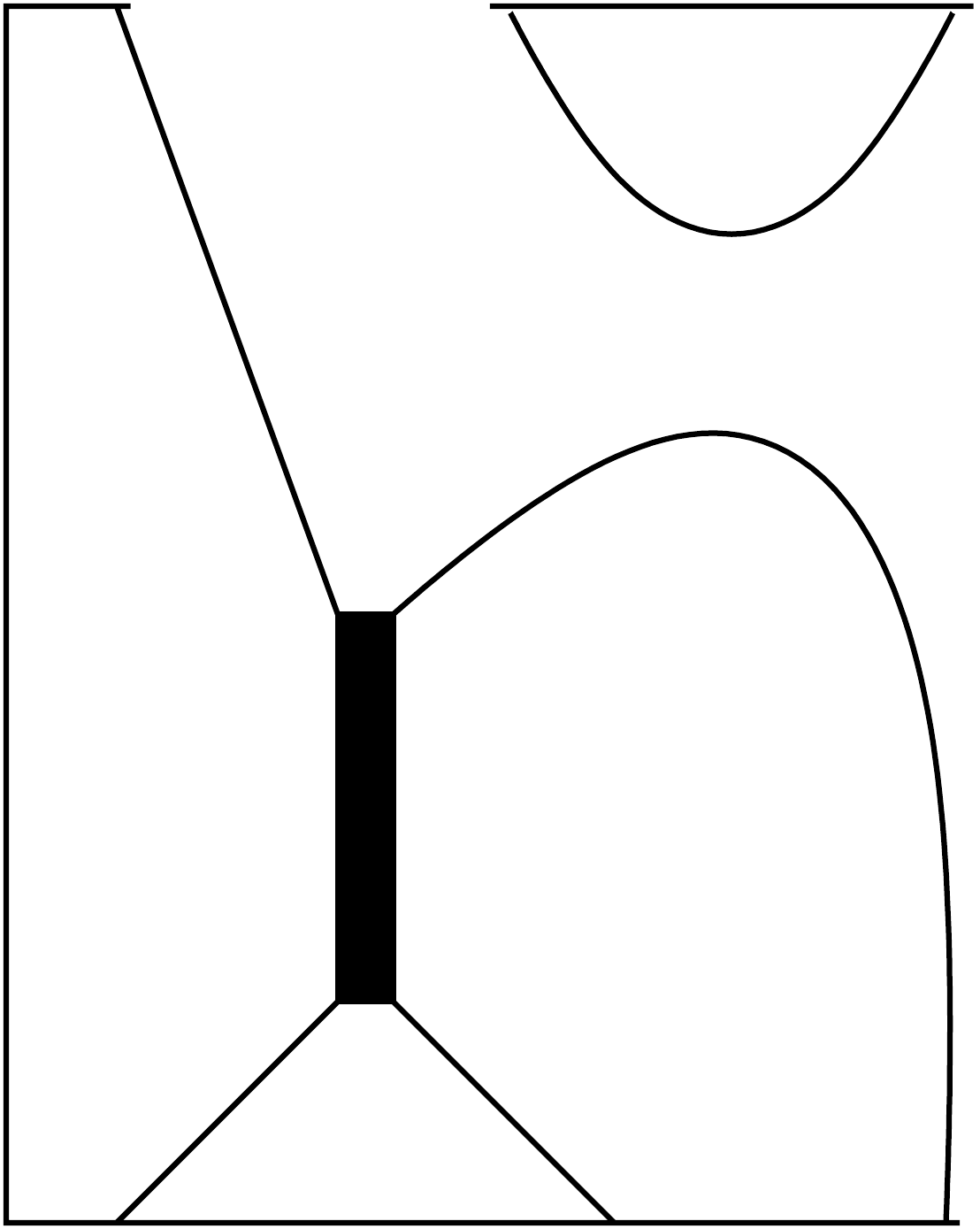}}\,\right) = 2 f\left(\,\raisebox{-7pt}{\includegraphics[height=0.25in]{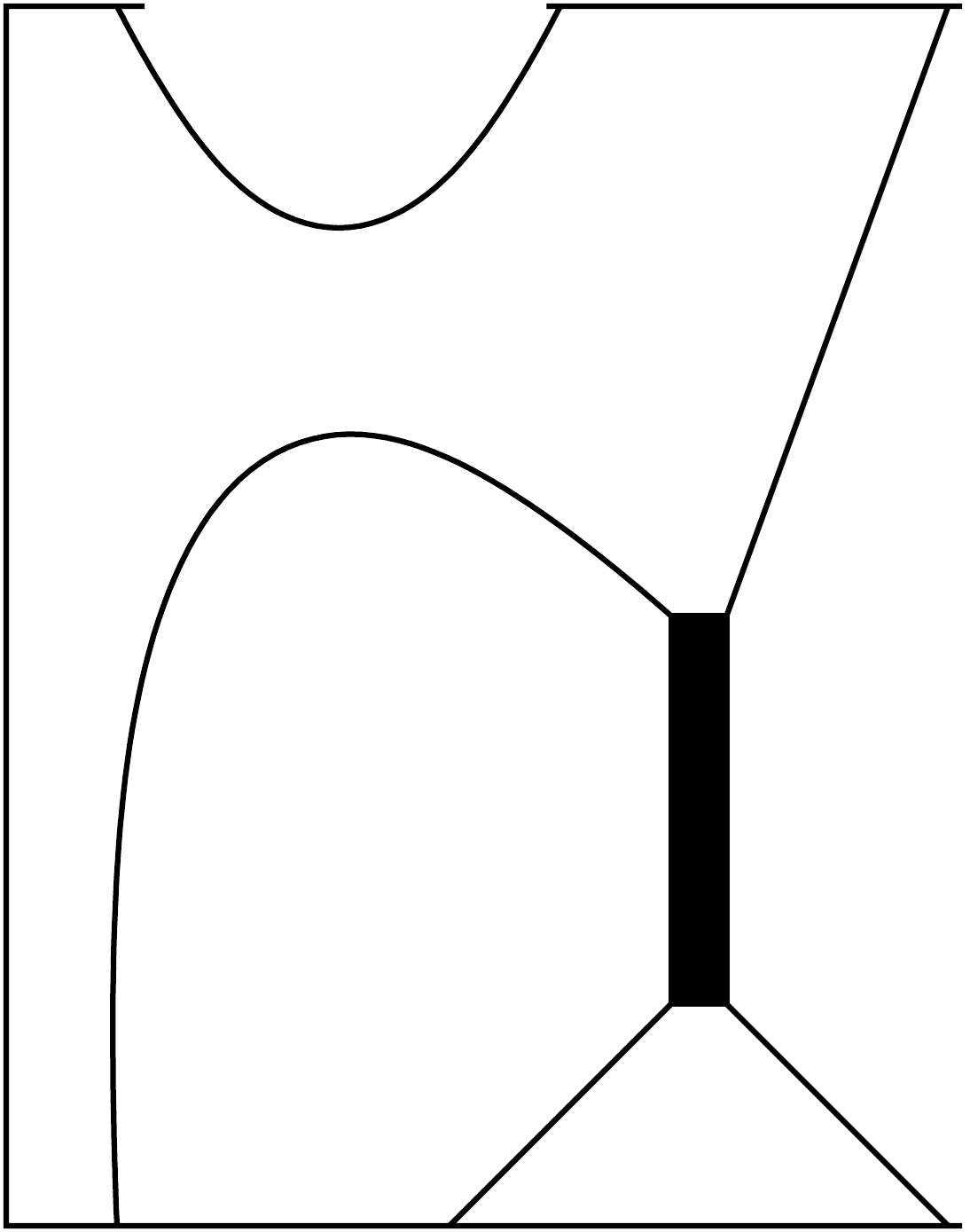}}\, \right)$, $f\left(\,\raisebox{-7pt}{\includegraphics[height=0.25in]{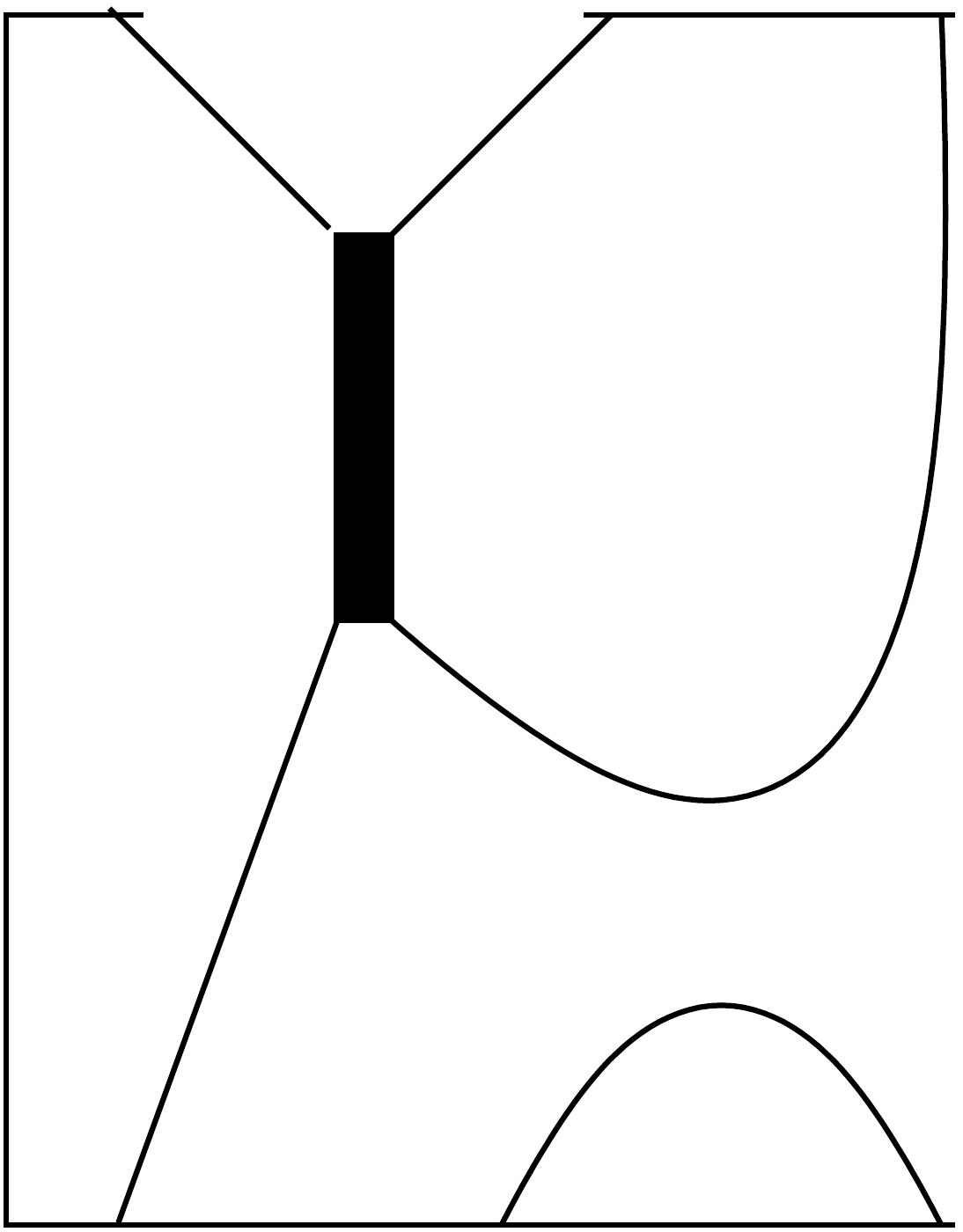}}\,\right) = f\left(\,\raisebox{-7pt}{\includegraphics[height=0.25in]{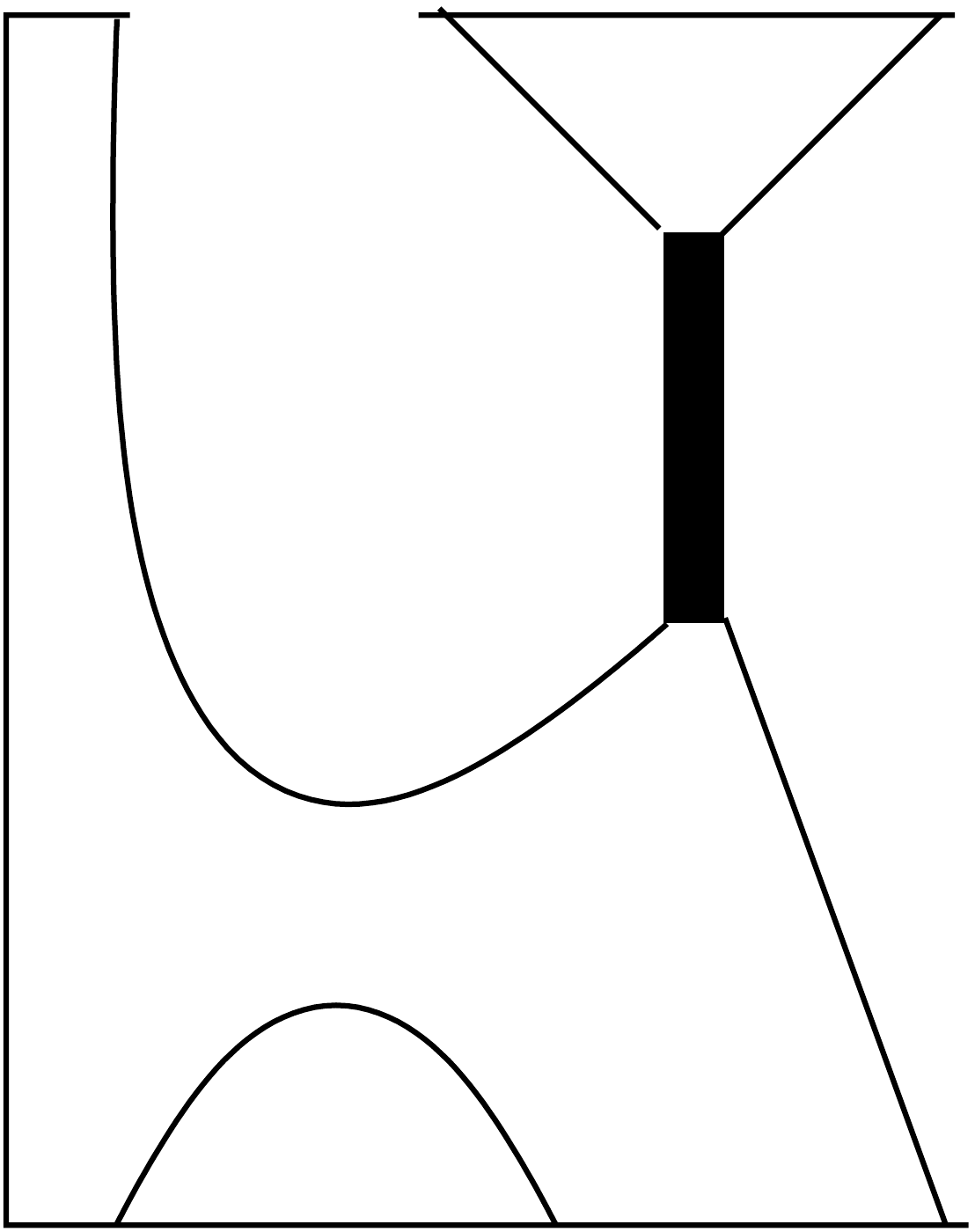}}\, \right)$ and $2f\left(\,\raisebox{-7pt}{\includegraphics[height=0.25in]{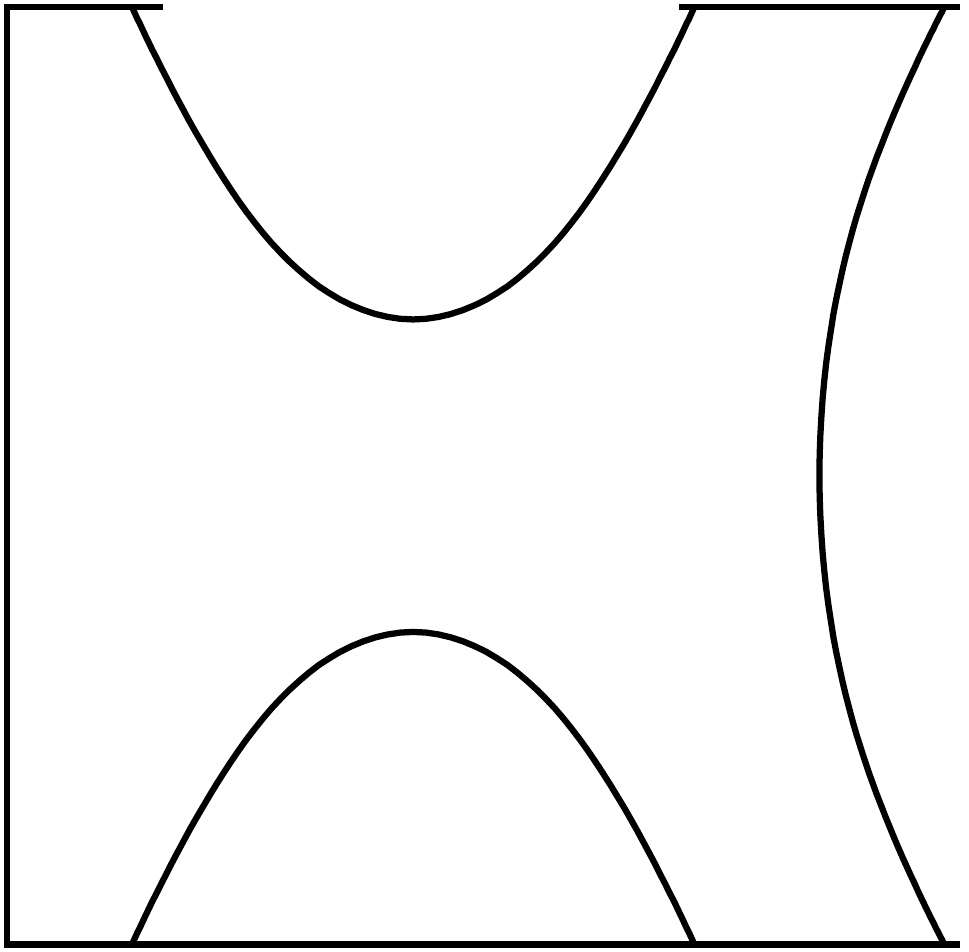}}\,\right) = f\left(\,\raisebox{-7pt}{\includegraphics[height=0.25in]{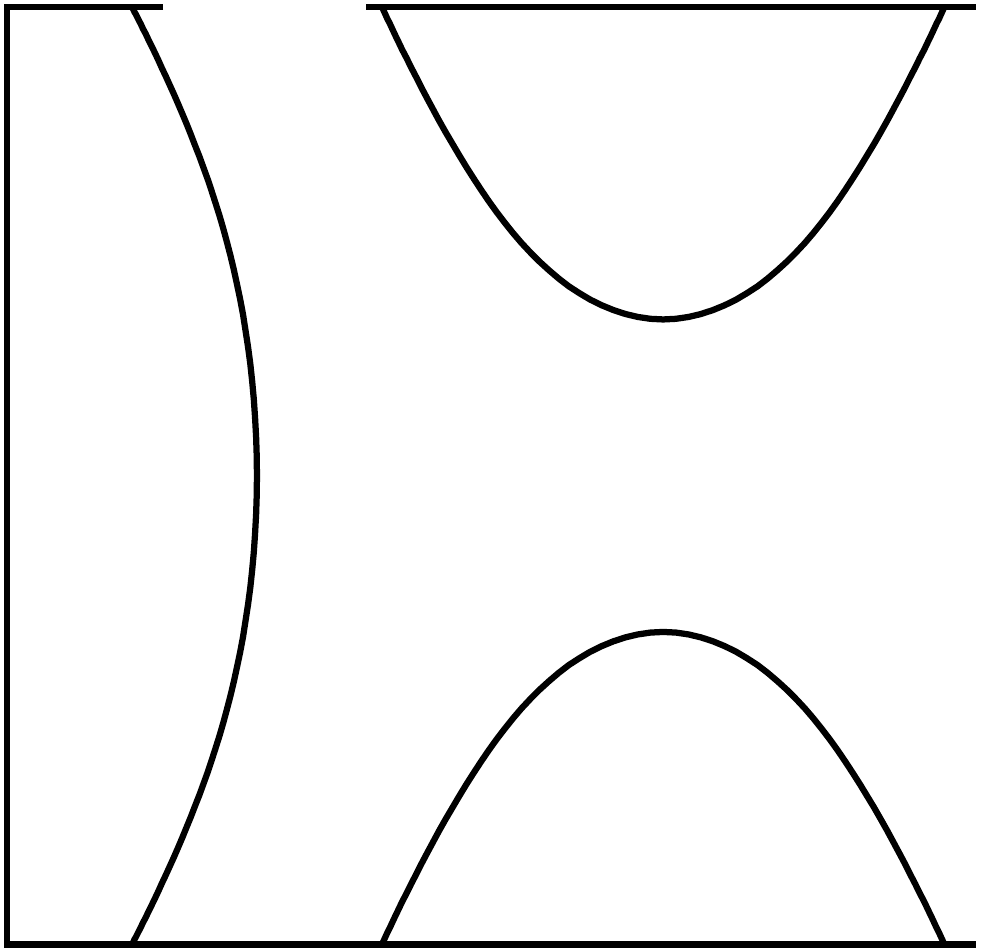}}\, \right)$. The identity becomes 
\[0 =  f\left(\,\raisebox{-7pt}{\includegraphics[height=0.25in]{long-6-case2}}\, \right) + (q + q^{-1})  f\left(\,\raisebox{-7pt}{\includegraphics[height=0.25in]{long-10-case2}}\, \right) =  (-q - q^{-1}) f\left(\,\raisebox{-7pt}{\includegraphics[height=0.25in]{long-10-case2}}\, \right) + (q + q^{-1})  f\left(\,\raisebox{-7pt}{\includegraphics[height=0.25in]{long-10-case2}}\, \right) = 0,\]
and hence it is satisfied in this case, as well.

\textit{Case 3.} The points are in three connected components, say $\raisebox{-15pt}{\includegraphics[height=0.5in]{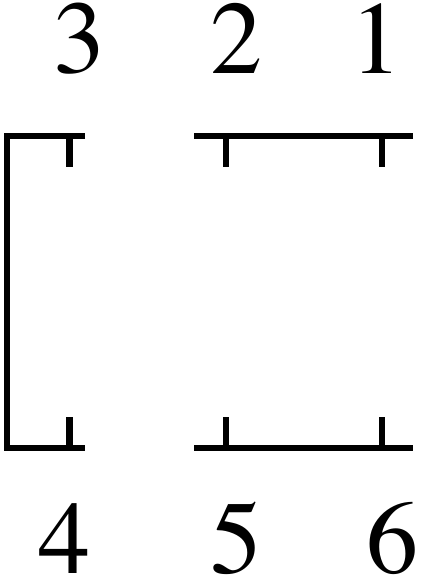}}\,,\raisebox{-15pt}{\includegraphics[height=0.5in]{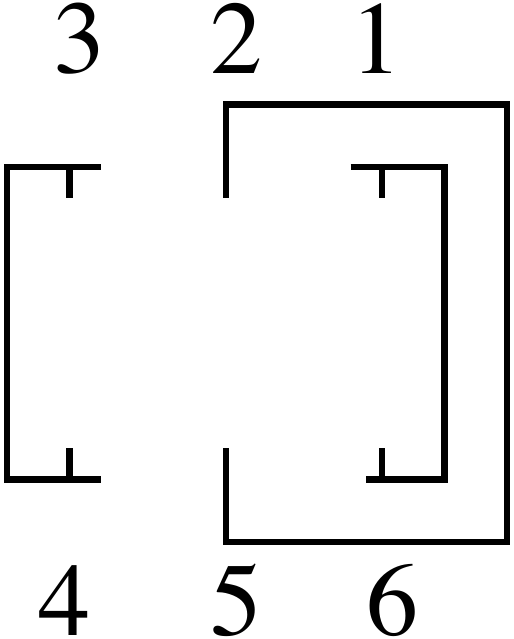}}\,$, or $\raisebox{-15pt}{\includegraphics[height=0.5in]{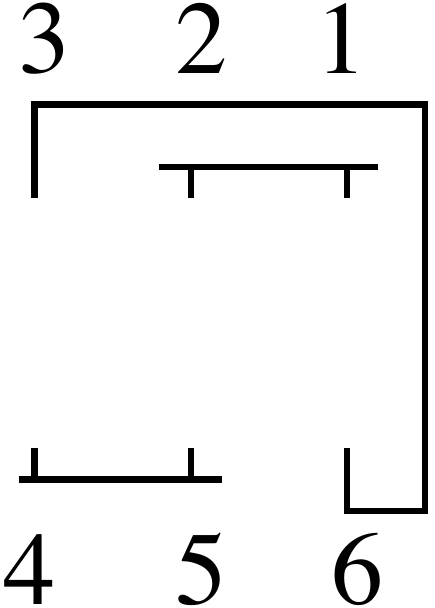}}\,$. In the first situation we have $f\left(\,\raisebox{-10pt}{\includegraphics[height=0.35in]{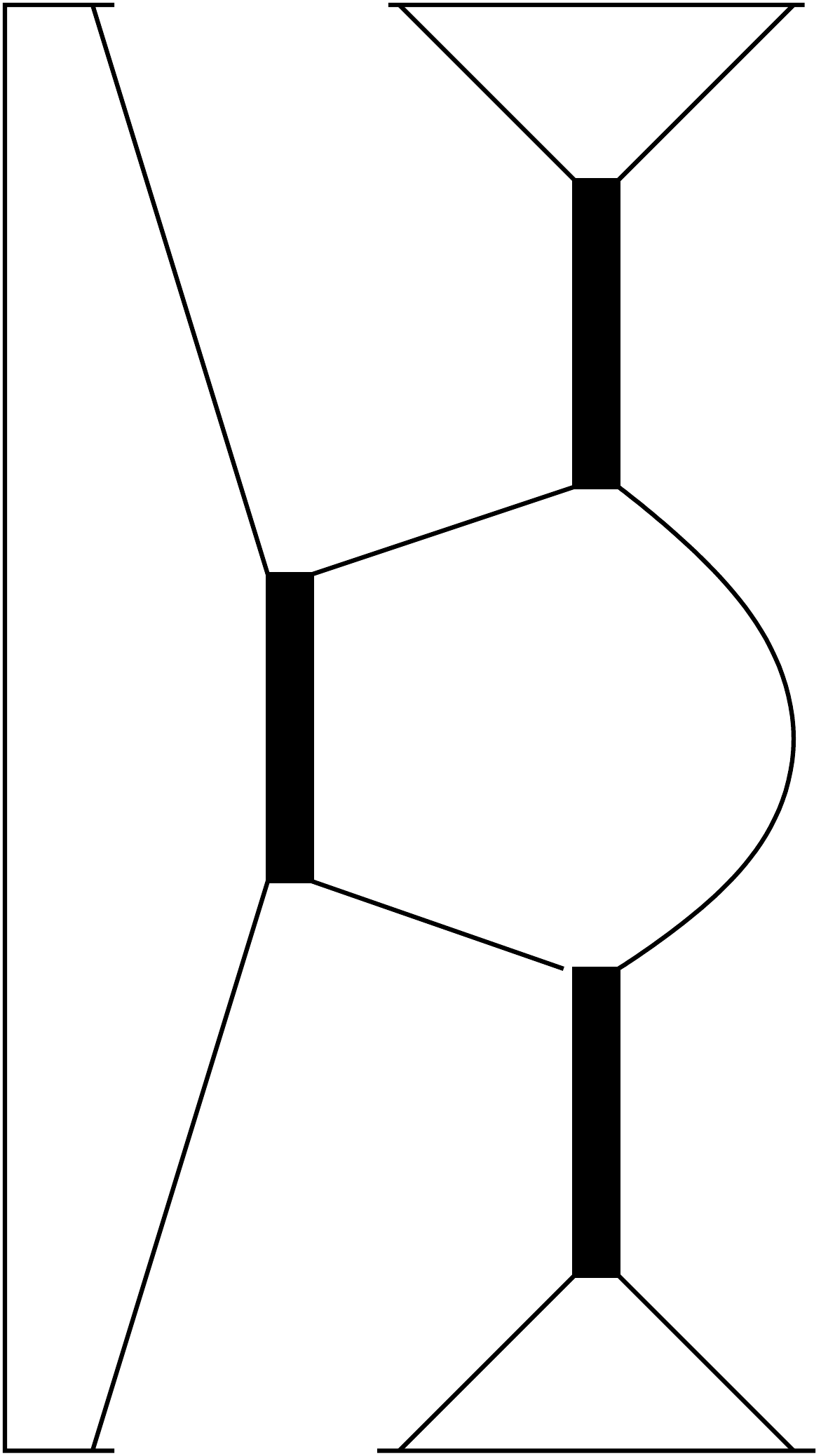}}\,\right) = f\left(\,\raisebox{-10pt}{\includegraphics[height=0.35in]{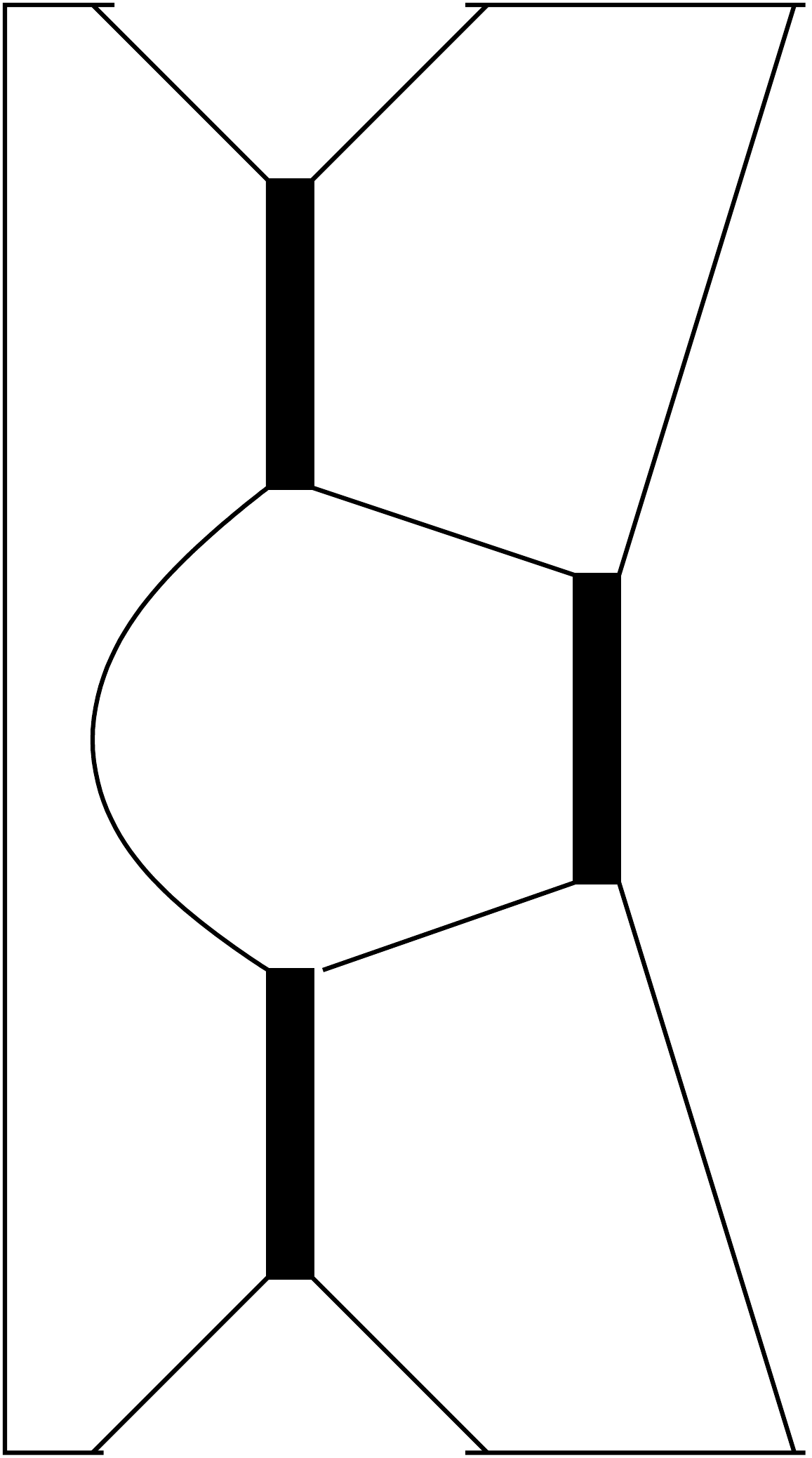}}\, \right), f\left(\,\raisebox{-7pt}{\includegraphics[height=0.25in]{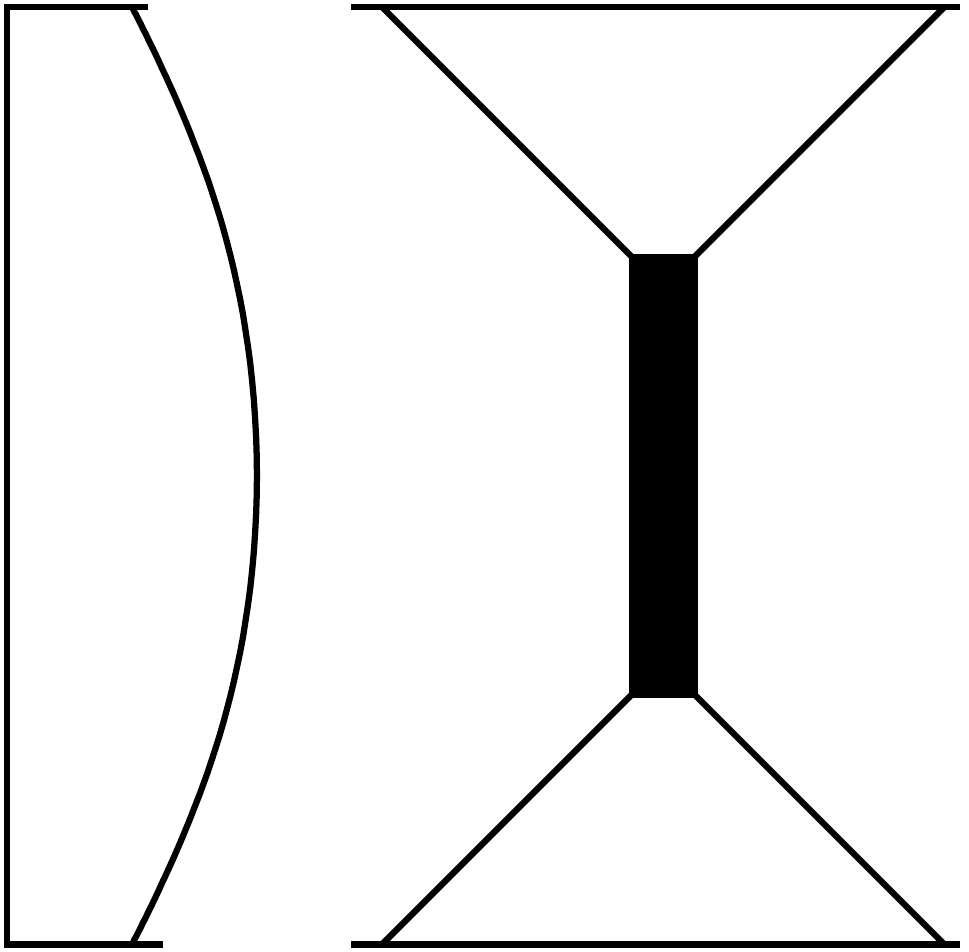}}\,\right) = 2 f\left(\,\raisebox{-7pt}{\includegraphics[height=0.25in]{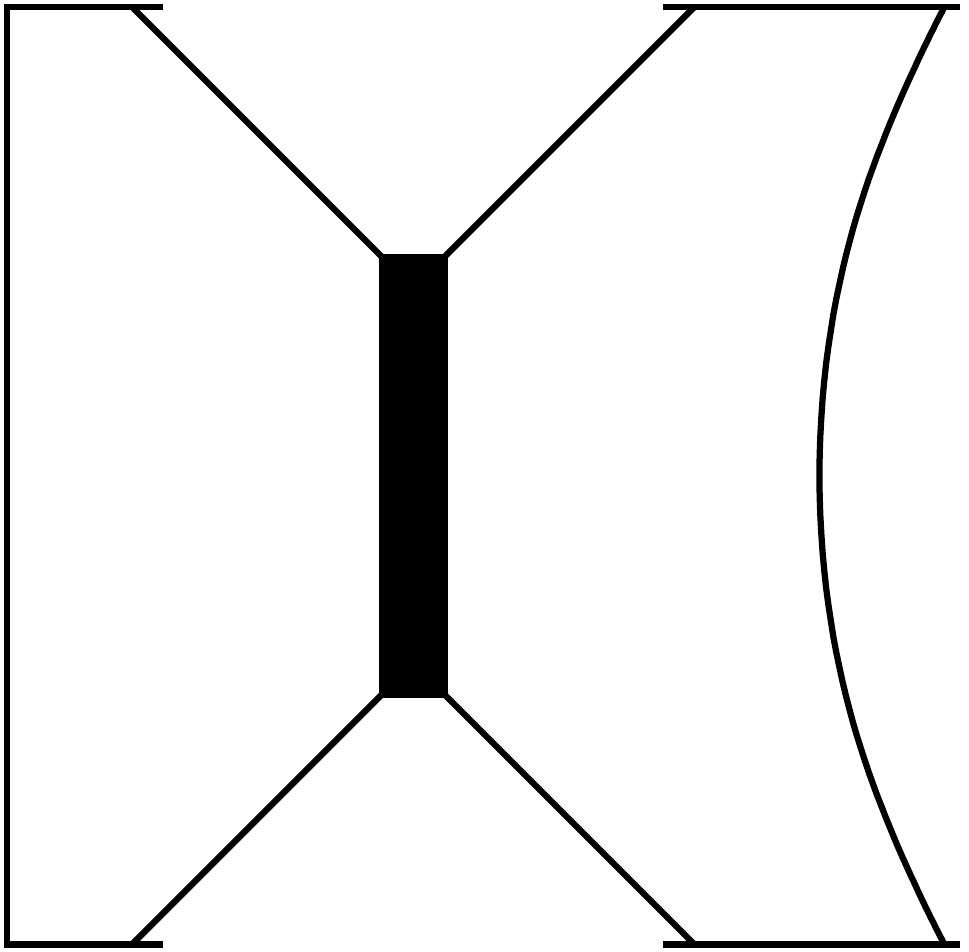}}\, \right)$, $f\left(\,\raisebox{-7pt}{\includegraphics[height=0.25in]{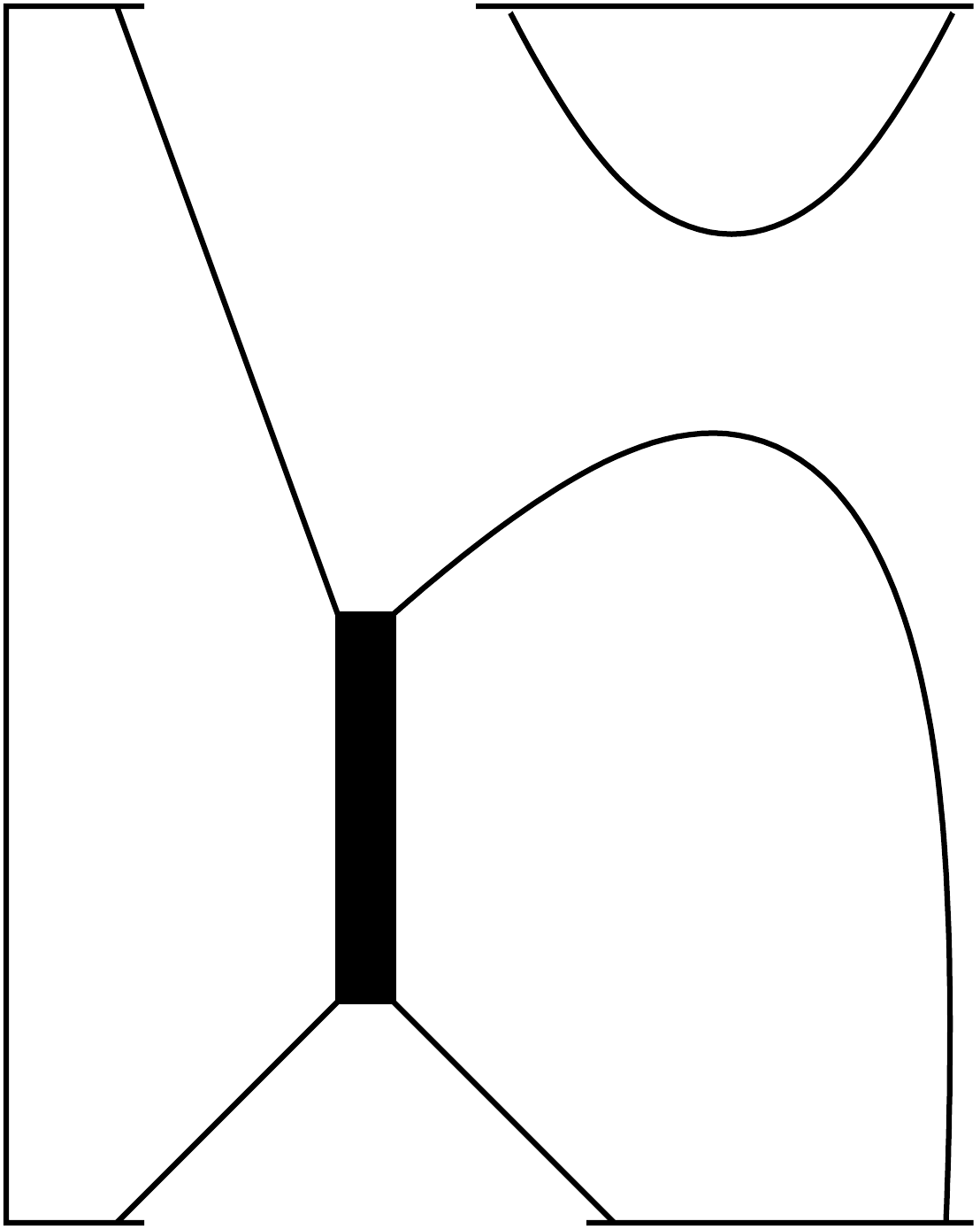}}\,\right) = 2 f\left(\,\raisebox{-7pt}{\includegraphics[height=0.25in]{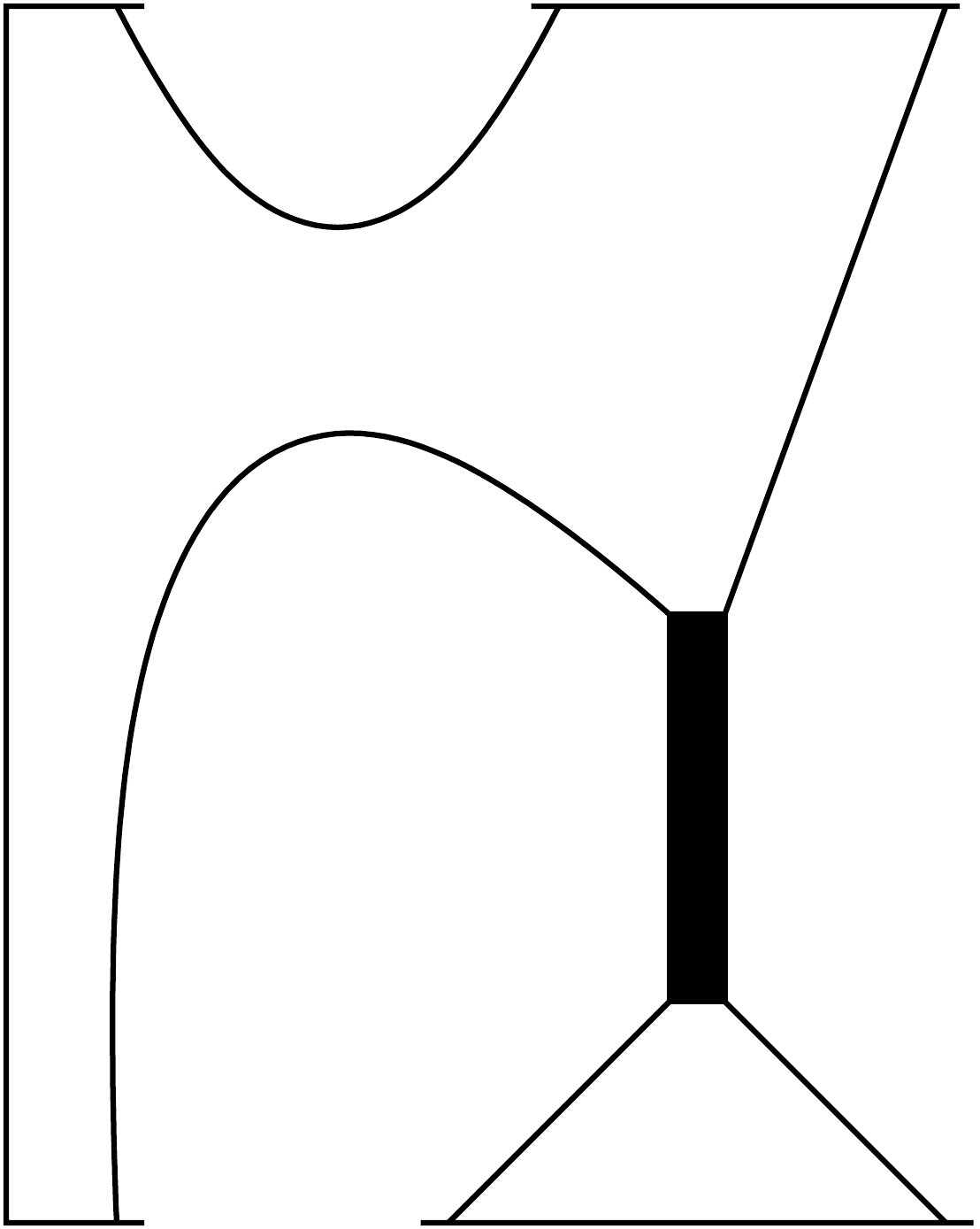}}\, \right), f\left(\,\raisebox{-7pt}{\includegraphics[height=0.25in]{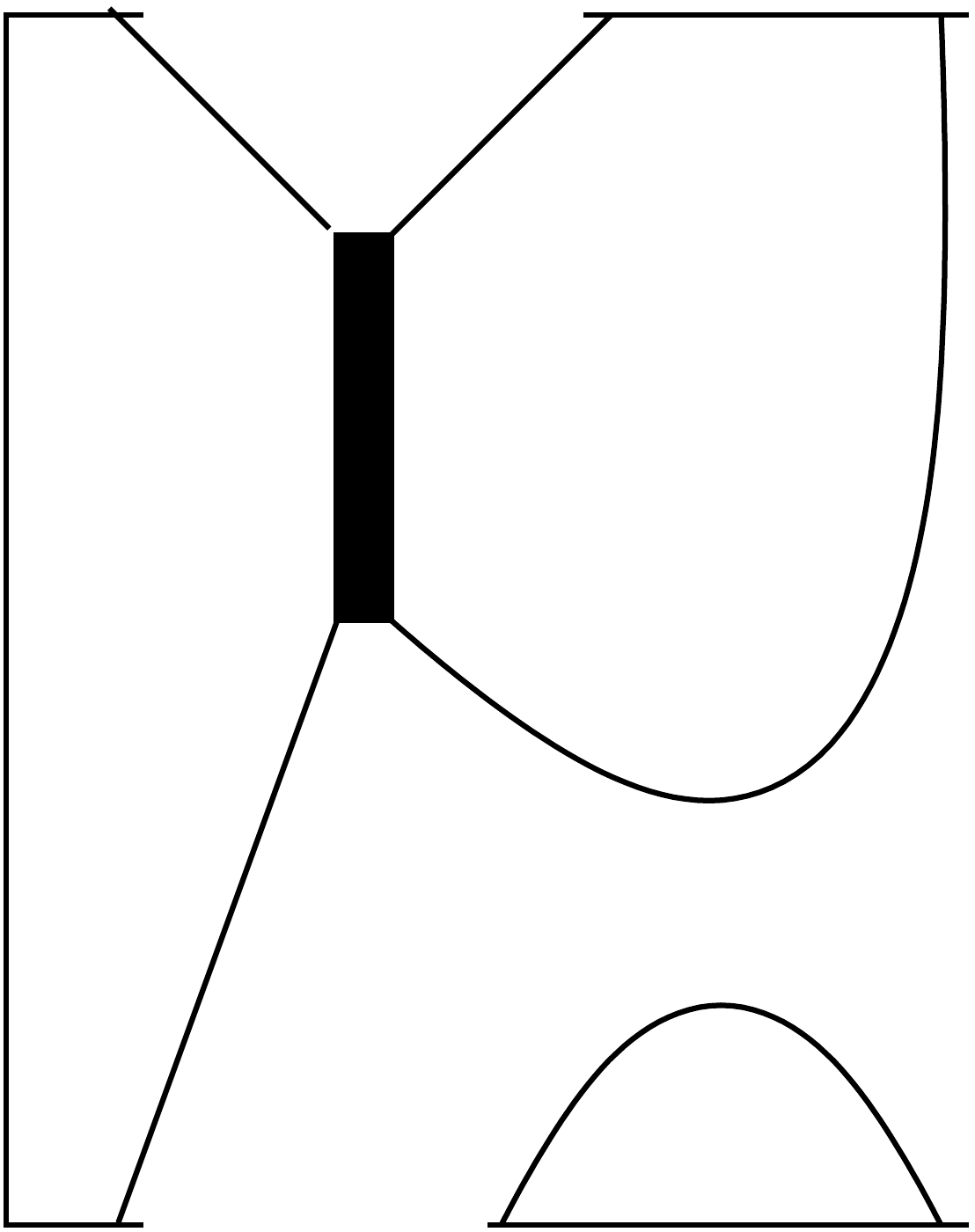}}\,\right) = 2f\left(\,\raisebox{-7pt}{\includegraphics[height=0.25in]{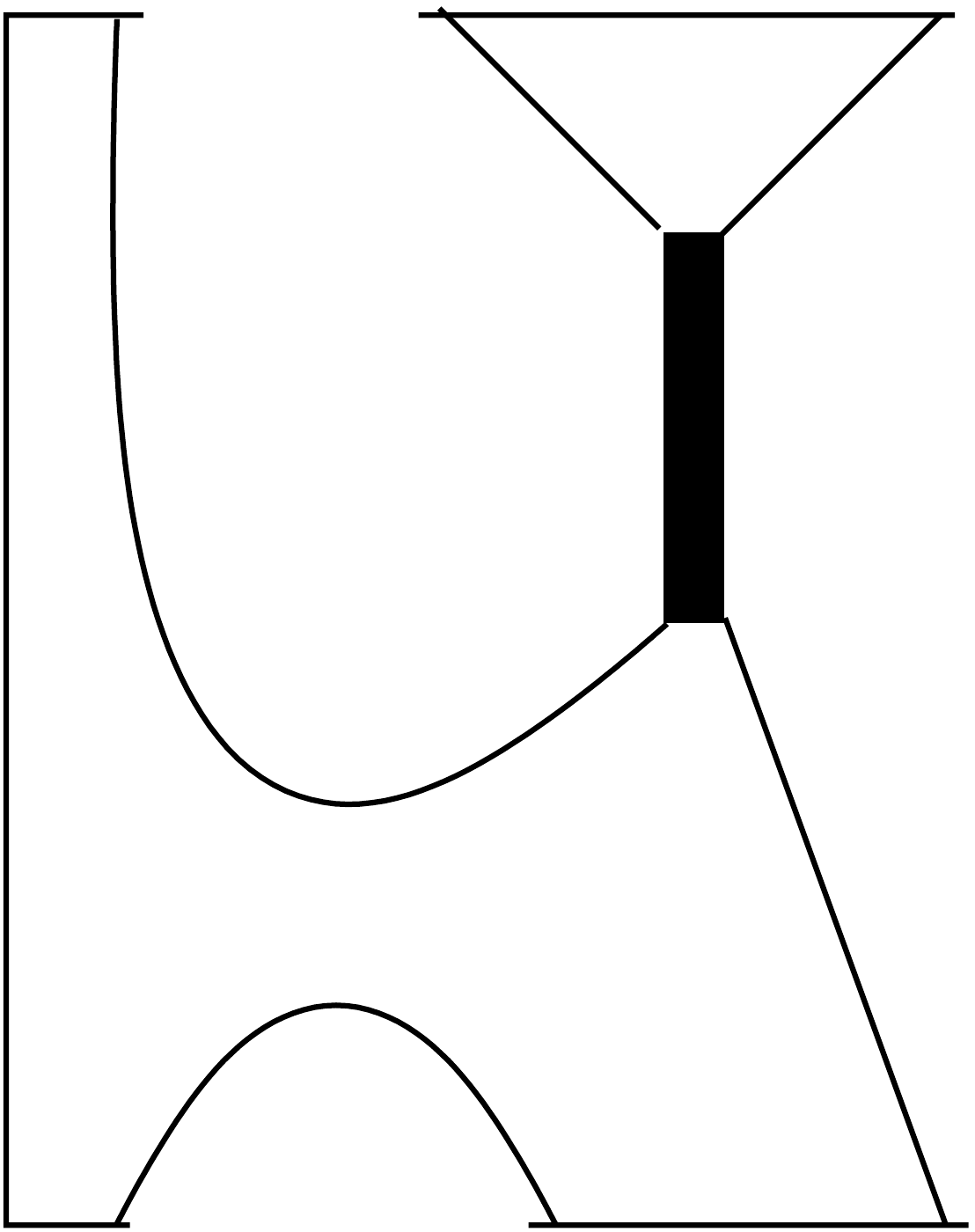}}\, \right)$ and $4f\left(\,\raisebox{-7pt}{\includegraphics[height=0.25in]{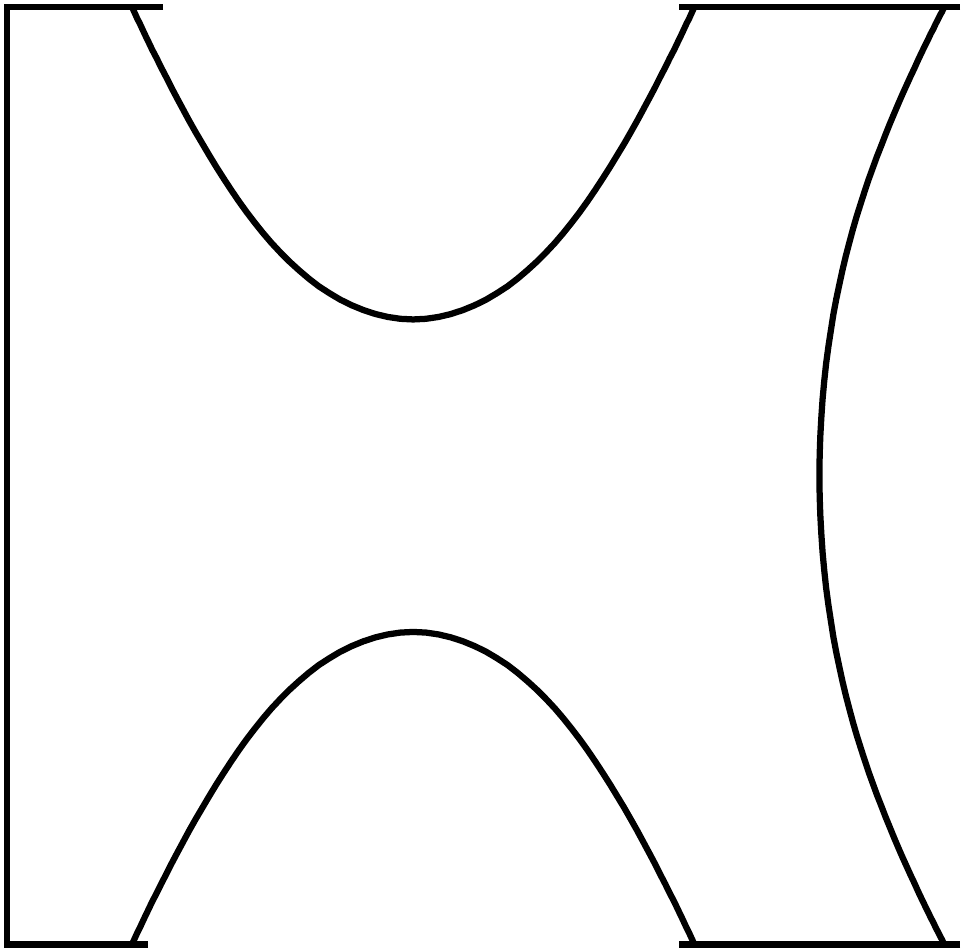}}\,\right) = f\left(\,\raisebox{-7pt}{\includegraphics[height=0.25in]{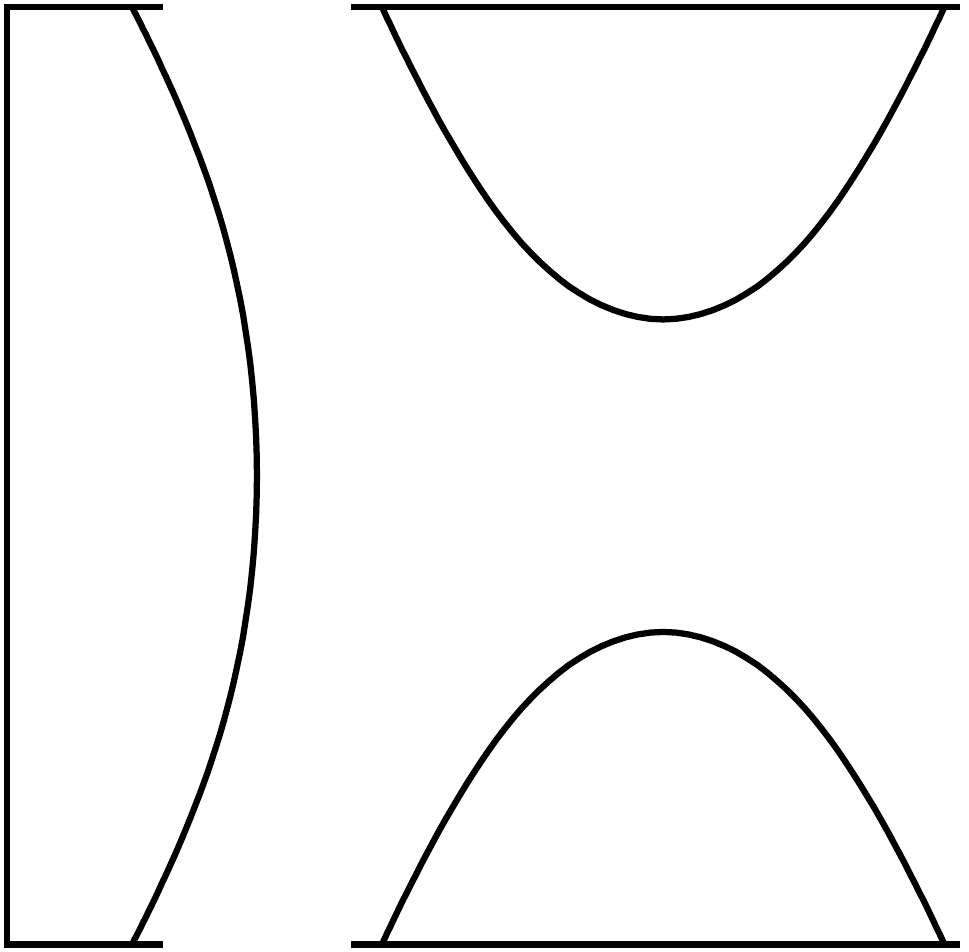}}\, \right)$. Therefore, the identity becomes
\begin{eqnarray*}
0 &=& f\left(\,\raisebox{-7pt}{\includegraphics[height=0.25in]{long-4-case3}}\, \right) + f\left(\,\raisebox{-7pt}{\includegraphics[height=0.25in]{long-6-case3}}\, \right)+ f\left(\,\raisebox{-7pt}{\includegraphics[height=0.25in]{long-8-case3}}\, \right)  + 3(q + q^{-1})f\left(\,\raisebox{-7pt}{\includegraphics[height=0.25in]{long-10-case3}}\, \right)\\
&=& 3(-q -q^{-1})f\left(\,\raisebox{-7pt}{\includegraphics[height=0.25in]{long-10-case3}}\, \right) + 3(q + q^{-1})f\left(\,\raisebox{-7pt}{\includegraphics[height=0.25in]{long-10-case3}}\, \right) = 0,
 \end{eqnarray*} 
 and therefore it holds. The other two possibilities for this case follow similarly.
\end{proof}

Theorem~\ref{thm:N=2} implies that, for the case $N = 2$, the link polynomial $\textbf{P}_L(q)$ (and thus the $SO(2)$ Kauffman polynomial) is easily computed. Obviously, the same holds for $[G]$, the evaluation of a knotted RE 3-graph. 

\begin{corollary}\label{cor:graph-one crossing}
If a trivalent graph diagram $G$ has one crossing involving standard edges and no other crossings, and if the removal of this crossing does not change the number of connected components, then in the case $N = 2$ the polynomial $[G]$ vanishes, and thus $G$ is not rigid-edge isotopic to a planar trivalent graph. 
\end{corollary}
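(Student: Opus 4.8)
The plan is to express the one-crossing graph polynomial $[G]$ in terms of the planar graph evaluation $P$ via the skein relation~\eqref{pos crossing} (or~\eqref{neg crossing}, depending on the sign of the crossing), and then invoke Theorem~\ref{thm:N=2} to show the resulting expression vanishes when $N=2$. Suppose without loss of generality that the single crossing is a positive crossing involving standard edges. Applying~\eqref{pos crossing} we obtain
\[ [G] = A\,\bigl[G_A\bigr] + B\,\bigl[G_B\bigr] + \bigl[G_w\bigr], \]
where $G_A$ is the $A$-smoothing, $G_B$ is the $B$-smoothing, and $G_w$ is the graph obtained by inserting a wide edge in place of the crossing. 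Each of these three graphs is now a planar trivalent diagram, so by Theorem~\ref{thm:N=2} its evaluation is $P(\cdot) = 2^{c-1}(-q-q^{-1})^{n/2}$, determined entirely by its number of connected components $c$ and number of vertices $n$.

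First I would track how the three smoothings affect the component count and vertex count relative to $G$. The hypothesis that removing the crossing does not change the number of connected components is the crucial input: it forces $G_A$ and $G_B$ to have the same number of components. Specializing to $N=2$ means $A=q$, $B=q^{-1}$, and both smoothings contribute no new vertices, so $A\,[G_A]+B\,[G_B] = (q+q^{-1})\,2^{c-1}(-q-q^{-1})^{n/2}$ for the appropriate common values of $c$ and $n$. The wide-edge graph $G_w$, on the other hand, has exactly two additional trivalent vertices compared to the smoothed diagrams but the same number of connected components, so that its factor $(-q-q^{-1})^{(n+2)/2} = (-q-q^{-1})\cdot(-q-q^{-1})^{n/2}$ introduces precisely one extra factor of $(-q-q^{-1})$. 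The sum then telescopes: $(q+q^{-1}) + (-q-q^{-1}) = 0$, and hence $[G]=0$.

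The main obstacle will be verifying carefully that the component and vertex counts line up as claimed in all three smoothings, which is really a local bookkeeping argument about how the $A$-smoothing, the $B$-smoothing, and the wide-edge insertion modify the diagram near the crossing. In particular one must check that inserting the wide edge does not merge or split components (so its component count agrees with that of the smoothings) while adding exactly two vertices, and that the hypothesis guarantees $G_A$ and $G_B$ have equal component counts; this is where the condition ``the removal of this crossing does not change the number of connected components'' is used essentially. Once the vanishing $[G]=0$ is established, the final assertion follows immediately: for a planar trivalent diagram, Theorem~\ref{thm:N=2} gives $[G] = 2^{c-1}(-q-q^{-1})^{n/2}$, which is a nonzero element of $\bbZ[q^{\pm1}]$, so $[G]=0$ shows $G$ cannot be RE isotopic to any planar diagram, since $[\,\cdot\,]$ is an RE regular isotopy invariant by Theorem~\ref{thm:graph poly}.
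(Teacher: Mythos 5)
Your proposal is correct and follows essentially the same route as the paper: resolve the unique crossing via the skein relation $[G] = q\,[G_A] + q^{-1}\,[G_B] + [G_w]$, evaluate each planar state with Theorem~\ref{thm:N=2}, and observe that under the component-count hypothesis all three states share the same $c$ while $G_w$ has two extra vertices, so the coefficients cancel as $(q+q^{-1}) + (-q-q^{-1}) = 0$. The paper's own proof is just a terser version of this same argument, so no further comparison is needed.
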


\begin{proof}
This is easily implied by Theorem~\ref{thm:N=2} and the skein relations
\begin{eqnarray*}
 \left[ \,\raisebox{-3pt}{\includegraphics[height=0.15in]{poscrossing}}\,\right] = q \left[ \, \raisebox{-3pt}{\includegraphics[height=0.15in]{A-smoothing}}\, \right] + q^{-1} \left[ \, \raisebox{-3pt}{\includegraphics[height=0.15in]{B-smoothing}}\, \right] +  \left[ \, \raisebox{-3pt}{\includegraphics[height=0.15in]{move5-3}} \, \right],\quad
\left[ \,\raisebox{-3pt}{\includegraphics[height=0.15in]{negcrossing}}\, \right] = q \left[ \, \raisebox{-3pt}{\includegraphics[height=0.15in]{B-smoothing}}\, \right] + q^{-1} \left[\, \raisebox{-3pt}{\includegraphics[height=0.15in]{A-smoothing}}\, \right] + \left[\, \raisebox{-3pt}{\includegraphics[height=0.15in]{move5-3}}\, \right].
\end{eqnarray*}  
\end{proof}

\begin{example}
We consider the graph $G = \raisebox{-15pt}{\includegraphics[height=0.5in]{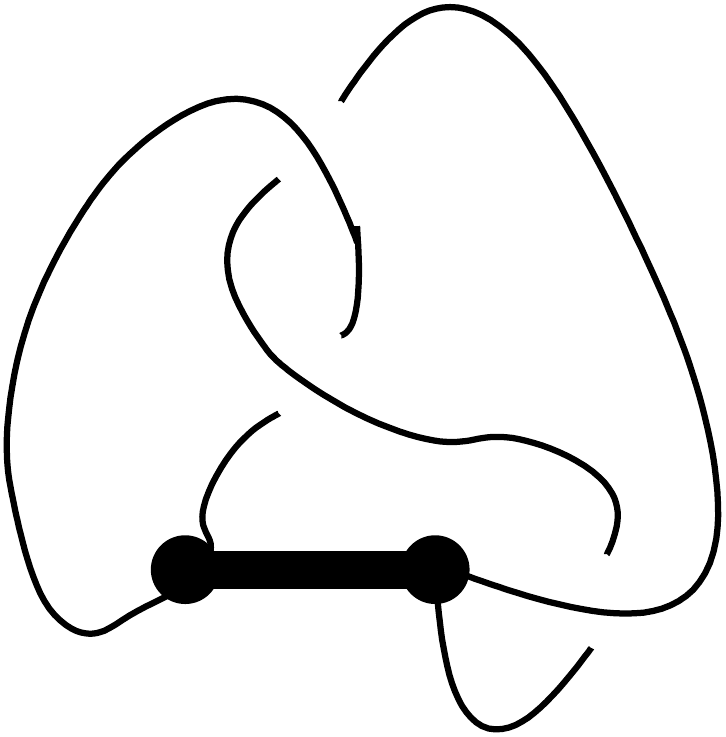}}$  and compute its polynomial $[G]$ for the case $N = 2$.

\begin{eqnarray*}
[G] &=&  \left[\raisebox{-15pt}{\includegraphics[height=0.5in]{hc-2}} \right] = q \left[\raisebox{-15pt}{\includegraphics[height=0.5in]{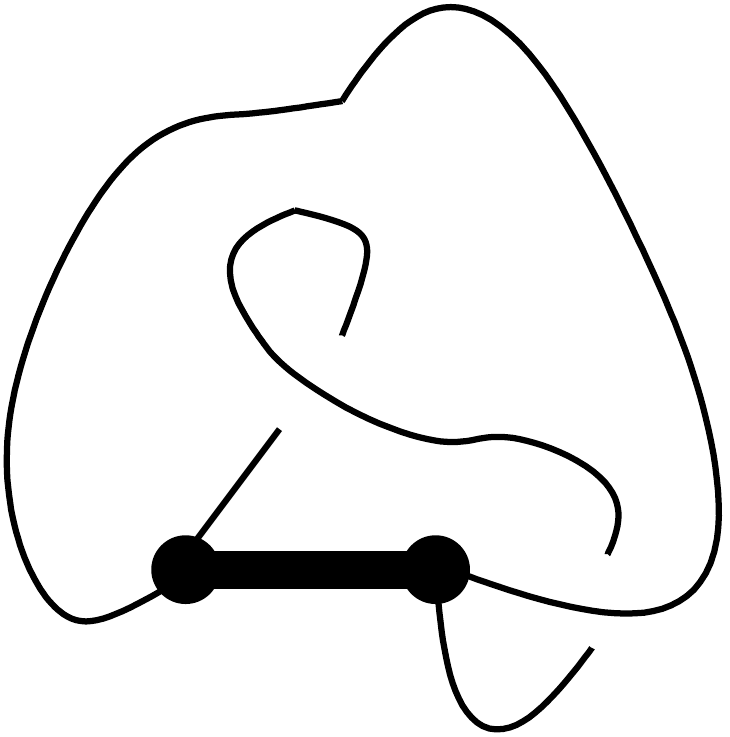}} \right] + q^{-1} \left[\raisebox{-15pt}{\includegraphics[height=0.5in]{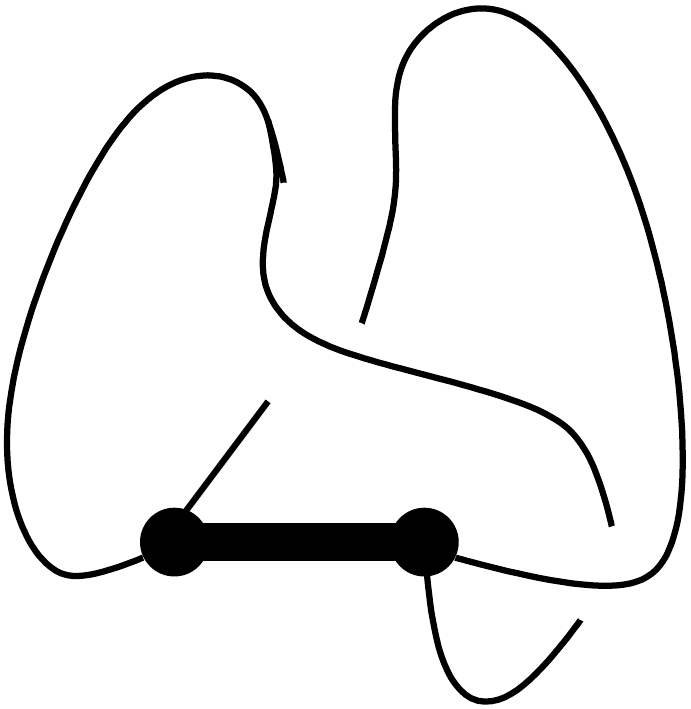}}\right] + \left[\raisebox{-15pt}{\includegraphics[height=0.5in]{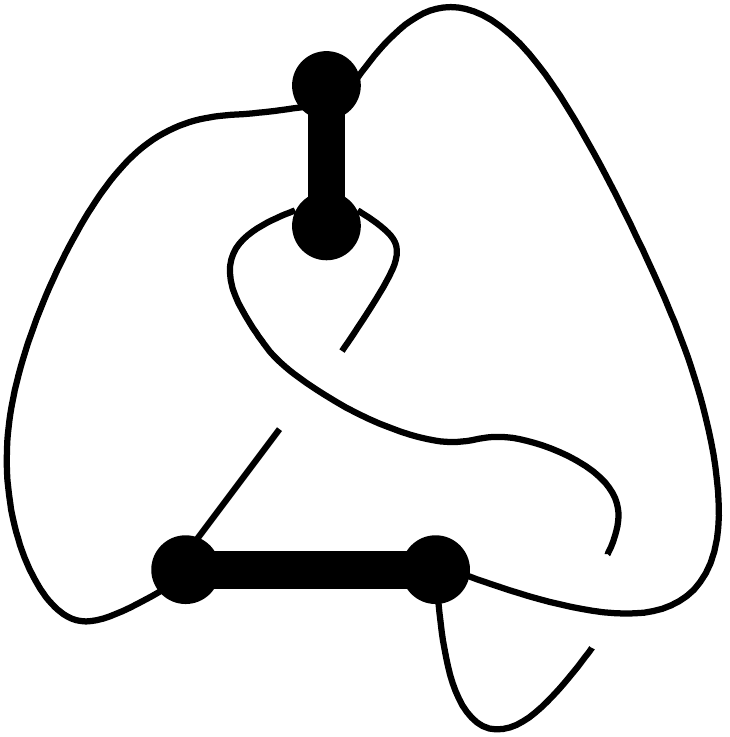}} \right] \\
& = & q^2 \left[\raisebox{-15pt}{\includegraphics[height=0.5in]{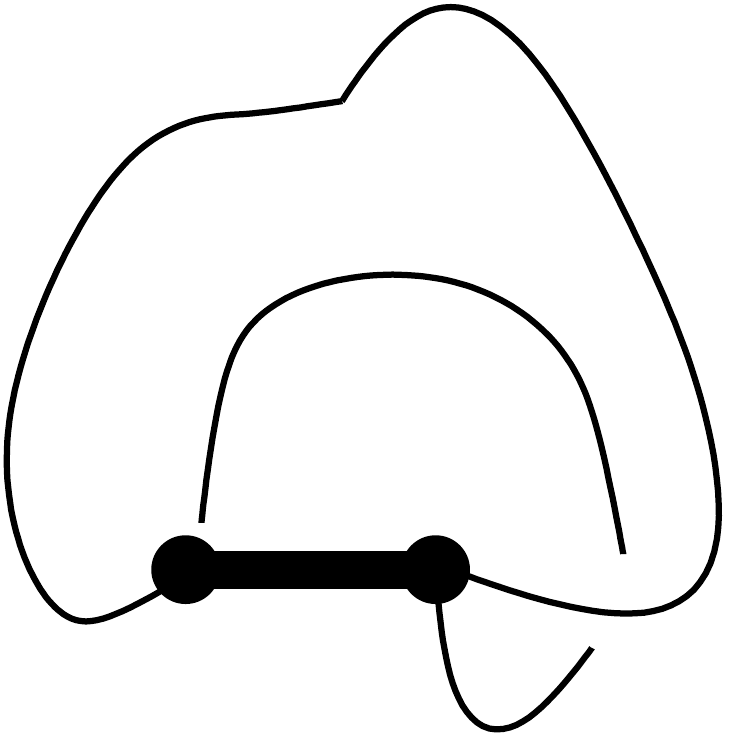}} \right] + \left[\raisebox{-15pt}{\includegraphics[height=0.5in]{ex3-4}} \right] + q^{-2}\left[\raisebox{-15pt}{\includegraphics[height=0.5in]{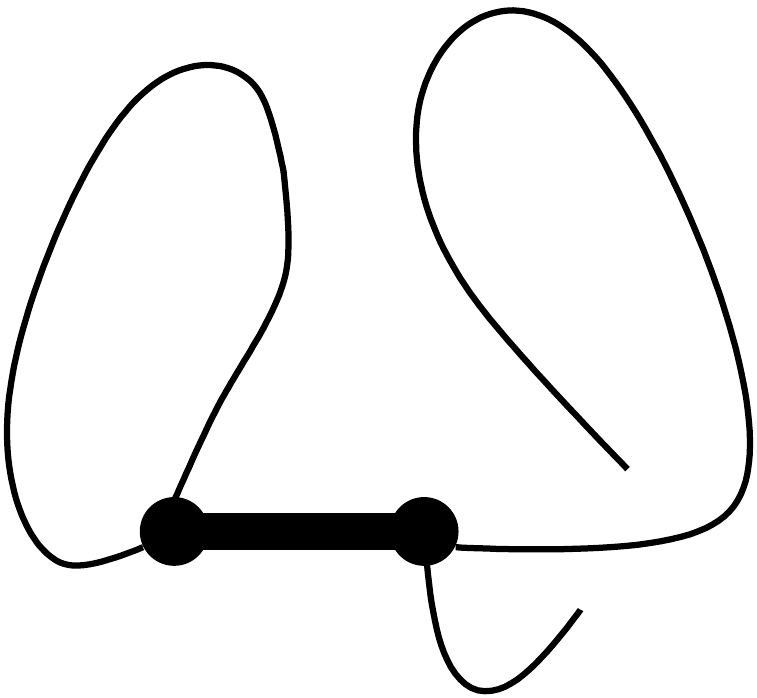}} \right] + q^{-1} \left[\raisebox{-15pt}{\includegraphics[height=0.5in]{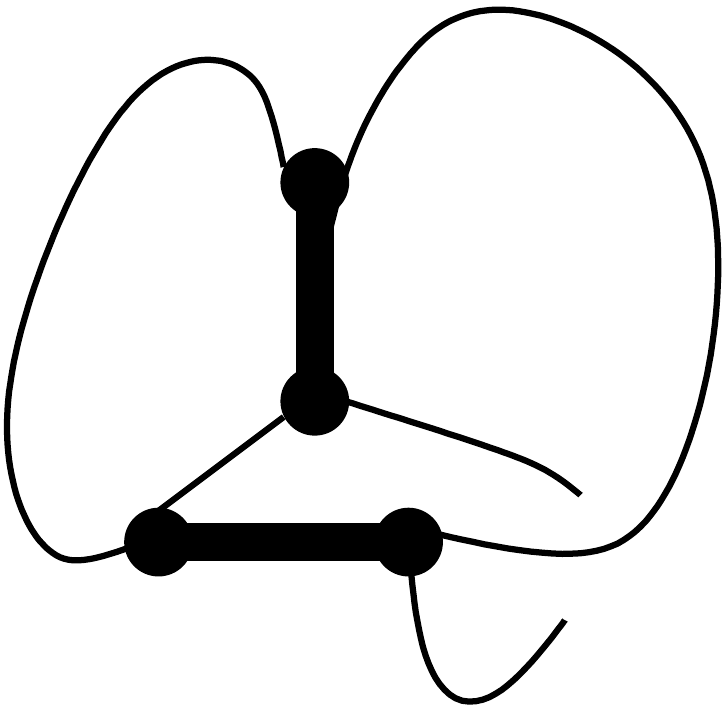}} \right]\\
&&+ q\left[\raisebox{-15pt}{\includegraphics[height=0.5in]{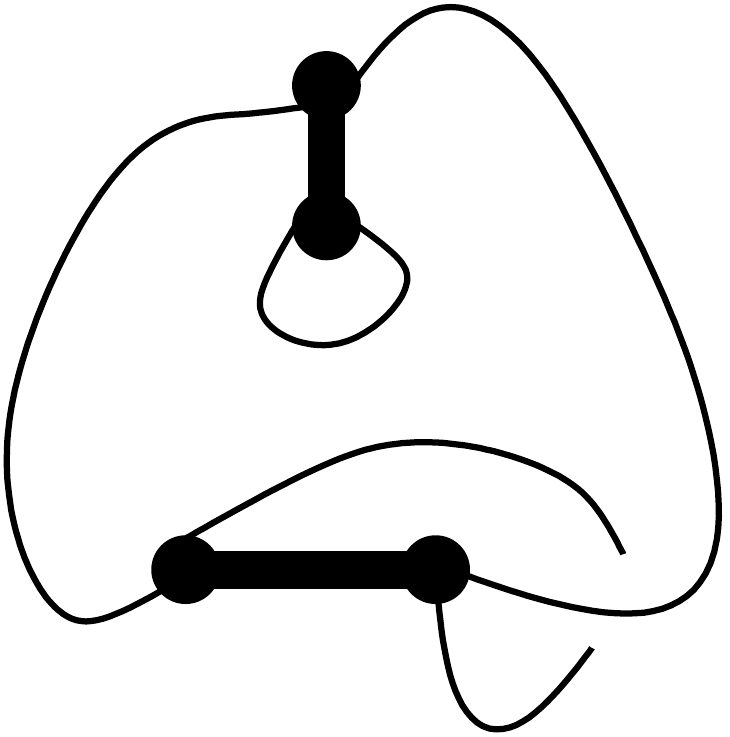}} \right] + q^{-1}\left[\raisebox{-15pt}{\includegraphics[height=0.5in]{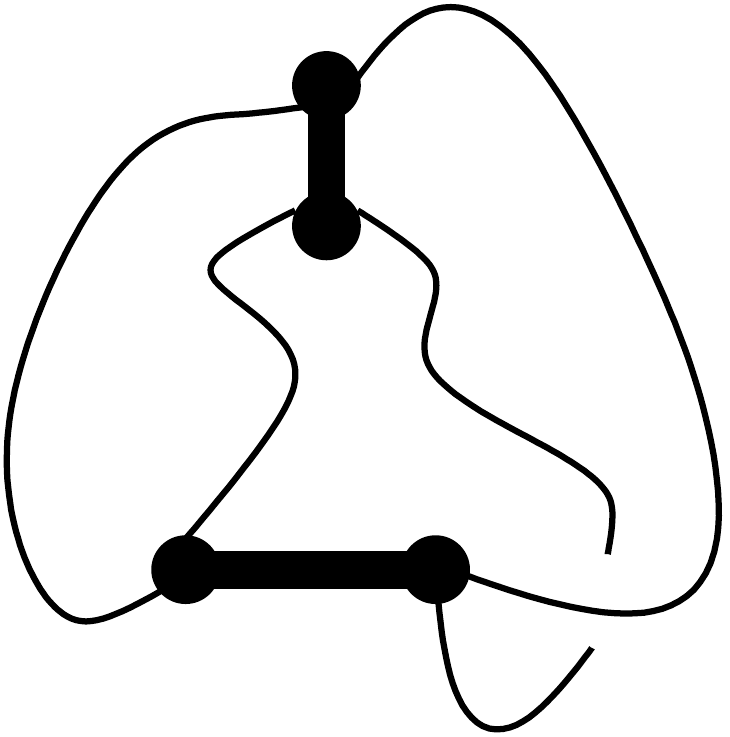}} \right] + \left[\raisebox{-15pt}{\includegraphics[height=0.5in]{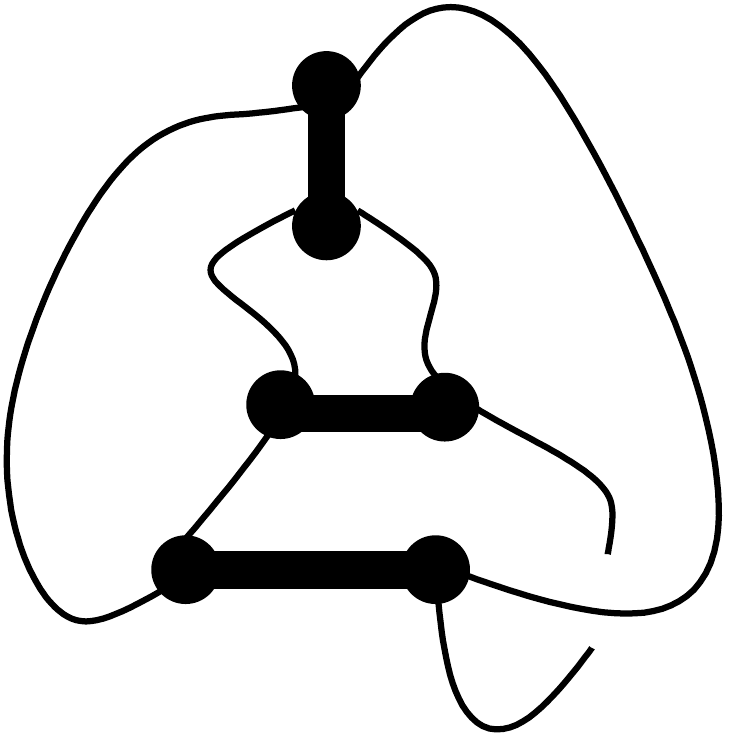}} \right].
\end{eqnarray*}
All diagrams in the last equality, with the exception of the third diagram, satisfy the hypothesis of Corollary~\ref{cor:graph-one crossing}, thus the evaluation of these diagrams is zero. We obtain
\[ [G] =  q^{-2}\left[\raisebox{-15pt}{\includegraphics[height=0.5in]{ex3-5}} \right]  = q^{-2}(-q -q^{-1}).\]

\end{example}
 
 
 \section{Concluding remarks}\label{sec:comments}
 
\subsection{}\label{ssec:4-valent} We describe here the state model for the two-variable Kauffman polynomial for links that uses planar 4-valent graphs. It is easily obtained from the model given in Section~\ref{sec:model} by collapsing the wide edges (that is, by identifying the two vertices adjacent to a wide edge): 
\[\raisebox{-6pt}{\includegraphics[height=0.25in]{resol}} \longrightarrow \raisebox{-3pt}{\includegraphics[height=0.15in]{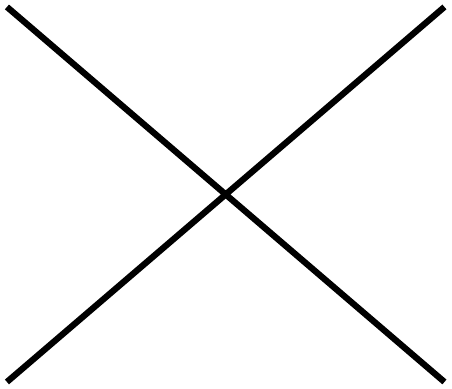}}\]

Start with a link diagram $D$ and replace each crossing in $D$ with the following formal linear combination:
\begin{eqnarray} \label{resolving crossings - version2}
\raisebox{-3pt}{\includegraphics[height=0.15in]{poscrossing}} = A\, \raisebox{-3pt}{\includegraphics[height=0.15in]{A-smoothing}} + B\, \raisebox{-3pt}{\includegraphics[height=0.15in]{B-smoothing}} +  \raisebox{-3pt}{\includegraphics[height=0.15in]{cross}}\,. 
\end{eqnarray}

Each state $\Gamma$ corresponding to the diagram $D$ is then a planar 4-valent graph, and one can uniquely associate to each such a graph a well-defined Laurent polynomial $P(\Gamma)  \in \bbZ[A^{\pm 1}, B^{\pm 1}, a^{\pm 1}, (A-B)^{\pm 1}] $ so that it satisfies the following skein relations, in which we omit the polynomial $P$ for simplicity (these skein relations are the equations of the \textit{graphical calculus} in~\cite{KV}):

\begin{eqnarray*}
\bigcirc &= &1,\quad
 \Gamma \cup \bigcirc = \alpha \Gamma, \quad 
\raisebox{-4pt}{\includegraphics[height=0.18in]{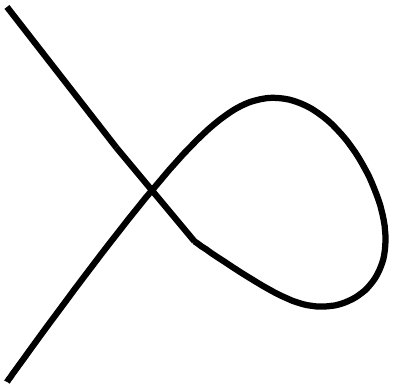}} = \beta \,\raisebox{-4pt}{\includegraphics[height=0.18in]{arc}}\\
\raisebox{-4pt}{\includegraphics[height=0.2in]{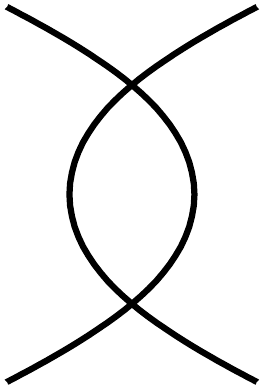}} &=& \left[1-AB \right] \, \raisebox{-4pt}{\includegraphics[height=0.18in]{A-smoothing}} + \gamma\, \raisebox{-4pt}{\includegraphics[height=0.18in]{B-smoothing}} - \left [A+B \right ] \, \raisebox{-4pt}{\includegraphics[height=0.18in]{cross}} \\
\raisebox{-5pt}{\includegraphics [height=0.2in]{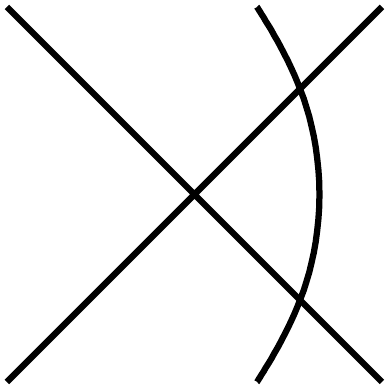}} \,-\, \raisebox{-5pt}{\includegraphics[height=0.2in]{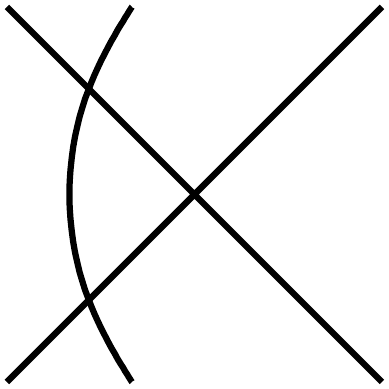}} &=& AB \left [ \,\raisebox{-5pt}{\includegraphics[height=0.2in]{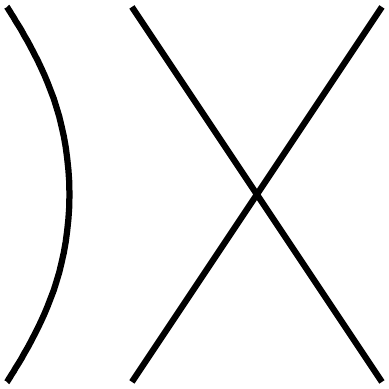}} - \raisebox{-5pt}{\includegraphics[height=0.2in]{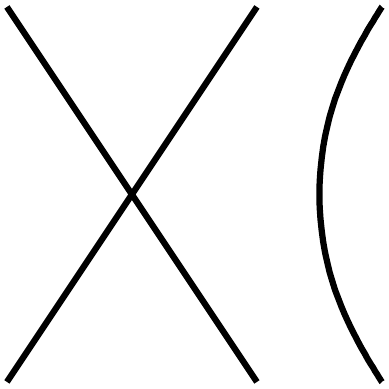}} \, + \, \raisebox{-5pt}{\includegraphics[height=0.2in]{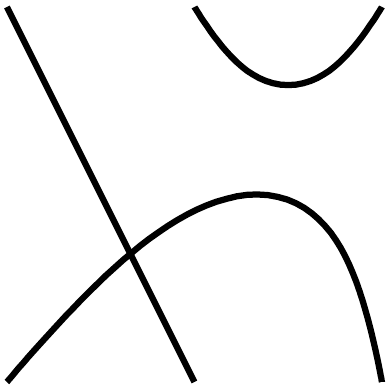}} - \raisebox{-5pt}{\includegraphics[height=0.2in]{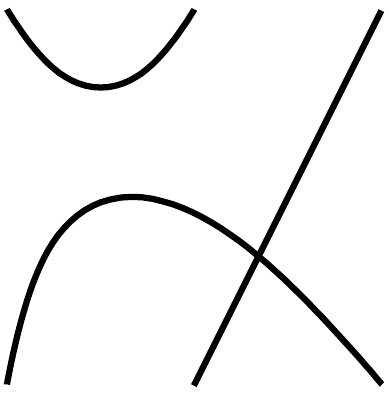}}\,  + \, \raisebox{-5pt}{\includegraphics[height=0.2in]{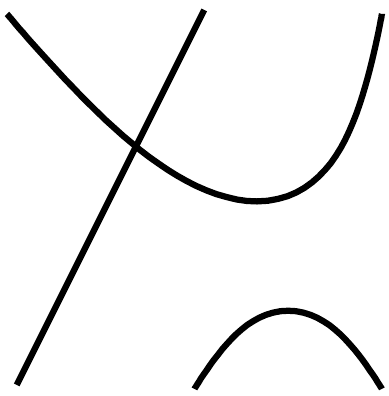}} \,-\, \raisebox{-5pt}{\includegraphics[height=0.2in]{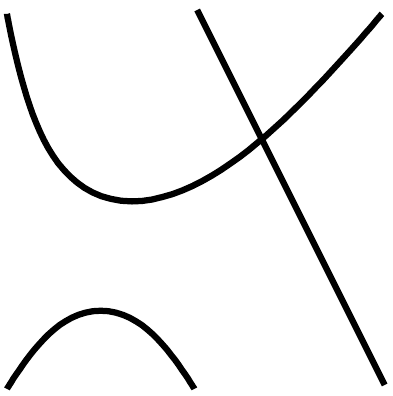}}\, \right]\, + \, \delta \, \left [\, \raisebox{-5pt}{\includegraphics[height=0.2in]{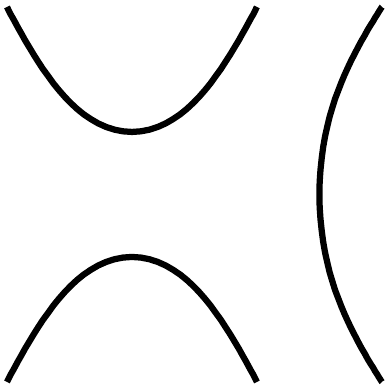}} -  \raisebox{-5pt}{\includegraphics[height=0.2in]{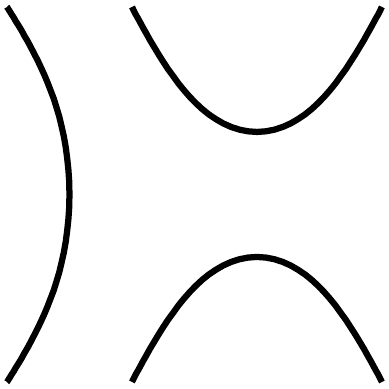}}\,\right ]
\end{eqnarray*}
where $\alpha =   \displaystyle \frac{a-a^{-1}}{A-B} + 1, \,\,\beta = \frac{Aa^{-1}-Ba}{A-B}-A-B,\,\, \gamma = \frac{B^2a-A^2a^{-1}}{A-B} + AB$ and $\delta = \displaystyle \frac{B^3a - A^3a^{-1}}{A - B}$.  A proof of the fact that these relations give a unique, well-defined polynomial associated to a planar 4-valent graph can be found in~\cite[Theorem 3]{C}).

The rational function $\textbf{P}_D = \textbf{P}_D(A, B, a)$ is computed by summing up $P(\Gamma)$ over all states $\Gamma$ of $D$, weighted by powers of $A$ and $B$ according to the rules given in~(\ref{resolving crossings - version2}). There is an analogue of Theorem~\ref{thm:invariance} for this model for the Kauffman polynomial, whose proof follows similarly as in the case for trivalent graphs.

\subsection{} In this paper we constructed a rational function $\textbf{P}_L = \textbf{P}_L(A,B,a)$ (or Laurent polynomial in variables $A,B,a$ and $A-B$), which is a regular invariant for unoriented links $L$, and which is a version of the two-variable Kauffman polynomial. The polynomial is defined via a state summation, which reduces the computational complexity of the Kauffman polynomial. The state sum  formula for $\textbf{P}_L$ involves Laurent polynomial evaluations $P(\Gamma)$, for all of the states $\Gamma$ of the link diagram $L$. A state $\Gamma$ is a trivalent planar diagram and the polynomial $P(\Gamma)$ is computed via certain graph skein relations which iteratively allow to write $P(\Gamma)$ in terms of polynomials of simpler planar trivalent diagrams. The graph skein relation $P\left( \,\raisebox{-7pt}{\includegraphics[height=0.25in]{resol}}\,\right) = P \left(\, \raisebox{-3pt}{\includegraphics[height=0.17in]{resol-ho}}\,\right)$ together with the comments in Subsection~\ref{ssec:4-valent}, raise the following natural question: Why did we choose to work with trivalent graphs instead of 4-valent graphs? As mentioned in the introduction, the main reason is that trivalent graphs are generic graphs. Another reason involves the second outcome of this paper, namely the polynomial invariant for knotted RE 3-graphs and well-known results that enable one to study knotted handlebodies through knotted trivalent graphs.

A \textit{handlebody-knot} is a handlebody embedded in the 3-sphere $S^3$. Any handlebody-knot is a regular neighborhood of some knotted trivalent graph. Hence there is a one-to-one correspondence between the set of knotted handlebodies and that of neighborhood equivalence classes of knotted trivalent graphs. Two knotted graphs are neighborhood equivalent if there is an isotopy of $S^3$ taking a regular neighborhood of one graph onto that of the other (we refer the reader to Suzuki~\cite{Su} for details about this notion). Ishii~\cite{I} proved that two knotted trivalent graphs are neighborhood equivalent if and only if they are related by a finite sequence of IH-moves and isotopies of $\bbR^3$, where an \textit{IH-move} is a local change of a knotted trivalent graph as described in Figure~\ref{fig:IH-move}. In particular, if $G_1$ and $G_2$ are knotted trivalent graphs with diagrams $D_1$ and $D_2$, then $G_1$ and $G_2$ are neighborhood equivalent if and only if $D_1$ and $D_2$ are related by a finite sequence of classical Reidemeister moves (for knots) combined with the moves depicted in~(\ref{graph moves}) (together with their mirror images), and the IH-move. This implies that if $v$  is an invariant for knotted trivalent graphs which is also invariant under the IH-move, then $v$ is an invariant for handlebody-knots. 
\begin{figure}[ht]
\[\raisebox{-8pt}{\includegraphics[height=0.3in]{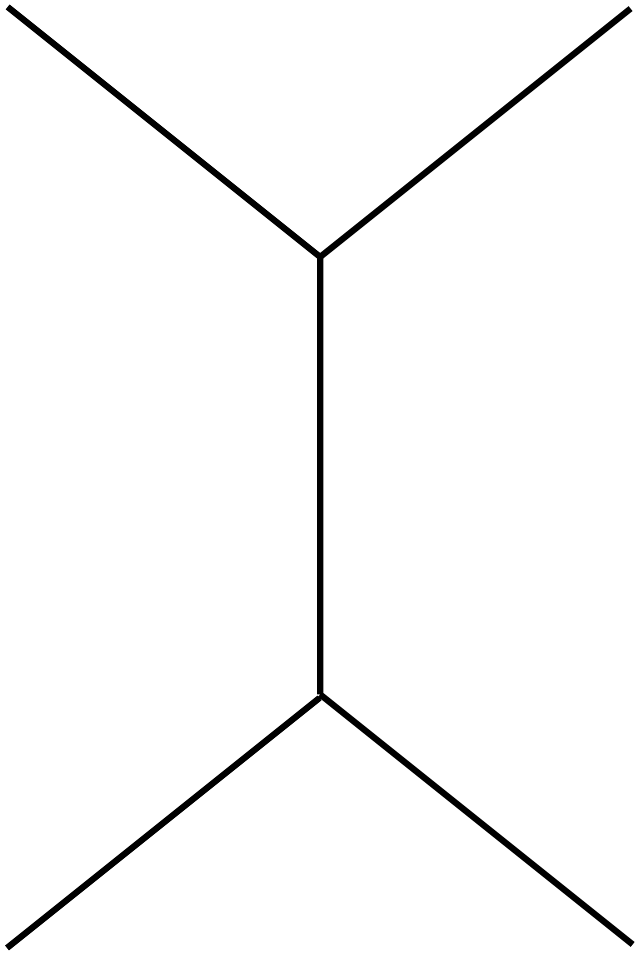}} \longleftrightarrow \raisebox{-8pt}{\includegraphics[height=0.3in]{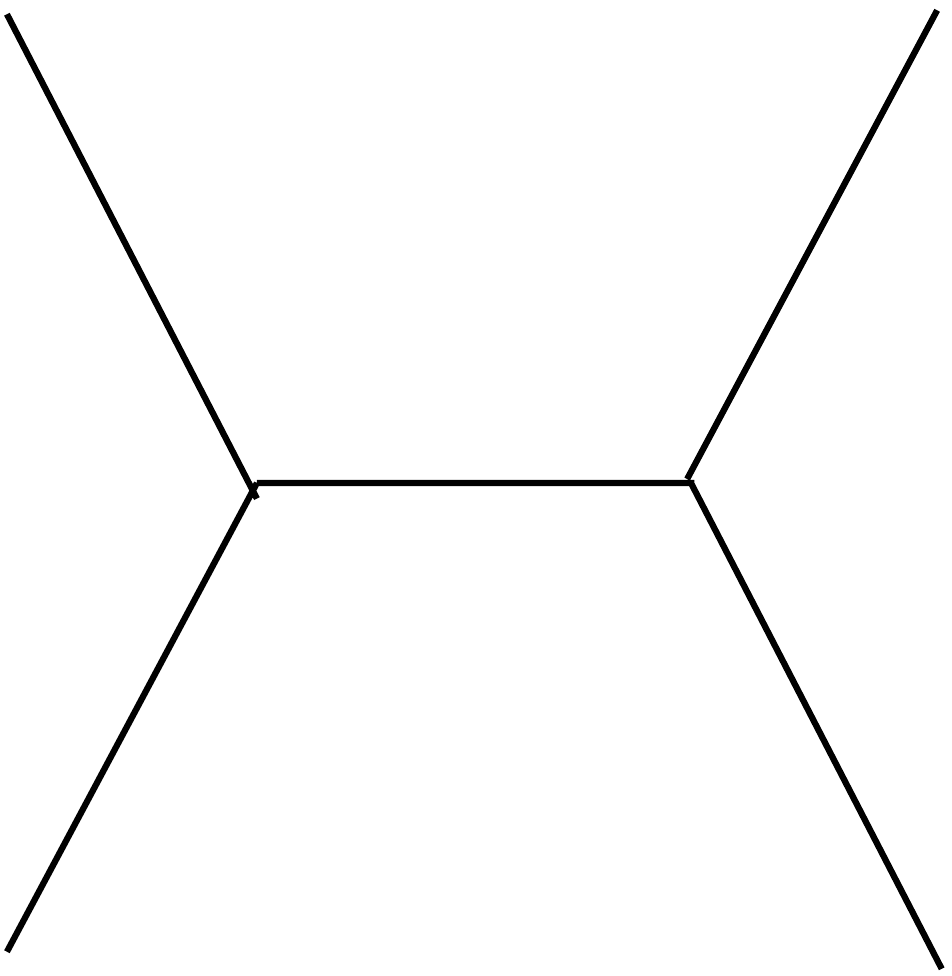}}\]
\caption{The IH-move}\label{fig:IH-move}
\end{figure}
Therefore, it is good that the graph polynomial $P$ satisfies the relation $P\left( \,\raisebox{-5pt}{\includegraphics[height=0.2in]{resol}}\,\right) = P \left(\, \raisebox{-3pt}{\includegraphics[height=0.2in, angle = 90]{resol}}\,\right)$.  The polynomial $[\,\cdot\, ]$ is an invariant of knotted RE 3-graphs, thus it falls slightly short of being an invariant for arbitrary knotted trivalent graphs, and consequently, of handlebody-knots. Can the construction presented in this paper be slightly modified so that it produces a honest invariant of topological knotted trivalent graphs? Maybe somebody reading this paper will find a solution to this question.

\subsection{}The definition of the $SO(N)$ Kauffman polynomial via the state model and trivalent graphs given in Subsection~\ref{ssec:one-variable poly} might prove useful in constructing a categorification of this polynomial. More specifically, given an unoriented link one may wish to construct a chain complex of graded vector spaces whose graded Euler characteristic is the $SO(N)$ Kauffman polynomial of the link. 

\subsection{} There is a relationship between the two-variable Kauffman polynomial and the Homflypt polynomial, due to Francois Jaeger. He obtains a Homflypt polynomial expansion of the Kauffman polynomial of an unoriented link $L$, by writing the Kauffman polynomial of $L$ as a certain weighted sum of Homflypt polynomials of oriented links associated with the given link $L$. Jaeger's model is described in~\cite{K3} (part I, Section 14). Applying the Murakami-Ohtsuki-Yamada~\cite{MOY} framework for the Homflypt polynomial to Jaeger's model, one obtains a model for the Kauffman polynomial via trivalent graphs (we thank Lorenzo Traldi for pointing this out to us). Then the following natural question arises: How is this model related to the one presented here in Section~\ref{sec:model}?


\appendix
\section{}\label{appendix:trivalent}

 We shall prove Theorem~\ref{thm:unique poly}, that is, we will show that for every planar trivalent graph diagram $\Gamma$, the graph skein relations~(\ref{gi2}) - (\ref{gi6}) determine a unique, well-defined Laurent polynomial $P(\Gamma) \in \bbZ[A^{\pm 1}, B^{\pm 1}, a^{\pm 1}, (A-B)^{\pm 1}]$. 

First we notice that the relations are consistent, in the sense that whenever there are two or more ways to calculate the polynomial of a planar trivalent diagram from the polynomials of other planar trivalent diagrams via the graph skein relations, the evaluation is the same (this can be verified by inspection). 

The graph skein relations allow us to calculate the polynomial of a planar trivalent graph diagram from the polynomials of other planar graph diagrams with less vertices, if the graph contains one of the local configurations
\begin{eqnarray}\label{good config}
\raisebox{-6pt}{\includegraphics[height=0.25in]{rem-edge}},\,\, \raisebox{-5pt}{\includegraphics[height=0.18in]{rem-loop}},\,\, \raisebox{-5pt}{\includegraphics[height=0.45in, angle = 90]{rem-digon}},\,\, \raisebox{-5pt}{\includegraphics[height=0.18in]{rem-square}}, \,\,\text{or}\,\, \raisebox{-5pt}{\includegraphics[height=0.18in]{rem-triangle}},
\end{eqnarray}
or if it contains one of these configurations after a sequence of moves of the type 
\begin{eqnarray}\label{good moves}
 \raisebox{-6pt}{\includegraphics[height=0.25in]{resol}}\, \longleftrightarrow \, \raisebox{-3pt}{\includegraphics[height=0.17in]{resol-ho}}\,,\quad \text{or} \quad \raisebox{-13pt}{\includegraphics[height=0.45in, width=0.25in]{long-1}}\, \longleftrightarrow \, \raisebox{-13pt}{\includegraphics[height=0.45in, width=0.25in]{long-2}}.
 \end{eqnarray}

\begin{proposition}\label{prop:polynomial}
If a connected planar trivalent graph diagram $\Gamma$ does not contain any of the local configurations displayed in~(\ref{good config}), then it is possible to change it, by applying a finite sequence of moves of type~(\ref{good moves}), into a  planar trivalent diagram containing one of the configurations \raisebox{-4pt}{\includegraphics[height=0.18in]{rem-square}}, \raisebox{-4pt}{\includegraphics[height=0.18in]{rem-triangle}},  or \raisebox{-4pt}{\includegraphics[height=0.45in, angle = 90]{rem-digon}}\,. Therefore, the graph skein relations allow us to calculate the polynomial $P(\Gamma)$ from the polynomials of other planar graph diagrams with less vertices.
\end{proposition}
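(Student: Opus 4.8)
The plan is to establish this proposition by an Euler-characteristic (counting) argument on the planar graph, combined with a careful analysis of what it means for none of the configurations in~(\ref{good config}) to appear. The key observation is that a connected trivalent planar graph diagram $\Gamma$ (whose vertices here are the trivalent vertices, with exactly one wide edge at each vertex) determines a cellular decomposition of the plane (or $2$-sphere, after adding the point at infinity), and the faces of this decomposition are precisely the regions bounded by edges. I would let $V$, $E$, and $F$ denote the numbers of vertices, edges, and faces, and invoke Euler's formula $V - E + F = 2$ on the sphere. Since the graph is trivalent, each vertex meets three edges and each edge meets two vertices, giving the standard relation $3V = 2E$. The forbidden configurations~(\ref{good config}) translate into geometric constraints: the absence of \raisebox{-4pt}{\includegraphics[height=0.18in]{rem-edge}} means no bigon-with-a-free-edge, the absence of \raisebox{-4pt}{\includegraphics[height=0.18in]{rem-loop}} means no monogon (a face bounded by a single edge forming a loop at one vertex), and the absence of the digon configuration means no face bounded by exactly two edges sharing both endpoints. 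Thus every internal face has at least three sides.

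\medskip

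Concretely, I would set up the face-counting inequality. If every face has at least three sides, then counting edge-face incidences gives $2E \geq 3F$ (each edge borders two faces, each face has at least three bordering edges). Combining $2E = 3V$, $2E \geq 3F$, and $V - E + F = 2$, one derives a numerical bound forcing the existence of a face with few sides. The goal is to show that some face must be a triangle (three edges) or a square (four edges), since those are exactly the configurations~\raisebox{-4pt}{\includegraphics[height=0.18in]{rem-triangle}} and~\raisebox{-4pt}{\includegraphics[height=0.18in]{rem-square}} whose polynomials can be reduced. The subtlety is that the faces of $\Gamma$ need not immediately present themselves as the \emph{standard}-edge triangles or squares required by the skein relations; a face may be bounded by a mixture of standard and wide edges, and the reduction relations~(\ref{gi8}) and~(\ref{gi9}) apply to specific edge-type patterns. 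This is where the moves of type~(\ref{good moves}) enter: the moves \raisebox{-6pt}{\includegraphics[height=0.25in]{resol}}\, $\longleftrightarrow$ \,\raisebox{-3pt}{\includegraphics[height=0.17in]{resol-ho}} and the longer hexagonal move let us rotate wide edges and reconfigure the local picture around a small face so that it becomes one of the three reducible configurations.

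\medskip

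The main steps, in order, are: (1) record the Euler relation and the trivalence identity $3V = 2E$; (2) use the hypothesis to show every face has at least three sides, hence $2E \geq 3F$; (3) deduce from $V - E + F = 2$ that there must be a face of small size --- in fact, the inequalities force the average face size to be strictly less than six, guaranteeing a face bounded by at most five edges; (4) analyze the bounded faces of smallest complexity and, using the moves~(\ref{good moves}) to adjust the arrangement of wide versus standard edges, bring such a face into one of the forms \raisebox{-4pt}{\includegraphics[height=0.18in]{rem-square}}, \raisebox{-4pt}{\includegraphics[height=0.18in]{rem-triangle}}, or \raisebox{-4pt}{\includegraphics[height=0.45in, angle = 90]{rem-digon}}; and (5) conclude that relations~(\ref{gi5}), (\ref{gi8}), or~(\ref{gi9}) then reduce $P(\Gamma)$ to polynomials of diagrams with fewer vertices.

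\medskip

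\textbf{The hard part} will be step~(4): the counting argument cheaply guarantees a combinatorially small face, but converting that small face into one of the three \emph{specific} reducible patterns --- with the correct incidences of wide edges --- requires a genuine case analysis of how wide edges sit around the face, together with an argument that the moves in~(\ref{good moves}) always suffice to normalize the local picture without introducing new vertices. One must also be careful that the connectedness hypothesis is used (a disconnected diagram could a priori dodge the counting bound by distributing complexity across components), and that the point-at-infinity / outer-face bookkeeping is handled so that the existence of a \emph{bounded} reducible face is guaranteed rather than merely a face that happens to be the unbounded region. I expect the cleanest route is to pass to the sphere to treat all faces symmetrically, extract the small face by the Euler count, and then handle the wide-edge normalization by a finite enumeration of the possible local edge-type patterns.
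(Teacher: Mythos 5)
Your steps (1)--(3) reproduce what is only a preliminary remark in the paper's proof (stated there as: the girth of $\Gamma$ is at most $5$, via exactly your count $3V=2E$, $V-E+F=2$), and the entire content of the proposition is your step (4), which you do not carry out but only describe as ``a genuine case analysis.'' That is not a routine verification, and the purely local route you sketch runs into concrete obstructions. First, the moves in~(\ref{good moves}) are not free local rotations about a face: the first move can only be applied where a wide edge actually lies on the boundary of the face (and it then removes that wide side), so a small face all of whose sides are standard edges --- for instance a pentagon with five standard sides and all five wide edges pointing outward --- admits no such move on its boundary at all, while rotating an external wide edge \emph{into} the face strictly increases its number of sides. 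One is then forced to invoke the second move of~(\ref{good moves}), whose input is a configuration of several wide edges joined by standard edges and whose effect on your face depends on data not visible from the face itself. Second, an external wide edge at a boundary vertex of your small face may have its \emph{other} endpoint also on that face (a wide chord running through a neighboring face); applying the first move to it then re-pairs legs at two boundary vertices simultaneously and can split or re-route the face, so the outcome is not a function of the ``local edge-type pattern'' you propose to enumerate, and nothing in your plan bounds the case analysis or guarantees termination.

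The paper's proof is built precisely around these difficulties and is global rather than local. It introduces \emph{alternating walks} --- walks alternating between wide and standard edges, entering each wide edge on one side and leaving on the other --- which by finiteness must close up into a cycle of one of five local types (Figure~\ref{fig:circuits}); from such cycles it extracts a configuration bounded by two alternating walks (Figure~\ref{fig:configurations}) and then chooses a \emph{minimal} such configuration. Minimality is what kills the chord/termination problems: every alternating walk started inside the minimal configuration must leave it without crossing itself. The interior is then emptied in a controlled way --- vertices are removed in pairs using nested ``global polygons'' whose innermost member is an honest $4$-, $5$-, or $6$-face where the moves of~(\ref{good moves}) apply, and afterwards the remaining standard edges inside are removed one at a time --- and only after this clearing does one of the three reducible configurations appear. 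Finally, your step (5) is also incomplete as stated: the second move in~(\ref{good moves}) does \emph{not} preserve $P$; by~(\ref{gi6}) it changes $P$ by terms supported on diagrams with fewer vertices, so the ``Therefore'' clause of the proposition (and the uniqueness argument for Theorem~\ref{thm:unique poly}) requires the telescoping bookkeeping over the whole sequence of moves, not merely the existence of a reducible face at the end.
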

\begin{proof} The proof is similar in spirit to that of Lemma 2 in~\cite{C}. First we notice that the two moves given in~(\ref{good moves}) yield the following moves:
\begin{eqnarray}\label{good moves-implied1}
\raisebox{-13pt}{\includegraphics[height=0.45in]{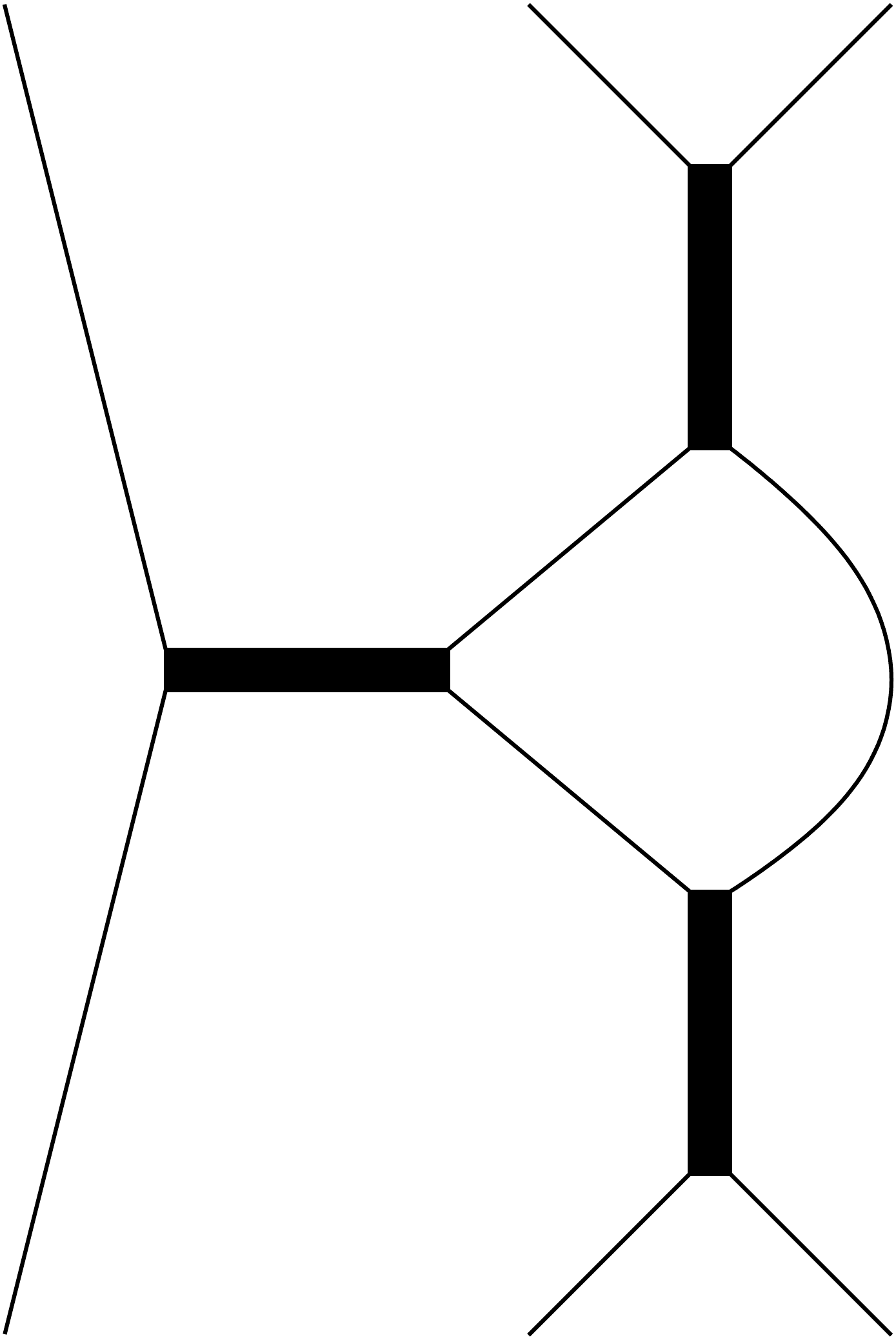}}\, \longleftrightarrow \, \raisebox{19pt}{\includegraphics[height=0.45in, angle = 180]{long-11}}, \quad \raisebox{-13pt}{\includegraphics[height=0.45in]{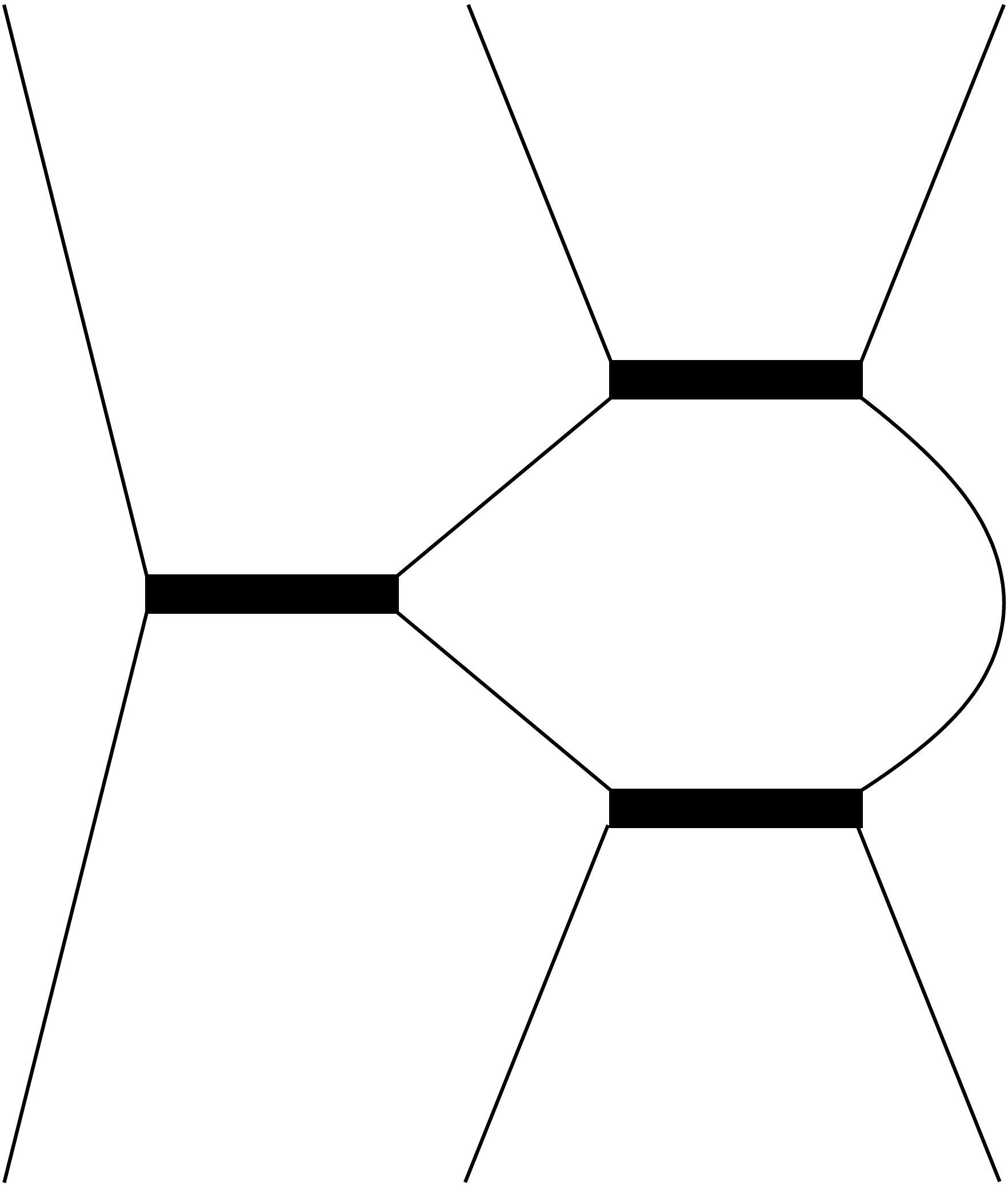}}\, \longleftrightarrow \, \raisebox{19pt}{\includegraphics[height=0.45in, angle = 180]{long-12}}, \quad \raisebox{-13pt}{\includegraphics[height=0.45in]{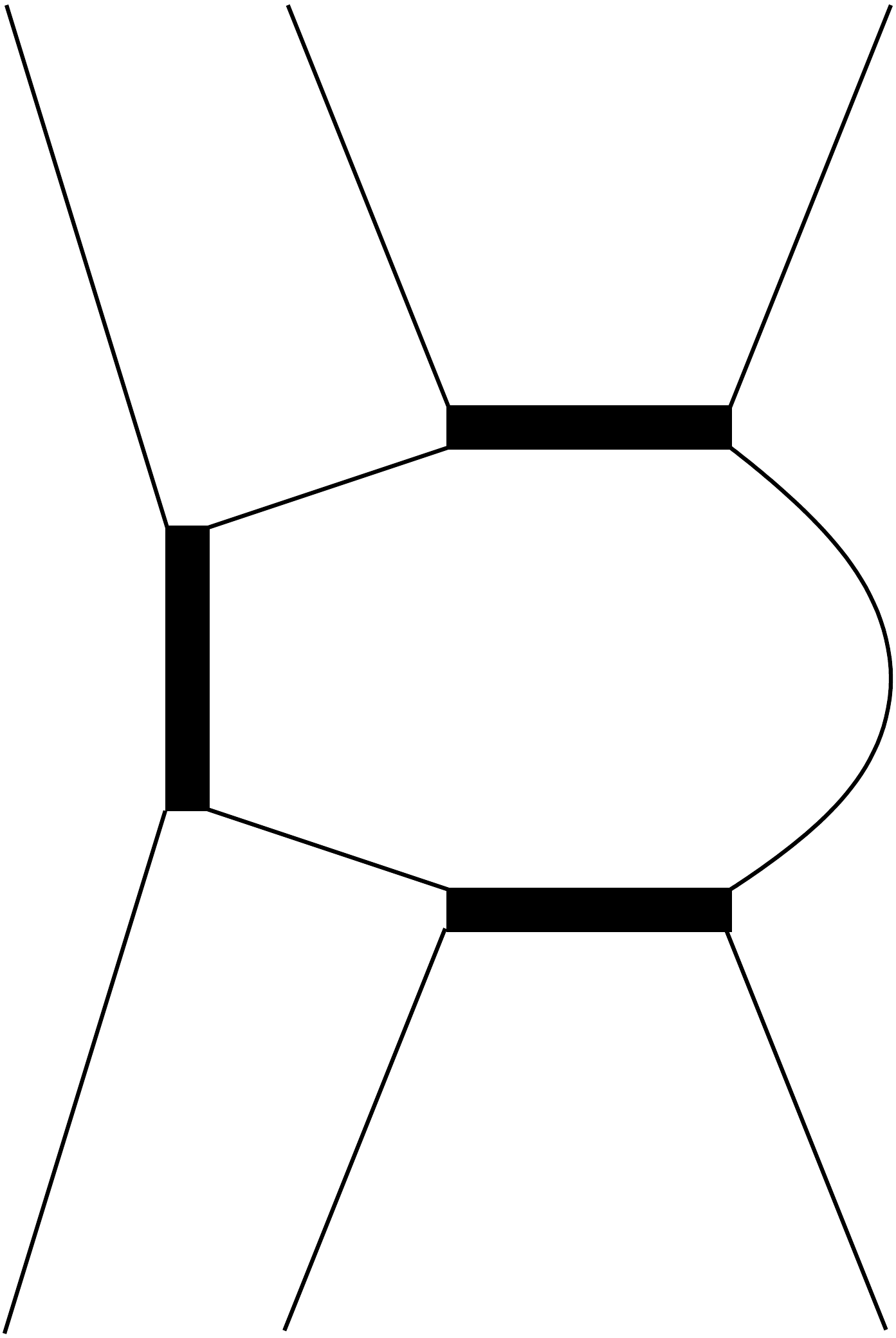}}\, \longleftrightarrow \, \raisebox{19pt}{\includegraphics[height=0.45in, angle = 180]{long-13}},
\end{eqnarray}
as well as other two variations of moves involving a 4-face
\begin{eqnarray}\label{good moves-implied2}
\raisebox{-13pt}{\includegraphics[height=0.45in]{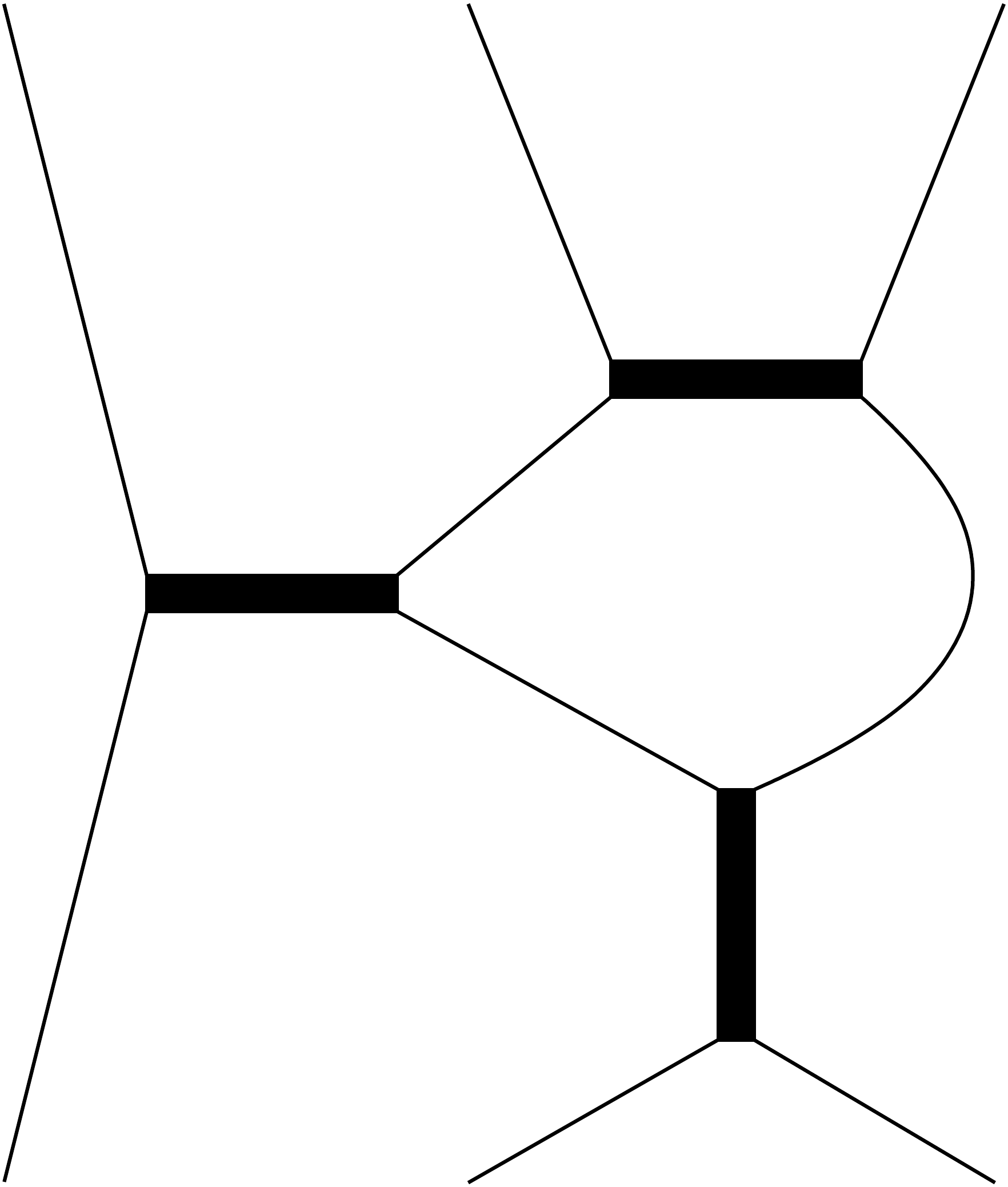}}\, \longleftrightarrow \, \reflectbox{\raisebox{-13pt}{\includegraphics[height=0.45in]{long-14}}}, \quad \reflectbox{\raisebox{19pt}{\includegraphics[height=0.45in, angle = 180]{long-14}}}\, \longleftrightarrow \, \raisebox{19pt}{\includegraphics[height=0.45in, angle = 180]{long-14}}\,.
\end{eqnarray}

We remark that the girth (the length of a shortest cycle) of the planar graph diagram $\Gamma$ is at most 5. For, if $\Gamma$ has $n$ vertices, then it has $3n/2$ edges and $2+n/2$ faces (counting the infinite face), and if the girth is 6 or larger, then by counting the edges we have:
$ 3n/2 \geq 6(2+ n/2)/2$, which is impossible.

We consider certain walks in the graph, which we call \textit{alternating walks}. Let $v$ be a vertex of the graph and choose an edge that is adjacent to $v$.  We construct a walk that starts at $v$ and proceeds along this edge, alternating between wide edges and standard edges in such a way that if the walk enters a wide edge from the right (or left) then it leaves it from the left (or right). The dotted line in the picture below represents such a walk:

\[\raisebox{-13pt}{\includegraphics[height=0.6in]{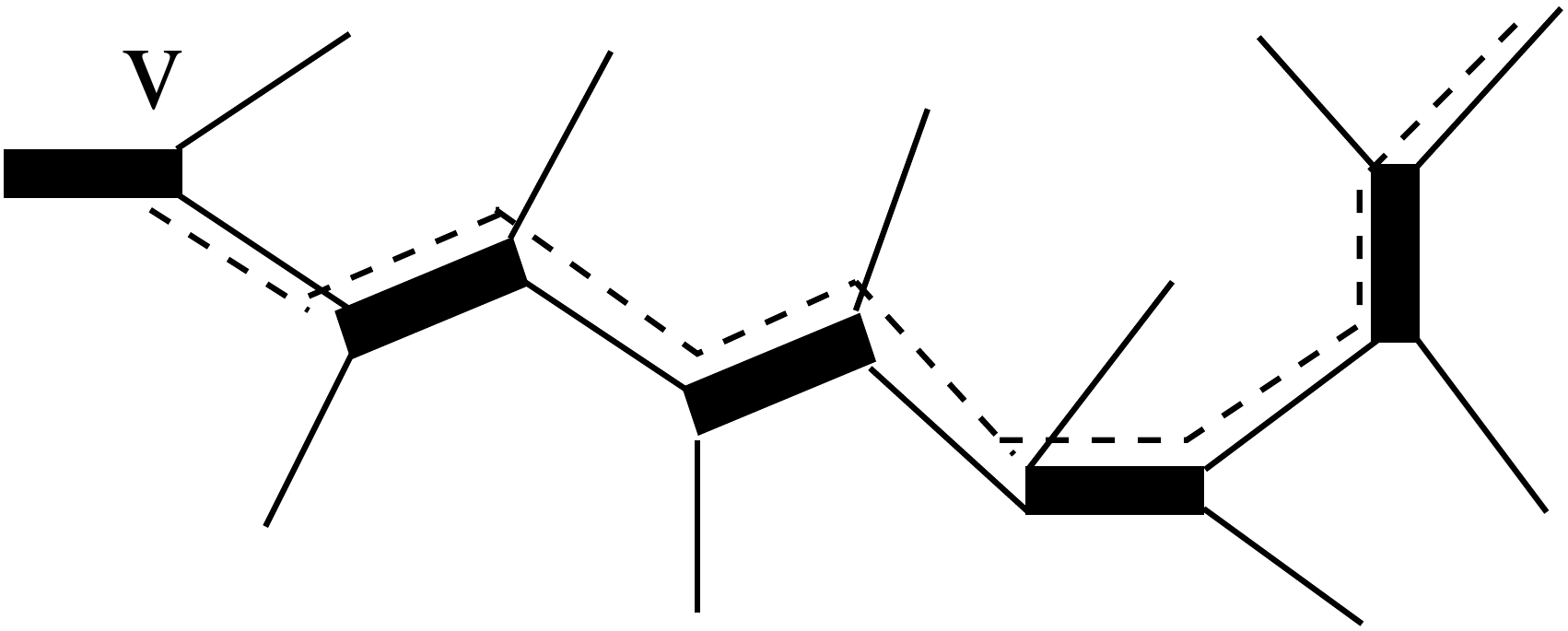}}\]

Since the graph contains a finite number of vertices, the walk returns to a vertex, $w$, it has already visited before, producing a cycle in the graph which, near $w$, looks like one of the diagrams shown in figure~\ref{fig:circuits}.

\begin{figure}[ht]
\raisebox{-13pt}{\includegraphics[height=0.6in]{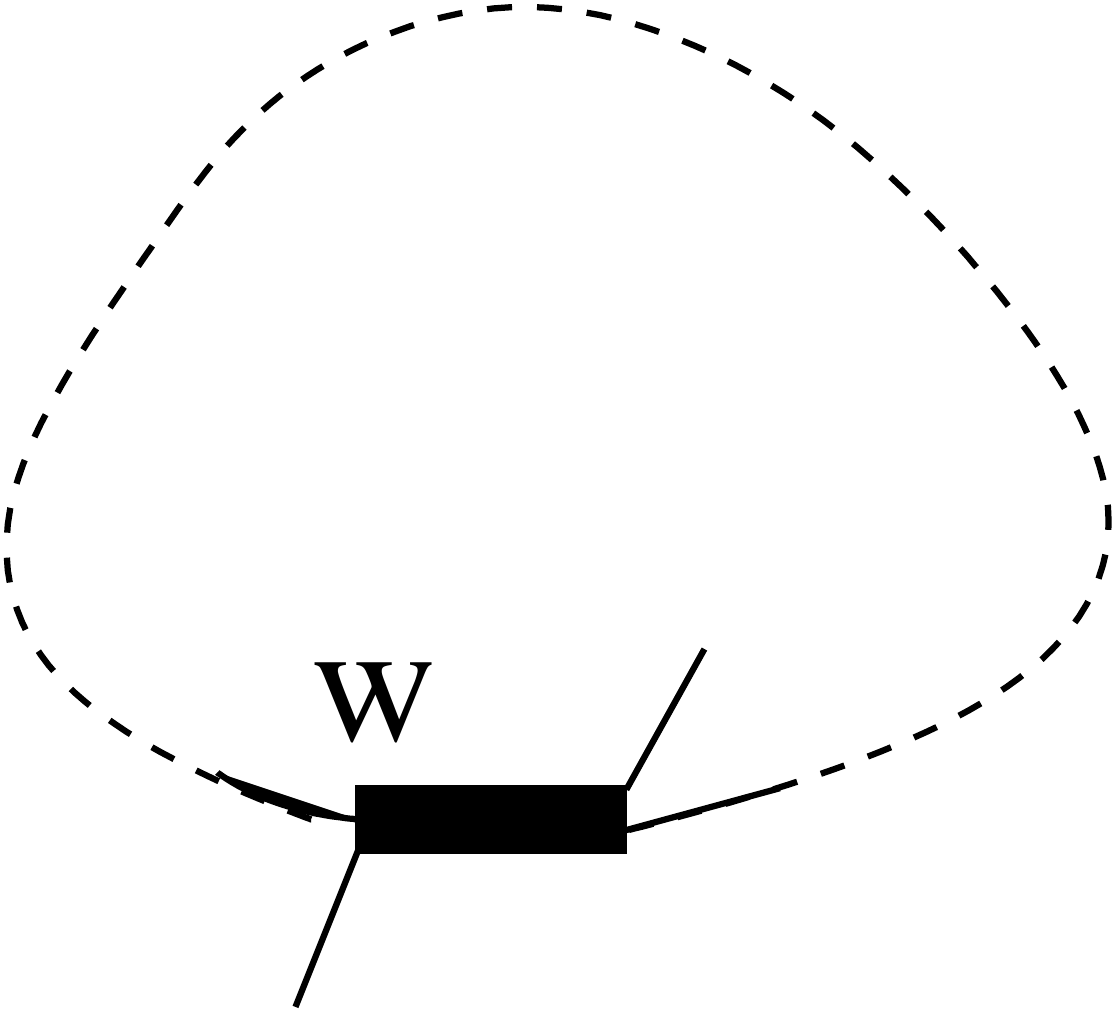}},\,\, \raisebox{-13pt}{\includegraphics[height=0.6in]{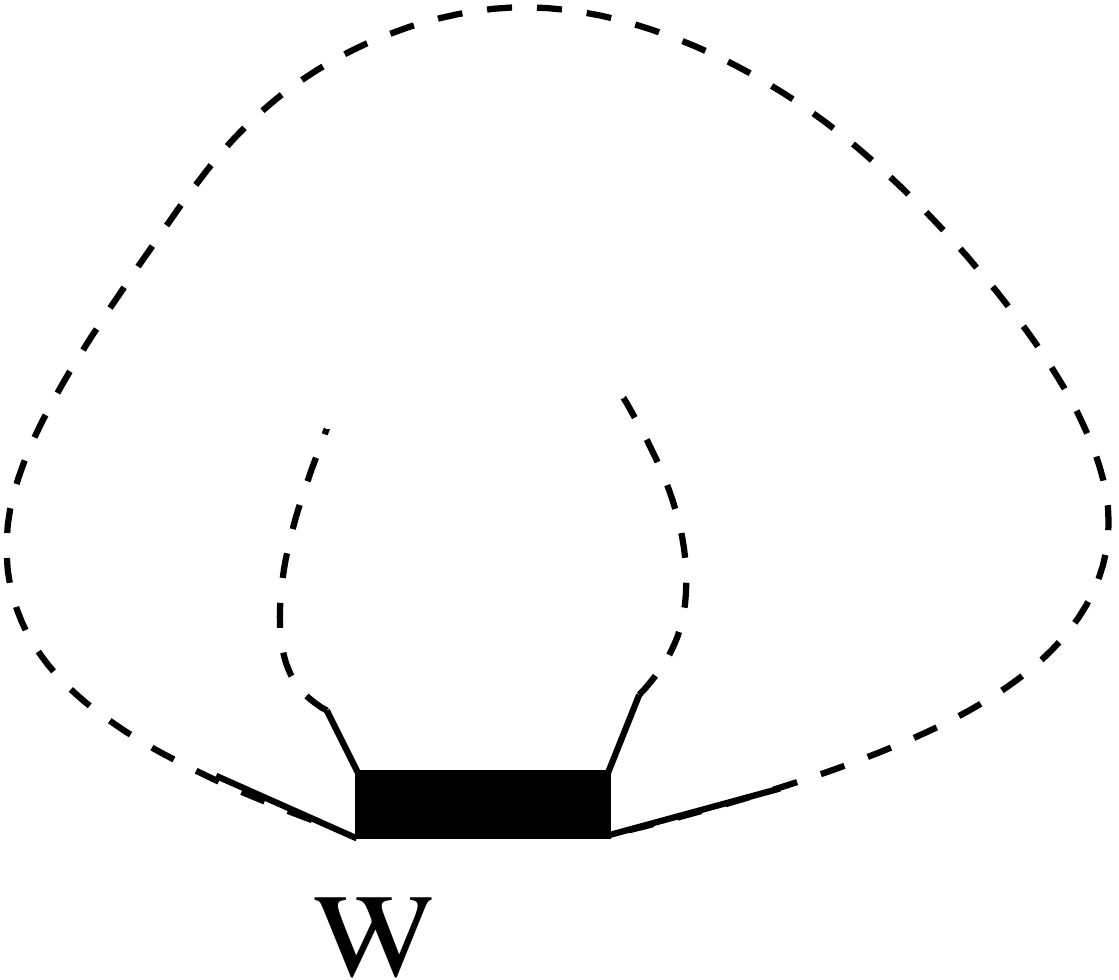}},\,\, \raisebox{-13pt}{\includegraphics[height=0.6in]{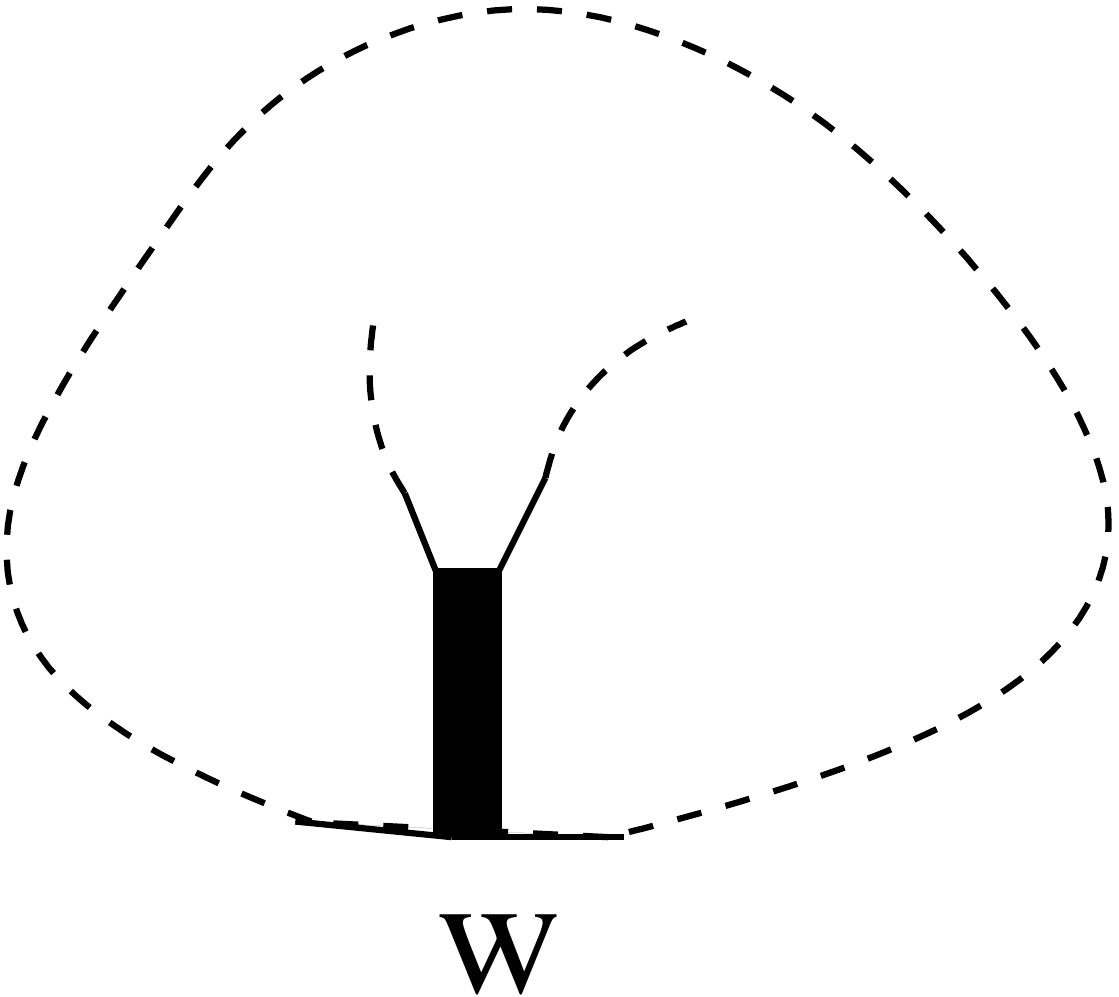}}, \,\, \raisebox{-13pt}{\includegraphics[height=0.6in]{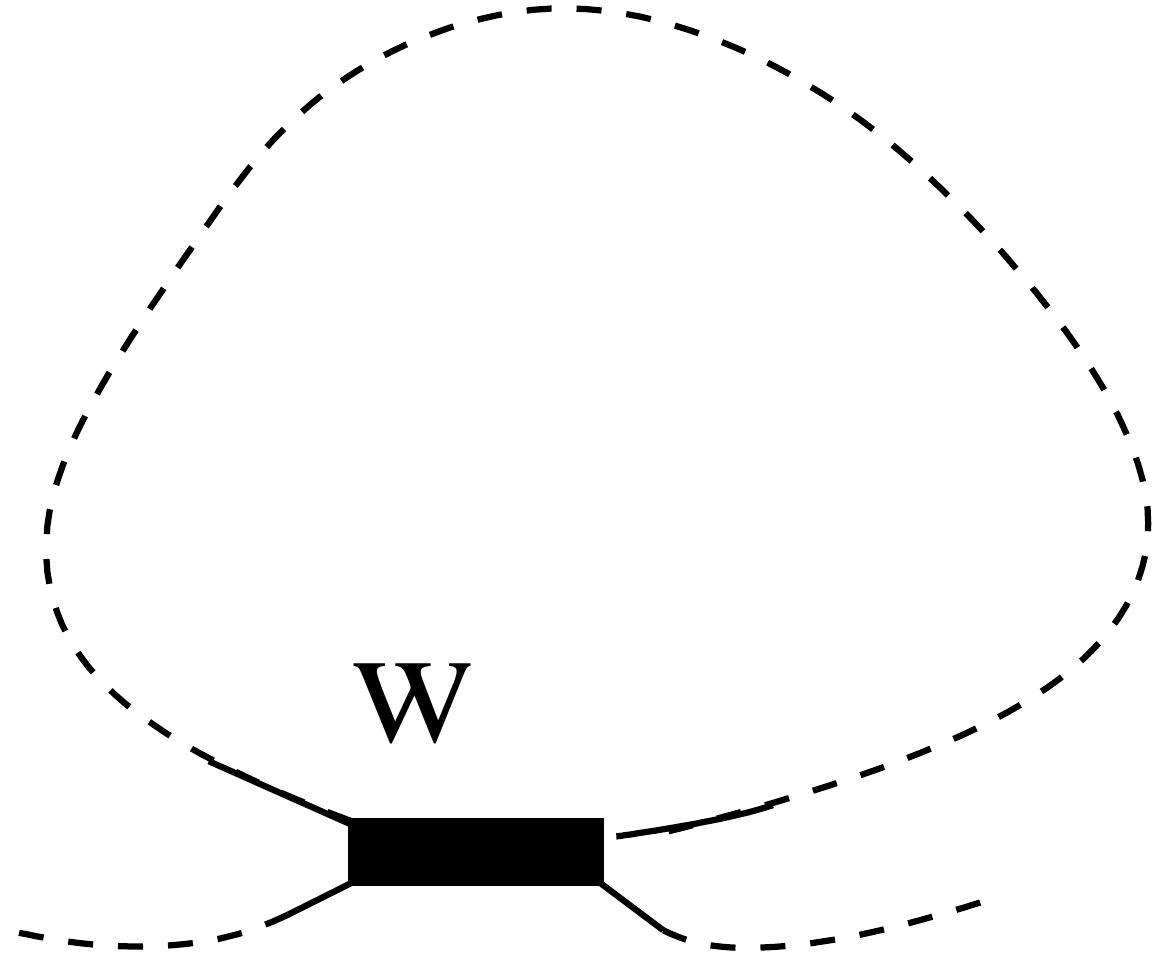}},\,\,\text{or} \,\, \raisebox{-19pt}{\includegraphics[height=0.7in]{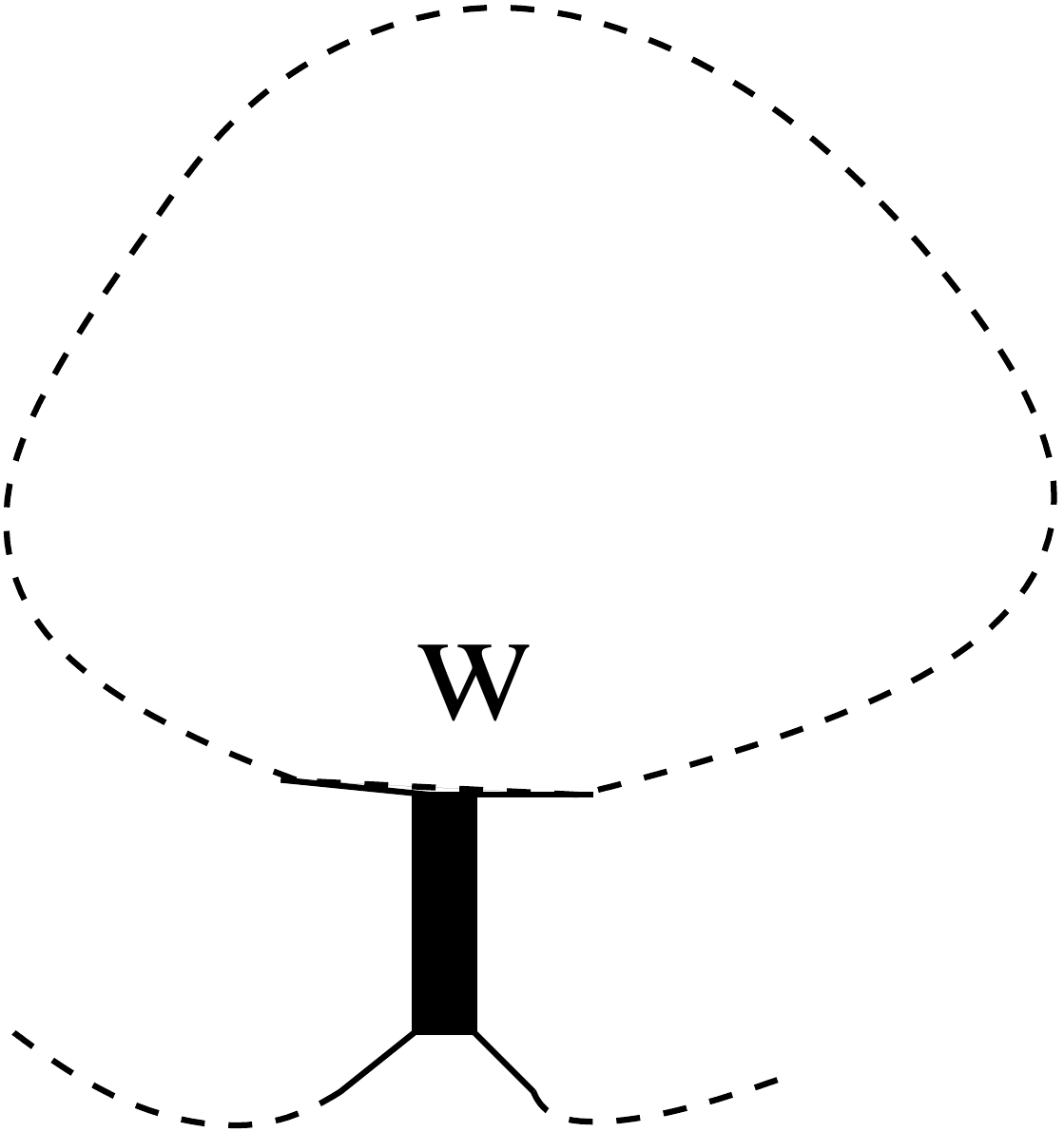}}
\caption{Cycles obtained from alternating walks}\label{fig:circuits}
\end{figure}
The dotted parts in the cycles given in figure~\ref{fig:circuits} are alternating walks, and therefore, along these walks, the two standard edges corresponding to the vertices of a wide edge and not belonging to the cycle, lie on the opposite sides of the cycle.

Using such a cycle, we show that the graph diagram contains one of the configurations depicted in Figure~\ref{fig:configurations}, where the dotted parts are alternating walks.

\begin{figure}[ht]
\raisebox{-9pt}{\includegraphics[height=0.3in]{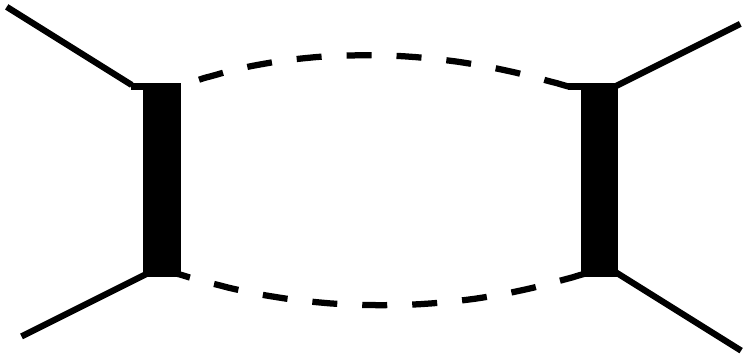}}, \quad  \quad \raisebox{-9pt}{\includegraphics[height=0.3in]{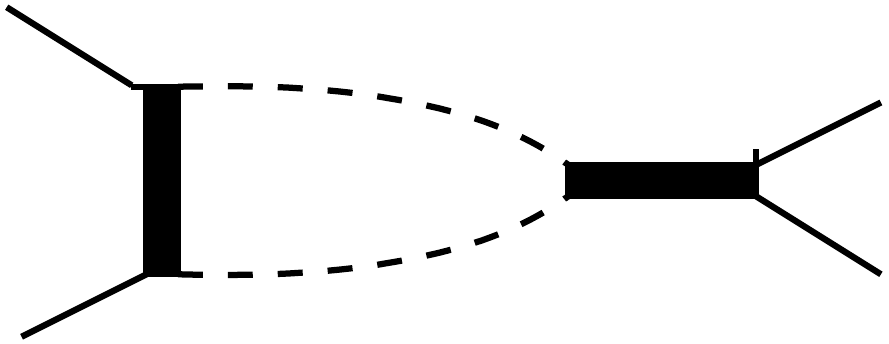}}, \quad  \quad \raisebox{-7pt}{\includegraphics[height=0.2in]{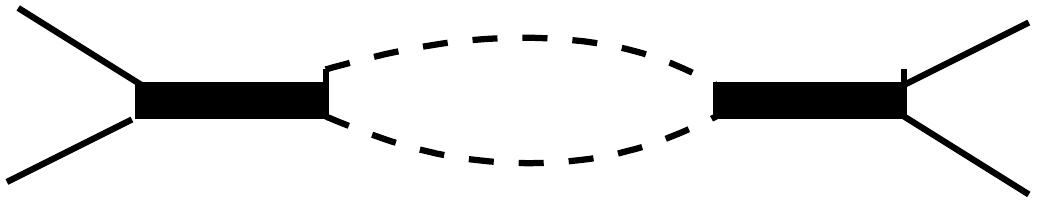}}
\caption{Desired configurations}\label{fig:configurations}
\end{figure}

Consider any of the the first three situations in Figure~\ref{fig:circuits}, and extend the walk going from the vertex $w$ inside the enclosed region. If the walk finishes at $w$ or at its neighboring vertex (shown in the diagram), or if the walk crosses itself before leaving the region, then we obtain a cycle of the fourth or fifth type as in Figure~\ref{fig:circuits}. If the walk leaves the enclosed region without crossing itself, then we arrive at one of the desired configurations in Figure~\ref{fig:configurations}.

In the fourth or fifth situation in Figure~\ref{fig:circuits}, we choose a standard edge inside the enclosed region and connected to a vertex (distinct from $w$) on the cycle. Call this vertex $x$. We follow the alternating walk which starts at the vertex $x$ and prolongs inside the enclosed region along the chosen standard edge. If this walk leaves the region without crossing itself, we arrive at a desired configuration. We give such a situation below:
\[\raisebox{-9pt}{\includegraphics[height=0.7in]{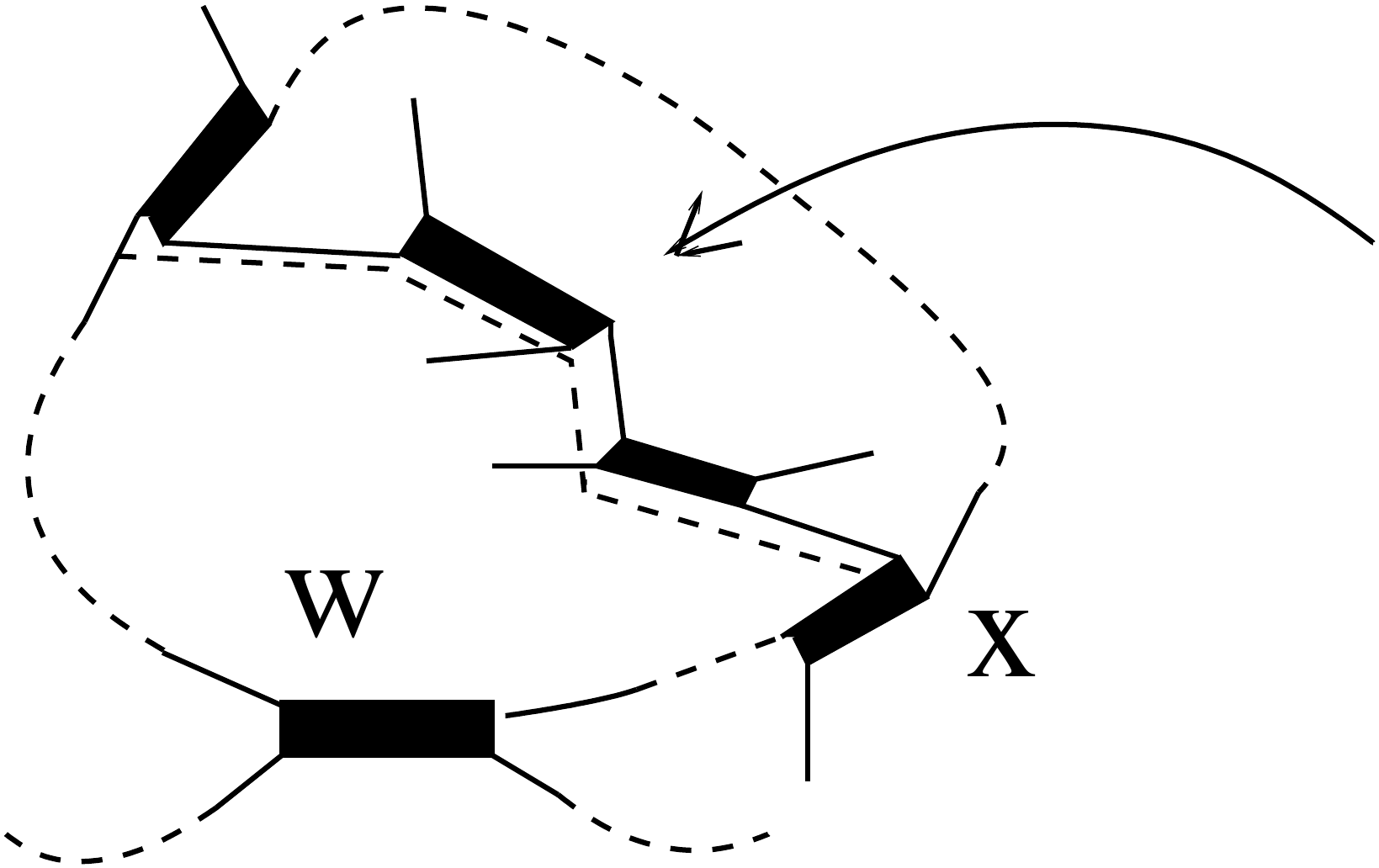}}\]
If this latter walk crosses itself before it leaves the region, we obtain another cycle of fourth or fifth type lying inside the initial one. In this situation we proceed as before, considering another walk going inside the latter cycle. If this walk crosses itself inside the latter cycle, we iterate the process until we obtain a walk that does not cross itself before it leaves the region, which produces one of the desired configurations in Figure~\ref{fig:configurations}.

Henceforth, the graph contains configurations of the desired form (as shown in Figure~\ref{fig:configurations}), and we choose one that does not contain another desired configuration inside. In other words, we choose a minimal configuration. This ensures that any walk starting inside the configuration must go outside without crossing itself. Moreover, any walk that goes in through one side of the configuration goes out through the other side without crossing itself.

We prove now that if there are vertices inside the (minimal) desired configuration, we can remove them in pairs by a sequence of the moves given in~(\ref{good moves}), or consequently, by a sequence of the moves displayed in~(\ref{good moves-implied1}) and~(\ref{good moves-implied2}). We remark that there is an even number of vertices inside the configuration, since every wide edge lying inside the configuration does not share a vertex with the cycle forming the configuration. Begin an alternating walk at one of these vertices and extend it until it crosses the configuration. At the penultimate vertex before this walk meets the configuration, proceed along the alternating walk that leaves the configuration through the same side. We obtain a `global' polygon adjacent to one side of the configuration. Such a situation is presented below, where the dotted lines are alternating walks:
\[\raisebox{-9pt}{\includegraphics[height=0.6in]{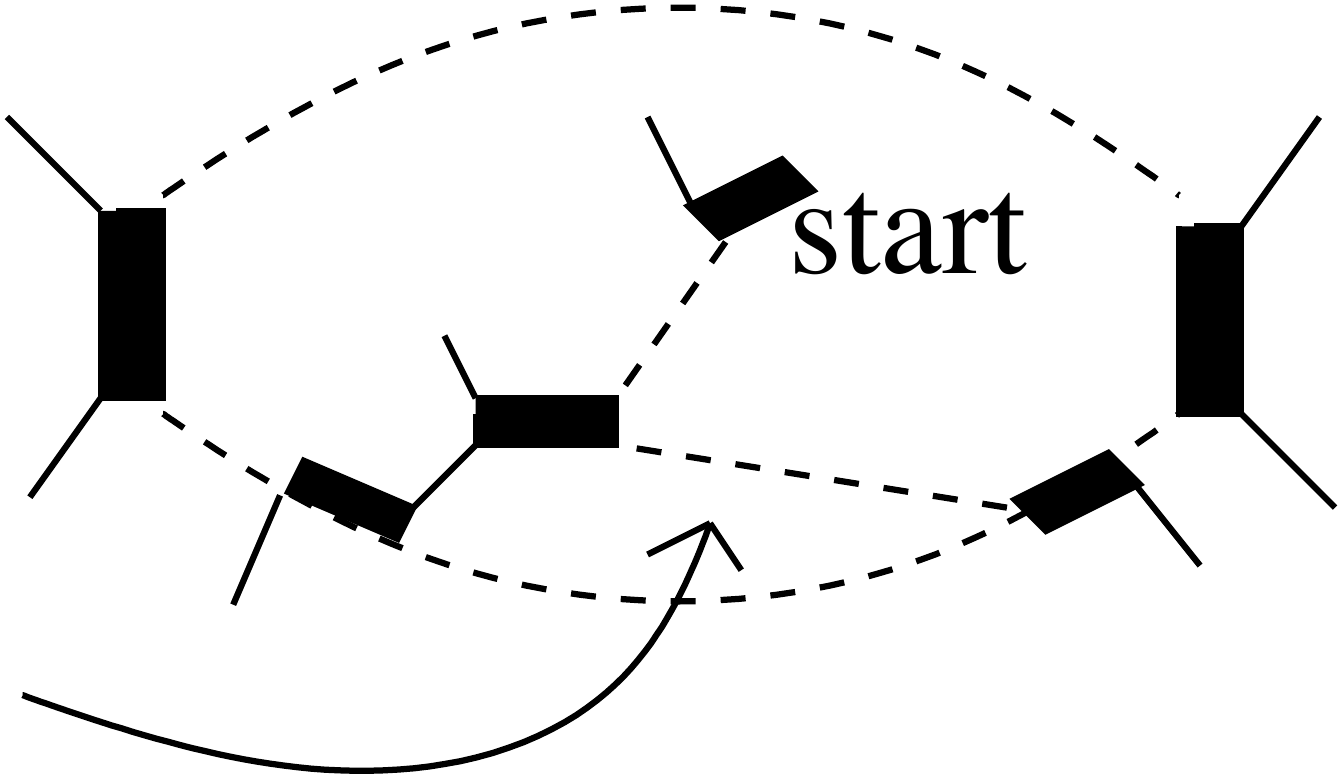}}\]

At the penultimate vertex of the latter walk before it crosses to the outside of the configuration, begin another alternating walk that goes inside the polygon and leaves it through the side of the configuration. This process produces a new global polygon inside the previous one and adjacent to the same side of the configuration:
\[\raisebox{-9pt}{\includegraphics[height=0.6in]{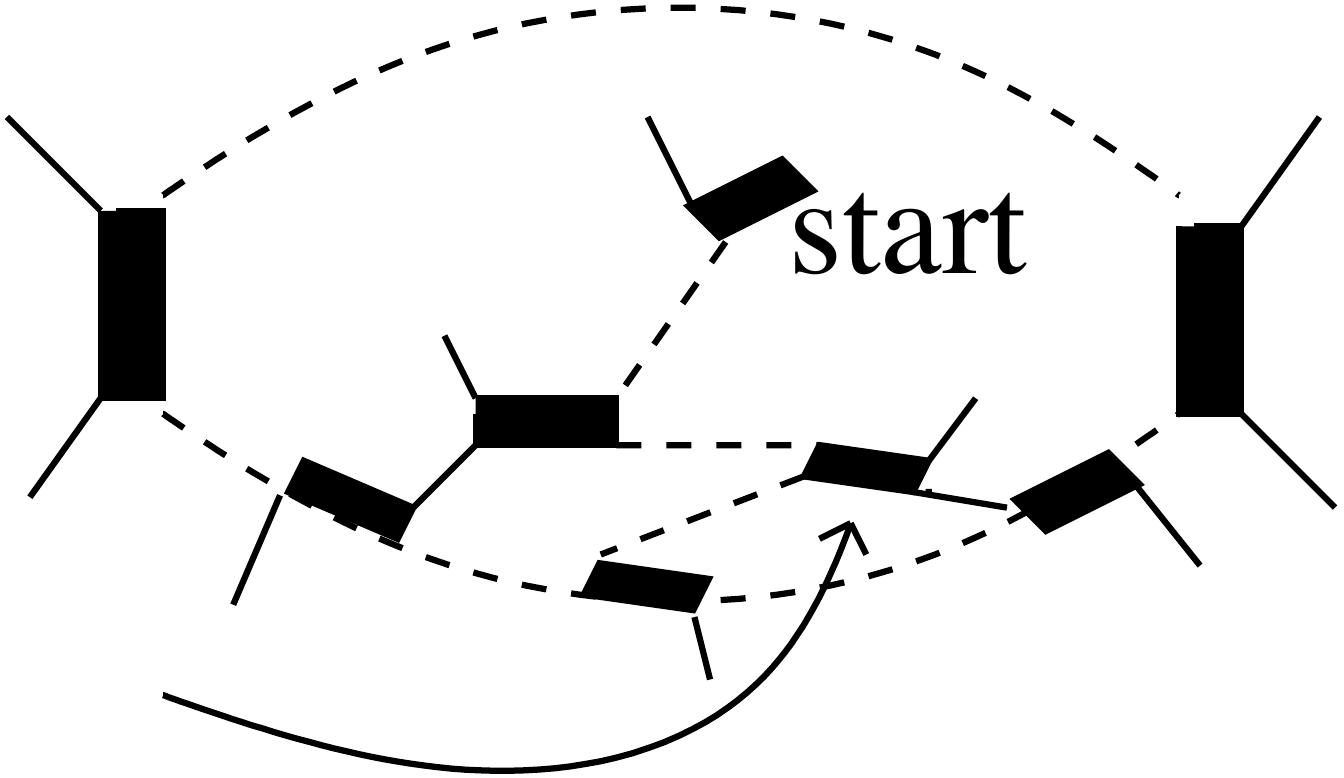}}\]

Iterating this process, we obtain a chain of polygons $p_1 \supset p _2 \supset \dots \supset p_n$ adjacent to one side of the configuration, where $p_n$ is a `simple' polygon. More precisely, $p_n$ is either a 4-face, 5-face or 6-face in the original graph diagram $\Gamma$:
\[\raisebox{-15pt}{\includegraphics[height=0.45in]{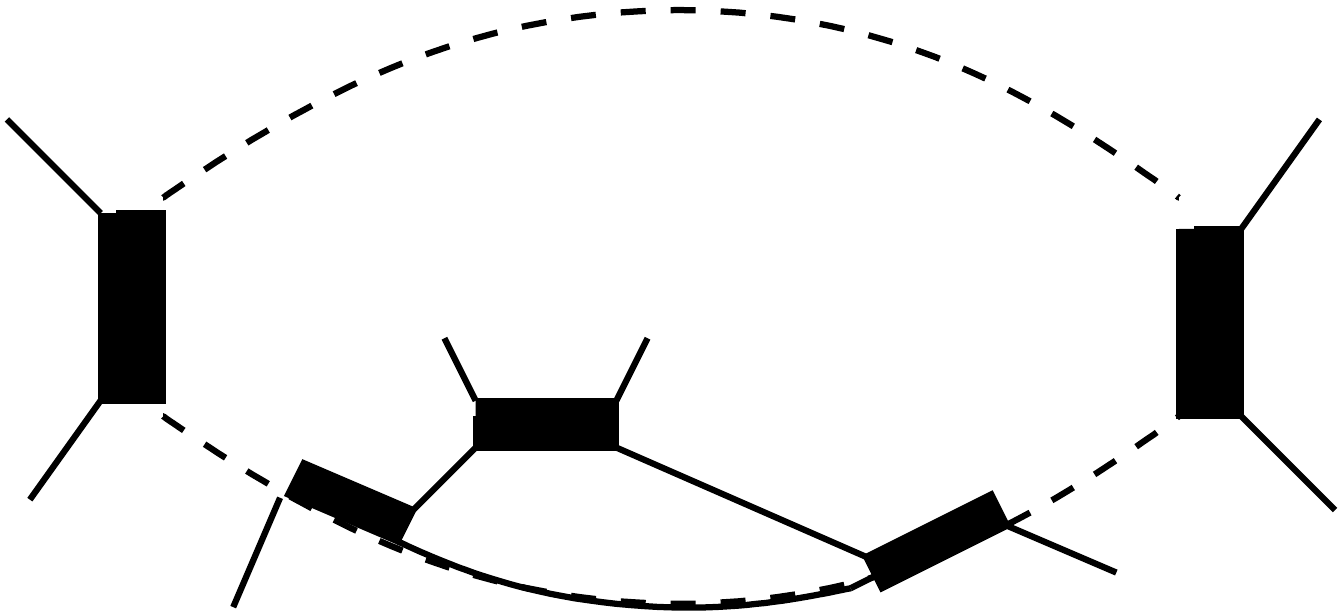}}, \quad \raisebox{-19pt}{\includegraphics[height=0.5in]{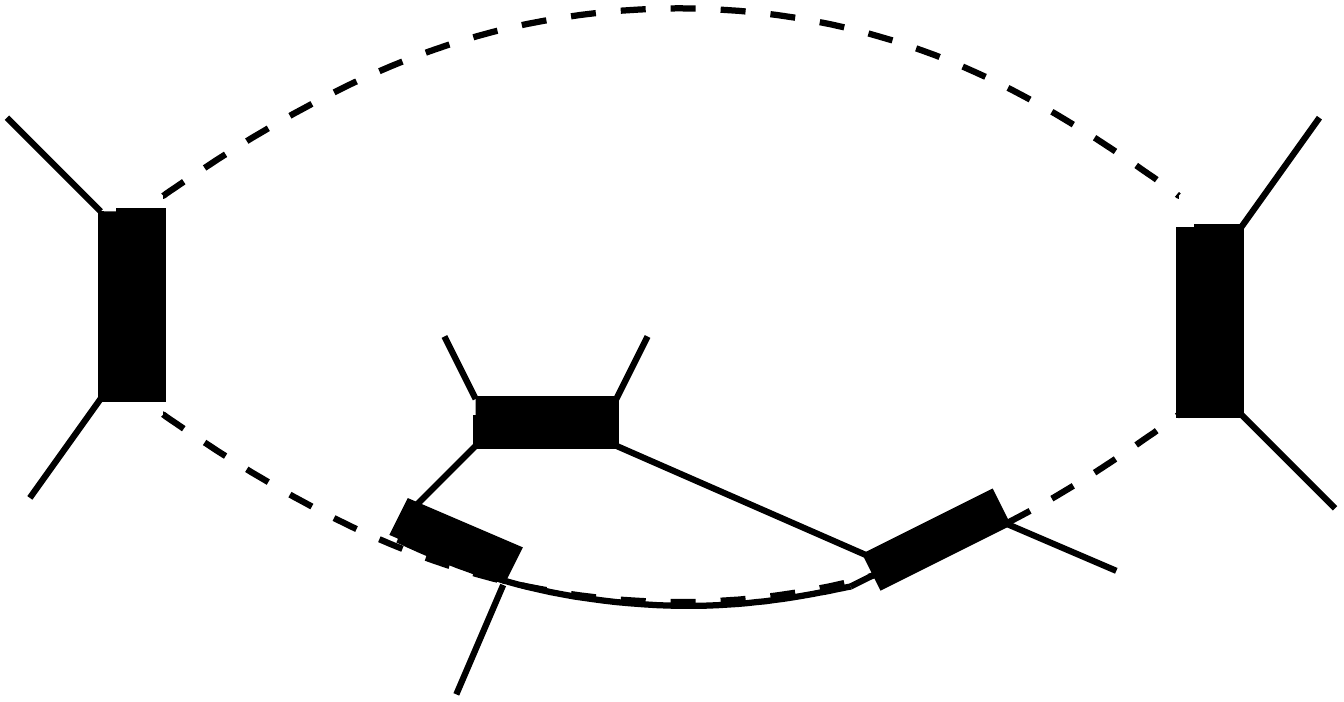}}, \quad \raisebox{-19pt}{\includegraphics[height=0.5in]{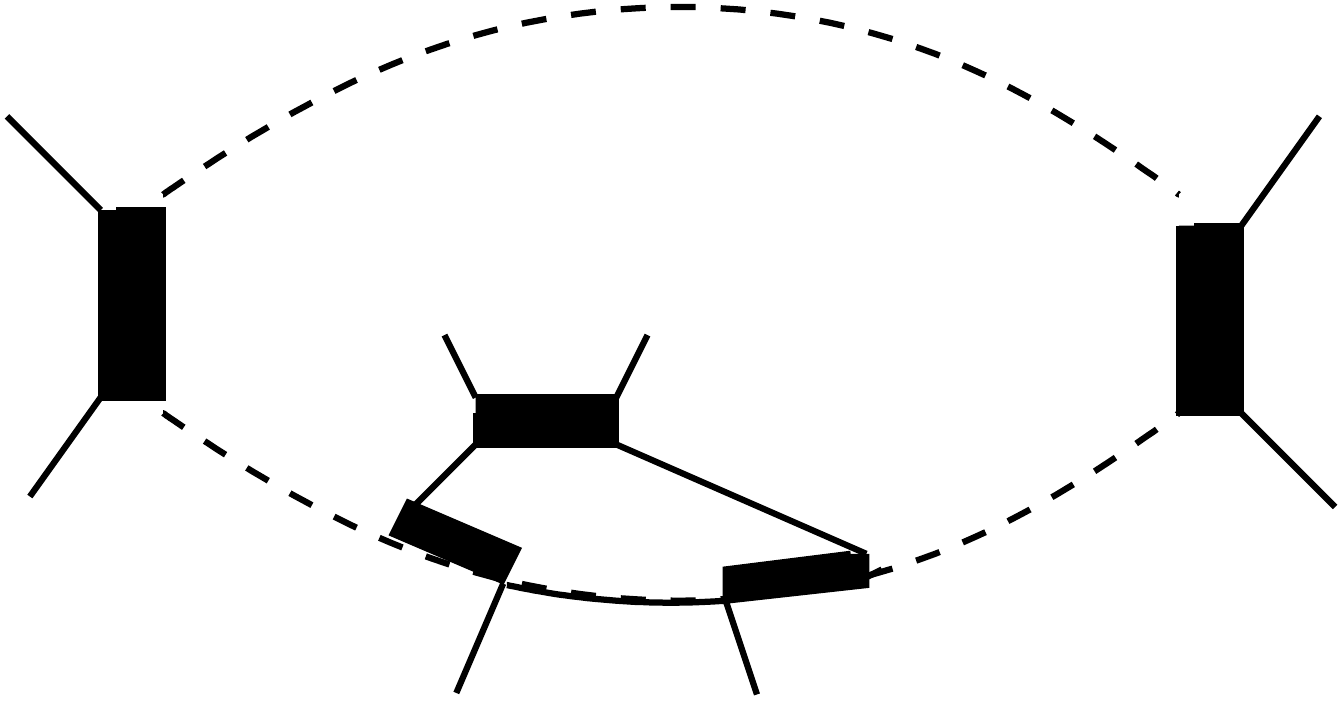}}\,\,\]
Henceforth, we can use those moves given in~(\ref{good moves}) - (\ref{good moves-implied2}) involving 4-, 5- or 6-faces, to remove a pair of vertices joined by a wide edge inside the configuration:

\[\raisebox{-19pt}{\includegraphics[height=0.5in]{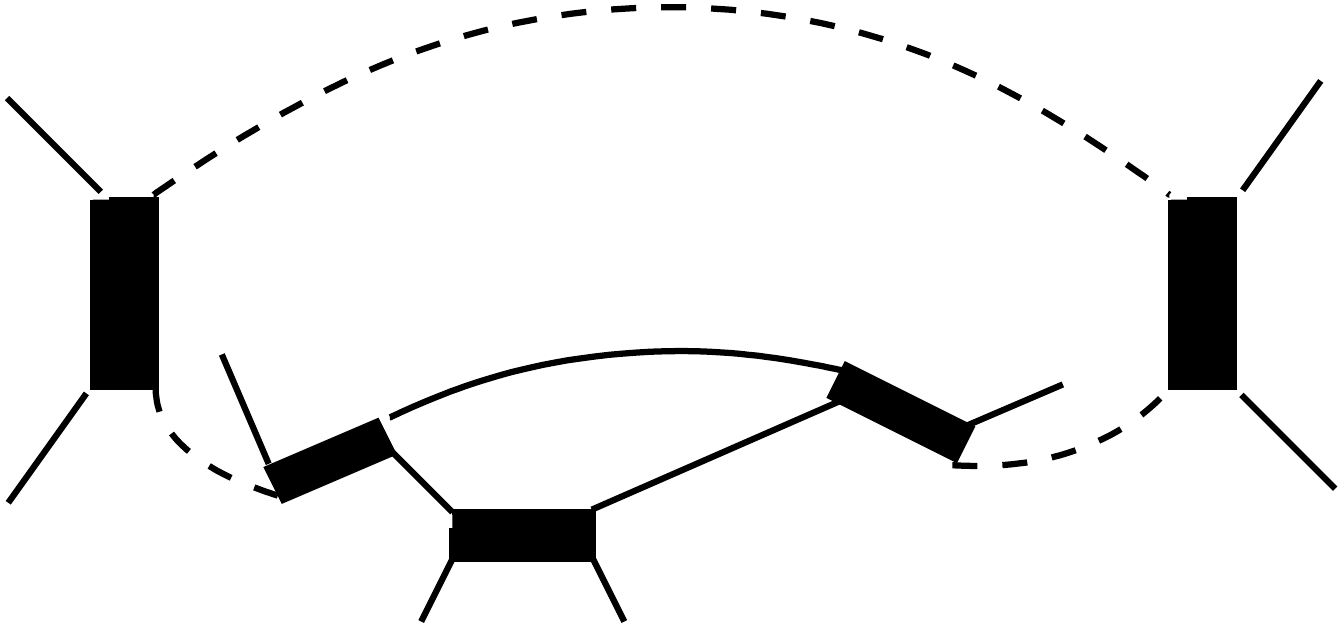}},\quad \raisebox{-19pt}{\includegraphics[height=0.5in]{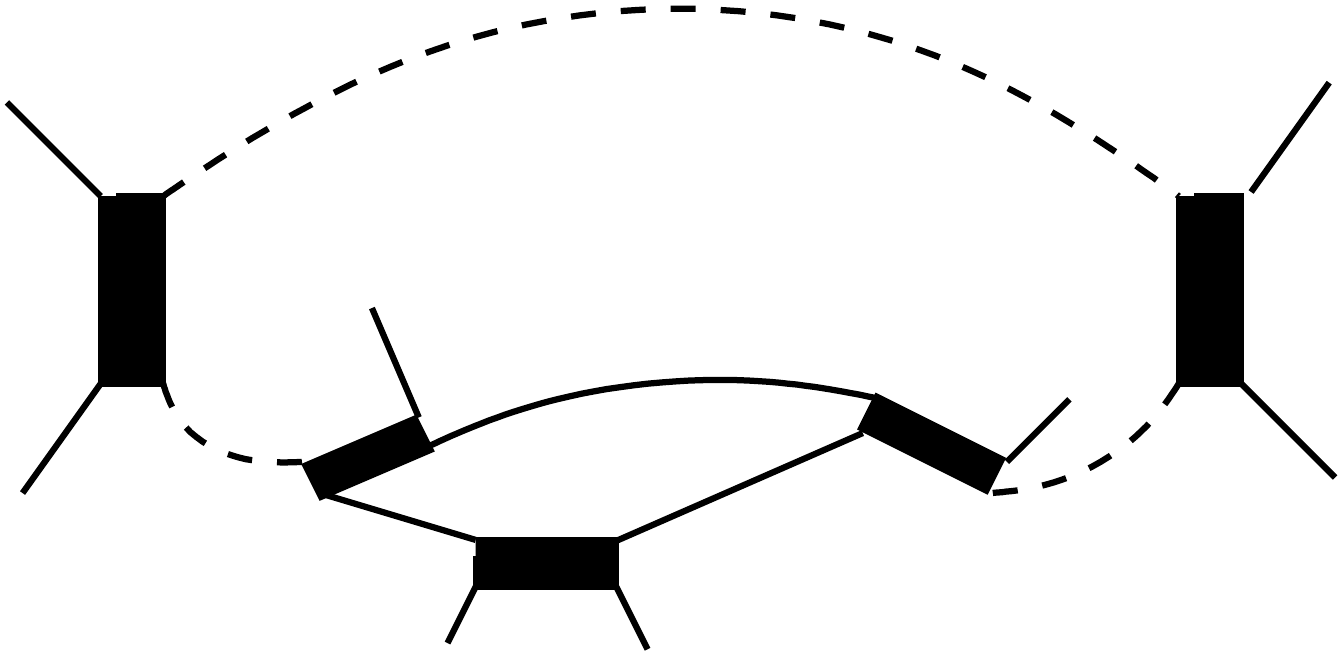}},\quad \raisebox{-19pt}{\includegraphics[height=0.5in]{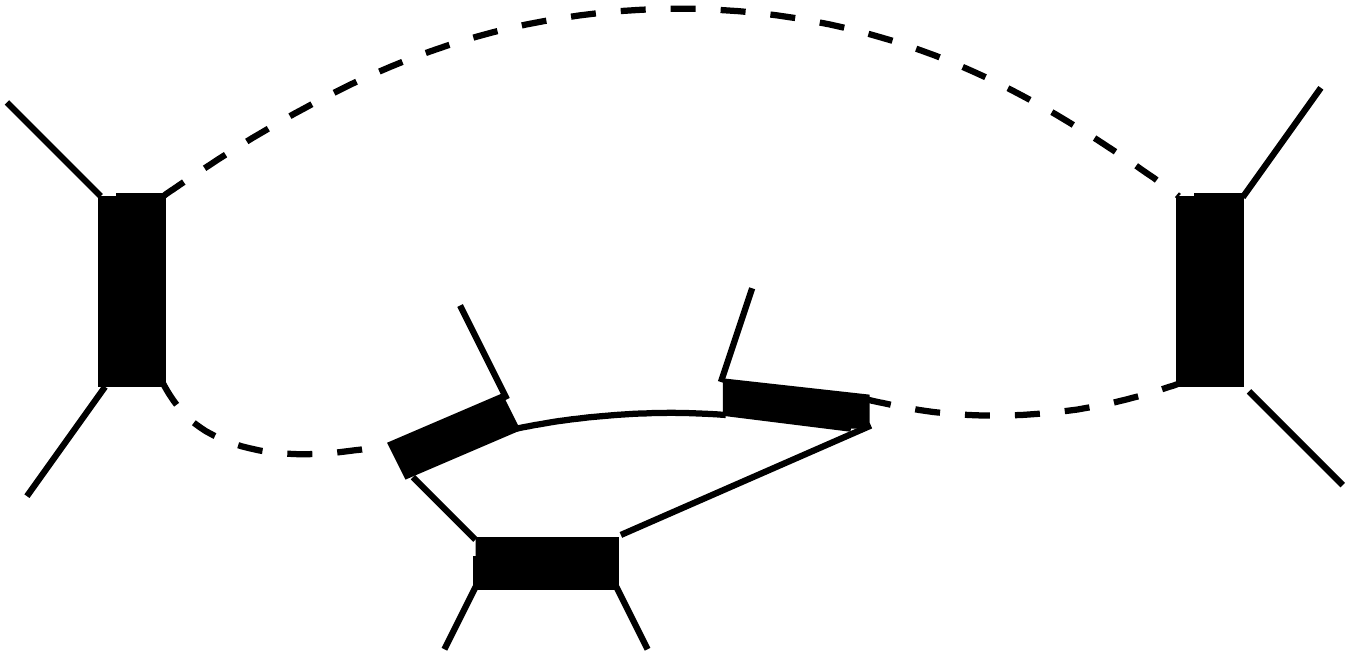}}\,\,\]

We apply this method recursively to remove all vertices within the configuration. The resulting diagram has the same number of vertices, but those vertices which were inside the configuration are now outside of it. Specifically, the resulting configuration has no vertices within it but standard edges connecting the two sides (alternating walks) of the configuration and separating $k$-faces inside of it, where $3 \leq k \leq 8$. Such a resulting configuration is shown below:
 \[\raisebox{-9pt}{\includegraphics[height=0.5in]{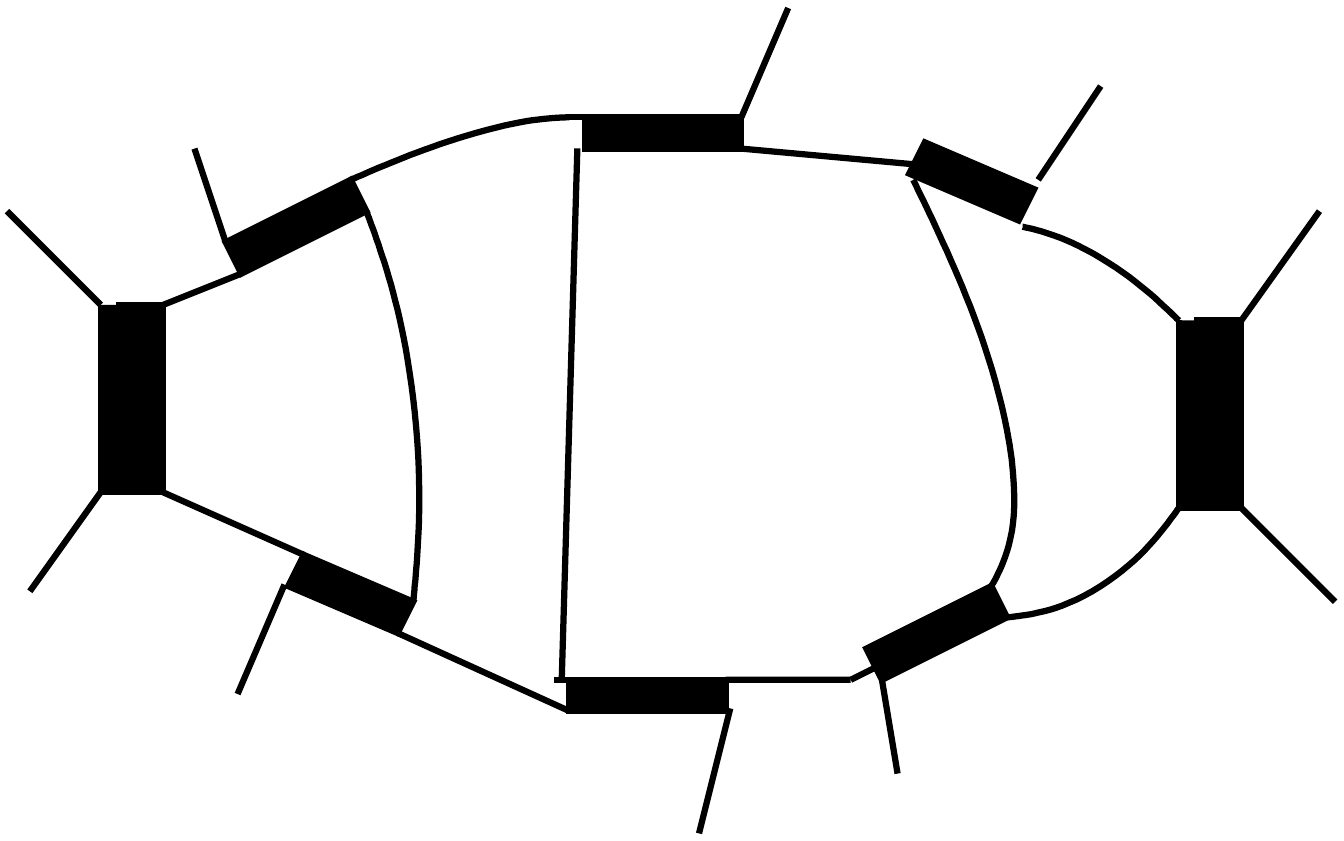}}\]
 Finally, we employ again the moves depicted in~(\ref{good moves-implied1}), (\ref{good moves-implied2}) and the second move in~(\ref{good moves}) to remove all edges inside the configuration, one at the time, starting with the leftmost (or rightmost) edge. It is important to remark that at each iteration of this process, the leftmost (or rightmost) face inside the configuration is a $k$-face, with $k\leq6$. We exemplify below such iterations, where at each stage we encircled the region where a move is applied:  
  \[\raisebox{-18pt}{\includegraphics[height=0.5in]{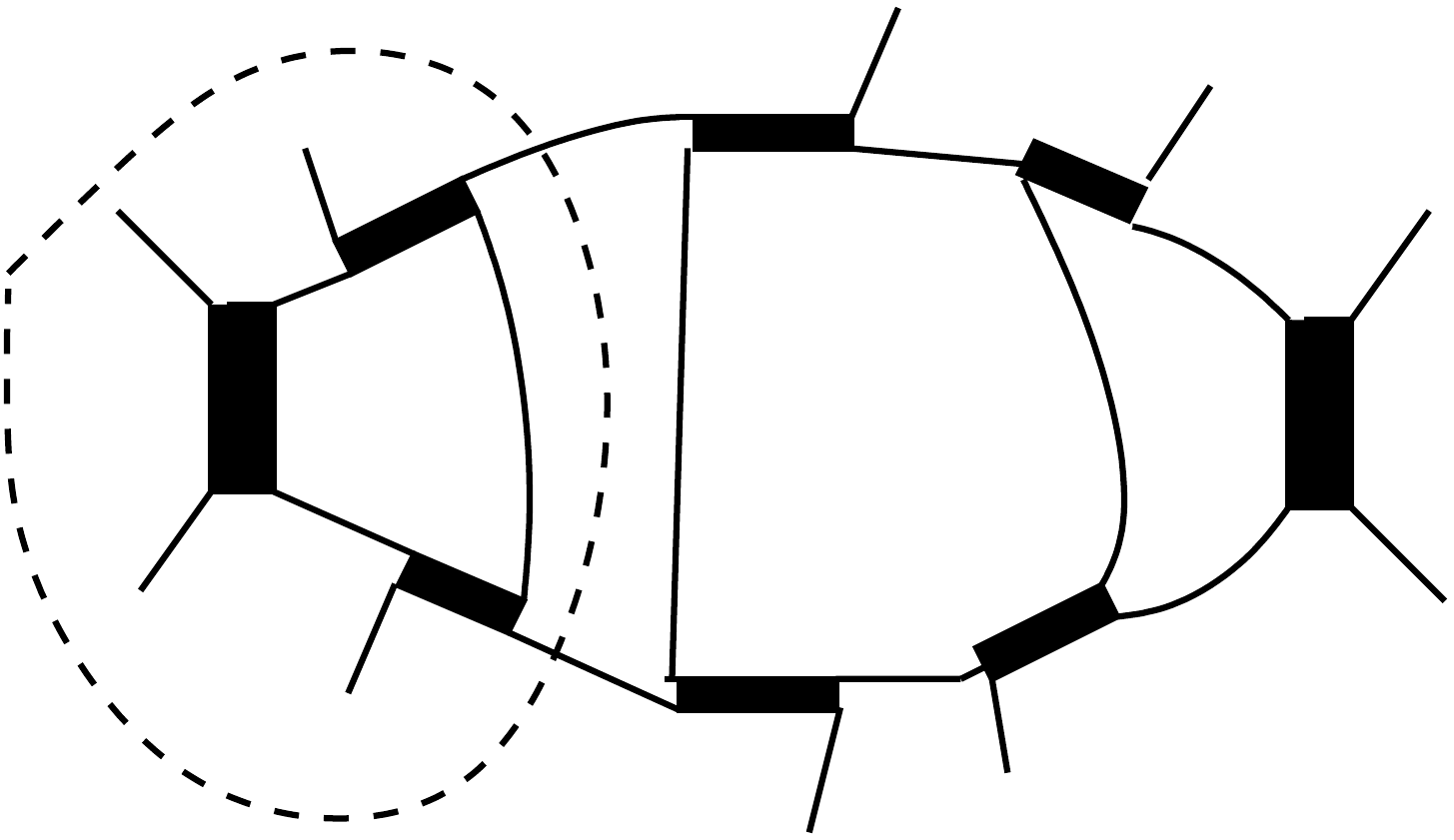}} \longrightarrow \raisebox{-18pt}{\includegraphics[height=0.5in]{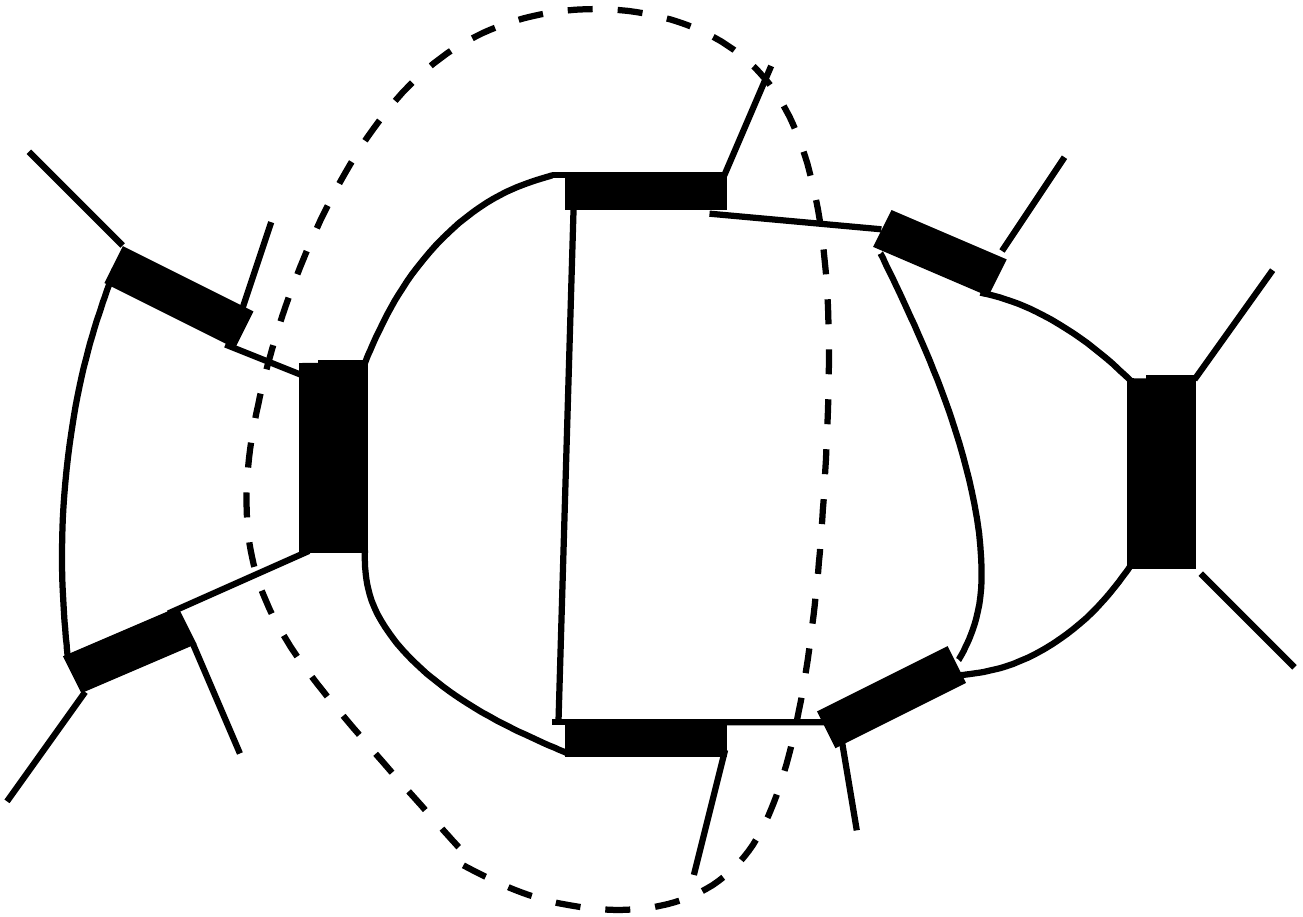}} \longrightarrow \raisebox{-18pt}{\includegraphics[height=0.5in]{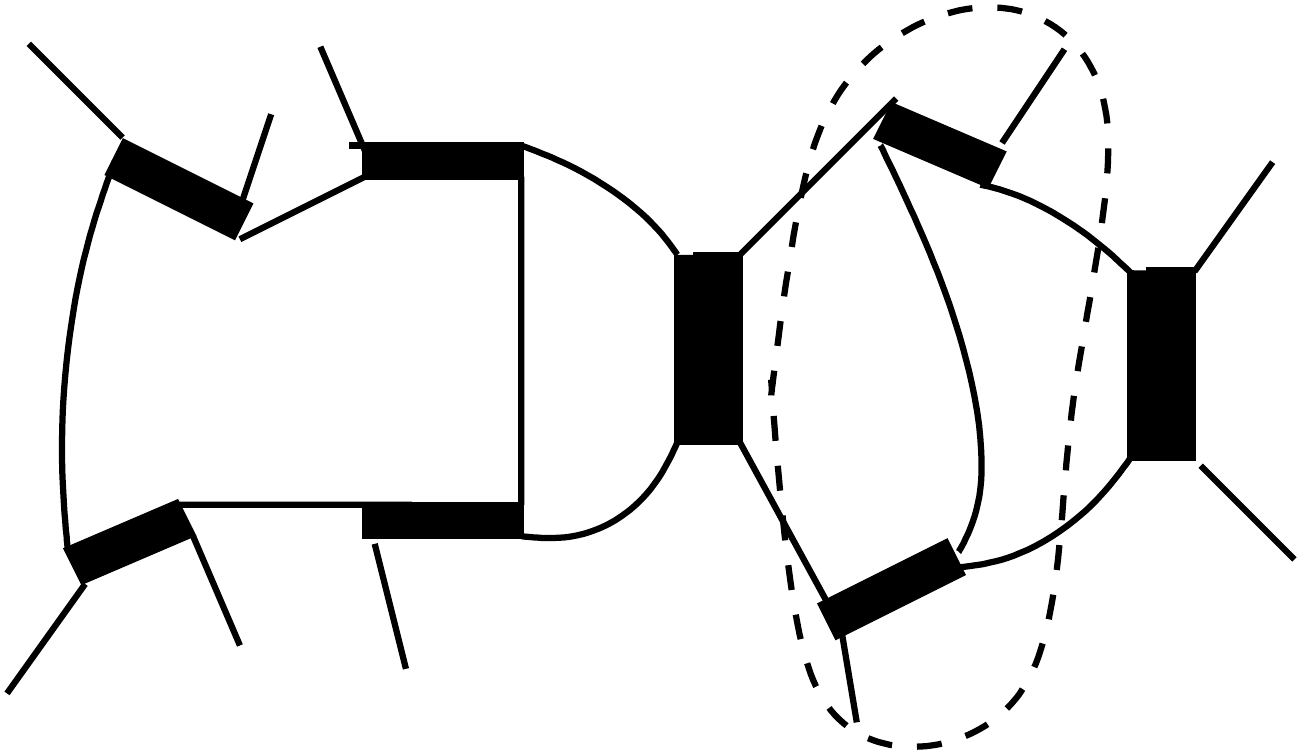}} \longrightarrow \raisebox{-18pt}{\includegraphics[height=0.5in]{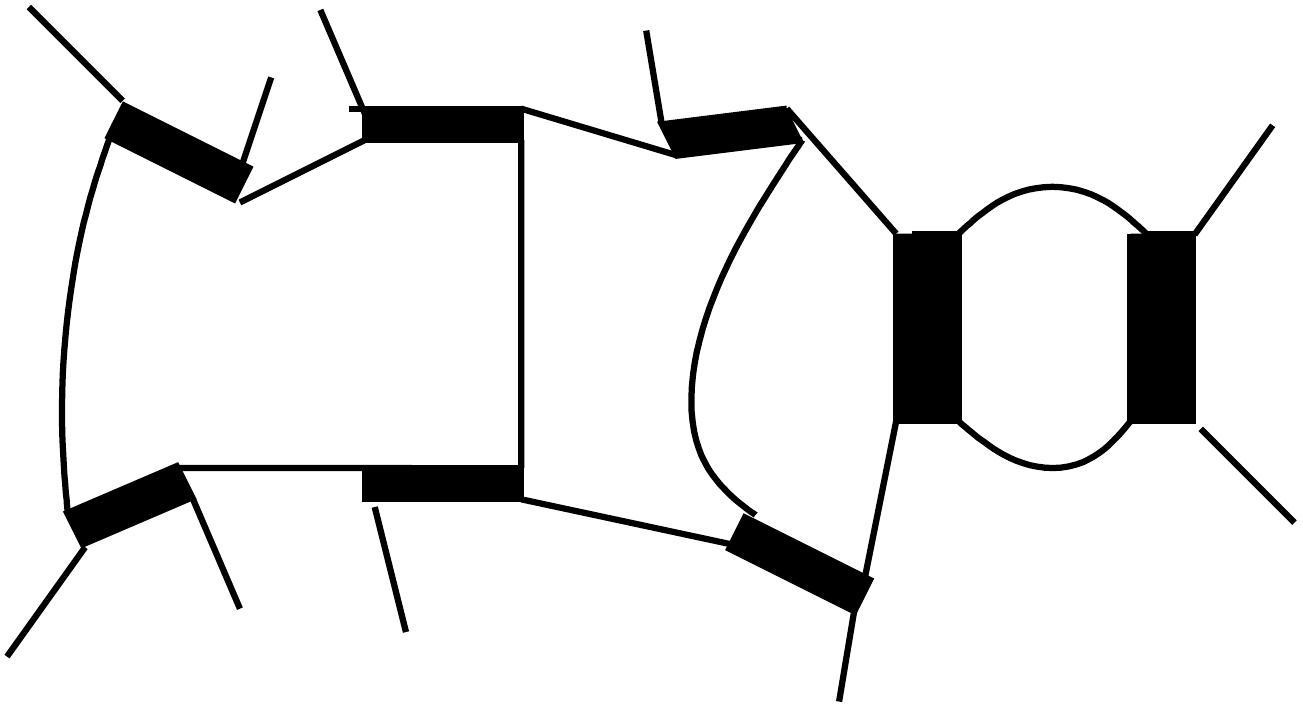}} \]
 
 Henceforth, we arrive at a graph containing one of the configurations \raisebox{-4pt}{\includegraphics[height=0.18in]{rem-square}}, \raisebox{-4pt}{\includegraphics[height=0.18in]{rem-triangle}}, or \raisebox{-4pt}{\includegraphics[height=0.45in, angle = 90]{rem-digon}}\,. This concludes the proof.
\end{proof}

\begin{proof} (of Theorem~\ref{thm:unique poly})
Let $P$ and $\tilde{P}$ be two polynomials that satisfy the skein relations (\ref{gi2}) - (\ref{gi6}). We will show by induction on the number of vertices that $P(\Gamma) = \tilde{P}(\Gamma)$ for any planar trivalent graph diagram $\Gamma$. If $\Gamma$ has no vertices then it is a disjoint union of $c$ unknots, and therefore $P(\Gamma) = \alpha^{c-1 }= \tilde{P}(\Gamma)$, by the first two graph skein relations.

Assume that $P$ and $\tilde{P}$ are equal for graphs with up to $n$ vertices, and let $\Gamma$ be a graph with $n+2$ vertices. 

If $\Gamma$ contains any of the local configuration in~(\ref{good config}), then $P(\Gamma)$ and $\tilde{P}(\Gamma)$ can be calculated in terms of the polynomials of new planar trivalent diagrams of graphs with less number of vertices, and therefore the induction hypothesis implies that $P(\Gamma) = \tilde{P}(\Gamma)$.

If $\Gamma$ does not contain any of the local configurations depicted in~(\ref{good config}), then Proposition~\ref{prop:polynomial} implies that there is a finite sequence of graphs $\Gamma = \Gamma_0\to \Gamma_1\to \dots \to \Gamma_k$, where $\Gamma_k$ contains one of the local configurations \raisebox{-4pt}{\includegraphics[height=0.18in]{rem-square}}, \raisebox{-4pt}{\includegraphics[height=0.18in]{rem-triangle}}, or \raisebox{-4pt}{\includegraphics[height=0.45in, angle = 90]{rem-digon}}\,.  Since the moves in~(\ref{good moves}) preserve the number of vertices, the graphs $\Gamma_1, \dots, \Gamma_k$ have the same number of vertices as $\Gamma$ and $P(\Gamma_k) = \tilde{P}(\Gamma_k)$. Moreover, $P(\Gamma) - P(\Gamma_1) = \tilde{P}(\Gamma) - \tilde{P}(\Gamma_1), P(\Gamma_1) - P(\Gamma_2) = \tilde{P}(\Gamma_1) - \tilde{P}(\Gamma_2), \dots, P(\Gamma_{k-1}) - P(\Gamma_k) = \tilde{P}(\Gamma_{k-1}) - \tilde{P}(\Gamma_k) $.  Hence $P(\Gamma) - P(\Gamma_k) = \tilde{P}(\Gamma) - \tilde{P}(\Gamma_k)$, implying that $P(\Gamma) = \tilde{P}(\Gamma)$.
\end{proof}

\textbf{Acknowledgements.} The first author gratefully acknowledges NSF partial support  through grant DMS - 0906401. She would also like to thank Scott Carter for helpful discussions on trivalent graphs and related topics in low-dimensional topology.

The second author would like to thank the first author for the opportunity to participate in the research and writing of this paper.  He is also very grateful for California State University, Fresno, which provided an Undergraduate Research Grant to participate in the research of this paper.


\end{document}